\newtheorem{Definition}{Definition}[section]
\newtheorem{theorem}[Definition]{Theorem}
\newtheorem{Corollary}[Definition]{Corollary}
\newtheorem{Theorem}[Definition]{Theorem}
\newtheorem{Lemma}[Definition]{Lemma}
\newtheorem{Proposition}[Definition]{Proposition}
\newtheorem{Observation}[Definition]{Observation}
\newtheorem{Remark}[Definition]{Remark}
\newtheorem{thm}{Theorem}
\newcommand{\kreis}[1]{\unitlength1ex\begin{picture}(2.5,2.5)%
\put(0.75,0.75){\circle{2.5}}\put(0.75,0.75){\makebox(0,0){#1}}\end{picture}}
\newcommand{\C}{\mathbb{C}} 
\newcommand{\cp}{\mathbb{C}\mathrm{P}} 
\newcommand{\ch}{\mathbb{C}\mathrm{H}} 
\newcommand{\R}{\mathbb{R}} 
\newcommand{\Q}{\mathbb{Q}} 
\newcommand{\Z}{\mathbb{Z}} 
\newcommand{\g}{\mathfrak{g}} 
\newcommand{\HH}{\mathcal{H}}
\newcommand{\VV}{\mathcal{V}}
\newcommand{\dd}{\mathrm{d}}
\newcommand{\diff}{\frac{\mathrm{d}}{\mathrm{d}t}\Big|_{t=0}}
\begin{document}
\title{On symplectic geometry of tangent bundles of\\ Hermitian symmetric spaces}
 
\author{{Johanna Bimmermann\footnote{Fakultät für Mathematik, Ruhr-Universität Bochum, Universitätsstraße 150, 44801 Bochum, Germany, johanna.bimmermann@rub.de} }}

\maketitle

\begin{abstract}
\noindent
We explicitly construct a symplectomorphism that relates magnetic twists to the invariant hyperkähler structure of the tangent bundle of a Hermitian symmetric space. This symplectomorphism reveals foliations by (pseudo-) holomorphic planes, predicted by vanishing of symplectic homology. Furthermore, in the spirit of Weinstein’s tubular neighborhood theorem, we extend the (Lagrangian) diagonal embedding of a compact Hermitian symmetric space to an open dense embedding of a specified neighborhood of the zero section. Using this embedding, we compute the Gromov width and Hofer–Zehnder capacity of these neighborhoods of the zero section.
\end{abstract}

\tableofcontents

\pagestyle{scrheadings}


\section{Introduction}
A classical example of a symplectic manifold is the cotangent bundle of a smooth manifold, $(T^*M,\dd\lambda)$ where $\lambda_{(x,p)}:=p\circ\dd\pi_{(x,p)}: T_{(x,p)}T^*M\to\R$ denotes the canonical 1-form. Given a Riemannian metric $g$ on $M$, we can use the metric isomorphism $TM\cong T^*M$ to pull $\lambda$ back to $TM$. Henceforth, we will work on the tangent bundle, and by a slight abuse of notation, we will also denote the pullback of the canonic 1-form by $\lambda$. The kinetic Hamiltonian $E:TM\to \R$, defined by $ E(x,v)=\frac{1}{2}g_x(v,v)$, generates the geodesic flow. For any closed 2-form $\sigma\in \Omega^2(M)$, the form
$$
\omega_\sigma:=\dd\lambda+\pi^*\sigma
$$
is a symplectic form on $TM$. Studying the flow induced by the kinetic Hamiltonian $E$, one finds that the flow lines are bent according to $\sigma$. This is why $\omega_\sigma$ is referred to as magnetically twisted, and the Hamiltonian flow induces by $E$ is called the magnetic geodesic flow.\\
Note that, as in the untwisted case, the projection $TM\to M$ induces a Lagrangian foliation. This makes it harder to see or understand foliations by pseudoholomorphic curves, which are crucial for defining and computing symplectic invariants such as Floer homologies or symplectic capacities. \\
Conversely, Albers--Frauenfelder--Oancea \cite[Thm.\ 3]{AFO17} and Groman--Merry \cite[Thm.\ 1.1 \& Thm.\ 1.2]{GM18} showed, using symplectic homology with twisted coefficients, that in some cases such foliations must exist. In this paper, we will construct three symplectomorphisms that reveal these foliations when $(M,g)$ is a Hermitian symmetric space with invariant Kähler form $\sigma$.\\
\noindent
Additionally, we uncover an interesting relation to the invariant hyperkähler structure present in a neighborhood of the zero-section, which was explicitly characterized by Biquard and Gauduchon \cite[Thm.\ 1]{BG21}. As a preliminary step and to illustrate the idea, we begin with the linear case $M=\C^n$. 
\\
\ \\
\noindent
\textbf{The linear picture.} We look at $M=\C^n$ with $g$ the standard Euclidean metric and $i$ the standard complex structure. The corresponding standard symplectic structure is $\sigma=g \circ (i\times 1)$. The tangent bundle $TM$ is identified with $\C^n\times \C^n$, where we can think of the factors as horizontal and vertical. Now a hyperkähler structure on $TM\cong \C^n\times\C^n$ is given by three complex structures:
$$
I:=\begin{pmatrix}
        0 & i\\
        i  & 0 \\
    \end{pmatrix},\ \
J:=\begin{pmatrix}
        0 & 1\\
        -1  & 0 \\
    \end{pmatrix},\ \
K:=\begin{pmatrix}
        i & 0\\
        0  & -i \\
    \end{pmatrix},
$$
which are compatible with the metric:
$$
G:=\begin{pmatrix}
        g & 0\\
        0  & g \\
    \end{pmatrix}.
$$
In this case, it is straightforward to see that:
$$
\omega_I=i^*\dd\lambda=\begin{pmatrix}
        0 & \sigma\\
        \sigma  & 0 \\
    \end{pmatrix},\ \ \omega_J=\dd\lambda=\begin{pmatrix}
        0 & -g\\
        g  & 0 \\
    \end{pmatrix},\ \ \omega_K=\begin{pmatrix}
        \sigma & 0\\
        0  & -\sigma \\
    \end{pmatrix}.
$$
Hence, the magnetically twisted symplectic form reads:
$$
\omega_\sigma:=\dd\lambda+\pi^*\sigma=\begin{pmatrix}
        \sigma & -g\\
        g  & 0 \\
    \end{pmatrix}.
$$
For both symplectic structures $\omega_\sigma$ and $\omega_K$ the kinetic Hamiltonian $E(x,v)=\frac{1}{2}g(v,v)$ generates a circle action. In the first case, orbits are of the form $(\gamma(t), \dot\gamma(t))$, where $\gamma$ parametrizes a circle of radius $\vert v\vert$ in the affine plane $\gamma(0)+\mathrm{span}\lbrace\dot\gamma(0),i\dot\gamma(0)\rbrace$. In the second case, the orbits are simply given by $(x, e^{it}v)$. The map that intertwines these circle actions:
$$
(\C^n\times\C^n, \omega_\sigma)\to (\C^n\times\C^n, \omega_K);\  \ \begin{pmatrix}x\\v\end{pmatrix}\mapsto \begin{pmatrix}x+iv\\v\end{pmatrix}=\begin{pmatrix}
        1 & i\\
        0  & 1\\
    \end{pmatrix}\begin{pmatrix}x\\v\end{pmatrix}
$$
is a symplectomorphism, since
$$
\begin{pmatrix}
        1 & 0\\
        -i  & 1 \\
    \end{pmatrix}
    \begin{pmatrix}
        \sigma & -g\\
        g  & 0 \\
    \end{pmatrix}
    \begin{pmatrix}
        1 & i\\
        0  & 1 \\
    \end{pmatrix}=
    \begin{pmatrix}
        \sigma & 0\\
        0  & -\sigma \\
    \end{pmatrix}.
$$
\noindent
\textbf{Complex projective and complex hyperbolic space.} It was shown by Benedetti--Ritter \cite[Appendix A]{BR19} that one can generalize this map to the case $M=\cp^1$ with standard Fubini--Study Kähler form $\sigma=\omega_{FS}$. Indeed, magnetic geodesics are geodesic circles of radius $R=\tan^{-1}(\vert v\vert)$ and the map
$$
(T\cp^1, \omega_\sigma)\to (T\cp^1, \omega_K);\ \  (x,v)\mapsto\left(\gamma(1), \left(P_\gamma v\right)(1)\right)
$$
is a symplectomorphism. Here, $P$ denotes the parallel transport with respect to the Levi-Civita connection along the geodesic 
$$
\gamma: [0,1]\to\cp^1;\ \ \gamma(t):=\exp_x\left(-tb(\vert v\vert^2)iv\right)
$$
connecting $x$ and the center of the geodesic circle. Therefore, 
\begin{equation}\label{b}
    b: \R\to\R;\ b(y):=\frac{\tan^{-1}(\sqrt{y})}{\sqrt y}.
\end{equation}
By replacing $\tan$ with $\tanh$, the exact same map yields a symplectomorphism 
$$
(D_1\ch^1, \omega_\sigma)\to (D_1\ch^1, \omega_K).
$$
The fact that for the hyperbolic plane the symplectomorphism exists only on the unit disc bundles is due to the fact that magnetic geodesics are only periodic below Ma\~{n}\'{e}'s critical value, which in this case is $c(\ch^1,g,\sigma)=\frac{1}{2}$, see \cite[Sec.\ 5.2]{CFP10}. Additionally, as discovered by Calabi \cite{C79}, the maximal neighborhood of the zero section on which the invariant hyperkähler structure, particularly the symplectic form $\omega_K$, exists is also $D_1M$.

\begin{Remark}
    The symplectomorphism $(D_1\ch^1, \omega_\sigma)\to (D_1\ch^1, \omega_K)$ is equivariant with respect to the induced action by isometries of $\ch^1$. This implies that the map descends to the quotient $D_1(\ch^1/\Gamma)$ for any torsion-free discrete group of isometries $\Gamma$. Hence, the symplectomorphism exists for any hyperbolic surface.
\end{Remark}

\noindent
The symplectic form $\omega_K$ turns the standard fibration $TM\to M$ into a symplectic fibration. However, unlike in the linear case, $\omega_K$ is not constant along the fiber. Benedetti and Ritter \cite[Appendix A]{BR19} found a scaling along the fibers that addresses this issue:
$$
(T\cp^1, \omega_K)\to (T\cp^1, \dd\tau/2+\pi^*\sigma);\ \  (x,v)\mapsto\left(x, e^{2a(\vert v\vert^2)}v\right).
$$
Here, $\tau$ denotes the connection 1-form viewing $T\cp^1$ as principal $U(1)$-bundle, and 
\begin{equation}\label{a}
    a:\R\to\R;\ a(y):=\frac{1}{2}\ln \left( \frac{2}{y}\left(\sqrt{1+y}-1\right)\right).
\end{equation}
Again, replacing $\vert v\vert^2$ with  $ -\vert v\vert^2$, yields a symplectomorphism 
$$
(D_1\ch^1, \omega_K)\to (D_2\ch^1, \dd\tau/2+\pi^*\sigma ).
$$
\begin{Observation}
   We can handle both the compact case ($\cp^1$) and the non-compact case ($\ch^1$) simultaneously by setting $y:=\kappa \vert v\vert^2$, where $\kappa=1$ for the compact case and $\kappa=-1$ for the non-compact case.
\end{Observation}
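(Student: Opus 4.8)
The plan is to verify that, under the single substitution $y:=\kappa\vert v\vert^2$, every ingredient of the two constructions above — the magnetic radius, the function $b$ of \eqref{b}, the fiber-scaling function $a$ of \eqref{a}, and the domain of definition — is the specialization at $\kappa=\pm1$ of one real-analytic expression. The compact case is governed by circular functions ($\tan^{-1}$, the signature of positive curvature) and the non-compact case by their hyperbolic analogues ($\tanh^{-1}$, negative curvature), so the content of the \emph{Observation} is precisely the analytic continuation relating the two, encoded by the sign of $\kappa$.

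First I would record the elementary identity $\tan^{-1}(iz)=i\tanh^{-1}(z)$, valid for $0\le z<1$, which follows at once from the logarithmic forms of the two inverse functions. Applying it to $\sqrt{y}=\sqrt{-\vert v\vert^2}=i\vert v\vert$ gives
$$
b(y)=\frac{\tan^{-1}(\sqrt{y})}{\sqrt{y}}=\frac{\tan^{-1}(i\vert v\vert)}{i\vert v\vert}=\frac{\tanh^{-1}(\vert v\vert)}{\vert v\vert}\qquad(\kappa=-1),
$$
while for $\kappa=1$ one has directly $b(\vert v\vert^2)=\tan^{-1}(\vert v\vert)/\vert v\vert$. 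Thus the single geodesic $\gamma(t)=\exp_x\!\left(-t\,b(y)\,iv\right)$ realizes the magnetic radius $\tan^{-1}(\vert v\vert)$ in the compact case and $\tanh^{-1}(\vert v\vert)$ in the non-compact case, exactly the two parametrizations quoted above, and the same substitution replaces every $\tan$ by $\tanh$ throughout. The analogous check for the scaling function $a$ is even more transparent: the substitution turns $\sqrt{1+y}$ into $\sqrt{1+\vert v\vert^2}$ for $\kappa=1$ and into $\sqrt{1-\vert v\vert^2}$ for $\kappa=-1$, both real on the relevant domain, and since $a(y)=\tfrac12\ln\!\bigl(\tfrac{2}{y}(\sqrt{1+y}-1)\bigr)$ is manifestly written as a function of $y$, it presents no branch ambiguity.

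The only point requiring care — and the step I expect to be the main (if modest) obstacle — is to confirm that $b$, written as $\tan^{-1}(\sqrt{y})/\sqrt{y}$, is genuinely a function of $y$ rather than of $\sqrt{y}$, so that the sign $\kappa$ may be moved freely inside. This follows from parity: the power series $\tan^{-1}(z)/z=1-z^2/3+z^4/5-\cdots$ contains only even powers of $z$, so $b$ is analytic across $z^2=y=0$ and independent of the chosen branch of $\sqrt{y}$. Consequently both $a$ and $b$ extend real-analytically to the interval $\{y>-1\}$, and the domain of the construction is then read off directly: the condition $y>-1$ imposes no constraint when $\kappa=1$, giving all of $T\cp^1$, whereas for $\kappa=-1$ it forces $\vert v\vert<1$, i.e. the unit disc bundle $D_1\ch^1$. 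This recovers in one stroke both the full tangent bundle in the compact case and the Mañé–Calabi threshold $D_1\ch^1$ in the non-compact case, completing the verification.
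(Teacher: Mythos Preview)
The paper states this Observation without proof; it is offered as a notational remark summarizing the two preceding paragraphs (``replacing $\tan$ with $\tanh$'' and ``replacing $\vert v\vert^2$ with $-\vert v\vert^2$''). Your argument is correct and supplies exactly the analytic justification the paper leaves implicit: the parity argument showing that $b(y)=\tan^{-1}(\sqrt{y})/\sqrt{y}$ is genuinely analytic in $y$ across $y=0$, the identity $\tan^{-1}(iz)=i\tanh^{-1}(z)$ that converts it to $\tanh^{-1}(\vert v\vert)/\vert v\vert$ when $\kappa=-1$, and the reading of the domain $y>-1$ as $T\cp^1$ versus $D_1\ch^1$. There is nothing to compare against, so your write-up simply fills in what the paper asserts.
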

\noindent
The symplectic form $\dd\tau/2+\pi^*\sigma$ endows $T\Sigma$ for $\Sigma\in \lbrace \cp^1,\ch^1\rbrace$ with the structure of a symplectic vector bundle. In particular it is not hard to write down an almost complex structure so that the fibers are pseudoholomorphic planes. This foliation leads to the vanishing of symplectic homology in the case of $\cp^1$, as shown by Ritter \cite[Thm.\ 5]{Rit11}. Interestingly, symplectic homology is not even defined in the case of $\ch^1$ because its boundary is not convex (the Liouville vector field points inwards). \\
\ \\
\noindent
In the case of $\cp^1$, there exists another intriguing symplectomorphism that persists even when the magnetic term vanishes. It was shown in \cite[Thm.\ C]{proj23} that the map
$$
(D_1\cp^1,\omega_{s\sigma})\to(\cp^1\times\cp^1\setminus\bar\Delta,R_-\sigma\ominus R_+\sigma);\ (x,v)\mapsto \Big (\exp_x(-c_-(\vert v\vert)iv),\exp_x(c_+(\vert v\vert)iv)\Big )
$$
is a symplectomorphism. Here, $\bar\Delta$ denotes the anti-diagonal, $R_\pm=\frac{1}{2}(\sqrt{s^2+1}\pm s)$ and $c_\pm: (-1,1)\to (0,\infty)$ are the smooth, even functions \cite[Lem.\ 5.2]{proj23} implicitly defined through the equations
\begin{align}\label{c}
    &R_-\sin(2c_-(y)y)+R_+\sin(2c_+(y)y)=y\\
    &R_-\cos(2c_-(y)y)-R_+\cos(2c_+(y)y)=R_--R_+.
\end{align}
\ \\
\noindent
\textbf{Hermitian symmetric spaces.}
In this paper, we will generalizes the aforementioned symplectomorphisms to Hermitian symmetric spaces $M\cong G/K$ equipped with the up to scaling unique invariant Kähler structure $(g,j,\sigma)$. Our approach heavily relies on explicit formulas derived by Biquard--Gaudouchon \cite[Thm.\ 1]{BG21} for the invariant hyperkähler structure in the tangent bundle of Hermitian symmetric spaces. For compact type spaces, the hyperkähler structure is defined on the full tangent bundle, while for non-compact type spaces it only exists on $U_1M$, where
$$
U_\rho M:=\lbrace (x,v)\in TM\ : \ \vert g_x(jR(jv,v)w,w)\vert\leq \rho^2 \vert w\vert^2 \ \forall w\in T_xM\rbrace.
$$
Observe that in the case of the complex hyperbolic space $\ch^1$, the unit disk bundle is precisely this neighborhood where the hyperkähler structure exists.\\
\ \\
\noindent
\textbf{From symplectically twisted to hyperkähler.}
\begin{thm}\label{twisted to hyperkähler}
If $M\cong G/K $ is a Hermitian symmetric space of compact type there is a $G$-equivariant symplectomorphism identifying
$$
(T M,\omega_\sigma)\cong(T M, \omega_K).
$$
If $M$ is a Hermitian symmetric space of non-compact type, then
$$
(U_{1}M,\omega_\sigma)\cong(U_{1} M, \omega_K).
$$
\end{thm}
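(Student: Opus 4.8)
The plan is to construct $\Phi$ explicitly as the curved analogue of the linear shear $(x,v)\mapsto(x+jv,v)$ and of the Benedetti--Ritter map for $\cp^1$, namely a ``magnetic exponential'' sending $(x,v)$ to the centre of its magnetic geodesic together with the parallel-transported velocity. Concretely I would set
$$\Phi(x,v)=\bigl(\exp_x(-B_v\,jv),\ P_\gamma v\bigr),\qquad \gamma(t):=\exp_x(-tB_v\,jv),$$
where $P_\gamma$ is Levi--Civita parallel transport along $\gamma$ and $B_v$ is obtained by applying the function $b$ of \eqref{b} (continued so that $\tan^{-1}$ becomes $\tanh^{-1}$ in the non-compact case) to a curvature operator $\mathcal R_v$ built from $R(jv,v)$; on $\cp^1$ and $\ch^1$ this operator acts on the holomorphic plane as the scalar $\kappa|v|^2$, so $B_v\,jv=b(\kappa|v|^2)\,jv$ and one recovers the known map exactly. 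The point of this choice is that $\Phi$ intertwines the magnetic flow of $E$ (whose orbits are geodesic circles) with the $\omega_K$-flow of $E$ (fibre rotation), which is why it carries each magnetic geodesic to its centre. Since $g$, $j$, $R$, and hence $\omega_\sigma$, $\omega_K$ and $\Phi$ are all $G$-invariant, it suffices to prove $\Phi^*\omega_K=\omega_\sigma$ along a single fibre $T_oM$ over the base point $o=eK$, reducing the statement to a linear-algebra identity on $T_oM\cong\mathfrak p$.

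First I would fix the Cartan decomposition $\mathfrak g=\mathfrak k\oplus\mathfrak p$, identify $T_oM\cong\mathfrak p$, and record that on a symmetric space the curvature is $R(X,Y)Z=-[[X,Y],Z]$ and $j=\mathrm{ad}(Z_0)|_{\mathfrak p}$, so that the relevant curvature operators are simultaneously diagonalisable in a $j$-adapted eigenbasis of $\mathcal R_v$. In this basis the differential $d\Phi_{(o,v)}$ is computed by Jacobi-field analysis: the horizontal and vertical components of $d\exp$ and of $P_\gamma$ are the standard operators $\cos(\sqrt{\mathcal R_v})$ and $\sin(\sqrt{\mathcal R_v})/\sqrt{\mathcal R_v}$ together with their derivatives in $v$. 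I would then record $d\Phi_{(o,v)}$ as a block matrix with respect to the horizontal/vertical splitting determined by the connection.

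Next I would substitute the Biquard--Gauduchon formula \cite[Thm.\ 1]{BG21} for $\omega_K$ and the splitting form $\omega_\sigma=\begin{pmatrix}\sigma&-g\\ g&0\end{pmatrix}$ into the identity to be verified, and check it block by block. The scaling $B_v$ is forced precisely so that the curvature factors $\cos,\sin$ appearing in $d\Phi$ cancel against the curvature factors in the Biquard--Gauduchon expression for $\omega_K$, collapsing the computation to the flat model identity
$$\begin{pmatrix}1&0\\-j&1\end{pmatrix}\begin{pmatrix}\sigma&-g\\ g&0\end{pmatrix}\begin{pmatrix}1&j\\0&1\end{pmatrix}=\begin{pmatrix}\sigma&0\\0&-\sigma\end{pmatrix},$$
already established in the linear case (at $v=0$ one has $b(0)=1$ and this identity holds on the nose). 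Finally I would check that $\Phi$ is a diffeomorphism onto the correct domain: invertibility of $B_v$ and of the Jacobi operators holds on all of $TM$ in the compact case and exactly on $U_1M$ in the non-compact case, where the spectrum of $\mathcal R_v$ stays in the range on which $b$ (now via $\tanh^{-1}$) and the hyperkähler structure are defined.

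The main obstacle I anticipate is the block-by-block verification: pinning down the operator $\mathcal R_v$ and the scaling $B_v$ so that the Jacobi-field matrix for $d\Phi$ and the Biquard--Gauduchon matrix for $\omega_K$ conspire to cancel. This is a genuine computation with non-commuting curvature operators, and I expect to need the Kähler identities $R(jX,jY)=R(X,Y)$ and $[R(X,Y),j]=0$ together with the parallelism of $R$ to ensure the operators in question commute, thereby reducing the verification to the scalar case already handled for $\cp^1$ and $\ch^1$. The domain bookkeeping in the non-compact case, matching $U_1M$ simultaneously to Mañé's critical value and to the maximal neighbourhood on which $\omega_K$ exists, is the other point requiring care.
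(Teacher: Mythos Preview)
Your candidate map is essentially the paper's: $\Phi(x,v)=(\exp_x(b(jR_{jv,v})jv),P_\gamma v)$, with $b$ applied spectrally to the self-adjoint curvature operator $jR_{jv,v}$. The difference lies entirely in the verification that $\Phi$ is symplectic.

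You propose to compute $d\Phi_{(o,v)}$ via Jacobi fields and check $\Phi^*\omega_K=\omega_\sigma$ block by block against the Biquard--Gauduchon formula. The paper does \emph{not} do this. Instead it uses a moment-map trick (Lemma~\ref{momtri}): since the $G$-orbits in $TM$ are coisotropic, an equivariant diffeomorphism intertwining the moment maps $\mu_\sigma(x,v)=[x,v]+x$ and $\mu_K(x,v)=-[v,j\tilde F(jR_{jv,v})v]+x$ is automatically symplectic, provided one exhibits a common isotropic complement $\Upsilon$ to the orbit distribution. The intertwining of moment maps is then reduced, via the polysphere/polydisc theorem, to an explicit $\mathfrak{su}(2)$ resp.\ $\mathfrak{sl}(2,\R)$ computation; no differential of $\Phi$ is ever computed.

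Your direct route is not wrong in principle, but there is a genuine gap in the sketch. The Jacobi-field operators $\cos(\sqrt{\mathcal R_v})$, $\sin(\sqrt{\mathcal R_v})/\sqrt{\mathcal R_v}$ describe the differential of $\exp_x$ at a \emph{fixed} argument; here the argument $b(jR_{jv,v})jv$ itself depends on $v$, so the vertical block of $d\Phi_{(o,v)}$ also picks up the $v$-derivative of the spectral function $b(jR_{jv,v})$. Since varying $v$ moves the eigenbasis of $jR_{jv,v}$, your ``simultaneously diagonalisable'' remark does not by itself control this term, and the hoped-for collapse to the flat $2\times 2$ identity is not justified. The paper's moment-map argument is designed precisely to bypass this: moment maps involve only the \emph{values} of $\Phi$, not its differential, and the polysphere reduction turns the spectral calculus into a genuine scalar one. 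If you want to push your approach through, you would effectively be re-deriving the polysphere foliation and the isotropic complement $\Upsilon=\mathrm{span}\{Y_1,\dots,Y_r\}$ in disguise; it is cleaner to invoke them directly as the paper does.
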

\noindent
The symplectic form $\omega_K$ turns the standard fibration $TM\to M$ into a symplectic fibration, i.e. $\omega_K\vert_{T_xM}$ is symplectic, but not constant along the fiber. Our second theorem states that one can find a fiberwise map, so that the symplectic form becomes fiberwise constant.\\
\ \\
\noindent
\textbf{From hyperkähler to fiberwise constant.} Let $\kappa: TTM\to \mathcal{V}\cong TM$ denote the projection to the vertical distribution with respect to the Levi-Civita connection. Define a 1-form $\tau\in\Omega^1(TM)$ pointwise as
$$
\tau_{(x,v)}(\cdot):=g_x(jv, \kappa(\cdot)).
$$
As shown in Proposition \ref{derivatives 1-forms}, $\dd\tau$ is fiberwise constant, more precisely:
$$
\left(\dd\tau_{(x,v)}\right)\vert_{\mathcal{V}\times\mathcal{V}}=2\sigma_x\ \ \forall v\in T_xM.
$$
Observe, that $\dd\tau$ cannot be non-degenerate as $\tau$ vanishes constantly along the zero-section. However, we will see in Corollary \ref{symplectic fibers} that $\dd\tau/2+\pi^*\sigma$ is non-degenerate, hence symplectic on $TM$ or $U_2M$ depending on the type of $M$.

\begin{thm}\label{hyperkähler to constant}
    If $M\cong G/K $ is a Hermitian symmetric space of compact type there is a $G$-equivariant symplectomorphism identifying
    $$
    (T M,\omega_K)\cong(T M, \dd\tau/2+\pi^*\sigma).
    $$
    If $M$ is a Hermitian symmetric space of non-compact type, then
    $$
    (U_{1}M,\omega_K)\cong(U_{2} M, \dd\tau/2+\pi^*\sigma).
    $$
\end{thm}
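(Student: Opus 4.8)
\noindent\emph{Proof idea.}
My plan is to exhibit the symplectomorphism as an explicit, fibre-preserving diffeomorphism obtained by rescaling each tangent vector through a function of the curvature, directly generalising the scalar rescaling $v\mapsto e^{2a(|v|^2)}v$ used for $\cp^1$ and $\ch^1$ in \cite[Appendix A]{BR19}. Let $S_v\colon T_xM\to T_xM$ be the $g$-self-adjoint endomorphism defined by $g_x(S_vw,w)=g_x(jR(jv,v)w,w)$; by the very definition of $U_\rho M$, the pair $(x,v)$ lies in $U_\rho M$ exactly when the spectrum of $S_v$ is contained in $[-\rho^2,\rho^2]$. Because $M$ is symmetric, $\nabla R=0$, so $S_v$ is assembled from parallel tensors and satisfies $S_{g_*v}=g_*S_vg_*^{-1}$ for all $g\in G$. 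I would then set $\Phi(x,v):=(x,F(S_v)v)$, where $F(\lambda):=e^{2a(\lambda)}=\tfrac{2}{\lambda}(\sqrt{1+\lambda}-1)$ with $a$ as in \eqref{a} and $F(S_v)$ is defined by functional calculus. On an irreducible rank-one factor $S_v$ has the single eigenvalue $\kappa|v|^2$, so $F(S_v)$ collapses to the scalar $e^{2a(\kappa|v|^2)}$ and $\Phi$ recovers the known maps. Since $F$ is smooth with $F(0)=1$, $\Phi$ is smooth and restricts to the identity on the zero section, and the intertwining relation for $S_v$ makes $\Phi$ manifestly $G$-equivariant.

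The computation rests on two explicit descriptions in the horizontal--vertical splitting $TTM\cong\HH\oplus\VV$. On the one hand, the Biquard--Gauduchon formula \cite[Thm.\ 1]{BG21} presents $\omega_K$ as a block form whose horizontal and vertical coefficients, together with a horizontal--vertical mixing term, are explicit functions of $S_v$. On the other hand, Proposition~\ref{derivatives 1-forms} computes $\dd\tau$ from $\tau_{(x,v)}(\cdot)=g_x(jv,\kappa(\cdot))$: its purely vertical part equals $2\sigma_x$, while its horizontal and mixed parts are the curvature corrections produced by differentiating in horizontal directions. Adding $\pi^*\sigma$ yields a second block form in the same frame, non-degenerate by Corollary~\ref{symplectic fibers}. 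The decisive structural fact, once more a consequence of $\nabla R=0$, is that both block forms are simultaneously diagonalised by the spectral decomposition of $S_v$: the parallel curvature commutes with the spectral projections, no curvature-of-curvature terms arise upon differentiating, and the whole identity decouples over the eigenspaces of $S_v$.

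With both forms in hand I would compute $\Phi^*(\dd\tau/2+\pi^*\sigma)$ and compare it with $\omega_K$ eigenspace by eigenspace. On the $\lambda$-eigenspace the rescaling contributes the factor $F(\lambda)$, while the derivative $F'(\lambda)$ enters through the dependence of $F(S_v)v$ on $v$; collecting terms reduces the vertical block of the matching to a single scalar identity in $\lambda$, which is precisely the equation solved by $F(\lambda)=e^{2a(\lambda)}$. This is the structural reason the one function that works for $\cp^1$ and $\ch^1$ continues to work in arbitrary rank. One then checks that the horizontal and mixed blocks match as well; this is where $\pi^*\sigma$ compensates the curvature contributions of $\dd\tau$ against those of $\omega_K$, and where the Kähler relations between $g$, $j$ and $\sigma$ on a symmetric space are used. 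Since the target is symplectic, the equality $\Phi^*(\dd\tau/2+\pi^*\sigma)=\omega_K$ is exactly what is needed.

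The principal obstacle I expect lies in this curvature bookkeeping: establishing that the Biquard--Gauduchon form and $\dd\tau$ are genuinely simultaneously spectral-diagonal, and that the resulting scalar equation is the same on every eigenspace, requires the particular algebra of Hermitian symmetric spaces---how $R(jv,v)$ interacts with $j$ and with $\sigma$---rather than a generic Kähler computation. What remains is to pin down domain and image. In the non-compact case $F(\lambda)$ is real and positive exactly for $\lambda\ge-1$, which is precisely the membership condition for $U_1M$; rescaling $v$ by $F(S_v)$ multiplies the eigenvalues of the curvature operator, sending the boundary value $\lambda=-1$ to $-4$, so that $\Phi$ maps $U_1M$ diffeomorphically onto $U_2M$. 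In the compact case the spectrum of $S_v$ is non-negative, $F$ is defined on all of $[0,\infty)$, and it remains to verify, by monotonicity of the induced radial map, that $\Phi$ is a global diffeomorphism of $TM$, as asserted.
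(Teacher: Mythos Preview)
Your candidate map is exactly the one the paper uses, and your discussion of equivariance, smoothness and the domain/image computation is correct. The divergence is in how you verify that $\Phi$ is symplectic.

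You propose a direct block-by-block comparison of $\Phi^*(\dd\tau/2+\pi^*\sigma)$ with $\omega_K$, claiming that both forms are simultaneously diagonalised by the spectral decomposition of $S_v$ so that the identity reduces to a scalar equation on each eigenspace. This is where the gap lies. At a fixed $(x,v)$ the operator $S_v$ has an eigenspace decomposition of $T_xM$, but computing $\Phi^*$ requires differentiating $F(S_v)v$ in all vertical directions $w$, and the derivative of $S_v$ in direction $w$ is $jR_{jw,v}+jR_{jv,w}$, which in general neither commutes with $S_v$ nor preserves its eigenspaces. The assertion that ``parallel curvature commutes with the spectral projections'' only controls horizontal derivatives; it does not by itself justify the vertical decoupling. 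What actually makes the decoupling work is the polysphere/polydisc theorem: through every $(x,v)$ there is a totally geodesic holomorphic $\Sigma^r\hookrightarrow M$ tangent to $v$, and $S_v$ restricted to $T_x\Sigma^r$ is diagonal with eigenvalues $\pm|v_i|^2$. Your approach could be completed by invoking this structure, but then you are essentially rediscovering the paper's reduction.

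The paper bypasses the block computation entirely via Lemma~\ref{momtri}: since the $G$-orbits in $TM$ are coisotropic with an $r$-dimensional complement $\Upsilon$, it suffices to check that $\Phi$ intertwines the moment maps $\mu_K$ and $\mu_\tau/2+\mu_\sigma$ and that $\Upsilon$ is isotropic for both forms. The moment-map identity is reduced, via the affine embedding of Proposition~\ref{polysphere as suborbit}, to the rank-one case, where it is a one-line computation. The isotropy of $\Upsilon$ is immediate because $\Phi$ preserves each $T\Sigma^r$ and splits as a product there. This avoids ever writing $\omega_K$ in explicit block form and never needs to differentiate $F(S_v)$ in directions transverse to the polysphere.
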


\noindent
\textbf{Diagonal embeddings.}
The previous symplectomorphisms exclusively work with a non-vanishing magnetic field. However, the result by Albers--Frauenfelder--Oancea \cite[Thm.\ 3]{AFO17}, indicates that these foliations by pseudoholomorphic planes should, in the compact case, also exists without a magnetic twist. This can be seen, as the diagonal embedding $M\hookrightarrow (M\times M,\sigma\ominus \sigma)$ is Lagrangian, which by Weinstein's tubular neighborhood theorem implies that also a neighborhood of the zero-section embeds. The product is uniruled \cite[Lem.\ 15]{LMZ13}. Using symmetries we can extend this embedding to an open dense embedding of an explicit neighborhood of the zero-section. We can still include a magnetic twist, but the following theorem also works without.

\begin{thm}\label{diagonal}
Let $M\cong G/K$ be an irreducible Hermitian symmetric space of compact type, then there exists a $G$-equivariant symplectomorphism
$$
\phi: (U_{2\sqrt{R}}M,\omega_{(1-R)\sigma})\rightarrow (M\times M\setminus \bar\Delta, \sigma\ominus R\sigma),
$$
where $\bar \Delta$ is fiberwise the cut locus of the base point and thus a finite union of complex submanifolds of complex codimension one.
\end{thm}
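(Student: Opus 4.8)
The plan is to write $\phi$ down explicitly, as the operator-valued generalization of the map in \cite[Thm.\ C]{proj23}, and then to deduce all of its properties by reducing, via $G$-equivariance, to a single-fibre computation governed by the symmetric curvature operator $A_v:=jR(jv,v)$ that already appears in the definition of $U_\rho M$. Concretely, for $(x,v)\in U_{2\sqrt R}M$ I would set
$$
\phi(x,v):=\Big(\exp_x\big(-c_-(A_v)\,jv\big),\ \exp_x\big(c_+(A_v)\,jv\big)\Big),
$$
where $c_\pm$ are the even functions implicitly defined by \eqref{c}, applied to $A_v$ through functional calculus on the spectral range permitted by the bound $\norm{A_v}\le 4R$. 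Since $A_v$ commutes with $j$ (the curvature of a Kähler manifold does), $A_v$ is symmetric and this calculus is unambiguous; for the rank-one spaces $\cp^n$ the vector $jv$ is an eigenvector of $A_v$ and the formula collapses to the scalar prototype. Because $g$, $j$, $\sigma$, the exponential map and $A_v$ are all $G$-invariant, $\phi$ is manifestly $G$-equivariant, and evenness of $c_\pm$ makes it smooth across the zero section, where it restricts to the diagonal $x\mapsto(x,x)$; there both $\sigma\ominus R\sigma$ and $\omega_{(1-R)\sigma}$ restrict to $(1-R)\sigma$, so the two forms already agree on $0_M$.

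By $G$-equivariance it then suffices to work over the base point $o$, identifying $T_oM\cong\mathfrak{m}$ with its parallel curvature tensor. I would compute the differentials of the two endpoint maps $p_\pm$ at $(o,v)$ by varying $\exp_o$ in horizontal and vertical directions; in a symmetric space these variation fields are Jacobi fields, so $\dd p_\pm$ is assembled from the Jacobi operators $\cos\sqrt{K_\pm}$ and $\tfrac{\sin\sqrt{K_\pm}}{\sqrt{K_\pm}}$, with $K_\pm:=R(\,\cdot\,,c_\pm jv)\,c_\pm jv$, post-composed with parallel transport along the geodesics $\gamma_\pm$. The spectral bound $\norm{A_v}\le 4R$ defining $U_{2\sqrt R}M$ is exactly what keeps both geodesics short of their first conjugate points, so that the combined differential $\dd\phi=(\dd p_-,\dd p_+)$ stays invertible and the resulting endpoint pair stays off the cut-locus relation $\bar\Delta$; as $v$ approaches $\partial U_{2\sqrt R}M$ the pair is driven onto $\bar\Delta$. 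Together with $G$-equivariance and the injectivity of $\phi$ near the zero section, a properness argument then yields that $\phi$ is a diffeomorphism onto $M\times M\setminus\bar\Delta$.

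The decisive step, and the one I expect to be the main obstacle, is the symplectic identity
$$
\phi^*(\sigma\ominus R\sigma)=p_-^*\sigma-R\,p_+^*\sigma=\dd\lambda+(1-R)\pi^*\sigma=\omega_{(1-R)\sigma}.
$$
Writing $\sigma=g(j\,\cdot\,,\cdot)$ and substituting the Jacobi-field expressions for $\dd p_\pm$, the pullbacks $p_\pm^*\sigma$ become quadratic forms in $\cos\sqrt{K_\pm}$ and $\tfrac{\sin\sqrt{K_\pm}}{\sqrt{K_\pm}}$ post-composed with $j$; since $j$ is parallel it commutes with the transports, and the Kähler identities ($j$-invariance of $R$) let me commute it through the Jacobi operators and recombine everything into expressions in the single operator $A_v$. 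Sorting the outcome into its vertical-vertical, horizontal-vertical and horizontal-horizontal blocks, the two relations \eqref{c} are precisely the conditions that force the vertical-vertical block to cancel, the horizontal-vertical block to reproduce $\dd\lambda$, and the horizontal-horizontal block to collapse to $(1-R)\sigma$. The genuinely delicate bookkeeping is the transport of $j$ and of the curvature between the three distinct base points $o$, $p_-$ and $p_+$; it is here that irreducibility is used, to guarantee that these identities hold in the rigid, simultaneously $A_v$-diagonalizable form for which the scalar relations \eqref{c} close the computation eigenvalue by eigenvalue.
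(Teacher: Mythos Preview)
Your candidate map and the equivariance argument are essentially the same as the paper's (up to relabeling $c_\pm$ as $c_1,c_2$).  Where you diverge is in the verification that $\phi$ is symplectic, and this is not a cosmetic difference.

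The paper never computes $\dd\phi$ or the pullbacks $p_\pm^*\sigma$ via Jacobi fields.  Instead it invokes Lemma~\ref{momtri}: because the $G$-action on both sides is Hamiltonian with coisotropic orbits, one only needs (i) the \emph{pointwise} identity $\mu_1=\mu_2\circ\phi$ of moment maps, and (ii) an isotropic complement $\Upsilon$ to the orbit distribution.  Both are checked by reducing along the polysphere foliation (Thm.~\ref{polysphere theorem}, Prop.~\ref{polysphere as suborbit}): the affine embedding $K:\mathfrak{su}(2)^r\hookrightarrow\mathfrak g$ intertwines the moment maps of $(\cp^1)^r$ with those of $M$, so everything collapses to the already-known $\cp^1$ case from \cite{proj23}.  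Bijectivity is likewise obtained by writing an explicit inverse factor-by-factor on polyspheres, not by a properness argument.

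Your direct route has a real obstacle precisely at the point you flag.  The Jacobi operators $K_\pm=R(\cdot,c_\pm jv)c_\pm jv$ act on \emph{all} of $T_xM$, including directions transverse to the polysphere through $(x,v)$, and on those directions they are \emph{not} functions of $A_v=jR_{jv,v}$ alone: one needs the full restricted-root decomposition of the curvature.  The scalar relations \eqref{c} were derived in dimension two and only close the computation on the polysphere-tangent block; they say nothing about the transverse block of $p_\pm^*\sigma$.  So ``recombining everything into expressions in the single operator $A_v$'' is not a bookkeeping issue but the entire difficulty, and it is exactly what the moment-map trick is designed to circumvent.  If you insist on the direct computation you will effectively be redoing the restricted-root analysis that the polysphere theorem already packages; the paper's approach buys you that for free.
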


\noindent
\textbf{Computation of symplectic capacities.}
Symplectic capacities have been a fascinating area of symplectic geometry since Gromov proved his non-squeezing theorem \cite{G85}. Roughly speaking, they fall into two types: embedding capacities and dynamical capacities. The Gromov width is of the former type and is defined as follows. For any symplectic manifold $(N,\omega)$ of dimension $2n$ we look for the largest standard ball $(B^{2n}(r),\omega_0)$ of radius $r$ that symplectically embeds, i.e.
$$
c_G(N,\omega):=\sup\lbrace\pi r^2\vert (B^{2n}(r),\omega_0)\hookrightarrow (N,\omega)\rbrace.
$$
On the other hand, the Hofer--Zehnder capacity belongs to the latter type. For a symplectic manifold $(N,\omega)$ possibly with boundary $\partial N$, the Hofer--Zehnder capacity is defined as:
$$
c_{HZ}(N,\omega):=\sup\left\lbrace \mathrm{max}(H)\ \vert\ H: N\to \R\ \  \text{smooth, admissible}\right\rbrace,
$$
where admissible means:
\begin{itemize}
    \item $0\leq H$ and there exists an open set $U\subset N\setminus\partial N$ such that $H\vert_U\equiv 0$,
    \item there exists a compact set $K\subset N\setminus\partial N$ such that $H\vert_{N\setminus K}\equiv\max(H)$,
    \item all non-constant periodic solutions $\gamma:\R \to N$ of $\dot \gamma=X_H$ have period $T>1$.
\end{itemize}
 Here, $X_H$ denotes the Hamiltonian vector field defined imposing the relation $\dd H=-\iota_{X_H}\omega$.\\
 \ \\
 \noindent
 Looking at the definitions there is no reason to expect that their values have anything to do with each other. Interestingly, in many example where both can be computed, they agree. Note that this list is not particularly long, but includes for example monotone convex toric domains in $\R^{2n}$ \cite[Thm.\ 1.7]{GHR22}.  In recent years, considerable research on capacities has revolved around the Viterbo conjecture:
 \begin{quote}
     "All normalized symplectic capacities on convex domains in $\R^{2n}$ agree."
 \end{quote}
 Recently, Haim-Kislev and Ostrover \cite{HO24}  provided a counterexample comparing the Gromov width and Hofer--Zehnder capacity of a specific convex domain. This result underscores the ongoing interest and complexity in understanding why symplectic capacities either agree or disagree.\\
 \ \\
 \noindent
 When it comes to subsets of (twisted) tangent bundles even less is known. Ferreira, Ramos and Vicente \cite{FRV23,FR22} computed the Gromov width of standard disk tangent bundles of certain spheres of revolution. They find that the value is always spectral, meaning it corresponds to the length of some closed geodesic, although not necessarily the minimum and not always represented by a simple curve. The situation for the Hofer--Zehnder capacity for standard disk tangent bundle is similarly restricted, with computations available only for selected examples. The list includes real and complex projective spaces \cite{proj23}, flat tori \cite{BBZ23} and some lens spaces \cite{BM24}.
 In all cases, the capacity is spectral, yet it does not always coincide with the Gromov width \cite{BM24}. For Hermitian symmetric spaces it is also possible to consider magnetically twisted tangent bundles, this was previously done only for $\cp^n$ \cite[Sec.\ 5]{proj23} and constant curvature surfaces \cite{Bim23}. In this paper we will generalize these results to obtain the following theorems. 

\begin{thm}\label{capacompact} Let $(M,j,\sigma)$ be a irreducible Hermitian symmetric space of compact type. Normalize $\sigma$ such that evaluated on the generator of $H_2(M,Z)$ it takes value $4\pi$, then
$$
    c_{\mathrm G}(U_{2\sqrt{R}}M,\omega_{(1-R)\sigma})=c_{\mathrm{HZ}}(U_{2\sqrt{R}}M,\omega_{(1-R)\sigma})=c_{\mathrm{HZ}}^0(U_{2\sqrt{R}}M,\omega_{(1-R)\sigma})=\min \lbrace 1,R\rbrace 4\pi.
$$
\end{thm}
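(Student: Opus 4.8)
The plan is to transport the problem to the product via Theorem \ref{diagonal} and then to pin the three capacities between an explicit ball embedding and a Gromov--Witten bound. By Theorem \ref{diagonal} the space $(U_{2\sqrt{R}}M,\omega_{(1-R)\sigma})$ is symplectomorphic to $(M\times M\setminus\bar\Delta,\sigma\ominus R\sigma)$, and since all three capacities are symplectomorphism invariants it suffices to treat the latter. Writing $\overline{M}$ for $M$ equipped with the conjugate complex structure, on which $-\sigma$ is the Kähler form, the form $\sigma\ominus R\sigma$ is a product Kähler form on the closed Kähler manifold $X:=M\times\overline{M}$ (second factor scaled by $R$), and $M\times M\setminus\bar\Delta$ is an open dense subset of $X$ since $\bar\Delta$ has real codimension two. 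Throughout I would use the general inequalities $c_{\mathrm G}\leq c_{\mathrm{HZ}}\leq c_{\mathrm{HZ}}^0$, valid because $c_{\mathrm G}$ is the smallest normalized symplectic capacity and because the admissibility condition defining $c_{\mathrm{HZ}}^0$ constrains only the contractible periodic orbits, hence is weaker and admits more Hamiltonians. It is therefore enough to prove the two bounds $c_{\mathrm G}\geq\min\{1,R\}\,4\pi$ and $c_{\mathrm{HZ}}^0\leq\min\{1,R\}\,4\pi$, which then pinch all three values.

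For the lower bound I would embed a round ball. The normalization makes the line, i.e. the positive generator of $H_2(M,\Z)$, have $\sigma$-area $4\pi$, so the Gromov width of $(M,\sigma)$ equals $4\pi$ and that of the rescaled conjugate factor equals $4\pi R$; in each factor a ball of the corresponding capacity embeds into the big Bruhat cell, that is, into the complement of the cut locus of a point. Combining these two embeddings with the standard inclusion $B^{4m}(r)\subset B^{2m}(r)\times B^{2m}(r)$, where $2m=\dim_{\R}M$, produces a symplectic embedding of a ball of capacity $\min\{1,R\}\,4\pi$ into $X$. Choosing the two base points of the cells generically, its image lies in the complement of $\bar\Delta$, so the same ball embeds into $M\times M\setminus\bar\Delta$ and $c_{\mathrm G}\geq\min\{1,R\}\,4\pi$.

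For the upper bound I would invoke the relation between Gromov--Witten invariants and the $\pi_1$-sensitive Hofer--Zehnder capacity due to G.~Lu. Irreducible Hermitian symmetric spaces of compact type are rational homogeneous, hence convex, Fano varieties with $b_2=1$, so their genus-zero Gromov--Witten invariants coincide with classical enumerative counts and the generator of $H_2(M,\Z)$ carries a nonzero point-constrained invariant (for $\cp^n$ this is simply the unique line through two points); the same holds in $\overline{M}$. By the product behaviour of Gromov--Witten invariants the split classes $[\ell]\times[\mathrm{pt}]$ and $[\mathrm{pt}]\times[\bar\ell]$ in $X$ then carry nonzero point-constrained invariants of $X$-areas $4\pi$ and $4\pi R$, and since $b_2(M)=1$ no class of smaller positive area does. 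Lu's theorem consequently bounds the $\pi_1$-sensitive Hofer--Zehnder capacity of $X$, and of its open subset $M\times M\setminus\bar\Delta$, by the smaller of these two areas, giving $c_{\mathrm{HZ}}^0\leq\min\{1,R\}\,4\pi$. Together with the displayed inequalities and the lower bound, all three capacities are forced to equal $\min\{1,R\}\,4\pi$.

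The main obstacle is the upper bound. Two points require care: first, the nonvanishing of the point-constrained line-class invariant must be secured uniformly across all irreducible compact Hermitian symmetric spaces, where the convexity and known quantum cohomology of these $G/P$ replace the elementary count available for $\cp^n$, and one must confirm that the minimal positive $X$-area among classes with nonvanishing such invariants is exactly $\min\{1,R\}\,4\pi$. Second, Lu's theorem must be applied in its form for open subsets of closed symplectic manifolds and for the $\pi_1$-sensitive capacity: the capping spheres furnished by the nonvanishing invariant pass through a prescribed point, which may be placed inside $M\times M\setminus\bar\Delta$, and this is precisely what circumvents the failure of naive monotonicity of $c_{\mathrm{HZ}}^0$ caused by meridian loops around the codimension-two set $\bar\Delta$. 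A minor point in the lower bound is to check that the product ball can genuinely be made disjoint from $\bar\Delta$, which is where the freedom in choosing the two base points is used.
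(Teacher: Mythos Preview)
Your overall architecture matches the paper: transport via Theorem~\ref{diagonal}, then sandwich the three capacities between a ball embedding and a Gromov--Witten upper bound coming from Lu's work. The upper bound is along the right lines; the paper makes it precise by invoking the version of Lu's result that takes the divisor into the invariant (Corollary~\ref{upperbound}), namely one shows $\mathrm{GW}_{A,4}([\mathrm{pt}],[\bar\Delta],\alpha,\beta)=\mathrm{GW}_{A,3}([\mathrm{pt}],\alpha,\beta)\cdot([\bar\Delta]\cdot A)\neq 0$, where $[\bar\Delta]\cdot A\neq 0$ follows from the exactness of $\sigma\ominus\sigma$ on $M\times M\setminus\bar\Delta$ established in Theorem~\ref{diagonal}. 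This is exactly what circumvents the $\pi_1$-issue you flag.

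There is, however, a genuine gap in your lower bound. The claim that by ``choosing the two base points of the cells generically'' the product ball lands in the complement of $\bar\Delta$ is false. The set $\bar\Delta$ has real codimension two and your product of big-cell balls has full dimension, so you should expect them to meet, and they do. Already for $M=\cp^1$ with $R=1$: the antidiagonal is $\bar\Delta=\{(x,-x)\}$, the big cell centred at $p_i$ is $\cp^1\setminus\{-p_i\}$, and the product cell is $(\cp^1\setminus\{-p_1\})\times(\cp^1\setminus\{-p_2\})$; the point $(-p_2,p_2)$ lies in $\bar\Delta$ and, whenever $p_1\neq p_2$, also in the product cell. As the factor balls fill the cells, no generic choice of $(p_1,p_2)$ separates them from $\bar\Delta$.

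The paper replaces this step by a McDuff--Polterovich type argument. One first embeds the ball into $M\times M$ so that its centre is the unique minimum $(Z,Z)$ of the circle Hamiltonian $\nu$ and the ball lies in $\{\nu<\mathrm{smin}(\nu)\}$. Since $(Z,Z)\notin\bar\Delta$, the gradient flow $\varphi_t$ of $\nu$ pushes $\bar\Delta$ out of the ball for some large time $T$; crucially $\varphi_t$ is \emph{holomorphic} (Lemma~\ref{hol}), so $j$ remains compatible with $\varphi_T^*(\sigma\ominus R\sigma)$ and $\bar\Delta$ stays symplectic for every form in the isotopy. A Moser argument relative to $\bar\Delta$ then identifies $(M\times M\setminus\bar\Delta,\varphi_T^*(\sigma\ominus R\sigma))$ with $(M\times M\setminus\bar\Delta,\sigma\ominus R\sigma)$, delivering the desired ball in the complement. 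Your product-ball shortcut does not produce this; you need the flow-and-Moser step (or an equivalent device) to get the lower bound.
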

\noindent
We can actually deduce the value of the Hofer--Zehnder capacity for untwisted disk tangent bundles. The result fits well with the previous results \cite{BBZ23, proj23, BM24, FRV23} mentioned above.
\begin{Corollary}
If $l$ denotes the length of a shortest closed geodesic, then
$$
c_{\mathrm{HZ}}(D_{1}M,\dd\lambda)=l.
$$
\end{Corollary}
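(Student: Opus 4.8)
The plan is to specialize Theorem~\ref{capacompact} to $R=1$, where the magnetic term vanishes and $\omega_{(1-R)\sigma}=\dd\lambda$, and then to transport the answer from the curvature neighbourhood $U_1M$ to the round bundle $D_1M$ by a fibrewise rescaling. At $R=1$ the theorem reads $c_{\mathrm{HZ}}(U_2M,\dd\lambda)=\min\{1,1\}4\pi=4\pi$. The transport tool is the fibre scaling $s_t(x,v)=(x,tv)$, which satisfies $s_t^*\lambda=t\lambda$, hence $s_t^*\dd\lambda=t\,\dd\lambda$, and $s_t(U_\rho M)=U_{t\rho}M$; as $c_{\mathrm{HZ}}$ is homogeneous of degree one in the symplectic form, $c_{\mathrm{HZ}}(U_{t}M,\dd\lambda)=t\,c_{\mathrm{HZ}}(U_1M,\dd\lambda)$, and $t=2$ gives $c_{\mathrm{HZ}}(U_1M,\dd\lambda)=2\pi$. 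The normalization $\int_{[\mathrm{gen}]}\sigma=4\pi$ fixes the geometry: the generator of $H_2(M,\Z)$ is the minimal holomorphic $\cp^1$, whose symplectic area equals its Riemannian area, so normalizing it to $4\pi$ makes the maximal holomorphic sectional curvature equal to $1$. From this I would extract two facts about an irreducible compact Hermitian symmetric space: the globally shortest closed geodesic is a great circle of this unit-radius $\cp^1$, so $l=2\pi$; and, writing $A_v:=jR(jv,v)$, the operator norm obeys $\lVert A_v\rVert\le|v|^2$, whence $D_1M\subseteq U_1M$. Monotonicity of $c_{\mathrm{HZ}}$ then gives the upper bound $c_{\mathrm{HZ}}(D_1M,\dd\lambda)\le c_{\mathrm{HZ}}(U_1M,\dd\lambda)=2\pi=l$.

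For the matching lower bound I would use the standard Hofer--Zehnder construction with radial Hamiltonians $H=f(E)$, $E=\tfrac12 g(v,v)$. On each level set the flow is the geodesic flow reparametrized by $f'(E)$, so a closed geodesic of length $L$ at speed $s=\sqrt{2E}$ becomes a periodic orbit of period $L/(sf'(E))$; requiring every such period to exceed $1$ forces $f'(E)<l/\sqrt{2E}$, the binding constraint being set by the global minimiser $l$ at every radius. Integrating, $\max H<\int_0^{1/2}l/\sqrt{2E}\,\dd E=l$, and letting $f$ approach the bound shows $c_{\mathrm{HZ}}(D_1M,\dd\lambda)\ge l$. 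The two estimates give equality, and since $c_{\mathrm{HZ}}(D_1M,\dd\lambda)$ and $l$ both scale linearly under $g\mapsto\mu^2g$, the identity is independent of the chosen normalization.

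The main obstacle is the curvature input underpinning the upper bound, namely that under this normalization the maximal holomorphic sectional curvature dominates both the bisectional curvature, giving $\lVert A_v\rVert\le|v|^2$ and hence $D_1M\subseteq U_1M$, and the geodesic lengths, so that the shortest closed geodesic is the great circle of the minimal $\cp^1$ with $l=2\pi$. Both statements say that the highest-root direction is extremal, which I would read off from the explicit root-space (or Biquard--Gauduchon) description of the curvature tensor $R$. A minor secondary point is to confirm that the period constraint in the lower-bound construction is governed by the same global minimiser $l$ at each energy level, which holds because the family of closed geodesics is independent of the speed.
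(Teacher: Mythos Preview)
Your argument is correct and follows the same route as the paper: the upper bound comes from the inclusion $D_1M\subset U_1M$ together with the value $c_{\mathrm{HZ}}(U_1M,\dd\lambda)=2\pi=l$ deduced from Theorem~\ref{capacompact} at $R=1$ and a fibre rescaling, and the lower bound comes from a radial Hamiltonian built from the geodesic flow. Your write-up is in fact more explicit than the paper's on the scaling step $U_2M\to U_1M$ and on why the normalization forces $l=2\pi$; the curvature inputs you flag as the ``main obstacle'' (namely $\lVert jR_{jv,v}\rVert\le|v|^2$ and that the shortest closed geodesic is the equator of a polysphere factor) are exactly what the paper uses without further comment.
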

\begin{proof}
    The upper bound follows directly as $D_1M\subset U_1M$ and $c_{HZ}(U_1M,\dd \lambda)=2\pi=l$. The fact that $l=2\pi$ comes from the normalization of $\sigma$. For the lower bound consider the Hamiltonian $H:TM\setminus{0_{TM}}\to\R;\ (x,v)\mapsto l\vert v\vert_x$. The induced Hamiltonian flow is the geodesic flow parametrized so that all orbits $(\gamma(t),\dot\gamma(t))\in TM$ satisfy $\vert \dot\gamma(t)\vert=l$. The fastest periodic orbit has period one as it corresponds to the shortest closed geodesic with speed $l$. We can now modify, $H$ similar to what is done in Lemma \ref{lem9}, to extend $H$ smoothly to the zero-section and make it admissible without changing its oscillation much. Thus $l$ is also a lower bound for $c_{\mathrm{HZ}}(D_{1}M,\dd\lambda)$.
\end{proof}
\noindent
Finally, we also obtain some explicit bounds in the negative curvature case.
\begin{thm}\label{capanoncompact}
Let $(M,g,\sigma)$ be isometrically covered by an irreducible Hermitian symmetric space of non-compact type with rank $r$, then
$$
2\pi\left(s-\sqrt{s^2-1/r}\right)\leq c_{HZ}(D_1 M,\omega_{s\sigma})\leq c_{HZ}^0(D_1 M,\omega_{s\sigma})\leq 2\pi r\left (s-\sqrt{s^2- 1}\right )
$$
for any constant $s>1$.
\end{thm}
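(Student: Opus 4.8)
The chain contains one formal inequality and two genuine estimates. The middle inequality $c_{HZ}\le c_{HZ}^0$ is immediate: requiring only the \emph{contractible} non-constant periodic orbits to have period larger than one enlarges the class of admissible Hamiltonians, hence raises the supremum. Since $(M,g,\sigma)$ is merely covered by $\tilde M\cong G/K$, I would run the whole argument $G$-equivariantly on $\tilde M$ and descend; the restriction to contractible orbits in $c_{HZ}^0$ is exactly what makes the upper bound insensitive to the short non-contractible closed (magnetic) geodesics that the lattice $\Gamma$ may create. The standing hypothesis $s>1$ places us where the earlier structural results apply: the fibre rescaling $\phi_s(x,v)=(x,sv)$ satisfies $\phi_s^*\omega_{s\sigma}=s\,\omega_\sigma$ and carries $D_{1/s}\tilde M$ onto $D_1\tilde M$, and since $1/s<1$ we remain inside $U_1\tilde M$, where Theorem~\ref{twisted to hyperkähler} and Theorem~\ref{hyperkähler to constant} are available. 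Composing $\phi_s$ with these symplectomorphisms and using $c(N,s\omega)=s\,c(N,\omega)$, every capacity of $(D_1\tilde M,\omega_{s\sigma})$ is identified with a capacity of an explicit region $\Omega\subset(T\tilde M,\dd\tau/2+\pi^*\sigma)$, whose fibres carry the fixed form $(\dd\tau/2+\pi^*\sigma)|_{\VV}=\sigma$.

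For the lower bound I would exhibit a concrete admissible Hamiltonian, autonomous of the form $H=f(E)$ with $E(x,v)=\tfrac12 g_x(v,v)$. Its nonconstant periodic orbits are precisely the \emph{closed} magnetic geodesics, reparametrised with period $T_{\mathrm{mag}}(E,\cdot)/f'(E)$. Using the Biquard--Gauduchon formulas, the relevant frequencies are the square roots $\sqrt{s^2-\mu}$ of $s^2$ minus the eigenvalues of the symmetric operator $A_v:=jR(jv,v)$, so that a planar closed magnetic geodesic has period $2\pi/\sqrt{s^2-\mu}$; for an irreducible space of rank $r$ the holomorphic sectional curvature is $1/r$-pinched, which bounds the eigenvalues of $A_v$ and hence the admissible slope $f'$. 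Optimising $f$ against the resulting constraint $f'(E)<T_{\mathrm{mag}}$, and integrating the elementary $\int \dd E/\sqrt{s^2-\cdots}$, yields an admissible Hamiltonian realising the value $2\pi(s-\sqrt{s^2-1/r})$ in the limit, which establishes the lower bound for $c_{HZ}$; equivalently one inscribes a standard ball in $\Omega$ whose radius is governed by the sharpest curvature direction.

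For the upper bound I would obstruct: the form $\dd\tau/2+\pi^*\sigma$ exhibits $\Omega$ as a symplectic (vector) bundle, and for a compatible almost complex structure the fibres are pseudoholomorphic planes, a foliation of exactly the type predicted by Albers--Frauenfelder--Oancea \cite{AFO17}. Feeding this foliation into the capacity-from-foliation mechanism used for the analogous compact statement (Theorem~\ref{capacompact}) and in \cite{proj23,BM24}, any admissible Hamiltonian whose oscillation exceeds the minimal leaf-area must possess a fast contractible orbit; the circumscribing comparison governed by the flattest eigenvalue $\rho^2/r$ of $A_v$ bounds this area by $2\pi r(s-\sqrt{s^2-1})$, giving $c_{HZ}^0\le 2\pi r(s-\sqrt{s^2-1})$.

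The principal difficulty, and the reason the two bounds coincide only at $r=1$, is that for $r>1$ the magnetic geodesic flow does not decouple along holomorphic planes -- these are no longer totally geodesic -- so the exact period spectrum on $D_1\tilde M$ is inaccessible and one is forced to sandwich it between the extremal holomorphic sectional curvatures $1/r$ and $1$. A secondary obstacle is that, in the non-compact setting, the boundary of $D_1\tilde M$ fails to be convex and there are no compact holomorphic curves, so the foliation argument controls only contractible orbits; this is precisely why the upper estimate must be phrased for $c_{HZ}^0$.
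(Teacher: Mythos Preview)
Your outline has the right flavour but misses the central mechanism and contains a genuine gap on both sides.

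The paper does \emph{not} use a Hamiltonian of the form $f(E)$. The key object is the spectral Hamiltonian
\[
H(x,v)=g_x\bigl(h(jR_{jv,v})v,v\bigr),\qquad h(y)=\tfrac{2\pi}{y}\bigl(\sqrt{s^2+y}-s\bigr),
\]
which on a polydisc reads $H=\sum_i 2\pi\bigl(s-\sqrt{s^2-|v_i|^2}\bigr)$ and therefore generates a genuine \emph{semi-free circle action} on all of $U_1M$; moreover one checks $2\pi E\circ\Psi=H$, so under the composite symplectomorphism $\Psi$ of Theorems~\ref{twisted to hyperkähler}--\ref{hyperkähler to constant} the $H$-sublevel $\{H<\pi\rho^2\}$ becomes the round disc bundle $D_\rho M$ in $(TM,\dd\tau/2+s\pi^*\sigma)$. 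Compactifying $D_\rho M$ by a Lerman cut produces a closed $\cp^n$-fibration over the aspherical base $M$, and Hofer--Viterbo then gives $c_{HZ}^0(\{H<\pi\rho^2\},\omega_{s\sigma})=\pi\rho^2$ exactly. Both inequalities of the theorem are then obtained by the elementary \emph{sandwiching}
\[
\{H\le 2\pi(s-\sqrt{s^2-1/r})\}\ \subset\ D_1M\ \subset\ \{H\le 2\pi r(s-\sqrt{s^2-1})\},
\]
which one reads off from $H=\sum_i 2\pi(s-\sqrt{s^2-|v_i|^2})$ and $\sum|v_i|^2=|v|^2$.

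Your $H=f(E)$ proposal for the lower bound does not go through. On the compact quotient $M=\Gamma\backslash\tilde M$ the magnetic flow acquires non-contractible closed orbits from the lattice, whose periods you cannot control; hence $f(E)$ is not admissible for $c_{HZ}$. On $\tilde M$ itself $f(E)$ fails the compact-support requirement. Even ignoring this, an honest period analysis on the polydisc shows that the \emph{shortest} closed magnetic geodesic at energy $E$ is the diagonal one with period $2\pi/\sqrt{s^2-2E/r}$, and the resulting integral $\int_0^{1/2}2\pi/\sqrt{s^2-2E/r}\,\dd E=2\pi r(s-\sqrt{s^2-1/r})$ does \emph{not} reproduce the stated value $2\pi(s-\sqrt{s^2-1/r})$; so your sketch does not actually land on the claimed bound. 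The paper avoids all of this because every $X_H$-orbit has period exactly one, on $\tilde M$ and on the quotient alike.

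For the upper bound your ``foliation mechanism'' is too vague to be a proof: there are no compact holomorphic curves here, and the Gromov--Witten style arguments used for Theorem~\ref{capacompact} are unavailable. The paper's route---Lerman cut of the \emph{round} disc bundle in the constant-fibre symplectic form and then Hofer--Viterbo---is what makes the obstruction quantitative, and it only applies to the $H$-sublevel sets, not to $D_1M$ directly; the inclusion $D_1M\subset\{H\le 2\pi r(s-\sqrt{s^2-1})\}$ is the missing step you need.
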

\noindent
Observe that in the rank one case (r=1) we can compute the exact value and recover the result in \cite{Bim23}.\\
\ \\
\noindent
\textbf{Outline.} In the second section \ref{prelim} we collect techniques and results about general geometry of tangent bundles, magnetic systems, coadjoint orbits and Hermitian symmetric spaces from the literature. The most important aspect for this paper is presenting any Hermitian symmetric spaces as coadjoint orbit (Cor.\ \ref{HSS as coadjoint orbit}). This yields explicit formulas for the moment maps of the different symplectic forms we study and those will be essential to show that the diffeomorphism we construct are actually symplectic. In section \ref{symplectomorphisms} we construct the symplectomorphisms of Theorem \ref{twisted to hyperkähler},\ref{hyperkähler to constant} and \ref{diagonal}. Finally, in section \ref{capacities} we compute the Gromov width and the Hofer--Zehnder of the $U$-neighborhoods, as stated in Theorem \ref{capacompact}, strongly using the diagonal embedding. We further derive the bounds on the Hofer--Zehnder capacity in Theorem \ref{capanoncompact} of disc tangent bundles and $U$-neighborhoods using the symplectomorphisms of Thm.\ \ref{twisted to hyperkähler} and Thm.\ \ref{hyperkähler to constant}.\\
\ \\
\noindent
\textbf{Acknowledgment.} Most of the results presented in this paper were achieved during my PhD studies and would not have been possible without the invaluable support of my advisor, Gabriele Benedetti. I also wish to extend my gratitude to Beatrice Pozzetti for her enlightening lecture on symmetric spaces, which facilitated my entry into this field. Additionally, I am grateful to Brayan Ferreira and Alejandro Vicente for generously sharing their expertise and introducing me to an old trick by McDuff--Polterovich, which proved useful in computing not only Hofer--Zehnder capacities but also Gromov widths. The author further acknowledges funding by the Deutsche Forschungsgemeinschaft (DFG, German Research Foundation) – 281869850 (RTG 2229), 390900948 (EXC-2181/1) and 281071066 (TRR 191).

\section{Preliminaries}\label{prelim}
\subsection{Geometry of tangent bundles}
In this section we recall the elements of \cite{GSK02} that are relevant for our work. We modify some of the calculations, as all calculations in \cite{GSK02} were done for the Levi-Civita, but work analogously for other connections. For general connections a reference is \cite{D62}. Let $(M,\sigma)$ be a symplectic manifold and denote by $\pi: TM\to M$ the tangent bundle. The kernel of the differential $\dd\pi: TTM\to TM$ defines a distribution in $\mathcal{V}\subset TTM$, called \emph{vertical distribution}. Choose a compatible almost complex structure $j$ on $M$ and denote the associated metric by $g$. The almost complex structure is in general not integrable still it turns the tangent bundle into a complex vector bundle. If we pick as connection the following modification 
$$
\Tilde{\nabla}_XY:=\nabla_XY-\frac{1}{2}j(\nabla_X j)Y
$$
of the Levi-Civita connection $\nabla$, we turn the tangent bundle into a Hermitian vector bundle. Indeed $\Tilde{\nabla}g=0$ and $\Tilde{\nabla}j=0$, but the new connection $\Tilde{\nabla}$ is not torsion free in general. Its torsion is precisely the Nijenhuis-tensor of $j$.
This connection determines a complement $\mathcal{H}\subset TTM$ called \emph{horizontal distribution}, i.e.\
$$
TTM=\mathcal{H}\oplus\mathcal{V}.
$$
Both $\mathcal{V}_{(x,v)}$ and $\mathcal{H}_{(x,v)}$ are as vector spaces isomorphic to $T_xM$. In particular we can lift vectors in $T_xM$ and also vector fields on $TM$ horizontally and vertically.
\begin{Definition}[Horizontal \& vertical lift]\label{lifts}
Let $(x,v)\in TM$. Any tangent vector $w\in T_xM$ can be lifted horizontally (vertically) to a vector in $\mathcal{H}_{(x,v)}$ ($\mathcal{V}_{(x,v)}$). Explicitly the horizontal lift is defined by
$$
w\mapsto w^\mathcal{H}:=\diff (\gamma(t),P_\gamma v)
$$
where $\gamma:(-\varepsilon,\varepsilon)\subset\R\to M$ is a smooth curve satisfying $\gamma(0)=x$ and $\dot\gamma(0)=w$ and $P_\gamma$ denotes parallel transport along $\gamma$. The vertical lift is defined by
$$
w\mapsto w^\mathcal{V}:=\diff (x, v+tw).
$$
\end{Definition}
\noindent
The following proposition gives the commutator relations for vertical and horizontal lifts of vector fields.
\begin{Proposition}[\cite{D62}, Lemma 2]\label{commutator lifts}\ \\
Let $X$ and $Y$ be vector fields on $M$, then their lifts satisfy the following commutator relations:
\begin{itemize} 
    \item[(i)] $[X^\mathcal{V},Y^\mathcal{V}]=0,$
    \item[(ii)] $[X^\mathcal{H},Y^\mathcal{V}]=(\tilde\nabla_XY)^\mathcal{V},$
    \item[(iii)] $[X^\mathcal{H},Y^\mathcal{H}]=[X,Y]^\mathcal{H}-(R(X,Y)v)^\mathcal{V}.$
\end{itemize}
Here, $R$ denotes the Riemannian curvature tensor of the Hermitian connection $\tilde \nabla$.
\end{Proposition}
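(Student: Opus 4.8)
The plan is to verify the three identities in local coordinates and then promote the coordinate computation to arbitrary vector fields. Fix coordinates $(x^i)$ on an open set of $M$ and let $(x^i,v^i)$ be the induced coordinates on $TM$, writing $v=v^i\partial_{x^i}$. Denote by $\Gamma^k_{ij}$ the Christoffel symbols of $\tilde\nabla$, so that $\tilde\nabla_{\partial_{x^i}}\partial_{x^j}=\Gamma^k_{ij}\partial_{x^k}$. First I would record the coordinate form of the two lifts: differentiating the defining curves in Definition \ref{lifts} gives $X^\mathcal{V}=X^i\,\partial_{v^i}$ and $X^\mathcal{H}=X^i\partial_{x^i}-X^i\Gamma^k_{ij}v^j\,\partial_{v^k}$ for $X=X^i\partial_{x^i}$; the vertical formula is read off directly, and the horizontal one follows because a parallel field $w$ along $\gamma$ satisfies $\dot w^k+\Gamma^k_{ij}\dot\gamma^i w^j=0$. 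These expressions make the vertical/horizontal splitting $TTM=\mathcal{H}\oplus\mathcal{V}$ explicit and reduce everything to bracket bookkeeping.

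Both (i) and (ii) then fall out of a short computation. For (i), the fields $X^\mathcal{V}=X^i\partial_{v^i}$ and $Y^\mathcal{V}=Y^j\partial_{v^j}$ have coefficients depending only on the base coordinates, which the operators $\partial_{v^i}$ annihilate, so $[X^\mathcal{V},Y^\mathcal{V}]=0$. For (ii) it is cleanest to note first that $X^\mathcal{H}$ is $\pi$-related to $X$ while $Y^\mathcal{V}$ is $\pi$-related to $0$, so $[X^\mathcal{H},Y^\mathcal{V}]$ is $\pi$-related to $[X,0]=0$ and is therefore vertical; its vertical part is then identified in coordinates. A direct expansion gives $[X^\mathcal{H},Y^\mathcal{V}]=X^i(\partial_{x^i}Y^a+\Gamma^a_{ij}Y^j)\,\partial_{v^a}=(\tilde\nabla_XY)^\mathcal{V}$, where the first summand comes from the base-derivative part of $X^\mathcal{H}$ acting on the coefficients of $Y^\mathcal{V}$ and the second from $\partial_{v^j}$ hitting the $v^l$ factor of $X^\mathcal{H}$.

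The essential case is (iii). Again using that $X^\mathcal{H},Y^\mathcal{H}$ are $\pi$-related to $X,Y$, the bracket $[X^\mathcal{H},Y^\mathcal{H}]$ is $\pi$-related to $[X,Y]$, so its horizontal component is forced to be $[X,Y]^\mathcal{H}$ and only a vertical correction remains to be computed. For coordinate fields (where $[X,Y]=0$) I would expand the bracket of $X^\mathcal{H}=\partial_{x^i}-\Gamma^a_{il}v^l\partial_{v^a}$ and $Y^\mathcal{H}=\partial_{x^j}-\Gamma^b_{jn}v^n\partial_{v^b}$; the four contributions assemble into $\big(-\partial_{x^i}\Gamma^c_{jl}+\partial_{x^j}\Gamma^c_{il}+\Gamma^a_{il}\Gamma^c_{ja}-\Gamma^a_{jl}\Gamma^c_{ia}\big)v^l\,\partial_{v^c}$. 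The main obstacle is bookkeeping: I expect the tedious part to be tracking the quadratic Christoffel terms arising when $\partial_{v}$ hits the $v^n$ factors, and then matching signs against the curvature tensor $R(\partial_{x^i},\partial_{x^j})\partial_{x^l}=\big(\partial_{x^i}\Gamma^c_{jl}-\partial_{x^j}\Gamma^c_{il}+\Gamma^a_{jl}\Gamma^c_{ia}-\Gamma^a_{il}\Gamma^c_{ja}\big)\partial_{x^c}$ of $\tilde\nabla$. Doing so identifies the vertical part as $-(R(X,Y)v)^\mathcal{V}$. I would emphasise that this step never uses symmetry of the connection, so the identity remains valid for the Hermitian connection $\tilde\nabla$ despite its nonvanishing torsion; torsion would only enter identities involving $\tilde\nabla_XY-\tilde\nabla_YX-[X,Y]$, which do not occur here. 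Finally I would extend (ii) and (iii) from coordinate fields to arbitrary $X,Y$ by $C^\infty(M)$-bilinearity and the Leibniz rule, verifying that the correction terms are exactly those building up the full covariant-derivative and curvature expressions.
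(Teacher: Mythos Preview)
Your argument is correct and is the standard local-coordinate verification of these identities; the key observation that nothing in the computation uses symmetry of $\Gamma^k_{ij}$ is exactly why the result carries over from the Levi--Civita to the Hermitian connection. Note, however, that the paper does not supply its own proof of this proposition: it is quoted directly from \cite{D62}, so there is no in-paper argument to compare against. Your computation is essentially Dombrowski's original one.
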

\noindent
We can define four vector fields 
\begin{equation}\label{XHYV}
    X_{(x,v)}:=v^\mathcal{H}, \
    H_{(x,v)}:=(jv)^\mathcal{H}, \
    Y_{(x,v)}:=v^\mathcal{V}, \
    V_{(x,v)}:=(jv)^\mathcal{V}.
\end{equation}
\noindent
Observe that these are not lifts of vector fields on the base, but locally can be written as linear combinations of those. For example
$$
X_{(x,v)}=v^i\partial_i^{\mathcal{H}},
$$
for some local frame $(\partial_1,\ldots,\partial_{2n})$ of $TM$.
Using Proposition \ref{commutator lifts} one can compute their commutators. 
\begin{Proposition}\label{commutator XHYV} The vector fields $X,H,Y,V$ satisfy
$$
[V,X]=H,\ \ \ \ [V,H]=-X,\ \ \ \ [V,Y]=0,\ \ \ \ [Y,X]=X,\ \ \ \ [Y,H]=H,
$$
and 
$$
[X,H]_{(x,v)}=(R(v,jv)v)^\mathcal{V}.
$$
\end{Proposition}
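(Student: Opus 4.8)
The plan is to reduce everything to Proposition~\ref{commutator lifts} by writing each of the four fields as a fibre-coordinate combination of lifts of a coordinate frame. Fix coordinates $x^1,\dots,x^{2n}$ on $M$, write a point of $TM$ as $(x,v)$ with $v=v^i\partial_i$, and let $j^l_m(x)$ and $\Gamma^l_{ip}$ denote the components of $j$ and the Christoffel symbols of $\tilde\nabla$. Then $X=v^i\partial_i^{\mathcal{H}}$, $Y=v^i\partial_i^{\mathcal{V}}$, $V=(jv)^i\partial_i^{\mathcal{V}}$ and $H=(jv)^i\partial_i^{\mathcal{H}}$, where $(jv)^l=j^l_mv^m$. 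Since $\partial_i^{\mathcal{V}}=\partial/\partial v^i$ and (by Definition~\ref{lifts}) $\partial_i^{\mathcal{H}}=\partial/\partial x^i-\Gamma^q_{ip}v^p\,\partial/\partial v^q$, the only coefficient-derivatives I will need are $\partial_i^{\mathcal{V}}v^j=\delta_i^j$, $\partial_i^{\mathcal{V}}(jv)^l=j^l_i$, $\partial_i^{\mathcal{H}}v^j=-\Gamma^j_{ik}v^k$, and — rewriting $\partial_{x^i}j^l_m$ by means of $\tilde\nabla j=0$ — the identity $\partial_i^{\mathcal{H}}(jv)^l=-\Gamma^l_{ik}(jv)^k$; that is, $\partial^{\mathcal H}$ differentiates $jv$ by the same Christoffel rule as $v$, precisely because $j$ is $\tilde\nabla$-parallel. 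With these in hand each bracket is expanded by the Leibniz rule $[fU,gW]=fg[U,W]+f(Ug)W-g(Wf)U$ and the lift-brackets (i)--(iii) of Proposition~\ref{commutator lifts}.

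For the five brackets not involving curvature the $[U,W]$-term contributes either $0$ (two verticals, by (i)) or a single Christoffel term (one vertical and one horizontal, by (ii)), and in each case this is cancelled by one of the two Leibniz derivative-terms; what survives is the other derivative-term, evaluated through $\partial^{\mathcal V}v=\mathrm{id}$ or $\partial^{\mathcal V}(jv)=j$. For example $[V,X]$ retains $(jv)^i(\partial_i^{\mathcal V}v^a)\partial_a^{\mathcal H}=(jv)^a\partial_a^{\mathcal H}=H$, while $[V,H]$ retains $(jv)^i(\partial_i^{\mathcal V}(jv)^l)\partial_l^{\mathcal H}=(j\,jv)^l\partial_l^{\mathcal H}=-X$ since $j^2=-1$. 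The same bookkeeping yields $[V,Y]=0$, $[Y,X]=X$ and $[Y,H]=H$, where in the last two the Christoffel terms cancel simply because the coefficient $v^iv^a$ is symmetric. Conceptually, these five identities merely record that $Y$ and $V$ generate the fibrewise scaling $v\mapsto e^tv$ and the fibrewise rotation $v\mapsto e^{jt}v$ (the latter well defined because $\tilde\nabla j=0$ makes $j$ commute with parallel transport), under which the horizontal and vertical lifts transform tautologically.

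The only genuinely non-algebraic bracket is $[X,H]$, where both fields are horizontal. Here the $[U,W]$-term is a horizontal--horizontal bracket, so (iii) produces the curvature contribution $v^i(jv)^l[\partial_i^{\mathcal H},\partial_l^{\mathcal H}]=-(R(v,jv)v)^{\mathcal V}$, which is the asserted term up to the curvature-sign convention fixed in Proposition~\ref{commutator lifts}. The two remaining Leibniz terms $v^i(\partial_i^{\mathcal H}(jv)^l)\partial_l^{\mathcal H}-(jv)^l(\partial_l^{\mathcal H}v^i)\partial_i^{\mathcal H}$, after inserting the coefficient-derivatives above, combine to $(\Gamma^l_{ip}-\Gamma^l_{pi})(jv)^iv^p\,\partial_l^{\mathcal H}=(\tilde T(jv,v))^{\mathcal H}$, the horizontal lift of the torsion of $\tilde\nabla$ evaluated on $(jv,v)$. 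Since this torsion is the Nijenhuis tensor $N$ and $N(jv,v)=-j\,N(v,v)=0$, the horizontal remainder vanishes, leaving $[X,H]=(R(v,jv)v)^{\mathcal V}$.

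The main obstacle is exactly this last point. A priori $[X,H]$ carries a horizontal component coming from the non-symmetry of $\tilde\nabla$, and the stated identity would fail for a general connection with torsion; what rescues it is that $\tilde\nabla$ was chosen so that its torsion equals the Nijenhuis tensor, together with the algebraic identity $N(jv,v)=0$. For the Kähler spaces that are the focus of the paper one has $\tilde\nabla=\nabla$ torsion-free, so this vanishing is automatic. Everything else is routine index bookkeeping, with $\tilde\nabla j=0$ doing the work of cancelling the Christoffel terms throughout.
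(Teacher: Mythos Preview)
Your proof is correct and follows exactly the route the paper indicates: the paper does not spell out a proof but simply says ``Using Proposition~\ref{commutator lifts} one can compute their commutators'' after noting that $X,H,Y,V$ are locally fibre-coefficient combinations of lifts (e.g.\ $X=v^i\partial_i^{\mathcal H}$), which is precisely your set-up. Your explicit identification of the horizontal remainder in $[X,H]$ as the torsion term $(\tilde T(jv,v))^{\mathcal H}$ and its vanishing via $N(jv,v)=-jN(v,v)=0$ is the one nontrivial point the paper leaves implicit, and you handle it cleanly; the sign in the curvature term is indeed a convention issue internal to the paper.
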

\noindent
These vector fields are non-zero and linearly independent outside the zero-section. To obtain a dual description on the cotangent bundle in terms of dual 1-forms we need to choose a metric on $TM$. For this we pick the Sasaki-metric $\hat g$. This is the metric that takes the form $g\oplus g$ with respect to the splitting $\mathcal{H}\oplus\mathcal{V}$ in horizontal and vertical. It turns out that the one-forms dual to $X,Y,H,V$ are no strangers.
\begin{Lemma}\label{dual 1-forms}
The 1-forms that are via the Sasaki-metric dual to $X, H, Y, V$ respectively are 
\begin{itemize}
    \item $\lambda$ the (metric pullback) of the canonical 1-form on $T^*M$,
    \item $j^*\lambda=$ the pullback of $\lambda$ via $j:TM\to TM$,
    \item $\dd E$ where $E(x,v)=\frac{1}{2}g_x(v,v)$ is the kinetic energy,
    \item $\dd^c E=\dd E\circ I$ where $I=j\ominus j$ is an almost complex structure on TM.
\end{itemize}
\end{Lemma}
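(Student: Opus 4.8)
The plan is to verify all four dualities by the same elementary scheme: both the candidate $1$-forms and the metric duals $\hat g(\,\cdot\,,\,\cdot\,)$ are tensorial, and since $TTM=\mathcal{H}\oplus\mathcal{V}$ pointwise, it suffices to evaluate both sides on a horizontal lift $w^{\mathcal{H}}$ and on a vertical lift $w^{\mathcal{V}}$ for an arbitrary $w\in T_xM$. The two ingredients I would set up first are: (a) the explicit form of the pullback canonical $1$-form, namely $\lambda_{(x,v)}(\xi)=g_x\big(v,\dd\pi(\xi)\big)$, which follows directly from $\lambda_{(x,p)}=p\circ\dd\pi$ together with the metric identification $TM\cong T^*M$; and (b) the defining properties of the lifts from Definition \ref{lifts}, namely $\dd\pi(w^{\mathcal{H}})=w$ and $\dd\pi(w^{\mathcal{V}})=0$, together with the orthogonality $\hat g(w_1^{\mathcal{H}},w_2^{\mathcal{V}})=0$ and the normalization $\hat g(w_1^{\mathcal{H}},w_2^{\mathcal{H}})=\hat g(w_1^{\mathcal{V}},w_2^{\mathcal{V}})=g(w_1,w_2)$ coming from the Sasaki metric.

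The first two cases are then immediate. From (a), $\lambda_{(x,v)}$ evaluates to $g_x(v,w)$ on $w^{\mathcal{H}}$ and to $0$ on $w^{\mathcal{V}}$, which is exactly $\hat g(v^{\mathcal{H}},\,\cdot\,)=\hat g(X,\,\cdot\,)$ by the Sasaki normalization; hence $X$ is dual to $\lambda$. For $H=(jv)^{\mathcal{H}}$ I would use that $j\colon TM\to TM$ covers the identity on $M$, so $\pi\circ j=\pi$ and therefore $\dd\pi\circ\dd j=\dd\pi$; this gives $(j^*\lambda)_{(x,v)}(\xi)=g_x\big(jv,\dd\pi(\xi)\big)$, which matches $\hat g(H,\,\cdot\,)$ on horizontal and vertical lifts respectively.

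For the remaining two I would compute $\dd E$ directly on the lifts. On a vertical lift, $\dd E(w^{\mathcal{V}})=\diff\tfrac12 g_x(v+tw,v+tw)=g_x(v,w)$, while $\hat g(Y,w^{\mathcal{V}})=g_x(v,w)$ and $\hat g(Y,w^{\mathcal{H}})=0$. On a horizontal lift I would use the curve $(\gamma(t),P_\gamma v)$ from Definition \ref{lifts}: here the crucial point is that the horizontal distribution is the one determined by the Hermitian connection $\tilde\nabla$, which satisfies $\tilde\nabla g=0$, so parallel transport $P_\gamma$ is isometric and $g_{\gamma(t)}(P_\gamma v,P_\gamma v)$ is constant in $t$; thus $\dd E(w^{\mathcal{H}})=0$. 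This identifies $\dd E$ with $\hat g(Y,\,\cdot\,)$. Finally, writing $\dd^cE=\dd E\circ I$ with $I$ acting as $+j$ on $\mathcal{H}$ and $-j$ on $\mathcal{V}$, I get $\dd^cE(w^{\mathcal{H}})=\dd E\big((jw)^{\mathcal{H}}\big)=0$ and $\dd^cE(w^{\mathcal{V}})=-\dd E\big((jw)^{\mathcal{V}}\big)=-g_x(v,jw)=g_x(jv,w)$, where the last equality is the skew-symmetry $g(jv,w)=-g(v,jw)$ of $j$ with respect to $g$. Comparing with $\hat g(V,w^{\mathcal{V}})=g_x(jv,w)$ and $\hat g(V,w^{\mathcal{H}})=0$ finishes the proof.

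The computations are essentially routine linear algebra fibrewise; the only genuinely structural point, and the step I would flag as the crux, is the vanishing $\dd E(w^{\mathcal{H}})=0$, which depends entirely on the horizontal distribution being metric, i.e.\ on $\tilde\nabla g=0$. Had one used a non-metric connection this identity would fail and $\dd E$ would no longer be the horizontal dual of $Y$. The only other thing to watch carefully is the sign convention in $I=j\ominus j$ together with the $g$-compatibility of $j$, both of which enter the final case.
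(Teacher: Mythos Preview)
Your proof is correct and follows essentially the same approach as the paper: both verify the four dualities by evaluating on horizontal and vertical lifts, both use $\dd\pi\circ\dd j=\dd\pi$ for the $H$ case, and both single out $\tilde\nabla g=0$ as the reason $\dd E$ vanishes horizontally. The only cosmetic difference is that for $V$ the paper writes the computation in terms of the vertical projection $\mathcal{P}_{\mathcal V}$ rather than evaluating on lifts, but the content is identical.
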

\begin{proof}
Clearly, 
$$
\iota_X\hat g(\cdot)=\hat g(X,\cdot)=g(v,\dd\pi\cdot)=\lambda(\cdot).
$$
Similarly,
$$
\iota_H\hat g(\cdot)=\hat g(H,\cdot)=g(jv,\dd\pi\cdot)=g(jv,\dd\pi\dd j\cdot)=\lambda(\dd j\cdot)=j^*\lambda(\cdot),
$$
where we used that $\dd\pi\dd j=\dd\pi$ as $j$ is a bundle map lifting the identity. The third one is maybe the most tricky. First observe that $\dd E$ vanishes on $\mathcal{H}$ as for every $w\in T_xM$ we find
$$
\dd E(w^{\mathcal{H}})_{(x,v)}=\diff E(\gamma(t), P_\gamma v(t))=\frac{1}{2} \diff g_{\gamma(t)}(P_\gamma v(t),P_\gamma v(t))=0
$$
because $\Tilde{\nabla}g=0$. Further 
\begin{align*}
\dd E(w^{\mathcal{V}})=\diff E(x, v+tw)=\frac{1}{2}\diff g_x(v+tw,v+tw)=g_x(v,w)
\end{align*}
and we conclude 
$
\hat{g}(Y,\cdot)=\dd E. 
$
Finally
$$
\iota_V\hat g(\cdot)=\hat{g}(V,\cdot)=g(jv,\mathcal{P}_\VV\cdot)=-g(v,j\mathcal{P}_\VV\cdot)=g(v,\mathcal{P}_\VV I\cdot)=\dd E\circ I(\cdot),
$$
where $\mathcal{P}_\VV$ denotes projection to the vertical subspace.
\end{proof}
\noindent
 We shall call the dual of $V$ the \emph{angular form}, as it is dual to the vector field that generates rotation $e^{jt}:T_xM\to T_xM$ in the fibers, and denote it by $\tau:=\iota_V\hat g=\dd^c E$. Using the commutator relations in Proposition \ref{commutator lifts} we can compute the exterior derivatives of $\lambda$ and $\tau$.

\begin{Proposition}\label{derivatives 1-forms} We write the 2-forms in matrix representation with respect to the splitting of $TTM=\HH\oplus \VV$. So the upper left entry eats two horizontal vectors, the upper right a horizontal and a vertical and so on. In this representation the exterior derivatives of $\lambda$ and $\tau$ are given as
\begin{align*}
    \dd\lambda&=\left(\begin{array}{rr} g(v,T(\cdot,\cdot)) & -g \\
      g & 0 
      \end{array}\right),\ \
    \dd\tau=\left(\begin{array}{rr} g(jv, R(\cdot,\cdot)v) & 0 \\
      0 & 2\sigma
      \end{array}\right).
\end{align*}
Here, $T$ and $R$ denote respectively torsion and curvature of $\Tilde{\nabla}$.
\end{Proposition}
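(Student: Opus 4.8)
The plan is to compute both two-forms by evaluating them on the four possible pairings of horizontal and vertical lifts of vector fields on the base, via the invariant formula
$$
\dd\alpha(U,W)=U(\alpha(W))-W(\alpha(U))-\alpha([U,W]).
$$
Since this expression is tensorial in $U$ and $W$, and since the horizontal (resp.\ vertical) lifts of vector fields on $M$ span $\mathcal{H}_{(x,v)}$ (resp.\ $\mathcal{V}_{(x,v)}$) at every point, it suffices to take $U,W$ among $A^{\mathcal{H}},B^{\mathcal{H}},A^{\mathcal{V}},B^{\mathcal{V}}$ for vector fields $A,B$ on $M$. From Lemma \ref{dual 1-forms} I read off the only values needed: $\lambda(A^{\mathcal{H}})_{(x,v)}=g_x(v,A)$ and $\lambda(A^{\mathcal{V}})=0$, while $\tau(A^{\mathcal{H}})=0$ and $\tau(A^{\mathcal{V}})_{(x,v)}=g_x(jv,A)$.

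The two computational inputs are the commutator relations of Proposition \ref{commutator lifts} and the action of the lifts on fibrewise functions. For the latter I use that $A^{\mathcal{H}}$ differentiates along parallel transport for $\tilde\nabla$, so that $A^{\mathcal{H}}(g(v,B))=g(v,\tilde\nabla_A B)$ and $A^{\mathcal{H}}(g(jv,B))=g(jv,\tilde\nabla_A B)$, both relying precisely on $\tilde\nabla g=0$ and $\tilde\nabla j=0$; whereas the vertical lift acts by differentiating $v\mapsto v+tA$, giving $A^{\mathcal{V}}(g(v,B))=g(A,B)$ and $A^{\mathcal{V}}(g(jv,B))=g(jA,B)$. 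With these in hand each entry is immediate. For $\dd\lambda$: on $(A^{\mathcal{H}},B^{\mathcal{H}})$ the first two terms give $g(v,\tilde\nabla_A B-\tilde\nabla_B A)$, and using $[A^{\mathcal{H}},B^{\mathcal{H}}]=[A,B]^{\mathcal{H}}-(R(A,B)v)^{\mathcal{V}}$ together with $\lambda|_{\mathcal{V}}=0$ the bracket contributes $-g(v,[A,B])$, leaving $g(v,T(A,B))$; on $(A^{\mathcal{H}},B^{\mathcal{V}})$ only the middle term survives and yields $-g(A,B)$; and the vertical–vertical block vanishes since $\lambda|_{\mathcal{V}}=0$ and $[A^{\mathcal{V}},B^{\mathcal{V}}]=0$.

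For $\dd\tau$ the horizontal–horizontal block is governed entirely by the bracket: as $\tau|_{\mathcal{H}}=0$, only $-\tau([A^{\mathcal{H}},B^{\mathcal{H}}])=\tau((R(A,B)v)^{\mathcal{V}})=g(jv,R(A,B)v)$ remains. On $(A^{\mathcal{H}},B^{\mathcal{V}})$ the derivative term $g(jv,\tilde\nabla_A B)$ is exactly cancelled by $\tau([A^{\mathcal{H}},B^{\mathcal{V}}])=\tau((\tilde\nabla_A B)^{\mathcal{V}})$, so that block is zero. The vertical–vertical block is where the factor two appears: $\dd\tau(A^{\mathcal{V}},B^{\mathcal{V}})=g(jA,B)-g(jB,A)=2\sigma(A,B)$, using $\sigma=g(j\cdot,\cdot)$ and the compatibility $g(jA,B)=-g(A,jB)$. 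I expect the only genuinely delicate points to be the bookkeeping of signs and the justification that $A^{\mathcal{H}}$ commutes past $g$ and $j$ — i.e.\ that the horizontal lift really is parallel transport for $\tilde\nabla$ — so that the torsion and curvature of $\tilde\nabla$, rather than of $\nabla$, are what surface; the factor $2\sigma$ then drops out cleanly from the antisymmetrization in the purely vertical directions.
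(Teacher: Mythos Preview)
Your proof is correct and essentially identical to the paper's: both evaluate $\dd\lambda$ and $\dd\tau$ on the four pairings of horizontal and vertical lifts via the invariant formula, using the commutator relations of Proposition~\ref{commutator lifts} together with the identities $A^{\mathcal{H}}(g(v,B))=g(v,\tilde\nabla_A B)$, $A^{\mathcal{V}}(g(v,B))=g(A,B)$ and their $j$-twisted analogues. The only cosmetic difference is that the paper computes the mixed blocks as $\dd\lambda(A^{\mathcal{V}},B^{\mathcal{H}})$ and $\dd\tau(A^{\mathcal{V}},B^{\mathcal{H}})$ rather than $(A^{\mathcal{H}},B^{\mathcal{V}})$, which of course just flips a sign.
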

\begin{proof}
We first prove two identities that will be useful. Let $A,B$ be any vector fields on $M$, then
\begin{align*}
    A^\mathcal{V}(g(v,B))(x)=\diff g_x(v+tA_x,B_x)=g_x(A_x,B_x).
\end{align*}
Further
$$
A^\mathcal{H}(g(v,B))(x)=\diff g_{x(t)}(P_x v(t),B_{x(t)})=g_x(v,(\tilde\nabla_A B)_x),
$$
where $x(t)\in M$ is an integral curve of $A$, i.e.\ $\dot x =A$ and $P_x v(t)$ denotes the parallel transport of $v\in T_{x(0)} M$ along $x(t)$ with respect to the Hermitian connection $\tilde\nabla$. In total we find
$$
A^\mathcal{V}(g(v,B))=g(A,B),\ \ \ A^\mathcal{H}(g(v,B))=g(v,\tilde\nabla_A B).
$$
Similarly also
$$
A^\mathcal{V}(g(jv,B))=g(jA,B),\ \ \ A^\mathcal{H}(g(jv,B))=g(jv,\tilde\nabla_A B)
$$
holds.
We can now compute $\dd\lambda$ using Proposition \ref{commutator lifts} and the formula for the differential of a 1-form
$$
\dd\lambda(\hat A,\hat B)=\hat A\lambda(\hat B))-\hat B(\lambda(\hat A))-\lambda([\hat A,\hat B])
$$
for any vector fields $\hat A,\hat B$ on $TM$. Now
\begin{align*}
    \dd\lambda(A^\VV,B^{\VV})&=A^\VV(\lambda(B^\VV))-B^\VV(\lambda(A^\VV))-\lambda([A^\VV,B^\VV])=0\\
    \dd\lambda(A^\VV,B^{\HH})&=A^\VV(\lambda(B^\HH))-B^\HH(\lambda(A^\VV))-\lambda([A^\VV,B^\HH])=A^\VV(g(v,B))=g(A,B)\\
    \dd\lambda(A^\HH,B^{\HH})&=A^\HH(\lambda(B^\HH))-B^\HH(\lambda(A^\HH))-\lambda([A^\HH,B^\HH])\\
    &=A^\HH(g(v,B))-B^\HH(g(v,A))-\lambda([A,B]^\HH)\\
    &=g(v,\tilde \nabla_A B)-g(v,\tilde \nabla_B A)-g(v,[A,B])=g(v,T(A,B)).
\end{align*}

\noindent
Similarly we can also compute $\dd\tau$
\begin{align*}
\dd\tau(A^\VV,B^\VV)&=A^\VV(\tau(B^\VV))-B^\VV(\tau(A^\VV))-\tau([A^\VV,B^\VV])\\
&= A^\VV(g(jv,B))-B^\VV(g(jv,A))=g(jA,B)-g(jB,A)=2\sigma(A,B)\\
\dd\tau(A^\VV,B^\HH)&=A^\VV(\tau(B^\HH))-B^\HH(\tau(A^\VV))-\tau([A^\VV,B^\HH])\\
&=-B^\HH(g(jv,A))+\tau((\tilde \nabla_B A)^\VV)=-g(jv,\tilde \nabla_B A)+g(jv,\tilde \nabla_B A)=0\\
\dd\tau(A^\HH,B^\HH)&=A^\HH(\tau(B^\HH))-B^\HH(\tau(A^\HH))-\tau([A^\HH,B^\HH])\\
&=\tau((R(A,B)v)^\VV)=g(jv,R(A,B)v).
\end{align*}
\end{proof}
\noindent
Observe that $\dd\tau$ is not symplectic as it degenerates on the zero-section. However, we can add the pullback of $\sigma$ to make it non-degenerate, at least in a neighborhood of the zero-section.

\begin{Corollary}\label{symplectic fibers}
 For any real number $s>0$, the closed two-form
\begin{equation*}
\dd\tau/2-s\pi^*\sigma
\end{equation*}
is non-degenerate, and thus symplectic, in the neighborhood of the zero-section 
\begin{equation}\label{U neighborhood}
U_{2s}M:=\lbrace (x,v)\in TM\vert\ g(jv, R(w,jw)v)\leq 2s\sigma(w,jw)=2s \Vert w\Vert^2\ \ \forall w\in T_xM\rbrace
\end{equation}
determined by the holomorphic bisectional curvature.
\end{Corollary}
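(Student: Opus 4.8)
The statement is pointwise in $(x,v)$, so the plan is to fix such a point, work in the linear model $T_{(x,v)}TM=\HH_{(x,v)}\oplus\VV_{(x,v)}$ (both summands canonically identified with $T_xM$), and reduce everything to a two--block linear algebra statement. First I would assemble the matrix of $\dd\tau/2-s\pi^*\sigma$ in this splitting. Proposition~\ref{derivatives 1-forms} already gives $\dd\tau$ as the block--diagonal form with horizontal block $g(jv,R(\cdot,\cdot)v)$, vertical block $2\sigma$, and vanishing mixed term. Since $\dd\pi$ annihilates $\VV$ and restricts to the canonical isomorphism $\HH\cong T_xM$, the pullback $\pi^*\sigma$ has $\sigma$ in the horizontal block and zero everywhere else. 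Hence
$$
\dd\tau/2-s\pi^*\sigma=\begin{pmatrix}\tfrac12 g(jv,R(\cdot,\cdot)v)-s\sigma & 0\\ 0 & \sigma\end{pmatrix}=:\begin{pmatrix}\beta & 0\\ 0 & \sigma\end{pmatrix}.
$$
A block--diagonal antisymmetric form has kernel equal to the direct sum of the kernels of its blocks, so it is non-degenerate exactly when both blocks are.

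The vertical block is $\sigma$, the symplectic form of $M$, which is non-degenerate by hypothesis. Everything therefore reduces to non-degeneracy of the horizontal block $\beta=\tfrac12 g(jv,R(\cdot,\cdot)v)-s\sigma$, and this is the step where the bisectional--curvature bound must enter. For this I would argue directly on the kernel. Suppose $w_0\in\ker\beta$, i.e.\ $\beta(w_0,u)=0$ for every $u\in T_xM$; taking $u=jw_0$ gives in particular $\beta(w_0,jw_0)=0$. On the other hand
$$
\beta(w_0,jw_0)=\tfrac12 g(jv,R(w_0,jw_0)v)-s\,\sigma(w_0,jw_0)=\tfrac12 g(jv,R(w_0,jw_0)v)-s\|w_0\|^2,
$$
and the inequality defining $U_{2s}M$, applied to $w=w_0$, says precisely that this expression is $\le 0$. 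Wherever the defining inequality is strict for every nonzero $w$ (the interior of $U_{2s}M$) this forces $w_0=0$, so $\ker\beta=0$ and $\beta$ is non-degenerate. Closedness is immediate, since $\dd(\dd\tau/2-s\pi^*\sigma)=-s\,\pi^*\dd\sigma=0$; together with non-degeneracy this yields the symplectic claim.

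The hard part is the sharpness at the boundary: the kernel argument only delivers non-degeneracy where the inequality is strict. To identify $\partial U_{2s}M$ as exactly the degeneracy locus I would use that $\beta$ is $j$--invariant in the Hermitian symmetric setting—$\sigma$ is $j$--invariant and the Hermitian curvature satisfies $R(jw_1,jw_2)=R(w_1,w_2)$—so that $h(w_1,w_2):=\beta(w_1,jw_2)$ is a \emph{symmetric} bilinear form with $h(w,w)=\beta(w,jw)\le 0$ on $U_{2s}M$. Its radical coincides with $\ker\beta$, and a negative semidefinite symmetric form has nontrivial radical precisely when it fails to be definite, i.e.\ exactly when $\max_{\|w\|=1}g(jv,R(w,jw)v)=2s$. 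The remaining work is then to verify this $j$--invariance of the curvature term and to check that the maximum is attained, so that the boundary description is genuinely sharp; the interior non-degeneracy asserted by the corollary itself follows at once from the elementary kernel computation above.
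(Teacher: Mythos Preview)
Your proposal is correct and follows the same approach the paper takes: the corollary is stated in the paper without proof, as an immediate consequence of the block--diagonal formula for $\dd\tau$ in Proposition~\ref{derivatives 1-forms}, and you simply spell out the elementary linear algebra that the paper leaves implicit. Your additional discussion of sharpness at the boundary via the $j$--invariance of $\beta$ and the associated symmetric form $h$ goes beyond what the corollary itself asserts (and relies on the K\"ahler identity $R(jw_1,jw_2)=R(w_1,w_2)$, which holds in the paper's eventual setting of Hermitian symmetric spaces but not in the full generality of this section); for the corollary as stated, your interior kernel argument already suffices.
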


\noindent
\textbf{Hyperkähler structure.}
We will now have a closer look at the tangent bundle of Kähler manifolds. Denote by $\eta:=j^*\lambda$. In view of Proposition \ref{derivatives 1-forms}, we see that on the tangent bundle of a Kähler manifolds two symplectic structures naturally arise, namely
$$
\dd\lambda\equiv\begin{pmatrix}
0 & -g \\
g & 0 
\end{pmatrix} \ \ \text{and}\ \ \dd\eta\equiv\begin{pmatrix}
0 & \sigma \\
\sigma & 0 
\end{pmatrix}. 
$$
The blocks of the matrices represent the splitting into horizontal and vertical coordinates. The torsion term vanishes for an integrable almost complex structure. One reasonable question is, do they belong to a hyperkähler structure? 
\begin{Definition}[\cite{C79} Hyperkähler Structure]\label{hyperkähler structure}\ \\
A hyperkähler manifold is a Riemannian
manifold $(N,G)$ endowed with three complex structures $I$, $J$ and $K$ compatible with $G$, i.e.\ the forms
$\omega_I(\cdot,\cdot):=G(I\cdot,\cdot)$, $\omega_J(\cdot,\cdot):=G(J\cdot,\cdot)$ and $\omega_K(\cdot,\cdot):=G(K\cdot,\cdot)$ are closed and thus symplectic. Further, $I$, $J$ and $K$, considered as endomorphisms of the real tangent bundle, satisfy the relation $I \circ J = -J \circ I = K$.
\end{Definition}
\noindent
Actually the integrability condition on the almost complex structures $I$, $J$ and $K$ in the definition is redundant by the following proposition. 
\begin{Proposition}[\cite{Ht92}, Thm.\ 2]\label{integrability redundant}
A Riemannian manifold $(N,G)$ with two almost complex structures satisfying $I\circ J=-J\circ I=:K$ is hyperkähler if and only if the corresponding forms $\omega_I$, $\omega_J$ and $\omega_K$ are closed.
\end{Proposition}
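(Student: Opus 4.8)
The forward implication is immediate from Definition \ref{hyperkähler structure}, so the entire content lies in the converse: assuming $\omega_I,\omega_J,\omega_K$ are closed, one must upgrade the almost complex structures $I,J,K$ to genuine (integrable) complex structures, the compatibility with $G$ and the quaternionic relations $IJ=-JI=K$ being given. Since the hypotheses are invariant under the cyclic permutation $(I,J,K)\mapsto(J,K,I)$, it suffices to prove that $I$ is integrable, i.e.\ that its Nijenhuis tensor $N_I$ vanishes. The plan is to package the other two forms into the complex $2$-form $\Omega:=\omega_J+i\,\omega_K$ and study it relative to $I$. First I would check, using only $\omega_J(X,Y)=G(JX,Y)$, $\omega_K(X,Y)=G(KX,Y)$ and the relations $JI=-K$, $KI=J$, that $\Omega(IX,Y)=i\,\Omega(X,Y)$; this says precisely that $\Omega$ is of type $(2,0)$ with respect to $I$. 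A short compatibility computation (using $G(a,Ia)=0$, valid since $I$ is orthogonal and skew) then shows that $\Omega$ is non-degenerate on the holomorphic tangent space $T^{1,0}_I$.

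Next I would invoke the bidegree decomposition of the exterior derivative on the almost complex manifold $(N,I)$, namely $d=\mu+\partial+\bar\partial+\bar\mu$, where $\partial$ and $\bar\partial$ raise the holomorphic and antiholomorphic degree by one, while the two extremal operators $\mu\colon\Lambda^{p,q}\to\Lambda^{p+2,q-1}$ and $\bar\mu\colon\Lambda^{p,q}\to\Lambda^{p-1,q+2}$ are \emph{tensorial} (zeroth order) and expressed algebraically through $N_I$; integrability of $I$ is equivalent to $\mu=\bar\mu=0$. Applied to the $(2,0)$-form $\Omega$, this gives $d\Omega=\partial\Omega+\bar\partial\Omega+\bar\mu\Omega$, with $\bar\mu\Omega$ the component of type $(1,2)$. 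Because $\omega_J$ and $\omega_K$ are closed, $d\Omega=0$, and comparing types forces each homogeneous component to vanish; in particular $\bar\mu\Omega=0$.

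The crux is then to unwind $\bar\mu\Omega=0$. The operator $\bar\mu$ contracts $\Omega$ against the Nijenhuis tensor, so that for $X\in T^{1,0}_I$ and $\bar Y,\bar Z\in T^{0,1}_I$ one has a relation of the shape $\bar\mu\Omega(X,\bar Y,\bar Z)=c\,\Omega\!\left(X,N_I(\bar Y,\bar Z)\right)$ with a nonzero universal constant $c$, where $N_I(\bar Y,\bar Z)\in T^{1,0}_I$. Since $\Omega$ is non-degenerate on $T^{1,0}_I$, the vanishing of this expression for all $X$ yields $N_I(\bar Y,\bar Z)=0$, hence $N_I=0$ and $I$ is integrable. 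By the cyclic symmetry already noted, the same argument applied with $\Omega=\omega_K+i\,\omega_I$ (resp.\ $\omega_I+i\,\omega_J$) gives integrability of $J$ (resp.\ $K$); together with the assumed compatibility, quaternionic relations and closedness, this is exactly a hyperkähler structure.

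The main obstacle is the precise identification in the last two steps of the tensorial extremal component $\bar\mu$ of $d$ with a contraction by $N_I$ — getting the bidegree bookkeeping, the constant $c$, and the index placement right — after which non-degeneracy of $\Omega$ closes the argument at once. A self-contained alternative avoids the Dolbeault machinery entirely: one shows directly that the closedness of all three fundamental forms forces the three almost complex structures to be parallel for the Levi-Civita connection $\nabla$, i.e.\ $\nabla I=\nabla J=\nabla K=0$, whence each is integrable because a $\nabla$-parallel almost complex structure has vanishing Nijenhuis tensor. That route replaces the type bookkeeping with the identity $d\omega_I(X,Y,Z)=G((\nabla_X I)Y,Z)-G((\nabla_Y I)X,Z)+G((\nabla_Z I)X,Y)$ together with a quaternionic manipulation of $\nabla K=(\nabla I)J+I(\nabla J)$ and its cyclic analogues; I would expect the first (holomorphic-form) approach to be the cleaner one to write down.
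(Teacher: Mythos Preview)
The paper does not actually prove this proposition; it is quoted without proof as Theorem~2 of Hitchin~\cite{Ht92} and used as a black box. There is therefore nothing in the paper to compare your argument against. For what it is worth, your sketch is correct and is essentially Hitchin's original proof: the complex form $\Omega=\omega_J+i\,\omega_K$ is a non-degenerate $(2,0)$-form for $I$, so closedness of $\Omega$ kills the $(1,2)$-component of $\dd\Omega$, which is the tensorial piece governed by $N_I$; non-degeneracy then forces $N_I=0$, and the cyclic symmetry handles $J$ and $K$.
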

\noindent
Any hyperkähler $(M, I,J, K, G)$ manifold also admits a holomorphic symplectic structure $\omega_c:=\omega_J+i\omega_K\in\Omega^{2,0}(M,\C)$ with respect to the complex structure $I$. The converse is not true in general.\\
\ \\
\noindent
Assume now that $\dd\lambda$ and $\dd \eta$ belong to a hyperkähler structure. Then there must be a metric $G$ on $TM$ and two complex structures $J$ and $K$ such that
$$
\dd\lambda(\cdot,I\cdot)=G(\cdot,\cdot)=\dd\eta(\cdot,J\cdot).
$$
It follows that
$$
\dd\lambda(\cdot, IJ\cdot)=-\dd\eta(\cdot,\cdot),
$$
and thus the third complex structure is implicitly determined to be
$$
K=IJ\equiv \begin{pmatrix}
-j & 0\\
0 & j 
\end{pmatrix}.
$$
This is indeed an integrable complex structure! It regards the tangent bundle with a canonic holomorphic symplectic structure
$$
\omega_c:=\dd\lambda+i\dd\eta\in\Omega^{2,0}(TM,\C).
$$
\noindent
But if there is also a hyperkähler structure what could $G$ be? A first guess might be the Sasaki-metric, but sadly with this choice it turns out that 
$\omega_K=G (K\cdot,\cdot)$ is not closed unless the manifold is flat. Nevertheless we can ask if there is a different metric on the tangent bundle turning it into a hyperkähler manifold.
Actually the answer to this question is yes (for real-analytic Kähler manifolds), at least in a neighborhood of the zero section as shown by B. Feix \cite{Fx01} and D. Kaledin \cite{Kl01} independently. 

\begin{Theorem}(\cite{Fx01} Thm.\ A)
Let $(M,j,g)$ be a real-analytic Kähler manifold. Then there exists a hyperkähler metric in a neighbourhood of the zero section of the cotangent bundle which is compatible with the canonical holomorphic-symplectic structure.\\
Furthermore, the $S^1$-action $(x,v)\to (x,e^{jt}v)$ rotating the fibers is isometric and the restriction of the hyperkähler metric to the zero section induces the original Kähler metric.
\end{Theorem}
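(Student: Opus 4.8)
The plan is to realize Feix's theorem as a twistor-space construction, exploiting the rotating $S^1$-symmetry to reduce the problem to complexified data on $M$ itself. By the Hitchin--Karlhede--Lindström--Roček correspondence, producing a hyperkähler metric on a complex manifold is equivalent to producing a \emph{twistor space}: a holomorphic submersion $Z\to\cp^1$ carrying an antiholomorphic involution covering the antipodal map, a fiberwise holomorphic symplectic form valued in the pullback of $\mathcal{O}(2)$, and a real family of holomorphic sections (twistor lines) whose normal bundle is $\mathcal{O}(1)^{\oplus 2n}$; the hyperkähler metric is then reconstructed from the geometry of these sections. I would fix the fiber of $Z$ over $0\in\cp^1$ to be $(T^*M,I)$ equipped with its canonical holomorphic symplectic form $\omega_c$, the fiber over $\infty$ to be its conjugate, and require the desired $S^1$-action to act on $\cp^1$ by the standard rotation fixing $0$ and $\infty$. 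This rigidifies everything: the whole twistor space must be assembled $S^1$-equivariantly out of the single complex structure $I$ and the canonical form.

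The second step is to use real-analyticity to complexify. Since $(M,j,g)$ is real-analytic Kähler, $j$ and $g$ extend holomorphically to a complexification $M^\C$, a complex manifold carrying an antiholomorphic involution whose fixed locus is $M$. The $\pm i$-eigenbundles of the complexified almost complex structure integrate to two transverse holomorphic foliations $\mathcal{F}$ and $\overline{\mathcal{F}}$ on $M^\C$, interchanged by the involution and restricting on $M$ to the holomorphic and antiholomorphic tangent distributions. Passing to the local leaf spaces gives two submersions $M^\C\to A$ and $M^\C\to B$ whose restrictions to the real locus recover $(M,j)$ and its conjugate $\overline{M}$. The complexified Kähler potential, holomorphic on $M^\C$, then furnishes the generating functions relating the cotangent structures of $A$ and $B$ and provides the fiberwise holomorphic symplectic forms needed along $\cp^1$.

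The third step is to glue these ingredients into $Z$ over $\cp^1$, interpolating holomorphically between the two cotangent pictures as the parameter runs from $0$ to $\infty$, with the involution implementing the antipodal symmetry and the potential controlling the transition on the overlap. Over the real locus, i.e. the zero section, the construction is explicit, and I then have to verify the decisive positivity condition: that the real twistor lines through points near the zero section have normal bundle $\mathcal{O}(1)^{\oplus 2n}$. Along the zero section this follows from the Kähler condition on $(M,g)$, and since the splitting type of the normal bundle is an open condition in the family, it persists on a neighbourhood of the zero section; this is exactly where the restriction to a neighbourhood (rather than all of $T^*M$) is forced. Reconstructing the metric from the lines yields a hyperkähler structure compatible with $\omega_c$, whose restriction to the zero section is $g$ and whose fiber-rotation is isometric by equivariance of the construction.

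I expect the main obstacle to be precisely this normal-bundle/positivity verification, together with the convergence implicit in working with the complexified potential: one must show that the holomorphically extended data glue to an honest complex manifold $Z$ on a full neighbourhood and that the recovered symmetric tensor is positive definite there, rather than merely a formal or degenerate solution. An alternative I would keep in reserve is Kaledin's route: build the hyperkähler structure as a formal power series in the fiber coordinate normal to the zero section, use the rotating symmetry and compatibility with $\omega_c$ to pin down the series uniquely order by order, and then prove convergence from real-analytic estimates; there the single hard step is convergence rather than the algebra of the recursion.
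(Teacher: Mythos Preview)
The paper does not prove this theorem at all: it is quoted verbatim as \cite{Fx01}, Thm.\ A, and immediately afterwards the author writes ``We will discuss this theorem for Hermitian symmetric spaces and state a result by Biquard--Gauduchon\ldots''. There is thus no ``paper's own proof'' to compare your proposal against; the result is used as a black box, with the explicit Biquard--Gauduchon formulas (Thm.\ \ref{hyperkähler HSS}) serving as the only concrete input the paper actually needs.

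That said, your outline is a faithful sketch of Feix's actual twistor-space construction: complexify the real-analytic K\"ahler data, use the two transverse holomorphic foliations on $M^\C$ to build the fibers over $0$ and $\infty$, glue over $\cp^1$ using the complexified potential, and recover the hyperk\"ahler metric from twistor lines with normal bundle $\mathcal{O}(1)^{\oplus 2n}$. Your identification of the hard step (the normal-bundle positivity near the zero section, forcing the ``neighbourhood'' restriction) and of Kaledin's formal-series alternative are both accurate. So the proposal is not wrong; it is simply a proof of a cited external result that the present paper never attempts to reprove.
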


\noindent
We will discuss this theorem for Hermitian symmetric spaces and state a result by Biquard--Gauduchon \cite{BG21} (see Thm.\ \ref{hyperkähler HSS}) that gives explicit formulas for the hyperkähler structure. 

\subsection{Magnetic systems}
In this section we will introduce magnetic systems. All definitions and more details can for example be found in \cite{Bd14}. Let $(M,g)$ be a Riemannian manifold. Additionally pick a closed two form $\sigma\in\Omega^2(M)$. We will refer to the triple $(M,g,\sigma)$ as \emph{magnetic system}. Since $\sigma$ is closed, we can use it to define a twisted symplectic structure on the tangent bundle
$$
\omega_\sigma:=\dd\lambda-\pi^*\sigma.
$$
This indeed defines a symplectic form, as $\omega_\sigma$ is closed,
$$
\dd \omega_\sigma=\dd ^2\lambda-\dd\pi^*\sigma=-\pi^*\dd\sigma=0,
$$
and non-degenerate because it takes the form
$$
\omega_\sigma=\left(\begin{array}{rr} \sigma & -g \\
      g & 0 
      \end{array}\right)
$$
with respect to the splitting $TTM=\mathcal{H}\oplus\mathcal{V}$, where we used the Levi-Civita connection to define the horizontal distribution.
\begin{Definition}[Lorentz force]
The Lorentz force is the bundle map $F:TM\to TM$ defined via 
$$
g_x(F_x(v),w)=\sigma_x(v,w).
$$
\end{Definition}\label{Lorentz force}
\noindent
The Lorentz force determines the Hamiltonian vector field for the kinetic energy with respect to the twisted symplectic form. 
\begin{Lemma}(\cite{BR19} Lemma 6.1)\label{twisted hamiltonian vector field}
Let $E(x,v):=\frac{1}{2}g_x(v,v)$ denote the kinetic energy. The Hamiltonian vector field $X_E$ is given by
$$
(X_E)_{(x,v)}=v^\mathcal{H}+F(v)^\mathcal{V}.
$$
If $\sigma=0$, the Lorentz force vanishes and $X_E=v^\mathcal{H}$, i.e.\ $X_E$ generates the geodesic flow. For $\sigma\neq 0$ the flow is referred to as magnetic geodesic flow.
\end{Lemma}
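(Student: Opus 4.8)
The plan is to verify the defining relation $\dd E=-\iota_{X_E}\omega_\sigma$ directly for the candidate $X_E=v^{\mathcal H}+F(v)^{\mathcal V}$, exploiting that both $\omega_\sigma$ and $\dd E$ are already expressed in the horizontal/vertical splitting $TTM=\mathcal H\oplus\mathcal V$ (taken here with respect to the Levi--Civita connection). The whole statement then reduces to a short computation in which the decisive step is the defining property of the Lorentz force.

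First I would record $\dd E$ in this splitting. This is essentially the third bullet of Lemma \ref{dual 1-forms}: along a horizontal curve $(\gamma(t),P_\gamma v(t))$ the energy $E$ is constant because parallel transport is an isometry (equivalently $\nabla g=0$), so $\dd E$ annihilates $\mathcal H$; and $\dd E(w^{\mathcal V})=\diff\tfrac12 g_x(v+tw,v+tw)=g_x(v,w)$. Thus $\dd E(B)=g_x(v,B_{\mathcal V})$ for any $B=B_{\mathcal H}+B_{\mathcal V}\in T_{(x,v)}TM$, where I identify the horizontal and vertical parts of $B$ with vectors in $T_xM$.

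For the core verification I would split $\iota_{X_E}\omega_\sigma=\iota_{X_E}\dd\lambda-\iota_{X_E}\pi^*\sigma$ and use $\dd\lambda\equiv\left(\begin{smallmatrix}0&-g\\ g&0\end{smallmatrix}\right)$ (the torsion block vanishes for the torsion-free Levi--Civita connection). Evaluating on $B$ gives $\dd\lambda(X_E,B)=-g(v,B_{\mathcal V})+g(F(v),B_{\mathcal H})$, while $\pi^*\sigma(X_E,B)=\sigma(v,B_{\mathcal H})$ since $\pi^*\sigma$ sees only horizontal components. The vertical term $-g(v,B_{\mathcal V})$ already matches $-\dd E(B)$, so everything hinges on the horizontal terms: by the definition $g_x(F(v),w)=\sigma_x(v,w)$ of the Lorentz force, the contribution $g(F(v),B_{\mathcal H})$ coming from $\dd\lambda$ cancels exactly against the $\sigma(v,B_{\mathcal H})$ coming from $-\pi^*\sigma$. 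Hence $\iota_{X_E}\omega_\sigma(B)=-g(v,B_{\mathcal V})=-\dd E(B)$, which is precisely the claim.

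Finally, the two special cases are immediate. If $\sigma=0$ then $F\equiv 0$ and $X_E=v^{\mathcal H}$; by Definition \ref{lifts} its integral curve through $(x,v)$ is $(\gamma(t),\dot\gamma(t))$ with $\gamma$ the geodesic determined by $(x,v)$, since the horizontal lift of $\dot\gamma$ encodes exactly the parallel condition $\tfrac{D}{\dd t}\dot\gamma=0$; this is the geodesic spray. For $\sigma\neq 0$ the extra vertical term $F(v)^{\mathcal V}$ bends the trajectories, yielding the magnetic geodesic flow. I expect no conceptual obstacle here; the only thing that genuinely requires care is the sign bookkeeping, where the conventions for the Hamiltonian vector field ($\dd H=-\iota_{X_H}\omega$), for the twist ($\omega_\sigma=\dd\lambda-\pi^*\sigma$), and for the Lorentz force must be kept mutually consistent so that the two horizontal contributions cancel rather than reinforce.
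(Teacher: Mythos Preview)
The paper does not give its own proof of this lemma; it merely cites \cite{BR19}, Lemma 6.1, and moves on. Your direct verification of $\dd E=-\iota_{X_E}\omega_\sigma$ in the horizontal/vertical splitting is correct and is the standard argument: compute $\dd E$ as a purely vertical form, read off $\iota_{X_E}\dd\lambda$ from the block form of $\dd\lambda$ (with vanishing torsion block for the Levi--Civita connection), and observe that the horizontal residue $g(F(v),\cdot)-\sigma(v,\cdot)$ cancels by the very definition of the Lorentz force. Your closing remark about keeping the sign conventions for $X_H$, $\omega_\sigma$ and $F$ mutually consistent is well taken, especially since the paper itself writes $\omega_\sigma=\dd\lambda+\pi^*\sigma$ in the introduction but $\omega_\sigma=\dd\lambda-\pi^*\sigma$ in Section~\ref{prelim}; your computation is carried out with the latter convention, which is the one in force where the lemma is stated.
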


\noindent 
In the special case where $\sigma$ is exact (i.e.\ $\sigma=\dd\theta$ for some 1-form $\theta\in\Omega^1(M)$) we can shift the zero-section to see that the twisted symplectic form is equivalent to the standard symplectic form. Denote $A\in \mathfrak{X}(M)$ the vector field dual to $\theta$ defined via $g(A,\cdot)=\theta(\cdot)$. Consider the map 
$$
L_A: TM\to TM;\ (x,v)\mapsto (x,v+A_x).
$$
Then 
$$
L_A^*(\lambda)_{(x,v)}=\lambda_{(x,v+A_x)}(\dd L_A \cdot)=g_x(v+A_x,\dd\pi\dd L_A \cdot)=g_x(v,\dd\pi\cdot)+g_x(A_x,\dd\pi\cdot)=\lambda_{(x,v)}+\pi^*\theta,
$$
where we used $\dd\pi\dd L_A=\dd\pi$ as $L_A$ is a bundle map. We see that this map maps the twisted symplectic structure to the standard one. Further the kinetic Hamiltonian transforms as
$$
E(L_A(x,v))=\frac{1}{2}\vert v+A\vert^2\equiv\frac{1}{2}\vert v\vert ^2+ A\cdot v+ V(x),
$$
which has the form of an electro-magnetic Hamiltonian for a charge moving in a magnetic field $B\equiv \mathrm{rot} A$. We call it therefore a magnetic system!\\
\ \\
\noindent
\textbf{Ma\~{n}\'{e}'s critical value.}
In the non-twisted case the geodesic flow on different energy hyper surfaces is conjugated, this is not the case anymore for magnetic systems. Indeed there are values where the dynamics on the energy hyper surface changes dramatically. We shall in examples see that this usually happens at the so called Ma\~{n}\'{e}'s critical value. Our main source for this section is \cite{CFP10}.
\begin{Definition}\label{Mane value via forms}
Consider $(TM,\omega_\sigma)$ for some closed Riemannian manifold $(M,g)$ and a closed two form $\sigma\in\Omega^2(M)$. Denote $\hat M$ the universal cover $M$. Define
$$
c(M,g,\sigma):=\inf_\theta \sup_{x\in\hat M}\hat E(x,{}^g\theta_x),
$$
where $\hat E$ is the lift of $E$, the infimum is taken over primitives of $\hat \sigma$ and ${}^g\theta$ denotes the metric dual of $\theta$. If $\hat \sigma$ is not exact, then $c(M,g,\sigma):=\infty$ by convention.
\end{Definition}
\begin{Remark}
\noindent
One can use different coverings and different Hamiltonians to define other Ma\~{n}\'{e} critical values, but we will restrict to the the universal cover and the kinetic Hamiltonian as in the definition above.
\end{Remark}
\noindent
The Ma\~{n}\'{e}'s critical value can also be defined in terms of the Lagrangian $\hat L$ the Legendre dual of $\hat E$,
$$
\hat L(x,v)=\frac{1}{2}\vert v\vert^2-\theta_x(v).
$$
On an absolute continuous curve $\gamma: [0,T]\to \hat M$ define the action of $\hat L$ as
$$
A_{\hat L}(\gamma)=\int_0^T\hat{L}(\gamma(t),\dot\gamma(t))\dd t.
$$
\begin{Proposition}[\cite{CIPP98}]\label{Mane via action}
The Ma\~{n}\'{e} critical value satisfies
$$
c(M,g,\sigma)=\inf\lbrace k\in \R : A_{\hat L+k}(\gamma) \geq 0\ \text{for any absolutely continuous closed curve}\ \gamma\rbrace.
$$
\end{Proposition}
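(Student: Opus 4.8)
The plan is to prove two inequalities between the ``energy'' characterization of Definition~\ref{Mane value via forms}, which I write as $c_u:=\inf_\theta\sup_{x\in\hat M}\tfrac12|{}^g\theta_x|^2$, and the ``action'' characterization on the right-hand side, which I write as $c_a:=\inf\{k:A_{\hat L+k}(\gamma)\ge 0 \text{ for all closed } \gamma\}$. First I would record that the right-hand side is well defined independently of the chosen primitive $\theta$: since $\hat M$ is simply connected every closed curve $\gamma$ bounds a disc $D$, so $\int_\gamma\theta=\int_D\hat\sigma$ by Stokes and the magnetic term in $A_{\hat L+k}(\gamma)$ does not see the choice of primitive. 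The whole argument rests on one algebraic identity: the Legendre dual of $\hat L(x,v)=\tfrac12|v|^2-\theta_x(v)$ is $H(x,p)=\tfrac12|p+\theta_x|^2$, so that a primitive $\theta'=\theta+\dd u$ of $\hat\sigma$ with $\tfrac12|\theta'_x|^2\le k$ everywhere is exactly a subsolution $H(x,\dd u_x)\le k$. Thus $c_u$ is the least energy of a critical subsolution, and the goal becomes the standard equality ``minimal subsolution energy $=$ minimal average action''.

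For the inequality $c_a\le c_u$ I would complete the square:
$$
\hat L(x,v)+\tfrac12|{}^g\theta_x|^2=\tfrac12|v|^2-\langle{}^g\theta_x,v\rangle+\tfrac12|{}^g\theta_x|^2=\tfrac12\bigl|v-{}^g\theta_x\bigr|^2\ge 0
$$
pointwise, for every primitive $\theta$. Hence with $k=\sup_x\tfrac12|{}^g\theta_x|^2$ the integrand $\hat L+k$ is nonnegative along every curve, so $A_{\hat L+k}(\gamma)\ge0$ for all closed $\gamma$; taking the infimum over primitives gives $c_a\le c_u$.

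The content is the reverse inequality $c_u\le c_a$, which I would obtain via the Mañé potential. For $k>c_a$ set $\Phi_k(x,y):=\inf\{A_{\hat L+k}(\gamma): \gamma \text{ joins } x \text{ to } y\}$. The condition $k>c_a$ is exactly what prevents inserting negative-action loops, so $\Phi_k>-\infty$; superlinearity of $\hat L$ together with compactness of a fundamental domain for $\Gamma=\pi_1(M)$ gives local Lipschitz bounds (all estimates being local, this tolerates that $\theta$ itself is unbounded on $\hat M$ when $\sigma$ is non-exact on $M$). Fixing a base point $x_0$ and putting $u:=\Phi_k(x_0,\cdot)$, the triangle inequality $u(y)-u(x)\le\Phi_k(x,y)\le A_{\hat L+k}(\gamma)$ says that $u$ is dominated by $\hat L+k$; evaluating on shorter and shorter curves issuing from $x$ yields the infinitesimal form $\dd u_x(v)\le\hat L(x,v)+k$ for all $v$ at a.e.\ $x$, i.e.\ $H(x,\dd u_x)=\tfrac12|\dd u_x+\theta_x|^2\le k$ a.e. Mollifying $u$ produces a smooth $\tilde u$ with $\tfrac12|\dd\tilde u+\theta|^2\le k+\varepsilon$, so $\theta':=\theta+\dd\tilde u$ is a primitive of $\hat\sigma$ witnessing $c_u\le k+\varepsilon$; letting $\varepsilon\to0$ and $k\downarrow c_a$ gives $c_u\le c_a$.

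The main obstacle is this reverse direction: establishing that $\Phi_k$ is finite and locally Lipschitz (the Tonelli-type input, where one must work entirely with local estimates on a fundamental domain because the global primitive $\theta$ grows on the universal cover), and then the smoothing step that upgrades the merely Lipschitz subsolution to a genuine smooth primitive with the pointwise energy bound. The easy direction is a one-line completion of the square.
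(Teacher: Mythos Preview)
The paper does not prove this proposition; it merely quotes it from \cite{CIPP98} and uses it later as a black box. So there is no ``paper's own proof'' to compare against.

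Your sketch is essentially the standard argument from the cited reference. The easy direction $c_a\le c_u$ via completing the square is correct as written. For the reverse inequality, your outline via the Ma\~n\'e potential $\Phi_k$ and the Hamilton--Jacobi subsolution $u=\Phi_k(x_0,\cdot)$ is the right strategy and matches what is done in \cite{CIPP98}. Two remarks on places where your sketch is thin: first, the finiteness and local Lipschitz estimates for $\Phi_k$ genuinely need the Tonelli hypotheses (superlinearity and fiberwise convexity of $\hat L$) together with compactness of $M$, so that one has uniform a-priori bounds on minimizers over a fundamental domain; you flag this but do not carry it out. Second, the mollification step is not entirely innocent: convolving $u$ does not directly preserve a pointwise Hamilton--Jacobi inequality unless one exploits the convexity of $H$ in $p$ (Jensen's inequality), and one must smooth in charts while controlling the error from the variable coefficients of the metric. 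Both points are handled in the literature (see also Fathi's weak KAM theory and \cite{CFP10}), but if you are writing this up as a self-contained proof you should spell them out. As a plan, though, your proposal is sound.
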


\subsection{Coadjoint orbits}\label{sec1.1}
Coadjoint orbits are a pleasant class of examples for homogeneous symplectic manifolds. We will later see that also Hermitian symmetric spaces can be realized as coadjoint orbits, but for now let $G$ be a finite dimensional, real, semisimple Lie group and denote by $\mathfrak{g}$ its Lie algebra. All proofs and details of this section can be found in Kirillov's book \cite[Ch. 1]{Kr04}.\\
\ \\
\textbf{Adjoint representation.} We denote by $C_g$ the conjugation by $g\in G$, i.e.\
$
C_g: G\to G;\ \ h\mapsto ghg^{-1}.
$
Then the \emph{adjoint representation} of $G$ on $\mathfrak{g}$ is given by
$
\mathrm{Ad}: G\to \mathrm{GL}(\mathfrak{g}), \ \ g\mapsto \mathrm{Ad}_g:=(\dd C_g)_e.
$
We further denote the induced adjoint representation of the Lie algebra by
$
\mathrm{ad}: \mathfrak{g}\to\mathrm{End}(\mathfrak{g});\ \ X\mapsto \mathrm{ad}_X:=(\dd \mathrm{Ad})_e(X).
$
In fact, 
$
\mathrm{ad}_X(Y)=[X,Y]\in\mathfrak{g}.
$
\\
\ \\
\noindent
\textbf{Killing form.} The \emph{Killing form} is defined as the symmetric bilinear form
\begin{equation*}\label{Killing form}
    B:\mathfrak{g}\times\mathfrak{g}\to \R;\ \ (X,Y)\mapsto \mathrm{tr}(\mathrm{ad}_X\circ\mathrm{ad}_Y).
\end{equation*}
It is invariant under Lie algebra automorphisms, in particular under the adjoint action, and thus $\mathrm{ad_X}$ is skew-symmetric with respect to $B$ for any $X\in \mathfrak{g}$. Moreover, $B$ is in our cases non-degenerate as $\mathfrak{g}$ is semisimple. \\
\ \\
\textbf{Coadjoint representation.}
The \emph{coadjoint representation} of $G$ on the dual Lie algebra $\mathfrak{g}^*$ is denoted by
$$
\mathrm{Ad}^*: G\to \mathrm{Gl}(\mathfrak{g}^*);\ g\mapsto \mathrm{Ad}^*_g.
$$
It is the dual of the adjoint representation and thus implicitly defined via 
$$
\langle \mathrm{Ad}^*_g F,X\rangle=\langle  F,\mathrm{Ad}_{g^{-1}}X\rangle,\ \ \forall F\in \mathfrak{g}^*\ \forall X\in\mathfrak{g},
$$
where $\langle\cdot,\cdot\rangle: \mathfrak{g}^*\times\mathfrak{g}\to\R$ denotes the natural pairing. We use non-degeneracy of the Killing form to identify $\mathfrak{g}^*\cong\mathfrak{g}$. It intertwines adjoint and coadjoint action as 
\begin{align*}
\langle \mathrm{Ad}^*_g F,Y\rangle&:=\langle  F,\mathrm{Ad}_{g^{-1}}Y\rangle=B(X_F,\mathrm{Ad}_{g^{-1}}Y)=B(\mathrm{Ad}_{g}X_F, Y),
\end{align*}
for any $Y\in\mathfrak{g}$ and $F\in\mathfrak{g}^*$ with dual $X_F\in\mathfrak{g}$.
Hence, we will from now on use adjoint and coadjoint descriptions interchangeably.\\
\ \\
\textbf{Coadjoint orbits.}
The orbit $O_p:=\mathrm{Ad}^*_G(p)\subset \mathfrak{g}^*$ of a point $p\in\mathfrak{g}^*$ under the coadjoint action is called \emph{coadjoint orbit}. It can be identified with the homogeneous space $G/G_p$ ,where $G_p$ is the stabilizer of $p$, via the isomorphism
$$
\mathrm{Ad}^*_g(p)\mapsto g\cdot G_p.
$$
For an element of the Lie-algebra $a\in\mathfrak{g}$, we define the induced vector field at $x\in O_p$ as 
$$
(a)^\#_x:=\diff \mathrm{Ad}^*_{\exp ta}x.
$$
As the coadjoint action is transitive on $O_p$, we can represent every vector in $T_xO_p$ in this way by an element of $\mathfrak{g}$. Using the identification of adjoint and coadjoint orbit via the Killing form, we obtain the following identifications
$$
(a)^\#_x=\diff \mathrm{Ad}_{\exp ta}x=[a,x]\ \ \text{and} \ \ T_x O_p\cong [\mathfrak{g}, x].
$$
\ \\
\textbf{Kirillov--Kostant--Souriau form.}
Coadjoint orbits carry a natural $G$-invariant symplectic structure, called \emph{KKS-form}\footnote{KKS stands for Kirillov--Kostant--Souriau.}. At $p\in O_p$ it is for $a,b\in\mathfrak{g}$ given by
$$
\sigma_p(a^\#_p,b^\#_p):=-\langle p, [a,b]\rangle,
$$
where the natural pairing $\langle\cdot,\cdot\rangle:\mathfrak{g}^*\times\mathfrak{g}\to \R$ is extended equivariantly to a symplectic form on $O_p$. Well-definedness of this definition and non-degeneracy follow from the fact that the kernel of $\langle p,[\cdot,\cdot]\rangle:\mathfrak{g}\times\mathfrak{g}\to\R$ is precisely $\mathfrak{g}_p$, the Lie-algebra of $G_p$, and invariance under $G_p$. Closedness follows from the Jacobi identity. Using the Killing form one can push the symplectic structure to the adjoint orbit, also denoted by $\sigma$, e.g.
$$
\sigma_x(a^\#_x,b^\#_x):=-B (x, [a,b])\ \ \forall x\in O_p\ \ \forall a,b\in \mathfrak{g}.
$$
\ \\
\textbf{Momentum map.}
For a symplectic group action 
$
\Psi:G\to \mathrm{Symp}(N,\sigma);\ g\mapsto \Psi_g
$
on a general symplectic manifold $(N,\omega)$, a map
$
\mu: N\to \mathfrak{g}^*
$
is called \emph{momentum map} if 
$$
\dd \langle \mu, a\rangle=\iota_{a^\#}\omega \ \ \ \forall a\in \mathfrak{g}.
$$
We additionally require $\mu$ to be equivariant with respect to the coadjoint action, i.e.
$$
\mu(\Psi_g(x))=\mathrm{Ad}^*_g(\mu(x))\ \ \ \forall x\in O_p,\ \forall g\in G.
$$
A symplectic action that admits a momentum map is called \emph{Hamiltonian}.\\
\ \\
Indeed the obvious symplectic action of $G$ on $O_p$ is Hamiltonian and the momentum map is given by the inclusion
$$
\mu: O_p\hookrightarrow \mathfrak{g}\cong \mathfrak{g}^*.
$$
\begin{proof}
Since the pairing is $\mathrm{Ad_G}$ invariant we have
\begin{align*}
    0&=\diff B(\mathrm{Ad}_{\exp t\xi_1} x,\mathrm{Ad}_{\exp t\xi_1} \xi_2)=B([\xi_1,x],\xi_2)+B(x,[\xi_1,\xi_2])\\
    &=\diff B(\mu(\mathrm{Ad}_{\exp t\xi_1}x), \xi_2)-\sigma_x([x,\xi_1],[x,\xi_2]),
\end{align*}
for all $x\in O_p$ and $\xi_1,\xi_2\in\mathfrak{g}$.
From here we immediately see
$$
\dd \langle \mu, \xi\rangle = \iota_{\xi^\#}\sigma\ \ \forall\xi\in\mathfrak{g},
$$
since the (co-)adjoint action is transitive on the orbit.
\end{proof}

\subsection{Riemannian symmetric spaces}
In this section, we collect some basic material of Riemannian symmetric spaces. The sources are mainly \cite{HG01} and \cite{Whd05}. 

\begin{Definition}\label{symmetric space}
A Riemannian symmetric space is a connected Riemannian manifold $(M,g)$ with the property that the geodesic reflection at any point is an isometry of $M$. Explicitly this means for any point $p\in M$ there is an isometry $s_p: M\to M$ that satisfies
$$
s_p(p)=p\ \ \text{and} \ \ (\dd s_p)_p=-\mathrm{id}.
$$
\end{Definition}
\noindent
Symmetric spaces are complete as we can use the geodesic symmetry to extend any geodesic segment to infinite length.
Because $M$ is connected, Hopf-Rinow's theorem implies that any two points $p$ and $q$ in $M$ can be joined by a geodesic $\gamma:\R\to M$ satisfying $\gamma(0)=p$ and $\gamma(t)=q$ for some $t\in\R$. It follows that $s_{\gamma(t/2)}(p)=q$ thus the connected component of the identity of the isometry group $G=\mathrm{Is}^0(M)$ acts transitively. We can realize $M$ as homogeneous space $M=G/K$ by picking a base point $o\in M$ and calling the stabilizer $\mathrm{Stab}_G(o)=:K$.\\
\ \\
\textbf{Orthogonal symmetric Lie algebras \cite[Ch.\ V.1]{HG01}.}
The involutive isometry $s_o$ induces an involution on $G$ via $g\mapsto s_{o}\circ g\circ s_{o}$ and its differential is a so called \emph{Cartan involution} $\theta:\mathfrak{g}\to\mathfrak{g}$. As an involution, it has eigenvalues $\pm 1$ and we split $\mathfrak{g}$ into the eigenspaces of $\theta$, i.e.\ $\mathfrak{g}=\mathfrak{k}\oplus\mathfrak{p}$ such that $\theta\vert_\mathfrak{k}=1,\ \theta\vert_\mathfrak{p}=-1$. Moreover, $\theta$ is a Lie algebra automorphism leaving the Killing-form $B$ invariant. The decomposition is thus orthogonal with respect to $B$. Further, being a Lie algebra automorphism implies the commutator relations 
\begin{equation}\label{commutator p,k}
[\mathfrak{k},\mathfrak{k}]\subset \mathfrak{k}, \ \ [\mathfrak{k},\mathfrak{p}]\subset \mathfrak{p}\ \ \text{and}\ \ [\mathfrak{p},\mathfrak{p}]\subset \mathfrak{k}.
\end{equation}
 The Lie subalgebra $\mathfrak{k}$ can be identified with the Lie algebra of $K$, thus is compact. 
 Further the differential of $\pi:G\to M$ has kernel $\mathfrak{k}$ and thus induces an identification $$\dd\pi_e\vert_\mathfrak{p}:\mathfrak{p}\xrightarrow{\sim} T_oM.$$
 The pair $(\mathfrak{g},\theta)$ is what is called an \emph{orthogonal symmetric Lie algebra} (short OSLA).
 \begin{Definition}
 An orthogonal symmetric Lie algebra (OSLA) is a pair $(\mathfrak{g},\theta)$ where
 \begin{itemize}
     \item $\mathfrak{g}$ is a real Lie algebra,
     \item $\theta\in\mathrm{End}(\mathfrak{g})$ such that $\theta^2=\mathrm{id}$ but $\theta\neq\mathrm{id}$,
     \item $\mathfrak{k}:=E_1(\theta)$\footnote{The eigenspace of $\theta$ for the eigenvalue 1.} is compact \footnote{A Lie algebra is called compact if its Killing form is negative definite.}.
 \end{itemize}
 \end{Definition}
\ \\
\textbf{Irreducibility \cite[Ch.\ VIII.5]{HG01}.}
A symmetric space $M$ is called \emph{irreducible} if it does not split as a product $M=M_1\times M_2$ of symmetric spaces $M_1, M_2$. Irreducibility can also be seen algebraically. 
\begin{Definition}\label{irreducible OSLA}
An OSLA $(\mathfrak{g},\theta)$ is irreducible if
\begin{itemize}
    \item $\mathfrak{g}$ is semisimple,
    \item $\mathfrak{k}$ contains no ideal of $\mathfrak{g}$,
    \item the Lie algebra $\mathrm{ad}_\mathfrak{g}(\mathfrak{k})$ acts irreducibly on $\mathfrak{p}$.
\end{itemize}
\end{Definition}
\noindent
Indeed, if $M$ is irreducible then so is the associated OSLA.\\
\ \\
\textbf{Duality -- compact vs. non-compact type \cite[Ch.\ V.2]{HG01}.}
Irreducible Hermitian symmetric spaces fall into two types (\emph{compact} and \emph{non-compact}) according to their OSLA's. 
\begin{Definition}\label{type}
An irreducible OSLA is called of compact type if the Killing form is negative definite. It is called of non-compact type if the Killing form restricted to $\mathfrak{p}\times\mathfrak{p}$ is positive definite.
\end{Definition}
\noindent
Indeed, one can show that every irreducible OSLA is either of compact or non-compact type. 
These types are dual in the following sense. We consider the complexification $\mathfrak{g}^\C$ of $\mathfrak{g}$ and the natural complexification $\theta^\C$ of $\theta$ and define 
$$
(\mathfrak{g}^\vee,\theta^\vee):=(\mathfrak{k}\oplus i\mathfrak{p},\theta^\C\vert_{\mathfrak{k}+i\mathfrak{p}}).
$$
If $(\mathfrak{g},\theta)$ is an OSLA of compact type, then $(\mathfrak{g}^\vee,\theta^\vee)$ is an OSLA of non-compact type and the other way around \cite[Prop. 2.1, Ch. V]{HG01}.\\
\ \\
\textbf{Euclidean type.}
In principle there is a third type of symmetric spaces, called \emph{Euclidean type}. This type occurs if $\mathfrak{p}$ is an abelian ideal of $\mathfrak{g}$. Indeed all symmetric spaces of Euclidean type can be isometrically identified with an Euclidean space. Furthermore, every symmetric space $M$ can be decomposed as a product \cite[Prop. 4.2, Ch. V]{HG01} 
$$
M=M_0\times M_-\times M_+,
$$
where $M_0$ is a Euclidean space and $M_+$, $M_-$ are
symmetric spaces of the compact and non-compact type, respectively.\\
\ \\
\textbf{Uniqueness of invariant metric.}
Irreducibility also ensures that there is (up to scalar multiple) a unique $G$-invariant metric, i.e. any invariant metric is induced by $\pm\lambda^2 B\vert_{\mathfrak{p}\times\mathfrak{p}}$ for some real constant $\lambda\neq 0$ and the sign chosen such that $\pm B\vert_{\mathfrak{p}\times\mathfrak{p}}$ is positive definite. In particular the $G$-invariant metric $g$ we started with is of this form. To see this assume there was another one. This would define another $K$-invariant scalar product $\langle\cdot,\cdot\rangle$ on $\mathfrak{p}$. Now we define a symmetric operator $S:\mathfrak{p}\to\mathfrak{p}$ implicitly via
$$
\langle\cdot,\cdot\rangle=B\vert_\mathfrak{p}(S\cdot,\cdot).
$$
As both scalar products are $K$-invariant $S$ must commute with all elements of $K$. Now $S$ is symmetric and therefore diagonalizable over $\R$. All eigenspaces are $K$-invariant subspaces, but the action of $K$ on $\mathfrak{p}$ is irreducible so $S$ must be of the form $\lambda^2\cdot\mathrm{id}$ for some $\lambda\neq 0$.\\
\ \\
\textbf{Curvature of symmetric spaces \cite[Ch.\ IV.4]{HG01}.}
As the sectional curvature is invariant under isometries, it is enough to determine sectional curvature at the base point. At the base point $o$ we can exploit the fact that $T_oM\cong \mathfrak{p}$. We can thus express the curvature tensor $R$ of $M$ at $o$ in terms of the curvature tensor of $G$ (\cite[Thm. 4.2, Ch. IV]{HG01}), i.e.\
\begin{equation}\label{curvature}
R(a,b)c (o)=-[[a,b],c]\ \ \ \forall a,b,c\in\mathfrak{p}\cong T_oM.
\end{equation}
The Riemannian metric $g$ on $M$ is induced by $+\lambda^2 B$ or $-\lambda^2 B$ where $M$ is of compact, respectively non-compact type and some real constant $\lambda\neq 0$. As
$$
g_o(a, R(a,b)b)=\mp \lambda^2 B(a,[[a,b],b])=\pm \lambda^2 B([a,b],[a,b]) \ \ \ \forall a,b\in\mathfrak{p}
$$
and $B\vert_{\mathfrak{t}\times\mathfrak{t}}>0$, it follows that if $M$ is of compact resp.\ non-compact type it has non-negative resp.\ non-positive sectional curvature. The sectional curvature of spaces of Euclidean type vanishes identically \cite[Thm.\ 3.1, Ch.\ V.3]{HG01}. \\
\ \\
\textbf{Totally geodesic subspaces \cite[Ch.\ IV.7]{HG01}.}
A submanifold $N\subset M$ is \emph{geodesic} at $p\in N$ if for all $v\in T_p N\subset T_p M$ the $M$-geodesic with tangent $v$ is contained in $N$. The submanifold is \emph{totally geodesic} if it is geodesic at every point in $N$. A totally geodesic submanifold of a symmetric space is itself a symmetric space as the geodesic symmetries restrict to the submanifold. The algebraic characterization of totally geodesic submanifolds of symmetric spaces leads to the notion of \emph{Lie triple systems}.
\begin{Definition}\label{Lie triple system}
Let $\mathfrak{g}$ be a Lie algebra. A Lie triple system is a vector space $\mathfrak{n}\subset \mathfrak{g}$ such that
$$
[\mathfrak{n},[\mathfrak{n},\mathfrak{n}]]\subset\mathfrak{n}.
$$
\end{Definition}
\noindent
Indeed, $N\subset M$ is a totally geodesic submanifold containing $o$ if $\mathfrak{n}:=\dd\pi^{-1}(T_o N)\subset\mathfrak{p}$ is a Lie triple system. Conversely, if $\mathfrak{n}\subset \mathfrak{p}$ is a Lie triple system, then $N:=\exp_o\dd\pi (\mathfrak{n})$ is a totally geodesic submanifold \cite[Thm.\ 7.2, Ch.\ IV]{HG01}.\\
\ \\
\textbf{Maximal flat subspaces \cite[Ch.\ V.6]{HG01}.}
A Riemannian manifold $F$ is \emph{flat} if all sectional curvatures vanish identically. A \emph{maximal flat} $F\subset M$ is a flat submanifold that is not contained in a flat submanifold of higher dimension. Via the exponential map, maximal flats $F$ correspond one-to-one to maximal abelian subalgebras $\mathfrak{a}\subset \mathfrak{p}$ \cite[Thm. 6.1, Ch. V]{HG01}.
\begin{theorem}[\cite{HG01} Thm. 6.2, Ch. V]\label{transitive on maximal flats}
Let $M$ be a symmetric space of compact/ non-compact type and $\mathfrak{a}_1,\mathfrak{a}_2\subset \mathfrak{p}$ maximal abelian subalgebras, then there exists a $k\in K$ such that 
$$
\mathrm{Ad}_k(\mathfrak{a}_1)=\mathfrak{a}_2.
$$
\end{theorem}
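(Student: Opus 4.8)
The plan is to combine compactness of $K$ with a maximization argument, reducing the statement to the existence of \emph{regular} elements in a maximal abelian subspace. Call $H\in\mathfrak{a}$ regular if its centralizer $Z_\mathfrak{p}(H):=\{X\in\mathfrak{p}:[H,X]=0\}$ equals $\mathfrak{a}$. Such elements exist: maximality of $\mathfrak{a}$ first gives that no $X\in\mathfrak{p}$ outside $\mathfrak{a}$ can commute with all of $\mathfrak{a}$ (else $\mathfrak{a}+\R X$ would be a larger abelian subspace), and the restricted root space decomposition of $\mathfrak{p}$ with respect to $\mathfrak{a}$ then exhibits the non-regular $H$ as the union of the finitely many hyperplanes $\ker\alpha$, $\alpha$ a restricted root. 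Hence regular elements form a dense open subset of $\mathfrak{a}$, and I would fix regular elements $H_1\in\mathfrak{a}_1$ and $H_2\in\mathfrak{a}_2$.

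Since $K$ is compact, the continuous function $f:K\to\R$, $f(k):=B(\mathrm{Ad}_k H_1,H_2)$, attains a maximum at some $k_0\in K$; write $H_1':=\mathrm{Ad}_{k_0}H_1$. Differentiating along $t\mapsto\exp(tX)k_0$ for $X\in\mathfrak{k}$ and using the ad-invariance of the Killing form, the critical point condition reads
$$
0=\frac{\dd}{\dd t}\Big|_{t=0}B(\mathrm{Ad}_{\exp(tX)}H_1',H_2)=B([X,H_1'],H_2)=B(X,[H_1',H_2]),\qquad\forall X\in\mathfrak{k}.
$$
As $H_1',H_2\in\mathfrak{p}$, the bracket $[H_1',H_2]$ lies in $[\mathfrak{p},\mathfrak{p}]\subseteq\mathfrak{k}$ by the commutator relations \eqref{commutator p,k}, and $B$ restricted to the compact subalgebra $\mathfrak{k}$ is negative definite, hence non-degenerate; therefore $[H_1',H_2]=0$.

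It remains to upgrade this commuting relation to conjugacy of the full subspaces. Since $\mathrm{Ad}_{k_0}$ is a Lie algebra automorphism commuting with $\theta$, it preserves $\mathfrak{p}$ and carries maximal abelian subspaces to maximal abelian subspaces, so $\mathrm{Ad}_{k_0}\mathfrak{a}_1$ is maximal abelian and $H_1'$ is again regular, giving $Z_\mathfrak{p}(H_1')=\mathrm{Ad}_{k_0}\mathfrak{a}_1$. From $[H_1',H_2]=0$ and regularity of $H_2$ we obtain $H_1'\in Z_\mathfrak{p}(H_2)=\mathfrak{a}_2$. Because $\mathfrak{a}_2$ is abelian and contains $H_1'$, every element of $\mathfrak{a}_2$ commutes with $H_1'$, so $\mathfrak{a}_2\subseteq Z_\mathfrak{p}(H_1')=\mathrm{Ad}_{k_0}\mathfrak{a}_1$; maximality of $\mathfrak{a}_2$ then forces $\mathfrak{a}_2=\mathrm{Ad}_{k_0}\mathfrak{a}_1$, which is the claim.

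I expect the genuine obstacle to be the first step, namely producing a regular element, i.e. setting up just enough restricted-root theory of $(\mathfrak{g},\theta)$ to guarantee that some $H\in\mathfrak{a}$ has centralizer exactly $\mathfrak{a}$. Once this is in place the maximization and the differentiation are short, and the argument runs uniformly for the compact and non-compact type; the type enters only through the sign making $\pm B\vert_{\mathfrak{p}\times\mathfrak{p}}$ positive definite and through the compactness of $K$, both of which hold in either case.
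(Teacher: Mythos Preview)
The paper does not give its own proof of this theorem; it is quoted from Helgason \cite[Thm.~6.2, Ch.~V]{HG01} as a background result. Your argument is precisely the classical proof found there: pick regular elements, maximize $B(\mathrm{Ad}_k H_1,H_2)$ over the compact group $K$, deduce $[H_1',H_2]=0$ from the first-order condition and non-degeneracy of $B\vert_{\mathfrak{k}\times\mathfrak{k}}$, and conclude via regularity. It is correct as written.
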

\noindent
The Theorem yields a well-defined notion of the rank of a symmetric space.
\begin{Definition}\label{rank}
The rank of $M$ is the dimension of maximal flats.
\end{Definition}
\noindent
\textbf{Locally symmetric spaces.}
We call a Riemannian manifold $(M,g)$ \emph{locally symmetric} if it is isometrically covered by a symmetric space. Locally symmetric spaces can also be characterized by the following theorem.
\begin{Theorem}[\cite{HG01} Thm.\ 1.1, Ch.\ IV]\label{locally symmetric}
    A Riemann manifold $(M,g)$ is locally symmetric if and only if the Riemannian curvature tensor is parallel, i.e. $\nabla R=0$ where $\nabla$ denotes the Levi-Civita connection.
\end{Theorem}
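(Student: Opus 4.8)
The plan is to prove both implications through the single principle that an isometry commutes with covariant differentiation and therefore preserves the curvature tensor $R$ together with all its covariant derivatives.

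For the direction ``locally symmetric $\Rightarrow \nabla R=0$'', suppose $(M,g)$ is isometrically covered by a symmetric space $\tilde M$. Since a Riemannian covering is a local isometry, it suffices to show $\nabla \tilde R=0$ on $\tilde M$. Fixing $\tilde p\in \tilde M$, its geodesic symmetry $s_{\tilde p}$ is an isometry with $s_{\tilde p}(\tilde p)=\tilde p$ and $(\dd s_{\tilde p})_{\tilde p}=-\mathrm{id}$. Isometry invariance gives $s_{\tilde p}^*(\nabla \tilde R)=\nabla \tilde R$, and evaluating at the fixed point $\tilde p$, where $\nabla \tilde R$ is a tensor in an odd number (five) of tangent arguments, the relation $(\dd s_{\tilde p})_{\tilde p}=-\mathrm{id}$ forces $(\nabla \tilde R)_{\tilde p}=(-1)^5(\nabla \tilde R)_{\tilde p}$, hence $(\nabla \tilde R)_{\tilde p}=0$. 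As $\tilde p$ was arbitrary, $\nabla \tilde R\equiv 0$, and therefore $\nabla R\equiv 0$.

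For the converse, assume $\nabla R=0$, and first I would show that every local geodesic symmetry is an isometry. Fix $p\in M$ and, on a normal ball, set $s_p:=\exp_p\circ(-\mathrm{id})\circ\exp_p^{-1}$, which fixes $p$ with $(\dd s_p)_p=-\mathrm{id}$. To see it is isometric I would invoke Cartan's local isometry theorem: a linear isometry $L:T_pM\to T_pM$ extends to a local isometry $\exp_p\circ L\circ\exp_p^{-1}$ provided that, along every geodesic $\gamma_v(t)=\exp_p(tv)$ from $p$, the operator $P_{-v,t}\circ L\circ P_{v,t}^{-1}$ intertwines the curvature tensors at $\gamma_v(t)$ and $\gamma_{-v}(t)$, where $P$ denotes parallel transport. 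Now $\nabla R=0$ says exactly that $R$ is invariant under parallel transport, so $P_{v,t}^{-1}R_{\gamma_v(t)}=R_p$ independently of $t$; moreover $L=-\mathrm{id}$ acts trivially on the four-tensor $R_p$ since four is even. Hence the curvature-matching condition holds automatically, Cartan's theorem applies, and $s_p$ is the desired isometric geodesic symmetry.

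It remains to globalize. I would pass to the universal Riemannian cover $\hat M\to M$, to which $\nabla R=0$ lifts and which is complete and simply connected when $M$ is; there the local geodesic symmetries extend to global ones, the only obstruction to extension along a path being a curvature mismatch that $\nabla R=0$ again rules out. Consequently each $s_{\hat p}$ is a global isometry, $\hat M$ is a symmetric space, and $M$ is isometrically covered by it. The main obstacle is the converse, and within it the verification that $s_p$ is genuinely an isometry: this is the content of Cartan's theorem, whose proof rests on a Jacobi-field comparison showing that matching the parallel curvature operators along geodesics forces the candidate map to preserve lengths. The subsequent globalization is comparatively routine but genuinely uses completeness and simple connectivity to eliminate monodromy.
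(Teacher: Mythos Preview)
The paper does not give its own proof of this statement; it is quoted verbatim as a classical result with a citation to Helgason \cite[Thm.\ 1.1, Ch.\ IV]{HG01}. Your argument is the standard one and is essentially what one finds in Helgason: the $(-1)^5$ parity trick for the forward implication, and Cartan's local isometry theorem (via parallel curvature) for the converse, followed by passage to the universal cover.

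One small point worth tightening: with the paper's definition of ``locally symmetric'' as \emph{isometrically covered by a symmetric space}, completeness of $M$ is not optional but forced by the forward direction (a symmetric cover is complete, hence so is $M$). Conversely, for $\nabla R=0$ alone to yield a symmetric cover you genuinely need $M$ complete so that the universal cover is complete and the local symmetries globalize; your phrase ``complete and simply connected when $M$ is'' hints at this but could be stated more carefully. In Helgason's original formulation, ``locally symmetric'' means the local geodesic symmetries are isometries, which is a purely local condition and does not require completeness; the paper's definition is the global variant, and the two coincide only in the complete case.
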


\subsection{Hermitian symmetric spaces}\label{HSS}
Until now we did not discuss any relations between symmetric spaces and symplectic manifolds. We will see that Hermitian symmetric spaces are precisely at the intersection, i.e.\ they are symmetric spaces with an $G$-invariant symplectic form. 

\begin{Definition}
A Hermitian symmetric space is a connected complex manifold with Hermitian structure $(M,g,j)$, such that the geodesic reflection at any point is a holomorphic isometry of $M$. Explicitly this means for any point $p \in M$ there is a holomorphic isometry $s_p : M \to M$ that satisfies
$$
s_p (p) = p\ \  \& \ \ (\dd s_p)_p = -\mathrm{id}.
$$
\end{Definition}

\noindent
All Hermitian symmetric spaces are symmetric spaces, so everything discussed in the previous section continues to hold. Nevertheless the class of Hermitian symmetric spaces is much smaller than the class of symmetric space. Indeed irreducible Hermitian symmetric spaces can be characterized by the following theorem. 
\begin{theorem}[\cite{HG01} Thm. 6.1. Ch. VIII]\label{characterization HSS}
\begin{itemize}
    \item[(i)] The compact irreducible Hermitian symmetric spaces are exactly the manifolds $G/K$ where $G$ is a connected compact simple Lie group with center $\lbrace e\rbrace$ and $K$ has nondiscrete center and is a maximal connected proper subgroup of $G$.
    \item[(ii)] The noncompact irreducible Hermitian symmetric spaces are exactly the manifolds $G^\vee/K$ where $G^\vee$ is a connected noncompact simple Lie group with center $\lbrace e\rbrace$ and $K$ has nondiscrete center and is a maximal compact subgroup of $G^\vee$.
\end{itemize}
\end{theorem}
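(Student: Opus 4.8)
The statement is Helgason's classical structure theorem, so rather than reproving it from scratch I sketch the strategy I would follow. Throughout I take $G=\mathrm{Is}^0(M)$ and write $(\mathfrak{g},\theta)$ for the associated OSLA with Cartan decomposition $\mathfrak{g}=\mathfrak{k}\oplus\mathfrak{p}$ and base point $o$, so that $\mathfrak{p}\cong T_oM$. Since $M$ is irreducible, the OSLA is irreducible; in particular $\mathrm{ad}(\mathfrak{k})$ acts irreducibly on $\mathfrak{p}$, and this forces $\mathfrak{g}$ to be simple, since a nontrivial $\theta$-stable ideal decomposition of $\mathfrak{g}$ would split $\mathfrak{p}$ into proper $\mathrm{ad}(\mathfrak{k})$-invariant subspaces. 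For $M$ of compact type $G$ is compact and, being the identity component of the isometry group of an irreducible symmetric space, is its adjoint group and hence centerless. By the duality between compact and non-compact OSLAs recalled above, it suffices to treat the compact case (i); applying the construction to $(\mathfrak{g}^\vee,\theta^\vee)=(\mathfrak{k}\oplus i\mathfrak{p},\theta^\C|_{\mathfrak{k}+i\mathfrak{p}})$ then yields (ii).

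First I would pass from the Hermitian datum to linear algebra on $\mathfrak{p}$. The geodesic symmetry $s_o$ is a holomorphic isometry with $(\dd s_o)_o=-\mathrm{id}$; since $\nabla j$ is a tensor with an odd number of slots it is reversed by $(\dd s_o)_o$, while being invariant under the isometry $s_o$, which preserves both $\nabla$ and $j$. Hence $\nabla j=0$, so $M$ is Kähler, and $G$-invariance of $j$ produces an $\mathrm{Ad}(K)$-invariant complex structure $J_0\in\mathrm{End}(\mathfrak{p})$ with $J_0^2=-\mathrm{id}$ that is skew-symmetric with respect to $g=\pm\lambda^2 B|_{\mathfrak{p}}$ and commutes with $\mathrm{ad}(\mathfrak{k})|_\mathfrak{p}$.

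The heart of the forward direction is to realize $J_0$ by a central element of $\mathfrak{k}$. I would extend $J_0$ to $\tilde J_0\in\mathrm{End}(\mathfrak{g})$ by setting it to zero on $\mathfrak{k}$, and show $\tilde J_0$ is a derivation of $\mathfrak{g}$. The only nontrivial Leibniz case is $A,B\in\mathfrak{p}$, where $[A,B]\in\mathfrak{k}$ gives $\tilde J_0[A,B]=0$, so one must verify $[J_0A,B]+[A,J_0B]=0$; this follows from a short computation pairing against $T\in\mathfrak{k}$ with the Killing form, using that $J_0$ is $B$-skew, commutes with $\mathrm{ad}(\mathfrak{k})$, and that $B|_\mathfrak{k}$ is non-degenerate. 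Since $\mathfrak{g}$ is semisimple, every derivation is inner, so $\tilde J_0=\mathrm{ad}(Z_0)$; as $\tilde J_0$ preserves the $\pm1$-eigenspaces of $\theta$ we get $Z_0\in\mathfrak{k}$, and as $\tilde J_0|_\mathfrak{k}=0$ we get $[Z_0,\mathfrak{k}]=0$, i.e. $0\neq Z_0\in\mathfrak{z}(\mathfrak{k})$. Thus $K$ has non-discrete center. Moreover $\mathrm{ad}(Z_0)=J_0$ is invertible on $\mathfrak{p}$ and zero on $\mathfrak{k}$, so $\mathfrak{k}$ is exactly the centralizer of $Z_0$; hence $K=Z_G(\exp\R Z_0)$ is the centralizer of a torus, therefore connected, and it is maximal among proper connected subgroups because $\mathfrak{p}$ has no proper $\mathrm{ad}(\mathfrak{k})$-invariant subspace.

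For the converse I would start from a nonzero $Z_0\in\mathfrak{z}(\mathfrak{k})$: then $\mathrm{ad}(Z_0)|_\mathfrak{p}$ is $B$-skew and commutes with the irreducible $\mathrm{ad}(\mathfrak{k})$, so by real Schur its square is a negative scalar, and a rescaling $J_0$ is an $\mathrm{Ad}(K)$-invariant almost complex structure; being invariant it is parallel, hence integrable, and $\omega(X,Y)=g(J_0X,Y)$ is a closed invariant form, giving the Hermitian symmetric structure. Note the quaternionic-Kähler symmetric spaces are excluded automatically, as there $\mathfrak{z}(\mathfrak{k})=0$, so no separate argument ruling out the quaternionic commutant is needed. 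The step I expect to be the genuine obstacle is the classificatory content hidden in the word "exactly": the reduction to $G$ simple and centerless, the exclusion of reducible and Euclidean factors, and the verification that the centralizer $K=Z_G(\exp\R Z_0)$ is maximal and proper all rely on the full structure theory of compact simple Lie groups. The derivation argument producing $Z_0$ is clean, but pinning down the precise group-level hypotheses is where the real effort lies.
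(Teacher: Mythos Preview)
The paper does not give its own proof of this theorem: it is quoted verbatim from Helgason \cite[Thm.~6.1, Ch.~VIII]{HG01} and used as a black box. So there is no ``paper's proof'' to compare against. What the paper does supply, in the paragraph \emph{Uniqueness of invariant complex structure} immediately following, is the converse half of your argument (real Schur on $\mathrm{ad}_Z|_\mathfrak{p}$ to get $\mathrm{ad}_Z^2=-\mathrm{id}_\mathfrak{p}$), and this matches your sketch exactly.

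Your outline is the standard Helgason route: the derivation trick producing $Z_0\in\mathfrak{z}(\mathfrak{k})$ with $\mathrm{ad}(Z_0)|_\mathfrak{p}=J_0$ is the heart of the forward direction, and you have it right, including the key Leibniz check $[J_0A,B]+[A,J_0B]=0$ for $A,B\in\mathfrak{p}$. You are also correct to flag that the group-level claims (centerlessness of $G$, connectedness and maximality of $K$) are where the structure theory enters and where a full proof would need more care; in particular, your one-line ``$K=Z_G(\exp\R Z_0)$ is the centralizer of a torus, therefore connected'' implicitly uses that $G$ is compact and simply connected or else an additional argument, and the passage from $\mathfrak{k}=\mathfrak{z}_\mathfrak{g}(Z_0)$ to maximality of $K$ among all connected proper subgroups (not just those containing $Z_0$) deserves a sentence. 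But as a sketch of Helgason's proof this is accurate, and since the paper simply cites the result, nothing more is required here.
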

\noindent
The center of $K$ can be described more accurately in both cases compact and noncompact. 
\begin{Proposition}[\cite{HG01} Thm. 6.1. Ch. VIII]\label{center is circle}
The center $C(K)$ of the group $K$ in Theorem \ref{characterization HSS} (i) and (ii) is analytically isomorphic to the circle group.
\end{Proposition}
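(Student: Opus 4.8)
The plan is to descend to the level of Lie algebras and prove that the center $\mathfrak{z}(\mathfrak{k})$ of $\mathfrak{k}=\mathrm{Lie}(K)$ is exactly one-dimensional; since $K$ is compact and connected in both cases (i) and (ii), its center is then a one-dimensional torus, i.e.\ a circle. I would first dispose of the non-compact case by duality: under $(\mathfrak{g},\theta)\leftrightarrow(\mathfrak{g}^\vee,\theta^\vee)$ the spaces $M$ and $M^\vee$ share the same isotropy algebra $\mathfrak{k}$ and the same (maximal compact) subgroup $K$, and the isotropy representation of $K$ on $\mathfrak{p}$ is isomorphic to the one on $i\mathfrak{p}$ via $X\mapsto iX$. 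Hence it suffices to study the isotropy representation $\rho:=\mathrm{Ad}|_K:K\to\mathrm{GL}(\mathfrak{p})$, $\mathfrak{p}\cong T_oM$. By irreducibility of the OSLA (Definition \ref{irreducible OSLA}) $\rho$ is irreducible over $\R$; the invariant complex structure $j$ restricts to an $\mathrm{Ad}(K)$-invariant complex structure on $\mathfrak{p}$, so $j$ lies in the commutant $\mathcal{C}:=\mathrm{End}_K(\mathfrak{p})$ of $\rho$; and since $\mathfrak{k}$ is compact the metric $g_o$ on $\mathfrak{p}$ is $\mathrm{Ad}(K)$-invariant, so each operator $\mathrm{ad}(Z)|_{\mathfrak{p}}$ with $Z\in\mathfrak{k}$ is skew-symmetric with respect to $g_o$.

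The heart of the argument concerns how the center acts. For $Z\in\mathfrak{z}(\mathfrak{k})$ the operator $\mathrm{ad}(Z)$ preserves $\mathfrak{p}$ (by \eqref{commutator p,k}) and commutes with $\rho$ (as $Z$ is central in $\mathfrak{k}$), so $\mathrm{ad}(Z)|_{\mathfrak{p}}\in\mathcal{C}$, and it is skew-symmetric by the previous paragraph. The assignment $Z\mapsto\mathrm{ad}(Z)|_{\mathfrak{p}}$ is injective: if $\mathrm{ad}(Z)|_{\mathfrak{p}}=0$ then, $Z$ being central in $\mathfrak{k}$, also $\mathrm{ad}(Z)|_{\mathfrak{k}}=0$, so $\mathrm{ad}(Z)=0$ and $Z=0$ by semisimplicity of $\mathfrak{g}$.

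Now I would invoke Schur's lemma: $\mathcal{C}$ is a finite-dimensional real division algebra, hence $\mathcal{C}\cong\R$, $\C$, or $\hh$, and the presence of $j\in\mathcal{C}$ rules out $\R$. The key observation is that $\mathfrak{z}(\mathfrak{k})$ is abelian, so $\mathrm{ad}(\mathfrak{z}(\mathfrak{k}))|_{\mathfrak{p}}$ is a \emph{commuting} subspace of the skew-symmetric elements of $\mathcal{C}$. In $\C$ the skew-symmetric part is the line $\R j$; in $\hh$ it is the three-dimensional space of imaginary quaternions, but two imaginary quaternions commute if and only if they are linearly dependent. In either case a commuting subspace of skew elements is at most one-dimensional, so $\dim\mathrm{ad}(\mathfrak{z}(\mathfrak{k}))|_{\mathfrak{p}}\le 1$, whence $\dim\mathfrak{z}(\mathfrak{k})\le 1$ by injectivity. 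Combined with the hypothesis that $K$ has nondiscrete center ($\dim\mathfrak{z}(\mathfrak{k})\ge 1$) this gives $\dim\mathfrak{z}(\mathfrak{k})=1$. Picking a generator $Z_0$, one finds $\mathrm{ad}(Z_0)|_{\mathfrak{p}}=c\,j$ for some $c\in\R$, so $\exp(t\,\mathrm{ad} Z_0)$ is a rotation on $\mathfrak{p}$ and trivial on $\mathfrak{k}$, hence periodic; since $G$ is connected with trivial center, $\mathrm{Ad}$ is injective and $t\mapsto\exp(tZ_0)$ has closed image $T\cong S^1$ central in $K$. Thus $C(K)^0=T$ is a circle, and the reductive splitting $\mathfrak{k}=\mathfrak{z}(\mathfrak{k})\oplus[\mathfrak{k},\mathfrak{k}]$ with $K$ connected gives $C(K)\cong S^1$.

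I expect the main obstacle to be the central dimension count, specifically treating all three Schur alternatives $\R,\C,\hh$ uniformly; the clean way through is precisely the remark that $\mathrm{ad}(\mathfrak{z}(\mathfrak{k}))|_{\mathfrak{p}}$ is a commuting family of skew operators, which forces dimension at most one in every case and so avoids having to decide the representation type of $\mathfrak{p}$. A secondary technical point is the passage from the Lie-algebra computation to the group statement, where one must verify that the central circle $T$ exhausts $C(K)$ and not merely its identity component.
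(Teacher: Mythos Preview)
The paper does not supply its own proof of this proposition; it is quoted verbatim from Helgason \cite[Thm.~6.1, Ch.~VIII]{HG01} and used as a black box. So there is nothing to compare your argument against inside this paper.

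On its own merits, your Lie-algebra argument is sound and is indeed one of the standard routes to the result. The injection $\mathfrak z(\mathfrak k)\hookrightarrow\mathcal C=\mathrm{End}_K(\mathfrak p)$ into skew-symmetric intertwiners, combined with Schur over $\R$, is exactly the right mechanism. One small tightening: you do not actually need to identify the skew part of $\mathcal C$ with the imaginary quaternions in the $\hh$ case (which would require knowing that the adjoint involution on $\mathcal C$ coincides with quaternionic conjugation). It suffices to note that any \emph{symmetric} element $S\in\mathcal C$ is diagonalisable with $K$-invariant eigenspaces, hence $S\in\R\cdot\mathrm{id}$ by irreducibility; then for commuting skew $A,B$ one has $AB$ symmetric, so $B\in\R A^{-1}=\R A$. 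This handles all three Schur alternatives uniformly without ever naming them.

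Your acknowledged gap at the end is genuine: from $\dim\mathfrak z(\mathfrak k)=1$ you get only that the identity component $C(K)^0$ is a circle. To conclude $C(K)=C(K)^0$ you can argue on the group level exactly as you did on the Lie-algebra level: any $k\in C(K)$ has $\mathrm{Ad}_k|_{\mathfrak p}\in\mathcal C$ orthogonal and commuting with $\mathrm{Ad}(C(K)^0)|_{\mathfrak p}$; the same symmetric-elements-are-scalar observation forces $\mathrm{Ad}_k|_{\mathfrak p}$ to lie in the one-parameter group $\{\exp(t\,\mathrm{ad}Z_0)|_{\mathfrak p}\}$, and since $\mathrm{Ad}_k|_{\mathfrak k}=\mathrm{id}$ and $\mathrm{Ad}:G\to\mathrm{GL}(\mathfrak g)$ is injective (trivial center of $G$), this pins down $k\in C(K)^0$.
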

\noindent
\textbf{Uniqueness of invariant complex structure.}
By Prop.\ \ref{center is circle} we can identify $C(K)\cong S^1$ and so the Lie algebra of $C(K)$ is identified with $i\R$. Denote $Z\in \mathfrak{g}$ the element that corresponds to $i$ under this identification. Now $A:=\mathrm{ad}_Z$ is an antisymmetric endomorphism of $\mathfrak{p}$ which is also $\mathrm{Ad}_K$-invariant. Thus $A^2$ is symmetric, negative definite and $\mathrm{Ad}_K$-invariant. By the same argument (similar to Schur's lemma) as in the proof of uniqueness of the invariant metric, this means $\mathrm{ad}_Z^2=-\lambda^2\mathrm{id}_\mathfrak{p}$ for some $\lambda\in\R$. Finally, $\lambda=1$ because $e^{2\pi t A}$ has eigenvalue $e^{\pm 2\pi t i\lambda}$ and for $t=1$ this eigenvalue must be equal to $1$ since $e^{2\pi A}$ is the identity.\\
\ \\
Thus $j_o=\mathrm{ad}_Z$ defines a complex structure on $T_oM\cong \mathfrak{p}$. Observe that $j_o$ is $K$-invariant as for all $k\in\mathfrak{k}$ and $v\in T_o M\cong\mathfrak{p}$ we have
$$
\mathrm{ad}_k(j_o (v))=\mathrm{ad}_k([Z,v])=-[Z,[v,k]]-[v,[k,Z]]= [Z, \mathrm{ad}_k v]=j_o(\mathrm{ad}_k(v)).
$$
Therefore we can extend $j_o$ equivariantly to a $G$-invariant almost complex structure $j$ on $M$.
Now the invariant metric $g$ and the invariant almost complex structure $j$ determine an invariant Hermitian metric $h$ on $TM^\C$. Analogously to the proof of uniqueness of the Riemannian metric $g$ one shows that $G$-invariant Hermitian metric $h$ on $TM^\C$ is also unique up to scalar multiple. To do so observe that $j$ promotes the adjoint representation to a complex irreducible representation on $\mathfrak{p}^\C$. In particular one can directly apply Schur's lemma to obtain uniqueness. As $g$ and $h$ are unique up to scalar multiple, the complex structure $j$ is unique up to sign. In particular the almost complex structure $j$ must up to sign coincide with the complex structure we started with and is therefore integrable.  \\
\ \\
\noindent
\textbf{Intermezzo -- Root systems.}
Let $\mathfrak{g}^\C$ be a semi-simple complex Lie algebra. A \emph{Cartan subalgebra} $\mathfrak{h}^\C$ is a maximal abelian subalgebra such that, for each $h\in \mathfrak{h}^\C$, $\mathrm{ad}_h$ is diagonalizable.  In particular, the operators $\mathrm{ad}_h$ can be diagonalized simultaneously. This leads to the definition of roots and root spaces.
\begin{Definition}\label{root system}
A root of $(\mathfrak{g}^\C,\mathfrak{h}^\C)$ is a non-zero linear form $\alpha:\mathfrak{h}^\C\to\C$ such that the corresponding root space 
$$
\mathfrak{g}_\alpha:=\lbrace X\in\mathfrak{g}^\C\ \vert\  \mathrm{ad}_hX=\alpha(h) X\ \forall\ h\in \mathfrak{h}^\C\rbrace
$$
is nonzero. Denote the set of roots by $\Delta$.
\end{Definition}
\noindent
The root spaces are simultaneous eigenspaces of $\mathrm{ad}_h$, for all $h\in\mathfrak{h}^\C$, and we get a decomposition of $\mathfrak{g}^\C$ into the direct sum of root spaces
$$
\mathfrak{g}^\C=\mathfrak{g}_0\oplus \bigoplus_{\alpha\in\Delta}\mathfrak{g}_\alpha
$$
The subspace $\mathfrak{g}_0$ is the centralizer of $\mathfrak{h}^\C\subset\mathfrak{g}^\C$. As for example explained in \cite[Ch. 3, Thm.\ 3]{S00}, we have that
$
\mathfrak{g}_0=\mathfrak{h}^\C.
$
Note that using the Jacobi-identity $[\mathfrak{g}_\alpha,\mathfrak{g}_\beta]\subset\mathfrak{g}_{\alpha+\beta}$, in particular $\mathfrak{h}_\alpha:=[\mathfrak{g}_\alpha,\mathfrak{g}_{-\alpha}]\subset \mathfrak{h}^\C$ for all $\alpha,\beta\in\Delta$. Indeed, one can show (\cite[Ch. VI, Thm. 2]{S00}) that for all roots $\alpha\in\Delta$ the spaces $\mathfrak{g}_\alpha$ and $\mathfrak{h}_\alpha$ are one-dimensional. Then there exists a unique element $H_\alpha\in\mathfrak{h}_\alpha$ determined by $\alpha(H_\alpha)=2$. It is easy to see that for each non-zero element $X_\alpha\in \mathfrak{g}_\alpha$ there exists an element $Y_\alpha$ in $\mathfrak{g}_{-\alpha}$ such that
$$
[H_\alpha, X_\alpha]= 2 X_\alpha,\ \ \ [H_\alpha, Y_\alpha]=-2 Y_\alpha\ \ \text{and}\ \ [X_\alpha, Y_\alpha]=H_\alpha.
$$
These elements generate a copy of $\mathfrak{sl}(2,\C)$ that we shall denote by $\mathfrak{g}[\alpha]$.\\
\ \\
\textbf{Polyspheres/ Polydiscs.}
We go back to our set up before the intermezzo. Denote by $\mathfrak{g}^\C$ the complexification of $\mathfrak{g}$. It decomposes as
$$
\mathfrak{g}^\C=\mathfrak{k}^\C\oplus\mathfrak{p}_+\oplus\mathfrak{p}_-,
$$
where $\mathfrak{k}^\C$ is the complexification of $\mathfrak{k}$ and $\mathfrak{p}_\pm$ are the $\pm i$-eigenspaces of the complex linear extension of $j=\mathrm{ad}_Z$. Assume without loss of generality that $\mathfrak{g}=\mathfrak{k}\oplus\mathfrak{p}$ is the compact real form and denote by $\mathfrak{g}^\vee=\mathfrak{k}\oplus i\mathfrak{p}$ its non-compact dual. As described in \cite[Ch. VIII.7]{HG01} the maximal abelian subalgebra $\mathfrak{h}\subset \mathfrak{k}$ complexifies to a Cartan subalgebra $\mathfrak{h}^\C$ of $\mathfrak{g}^\C$. We denote by $\Delta$ the set of roots of $\mathfrak{g}^\C$ with respect to $\mathfrak{h}^\C$. Since $[\mathfrak{h}^\C,\mathfrak{k}^\C]\subset \mathfrak{k}^\C$ and $[\mathfrak{h}^\C,\mathfrak{p}^\C]\subset \mathfrak{p}^\C$ the root space $\mathfrak{g}_\alpha$ is either contained in $\mathfrak{k}^\C$ or $\mathfrak{p}^\C$. The roots are called \emph{compact} or \emph{non-compact}, respectively. In particular,
$$
\mathfrak{k}^\C=\mathfrak{h}^\C\oplus\bigoplus_\alpha \mathfrak{g}_\alpha,\ \ \ \ \mathfrak{p}^\C=\bigoplus_\beta \mathfrak{g}_\beta,
$$
where $\alpha$ runs over compact roots and $\beta$ runs over all non-compact roots. Moreover, one can partition into positive and negative non-compact roots according to the sign of $-i\beta(Z)$. Indeed, this is compatible with the decomposition $\mathfrak{p}^\C=\mathfrak{p}_+\oplus\mathfrak{p}_-$ and we can write
$$
\mathfrak{p}_+=\bigoplus_{\beta} \mathfrak{g}_\beta,\ \ \ \ \mathfrak{p}_-=\bigoplus_{\beta} \mathfrak{g}_{-\beta},
$$
where $\beta$ runs over all positive non-compact roots. Two roots $\alpha, \beta\in \Delta$ are called \emph{strongly orthogonal} if $\alpha\pm\beta\notin \Delta$, which implies $[\mathfrak{g}_\alpha, \mathfrak{g}_\beta]=0$. By \cite[Prop. 7.4, Ch. VIII]{HG01} there exist strongly orthogonal positive non-compact roots $\gamma_1,\ldots,\gamma_r$. Thus, the subspace
$$
\bigoplus_{i=1}^r\mathfrak{g}^\C[{\gamma_i}]\subset \mathfrak{g}^\C
$$
is isomorphic to $\mathfrak{sl}(2,\C)^r$. The intersection with $\mathfrak{g}$ resp.\ $\mathfrak{g}^\vee$ yield subalgebras of $\mathfrak{g}$ isomorphic to the compact real form $\mathfrak{su}(2)^r$ resp.\ the dual non-compact real form $\mathfrak{sl}(2,\R)^r$. Intersecting these with $\mathfrak{p}$ resp.\ $i\mathfrak{p}$ yield Lie-triple systems in $\mathfrak{g}$ resp.\ $\mathfrak{g}^\vee$ and thus realize totally geodesically embedded polyspheres resp.\ polydiscs of $G/K$ resp.\ $G^\vee/K$. Indeed Hermitian symmetric space corresponding to $\mathfrak{su}(2)$ is $(\cp^1)^r$, while the Hermitian symmetric space corresponding to $\mathfrak{sl}(2,\R)$ is $(\C\mathrm{H}^1)^r$. These polysphere resp.\ polydisc obtained be integrating the copy $\mathfrak{su}(2)^r$ resp.\ $\mathfrak{sl}(2,\R)^r$ can be translated by the adjoint action of $G$ to see that there is a polysphere resp.\ polydisc through every point. In total we obtain the polysphere resp.\ polydisc theorem.

\begin{theorem}[Polysphere/ polydisc theorem \cite{W72}, p.\ 280]\label{polysphere theorem}
Let $M$ be an irreducible Hermitian symmetric space of rank $r$. For any point $q=(x, v) \in TM$, there exists a point $p=(x_0, v_0)\in T\Sigma^r$ and a holomorphic totally geodesic embedding  
$$
\iota_{p,q}:\Sigma^r=\Sigma\times\ldots\times\Sigma\hookrightarrow M
$$
such that 
$$
\iota_{p,q}(x_0)=x\ \  \ \text{and}\ \ \ (\dd\iota_{p,q})_{x_0}v_0=v.
$$
Here, $\Sigma=\cp^1$ in the compact case and $\Sigma=\C \mathrm{H}^1$ in the non-compact case. 
\end{theorem}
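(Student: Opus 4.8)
The plan is to reduce the statement to the single standard polysphere (resp.\ polydisc) through the base point $o$ already produced from the strongly orthogonal roots $\gamma_1,\dots,\gamma_r$, and then to transport it into position using the isometry group. Let $\iota_0:\Sigma^r\to M$ be that totally geodesic holomorphic embedding, with base point $x_0\in\Sigma^r$ satisfying $\iota_0(x_0)=o$, and write $\mathfrak a':=(\dd\iota_0)_{x_0}(T_{x_0}\Sigma^r)\subset\mathfrak p$ for its tangent space, a $j$-invariant subspace of real dimension $2r$. It has the form $\mathfrak a'=\mathfrak a\oplus j\mathfrak a$, where $\mathfrak a$ is the maximal flat spanned by the real root vectors of $\gamma_1,\dots,\gamma_r$; by strong orthogonality $\mathfrak a$ is abelian of dimension $r$, hence a maximal abelian subalgebra of $\mathfrak p$. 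Recall also that the complex structure $j$ and the metric $g$ are $G$-invariant, so every $g\in G=\mathrm{Is}^0(M)$ acts as a holomorphic isometry.

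Given $q=(x,v)$, first I would use transitivity of $G$ to choose $g\in G$ with $g\cdot o=x$ and pull the tangent vector back to the base point, setting $w:=(\dd g)_o^{-1}v\in T_oM\cong\mathfrak p$. Since $g$ is a holomorphic isometry, it then suffices to find a totally geodesic holomorphic polysphere through $o$ whose tangent space at $o$ contains $w$; applying $g$ afterwards produces the embedding through $x$ with $v$ tangent. The one step with genuine content is to rotate the standard polysphere by a suitable $k\in K$ so that its tangent space captures $w$. The key fact is that every $\mathrm{Ad}_K$-orbit in $\mathfrak p$ meets the fixed maximal flat $\mathfrak a$: the line $\R w$ is abelian in $\mathfrak p$ (indeed $[\mathfrak p,\mathfrak p]\subset\mathfrak k$ and $[w,w]=0$), so it extends to a maximal abelian subalgebra $\mathfrak a_w\subset\mathfrak p$, and by Theorem \ref{transitive on maximal flats} there is $k\in K$ with $\mathrm{Ad}_k\mathfrak a=\mathfrak a_w$. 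Hence $w\in\mathfrak a_w=\mathrm{Ad}_k\mathfrak a\subset\mathrm{Ad}_k\mathfrak a'$. As $k$ fixes $o$ and acts on $T_oM\cong\mathfrak p$ by $(\dd k)_o=\mathrm{Ad}_k$, the translate $k\circ\iota_0$ is again a totally geodesic holomorphic embedding of $\Sigma^r$ through $o$, now with tangent space $\mathrm{Ad}_k\mathfrak a'\ni w$ at $o$. Choosing $v_0\in T_{x_0}\Sigma^r$ with $(\dd(k\circ\iota_0))_{x_0}(v_0)=w$ and composing, the map $\iota:=g\circ k\circ\iota_0$ together with $p=(x_0,v_0)$ satisfies $\iota(x_0)=x$ and $(\dd\iota)_{x_0}(v_0)=(\dd g)_o(w)=v$, which is exactly the assertion.

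The compact and non-compact cases run in parallel: one only replaces $\mathfrak p$ by $i\mathfrak p$, the polysphere by a polydisc, and $\Sigma=\cp^1$ by $\Sigma=\ch^1$, and Theorem \ref{transitive on maximal flats} is already stated for both types, so no separate argument is needed. The only real obstacle beyond bookkeeping is the surjectivity $\mathrm{Ad}_K(\mathfrak a)=\mathfrak p$ invoked above, which combines the conjugacy of maximal abelian subalgebras (Theorem \ref{transitive on maximal flats}) with the elementary observation that every vector of $\mathfrak p$ lies in some maximal abelian subalgebra. The remaining points — that $\mathfrak a$ is genuinely maximal abelian of dimension $r$ and sits inside $\mathfrak a'$, that $G$ acts transitively, and that "holomorphic totally geodesic" is preserved under composition with holomorphic isometries of $M$ — are routine and follow directly from the root-space description preceding the statement.
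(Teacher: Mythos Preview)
Your proposal is correct and follows essentially the same approach sketched in the paper: the paragraphs preceding the theorem construct the standard polysphere/polydisc through $o$ from the strongly orthogonal roots $\gamma_1,\dots,\gamma_r$ and then note that translating by the adjoint action of $G$ produces one through any prescribed $(x,v)$. You have fleshed out precisely the two ingredients implicit there --- transitivity of $G$ on $M$ to reach $x$, and conjugacy of maximal abelian subalgebras in $\mathfrak p$ (Theorem~\ref{transitive on maximal flats}) to rotate the tangent space so that it contains $w=(\dd g)_o^{-1}v$ --- and your bookkeeping with $\iota=g\circ k\circ\iota_0$ is clean.
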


\begin{Remark}\label{equivariance polysphere}
    These embeddings are equivariant in a double sense. We denote by $H$ either $\mathrm{SU}(2)$ in the compact case or $\mathrm{SL}(2,\R)$ in the non-compact case.\\
    \ \\
    \textbf{Translation and reparametrization:}
    For all $g\in G$ and $h\in H^r$ the following diagram commutes
    \[ \begin{tikzcd}
\Sigma^r \arrow[hookrightarrow]{r}{ \iota_{p,q}} \arrow[rightarrow]{d}{h} & M \arrow[rightarrow]{d}{g} \\%
\Sigma^r  \arrow[hookrightarrow]{r}{\iota_{hp,gq}}&  M
\end{tikzcd}.
\]
Here the action $h:\Sigma\to \Sigma$ should be interpreted as reparametrization, while the arrow $g: M\to M$ translates a polysphere resp.\ polydisc through $q=(x,v)$ to a polysphere resp.\ polydisc through $gq=(g(x),\dd g(x))$. We say a polysphere resp.\ polydisc goes through $q=(x,v)$ if it goes through $x$ and $v$ is tangent to it.\\
\ \\
\textbf{$H^r$-equivariance of $\iota_{p,q}$:} As discussed above Theorem \ref{polysphere theorem} every embedding $\iota_{p,q}$ comes from a Lie algebra monomorphism $k:\mathfrak{h}^r\hookrightarrow\mathfrak{g}$. This can be integrated to a monomorphism of Lie groups we denote by $R: H^r\hookrightarrow G$. Then $\iota_{p,q}$ is also equivariant with respect to $R$, i.e. for all $h\in H^r$ the following diagram commutes
\[ \begin{tikzcd}
\Sigma^r \arrow[hookrightarrow]{r}{ \iota_{p,q}} \arrow[rightarrow]{d}{h} & M \arrow[rightarrow]{d}{R(h)} \\%
\Sigma^r  \arrow[hookrightarrow]{r}{\iota_{p,q}}&   M
\end{tikzcd}.
\]
\end{Remark}
\begin{Remark}
    From now on we will not need root systems again. One nice thing about the proofs in this paper is, that they only use the polysphere/ polydisc theorem and no root systems explicitly. They were only included to convince the reader that the polysphere/ polydisc theorem holds, but one could equally well just use this theorem as a black box.
\end{Remark}
\ \\
\noindent
\textbf{Foliation of the tangent bundle of a Hermitian symmetric space.}
The polysphere resp.\ polydisc theorem tells us that for every point $(x,v)\in TM$ there is a polysphere resp.\ polydisc through $x$ with $v$ tangent to it. We want to investigate in what sense these $T\Sigma^r$, where $\Sigma=\cp^1$ resp.\ $\Sigma=\C\mathrm{H}^1$, form a foliation of $TM$. It is not hard to see that through some points (for example points on the zero section) go more than one $T\Sigma^r$. So our foliation will be singular, but we can characterize an open dense set of points, where the foliation is not singular.
\begin{Definition}\label{regularpoint}
An element $v\in \mathfrak{p}$ is called \emph{regular} if its centralizer
$$
Z_{\mathfrak{p}}(v):=\lbrace w\in \mathfrak{p}\ \vert \ [v,w]=0\rbrace
$$
has dimension as small as possible. 
\end{Definition}
\noindent
The smallest possible dimension is equal to the rank of M, because $Z_{\mathfrak{p}}$ can be identified with the union of all maximal abelian subalgebras $\mathfrak{a}\subset\mathfrak{p}$ containing $v$, denoted by 
$
\cup\mathfrak{a}.
$
The inclusion $\cup\mathfrak{a}\subset Z_{\mathfrak{p}}(v)$ is immediate. On the other hand any element $w\in Z_{\mathfrak{p}}(v)$ satisfies $[v,w]=0$ and can thus be extended to a maximal abelian subspace $\mathfrak{a}$ containing $v$ and $w$.
In particular regular vectors lie in a unique maximal abelian subspace explicitly given by
$$
\mathfrak{a}_v=Z_{\mathfrak{p}}(v).
$$
One can show that the set of regular vectors is open and dense \cite[Prop. 1, Ch.III.2]{S00}.
We call a point $(x,v)\in TM$ \emph{regular} if $\mathrm{Ad}_g v\in\mathfrak{p}$ is regular for $g\in G$ such that $\mathrm{Ad}_g(x)=o$. The set of regular points is denoted by $T^{\mathrm{reg}}M$.
Observe that picking $r$ vectors, each tangent to a factor in the polysphere resp.\ polydisc through $x$, we obtain a maximal abelian subspace of $\mathrm{Ad}_{g^{-1}}\mathfrak{p}$. Thus there is up to reparametrization only a unique polysphere resp.\ polydisc through a regular point $(x,v)\in T^{\mathrm{reg}}M$.   
\noindent
As all maximal flats are conjugate (see Thm. \ref{transitive on maximal flats}) the same holds true for their complexifications the polyspheres/ polydiscs.
\begin{theorem}\label{transitive on polyspheres}
Every polysphere/ polydisc through $o$ can be mapped to any other polysphere/ polydisc through $o$ by an element of $K$.
\end{theorem}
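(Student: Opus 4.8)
The plan is to reduce the geometric transitivity statement to the algebraic transitivity already provided by Theorem \ref{transitive on maximal flats}, by passing to tangent Lie triple systems at $o$. By Theorem \ref{polysphere theorem} a polysphere resp.\ polydisc $P$ through $o$ is a \emph{holomorphic} totally geodesic submanifold, so it is determined by its tangent space $\mathfrak{n}:=\dd\pi^{-1}(T_oP)\subset\mathfrak{p}$, which is a Lie triple system, via $P=\exp_o\dd\pi(\mathfrak{n})$. Since $P$ is a complex submanifold, $\mathfrak{n}$ is invariant under $j=\mathrm{ad}_Z$, and since $P\cong\Sigma^r$ with $\Sigma\in\{\cp^1,\ch^1\}$ we have $\dim_\R\mathfrak{n}=2r$, where $r$ is the rank of $M$.

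First I would locate a maximal abelian subalgebra inside $\mathfrak{n}$. The product of geodesic circles $(S^1)^r\subset\Sigma^r=P$ is a flat, totally geodesic, $r$-dimensional submanifold of $M$; as $r$ equals the rank, it is a maximal flat of $M$ by Definition \ref{rank}, so its tangent space $\mathfrak{a}\subset\mathfrak{n}\subset\mathfrak{p}$ is a maximal abelian subalgebra. The key intermediate claim is then that $\mathfrak{n}=\mathfrak{a}\oplus j\mathfrak{a}$, i.e.\ that $\mathfrak{a}$ is a totally real form of the complex space $\mathfrak{n}$; this is exactly what allows $P$ to be recovered from the single datum $\mathfrak{a}$.

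To prove the claim, note that $\mathfrak{n}$ is $j$-invariant and contains $\mathfrak{a}$, hence contains $\mathfrak{a}+j\mathfrak{a}$; so it suffices to show $\mathfrak{a}\cap j\mathfrak{a}=0$, which yields $\dim_\R(\mathfrak{a}+j\mathfrak{a})=2r=\dim_\R\mathfrak{n}$ and therefore equality. Suppose $Y\in\mathfrak{a}$ with $Y\neq 0$ and $jY\in\mathfrak{a}$. Because $\mathfrak{a}$ is abelian, the curvature formula \eqref{curvature} gives $R(Y,W)=-[[Y,W],\,\cdot\,]=0$ for all $W\in\mathfrak{a}$; taking $W=jY$ forces $R(Y,jY)=0$, so the holomorphic sectional curvature $g_o(R(Y,jY)jY,Y)$ vanishes, which is impossible for $Y\neq 0$ since an irreducible Hermitian symmetric space of compact resp.\ non-compact type has nowhere-vanishing (positive resp.\ negative) holomorphic sectional curvature. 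Hence $\mathfrak{a}\cap j\mathfrak{a}=0$ and $\mathfrak{n}=\mathfrak{a}\oplus j\mathfrak{a}$.

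With this in place the conclusion is formal. Given two polyspheres resp.\ polydiscs $P_1,P_2$ through $o$, write $\mathfrak{n}_i=\mathfrak{a}_i\oplus j\mathfrak{a}_i$ as above. By Theorem \ref{transitive on maximal flats} there is $k\in K$ with $\mathrm{Ad}_k\mathfrak{a}_1=\mathfrak{a}_2$, and since $j=\mathrm{ad}_Z$ is $\mathrm{Ad}_K$-invariant the map $\mathrm{Ad}_k$ commutes with $j$, so
$$
\mathrm{Ad}_k\mathfrak{n}_1=\mathrm{Ad}_k\mathfrak{a}_1\oplus j\,\mathrm{Ad}_k\mathfrak{a}_1=\mathfrak{a}_2\oplus j\mathfrak{a}_2=\mathfrak{n}_2.
$$
The isometry $k:M\to M$ fixes $o$ with differential $\mathrm{Ad}_k$ on $\mathfrak{p}\cong T_oM$ and intertwines the exponential maps, whence $k\cdot P_1=\exp_o\dd\pi(\mathrm{Ad}_k\mathfrak{n}_1)=\exp_o\dd\pi(\mathfrak{n}_2)=P_2$. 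I expect the main obstacle to be the identification $\mathfrak{n}=\mathfrak{a}\oplus j\mathfrak{a}$ — that is, showing a polysphere is already pinned down by the maximal flat it contains; once this is established, everything else is a direct appeal to Theorem \ref{transitive on maximal flats} together with the $\mathrm{Ad}_K$-invariance of $j$.
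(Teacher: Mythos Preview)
Your proof is correct and follows the same approach the paper sketches in the sentence preceding the theorem: polyspheres/polydiscs are complexifications of maximal flats, so transitivity of $K$ on maximal flats (Theorem \ref{transitive on maximal flats}) together with $\mathrm{Ad}_K$-invariance of $j$ yields transitivity on polyspheres/polydiscs. You have simply made precise what ``complexification'' means here via the identity $\mathfrak{n}=\mathfrak{a}\oplus j\mathfrak{a}$.

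One minor remark: the holomorphic sectional curvature argument for $\mathfrak{a}\cap j\mathfrak{a}=0$ is valid but heavier than needed. Since the flat you chose is the product of real geodesics, one per $\Sigma$-factor, and in each complex-one-dimensional factor the tangent line to a geodesic is a real line inside a complex line, the product $\mathfrak{a}$ is totally real in $\mathfrak{n}$ by construction; dimension count then gives $\mathfrak{n}=\mathfrak{a}\oplus j\mathfrak{a}$ immediately. Also, your parenthetical ``geodesic circles $(S^1)^r$'' is only literally correct in the compact case; in the non-compact case the flat is $\R^r$, though the argument is unchanged.
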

\noindent
In view of Remark \ref{equivariance polysphere} we obtain a smooth foliation of $T^{\mathrm{reg}}M$.
In particular, we can locally in a neighborhood $U\subset T^{\mathrm{reg}}M$ of a regular point $(x,v)$ define the projections
$$
\pi_i: U\to T\Sigma
$$
on the i-th factor of the product $T\Sigma^r$. In addition the following quantities are locally well-defined and smooth
$$
v_i:=\pi_i(v),\ \ \ r_i:=\vert v_i\vert,\ \ \ X_i:=(v_i)^\mathcal{H}, \ \ \ H_i:=(jv_i)^\mathcal{H},\ \ \ Y_i:=(v_i)^\mathcal{V},\ \ \ V_i:=(jv_i)^\mathcal{V}.
$$
There are two distributions on $T^{\mathrm{reg}}M$ that we will need later, denote
$$
\Upsilon:=\mathrm{span}_{\R}\lbrace Y_1,\ldots,Y_r\rbrace \subset TT^{\mathrm{reg}}M\ \ \ \text{and}\ \ \ \mathcal{D}:=\lbrace a^\#\vert a\in\mathfrak{g}\rbrace\subset TT^{\mathrm{reg}}M.
$$
At $p=(o,v)\in T^{\mathrm{reg}}M$ for any $v\in T_o^{\mathrm{reg}}M$ we can identify the following sub spaces
\begin{equation}\label{Ups und Delta distribution}
\Upsilon_p=\mathrm{span}_\R\lbrace Y_1,\ldots,Y_r\rbrace\vert_{(o,v)} \cong (\mathfrak{a}_v)^\VV
\ \ \ \text{and}\ \ \ 
\mathcal{D}_p=\mathfrak{p}^\HH\oplus[\mathfrak{k},v]^\VV.
\end{equation}
\begin{Lemma}\label{lem8}
If $v\in\mathfrak{p}$ is regular, then
$$
\mathfrak{a}^\perp_v=[\mathfrak{k},v],
$$
where $\perp$ denotes the orthogonal with respect to the Killing form $B$.
\end{Lemma}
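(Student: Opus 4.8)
The plan is to prove the identity as an orthogonal decomposition of $\mathfrak{p}$ itself. Since $[\mathfrak{k},v]\subset[\mathfrak{k},\mathfrak{p}]\subset\mathfrak{p}$ by the commutator relations \eqref{commutator p,k}, both $\mathfrak{a}_v$ and $[\mathfrak{k},v]$ sit inside $\mathfrak{p}$, so the intended reading of $\perp$ is the orthogonal complement inside $(\mathfrak{p},B\vert_{\mathfrak{p}\times\mathfrak{p}})$, and I aim to establish $\mathfrak{p}=\mathfrak{a}_v\oplus[\mathfrak{k},v]$ with $B$-orthogonal summands. The two tools I would use throughout are that $\mathrm{ad}_x$ is skew-symmetric for $B$, and that by Definition \ref{type} the restriction $B\vert_\mathfrak{p}$ is definite while $B\vert_\mathfrak{k}$ is negative definite (in both the compact and non-compact case).

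First I would check the inclusion $[\mathfrak{k},v]\subset\mathfrak{a}_v^\perp$. For $k\in\mathfrak{k}$ and $w\in\mathfrak{a}_v$, skew-symmetry of $\mathrm{ad}_v$ gives
$$
B([k,v],w)=-B(\mathrm{ad}_v k,w)=B(k,\mathrm{ad}_v w)=B(k,[v,w])=0,
$$
because $w\in\mathfrak{a}_v=Z_\mathfrak{p}(v)$ commutes with $v$. This shows that $[\mathfrak{k},v]$ and $\mathfrak{a}_v$ are $B$-orthogonal, and since $B\vert_\mathfrak{p}$ is definite their sum is automatically direct (a common element would be orthogonal to itself, hence zero).

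For the reverse inclusion I would show that any $w\in\mathfrak{p}$ orthogonal to $[\mathfrak{k},v]$ already lies in $\mathfrak{a}_v$. The same manipulation, read in the other slot, gives for every $k\in\mathfrak{k}$
$$
B(w,[k,v])=B([v,w],k),
$$
so $w\perp[\mathfrak{k},v]$ forces $[v,w]$ to be $B$-orthogonal to all of $\mathfrak{k}$. The crucial observation is that $[v,w]\in[\mathfrak{p},\mathfrak{p}]\subset\mathfrak{k}$ by \eqref{commutator p,k}, and $B\vert_\mathfrak{k}$ is definite; hence $[v,w]=0$, i.e. $w\in Z_\mathfrak{p}(v)$. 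Regularity of $v$ enters exactly here, through the identification $Z_\mathfrak{p}(v)=\mathfrak{a}_v$ recorded after Definition \ref{regularpoint}, so $w\in\mathfrak{a}_v$. Thus $[\mathfrak{k},v]^\perp\subset\mathfrak{a}_v$, and taking orthogonal complements in $\mathfrak{p}$ (an involution, as $B\vert_\mathfrak{p}$ is non-degenerate) yields $\mathfrak{a}_v^\perp\subset[\mathfrak{k},v]$, which together with the first inclusion gives equality.

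The only genuinely delicate step is the second one: one must notice that $[v,w]$ lands in $\mathfrak{k}$ rather than merely in $\mathfrak{g}$, and then invoke the definiteness of the Killing form on the compact subalgebra $\mathfrak{k}$ to pass from $[v,w]\perp\mathfrak{k}$ to $[v,w]=0$. Were only the non-degeneracy of $B$ on all of $\mathfrak{g}$ available this would fail, so the compact-type/non-compact-type dichotomy of Definition \ref{type}, which guarantees definiteness on each factor $\mathfrak{k}$ and $\mathfrak{p}$ separately, is precisely what makes the argument go through.
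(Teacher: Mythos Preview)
Your proof is correct and follows essentially the same route as the paper: both show $[\mathfrak{k},v]\subset\mathfrak{a}_v^\perp$ via skew-symmetry of $\mathrm{ad}_v$, and both prove the reverse inclusion by checking $[\mathfrak{k},v]^\perp\subset\mathfrak{a}_v$, using that $[v,w]\in\mathfrak{k}$ while also $[v,w]\perp\mathfrak{k}$, then invoking regularity to identify $Z_\mathfrak{p}(v)=\mathfrak{a}_v$. The only cosmetic difference is that the paper concludes $[v,w]=0$ from $\mathfrak{k}^\perp=\mathfrak{p}$ in the $B$-orthogonal Cartan decomposition, whereas you invoke definiteness of $B\vert_\mathfrak{k}$; both are equivalent here.
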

\begin{proof}
Take $k\in\mathfrak{k}$, then for all $w\in\mathfrak{a}_v$ we have
$$
B([k,v],w)=-B(k,[w,v])=0
$$
as $[w,v]=0$. Thus $[k,v]\in\mathfrak{a}_v^\perp$. It remains to be shown that $\mathfrak{a}^\perp_v\subset[\mathfrak{k},v]$. We show instead $[\mathfrak{k},v]^\perp\subset\mathfrak{a}_v$. For this take $w\in [\mathfrak{k},v]^\perp$, then
\begin{align*}
    &B([k,v],w)=0\  \ \forall k\in\mathfrak{k}
    \Rightarrow\ \ \ B(k,[v,w])=0\  \ \forall k\in\mathfrak{k}
    \Rightarrow\ \ \ [v,w]\in\mathfrak{k}^\perp.
\end{align*}
On the other hand, $[\mathfrak{p},\mathfrak{p}]\subset\mathfrak{k}$. Hence, $[v,w]=0$ and therefore $w\in Z_\mathfrak{p}(v)$. As $v$ is regular we have $Z_\mathfrak{p}(v)=\mathfrak{a}_v$ and the claim follows.
\end{proof}
\begin{Corollary}\label{complements}
At every regular $p\in TM$ we have
$$
T_{p}TM=\mathcal{D}_p\oplus\Upsilon_p.
$$
\end{Corollary}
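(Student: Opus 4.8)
The plan is to reduce the statement to the base point $p=(o,v)$ via $G$-equivariance, and there to read off the splitting from the explicit descriptions in \eqref{Ups und Delta distribution} together with Lemma \ref{lem8}. Concretely, both distributions are $G$-invariant: $\mathcal{D}$ is by definition the distribution tangent to the $G$-orbits, so $\dd g(\mathcal{D}_p)=\mathcal{D}_{g\cdot p}$, and $\Upsilon$ is invariant because $\dd g$ preserves the vertical distribution and carries the polysphere (resp.\ polydisc) through $p$ to the one through $g\cdot p$, hence permutes the fields $Y_i$ up to reparametrization (Remark \ref{equivariance polysphere}). Since a point $(x,v)$ is regular precisely when $\mathrm{Ad}_g v\in\mathfrak{p}$ is regular for some $g$ with $\mathrm{Ad}_g(x)=o$, applying such a $g$ moves any regular $p$ to a regular base point $(o,\mathrm{Ad}_g v)$. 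It therefore suffices to establish $T_pTM=\mathcal{D}_p\oplus\Upsilon_p$ at $p=(o,v)$ with $v\in\mathfrak{p}$ regular.

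At the base point \eqref{Ups und Delta distribution} gives $\mathcal{D}_p=\mathfrak{p}^\mathcal{H}\oplus[\mathfrak{k},v]^\mathcal{V}$ and $\Upsilon_p=(\mathfrak{a}_v)^\mathcal{V}$. Since $\mathfrak{p}^\mathcal{H}$ is already all of the horizontal space $\mathcal{H}_{(o,v)}$ and $\Upsilon_p$ is purely vertical, the only thing left to check is that the two vertical pieces $[\mathfrak{k},v]^\mathcal{V}$ and $(\mathfrak{a}_v)^\mathcal{V}$ split the vertical space $\mathcal{V}_{(o,v)}\cong\mathfrak{p}^\mathcal{V}$. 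This is exactly the statement $\mathfrak{p}=\mathfrak{a}_v\oplus[\mathfrak{k},v]$, and it follows for free: by Lemma \ref{lem8} the subspace $[\mathfrak{k},v]$ is the Killing-orthogonal complement of $\mathfrak{a}_v$ inside $\mathfrak{p}$, and because $M$ is of compact or non-compact type the form $B|_{\mathfrak{p}\times\mathfrak{p}}$ is definite (Definition \ref{type}), hence non-degenerate, so the orthogonal complement produces a genuine direct sum. Lifting vertically yields $\mathcal{V}_{(o,v)}=(\mathfrak{a}_v)^\mathcal{V}\oplus[\mathfrak{k},v]^\mathcal{V}$, and combining with $\mathcal{H}_{(o,v)}=\mathfrak{p}^\mathcal{H}$ gives $T_pTM=\mathfrak{p}^\mathcal{H}\oplus[\mathfrak{k},v]^\mathcal{V}\oplus(\mathfrak{a}_v)^\mathcal{V}=\mathcal{D}_p\oplus\Upsilon_p$.

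There is no genuinely hard step; the only point requiring care is that $\mathfrak{p}=\mathfrak{a}_v\oplus[\mathfrak{k},v]$ is a direct sum and not merely a spanning statement, which is where definiteness of $B|_\mathfrak{p}$ (and hence regularity of $v$, used in Lemma \ref{lem8}) enters. As a sanity check the dimensions match: writing $n:=\dim_\R\mathfrak{p}=\dim_\R M$, one has $\dim[\mathfrak{k},v]=n-\dim\mathfrak{a}_v=n-r$, so $\dim\mathcal{D}_p=n+(n-r)=2n-r$ and $\dim\Upsilon_p=r$ sum to $2n=\dim T_pTM$.
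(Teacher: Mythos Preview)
Your proof is correct and is exactly the argument the paper has in mind: the Corollary is stated immediately after Lemma~\ref{lem8} without a separate proof, and your reduction to the base point via $G$-equivariance together with the orthogonal splitting $\mathfrak{p}=\mathfrak{a}_v\oplus[\mathfrak{k},v]$ from Lemma~\ref{lem8} and the definiteness of $B\vert_{\mathfrak{p}\times\mathfrak{p}}$ is precisely how one unpacks it.
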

\noindent
The next question is, what happens at the non-regular points? Indeed, the foliation becomes singular at non-regular points. Through every non-regular point go more than one polysphere/ polydisc. This is expressed by the fact that the distributions $\Upsilon$ and $\mathcal{D}$ become lower dimensional on singular points. \\
\ \\

\noindent
\textbf{Hermitian symmetric spaces as coadjoint orbits.}
As corollary of Theorem \ref{characterization HSS} and Proposition \ref{center is circle} we can finally deduce the realization of Hermitian symmetric spaces as coadjoint orbits. The corollary is known to the experts, but as we could not find a reference the proof is included here.
\begin{Corollary}\label{HSS as coadjoint orbit}
Every Hermitian symmetric space can be realized as (co-)adjoint orbit. 
\end{Corollary}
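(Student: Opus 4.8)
The plan is to realize $M$ as the adjoint orbit through the distinguished element $Z\in\mathfrak{g}$ that generates the Lie algebra of the center $C(K)\cong S^1$ furnished by Proposition \ref{center is circle}; via the identification $\mathfrak{g}^*\cong\mathfrak{g}$ by the nondegenerate Killing form $B$ (Section \ref{sec1.1}) this is the same as a coadjoint orbit. Concretely, I would set $O_Z:=\mathrm{Ad}_G(Z)\subseteq\mathfrak{g}$ and prove that the stabilizer $G_Z=\{g\in G:\mathrm{Ad}_g Z=Z\}$ equals $K$; then $O_Z\cong G/G_Z=G/K=M$, and the general construction of Section \ref{sec1.1} endows $O_Z$ with its canonical $G$-invariant KKS symplectic form.

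The first step is to identify the Lie algebra of the stabilizer, which is the centralizer $\mathfrak{g}_Z=\ker(\mathrm{ad}_Z)$. Since $Z$ is central in $\mathfrak{k}$ one has $\mathrm{ad}_Z\vert_{\mathfrak{k}}=0$, so $\mathfrak{k}\subseteq\ker(\mathrm{ad}_Z)$; and since $\mathrm{ad}_Z\vert_{\mathfrak{p}}=j_o$ satisfies $j_o^2=-\mathrm{id}_{\mathfrak{p}}$, it is invertible on $\mathfrak{p}$, whence $\ker(\mathrm{ad}_Z)\cap\mathfrak{p}=0$. Using $\mathfrak{g}=\mathfrak{k}\oplus\mathfrak{p}$ this gives $\mathfrak{g}_Z=\mathfrak{k}$. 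I would also record the inclusion $K\subseteq G_Z$: the subgroup $\exp(\R Z)$ is central in $K$, so each $k\in K$ commutes with it, and differentiating $k\exp(tZ)k^{-1}=\exp(tZ)$ yields $\mathrm{Ad}_k Z=Z$.

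The remaining and most delicate step is to promote this equality of Lie algebras to the equality of groups $G_Z=K$, and here the two types behave differently. In the compact case $G$ is compact and connected, and $G_Z$ is exactly the centralizer of the torus $\overline{\exp(\R Z)}$ (as $\mathrm{Ad}_g Z=Z$ is equivalent to $g\exp(tZ)g^{-1}=\exp(tZ)$ for all $t$); since centralizers of tori in compact connected groups are connected, $G_Z$ is a connected subgroup with Lie algebra $\mathfrak{k}$ and therefore coincides with $K$. In the non-compact case I would use that $G_Z$ is invariant under the global Cartan involution $\Theta$, because $\Theta(Z)=Z$ (as $Z\in\mathfrak{k}$); the Cartan decomposition of a $\Theta$-stable subgroup then gives $G_Z=(G_Z\cap K)\exp(\mathfrak{g}_Z\cap\mathfrak{p})$, and $\mathfrak{g}_Z\cap\mathfrak{p}=\mathfrak{k}\cap\mathfrak{p}=0$ forces $G_Z\subseteq K$, hence $G_Z=K$. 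Finally, a reducible Hermitian symmetric space of compact or non-compact type is a product of irreducible ones, and the product of the associated orbits is the (co-)adjoint orbit of the product group through the sum of the distinguished elements, so the general statement follows.

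I expect the main obstacle to be precisely this last point: the identity $\mathfrak{g}_Z=\mathfrak{k}$ only exhibits $M$ as a covering of $O_Z$, and ruling out extra connected components of the stabilizer requires invoking the correct structural input in each case — connectedness of torus centralizers in the compact type, and $\Theta$-stability together with $\mathfrak{g}_Z\cap\mathfrak{p}=0$ in the non-compact type.
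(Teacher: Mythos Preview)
Your proof is correct and reaches the same conclusion, but the route differs from the paper's in two places. The paper does not compute $\mathfrak{g}_Z$ at all; instead it picks a non-identity element $z\in C(K)$, observes $K\subseteq C_G(z)\subsetneq G$ (the latter because $G$ has trivial center), and then invokes the \emph{maximality} of $K$ from Theorem \ref{characterization HSS} to conclude $K=(C_G(z))^0$; the sandwich $K\subseteq\mathrm{Stab}_G(Z)\subseteq C_G(z)$ together with connectedness of stabilizers finishes the compact case, and the non-compact case is dismissed ``by duality''. Your argument bypasses maximality entirely by computing $\mathfrak{g}_Z=\mathfrak{k}$ directly from $\mathrm{ad}_Z\vert_\mathfrak{p}=j_o$ being invertible, and then handles the two types with distinct and explicit connectedness arguments (torus centralizers in the compact case, the Cartan decomposition of $\Theta$-stable closed subgroups in the non-compact case). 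The paper's version is shorter because it recycles Theorem \ref{characterization HSS}; yours is more self-contained, makes the structural inputs transparent, and in particular gives a genuine proof in the non-compact case rather than an appeal to duality.
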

\begin{proof}
We prove the compact case, the noncompact case follows by duality. As $C(K)$ is analytically isomorphic to the circle group there exists an element $z\in C(K)$ different from the unit. Now $K$ is a sub group of the centralizer $C_G(z)$ of $z$ in $G$. As the center of $G$ is trivial, we have $C_G(z)\neq G$ so $K$ coincides with the identity component of $C_G(z)$ by maximality of $K$. Denote by $Z$ a generator of $C(K)$. Clearly, on the one hand, $K\subset\mathrm{Stab}_G(Z)$ and on the other hand, $\mathrm{Stab}_G(Z)\subset C_G(z)$. Thus
$$
\mathrm{Stab}_G(Z)=K,
$$
as stabilizers of simple groups are connected and therefore we may identify $G/K$ with the (co-)adjoint orbit $O_Z$ of $G$ at $Z\in\mathfrak{g}$.
\end{proof}
\noindent
\textbf{Kähler structure.}
The last question that needs to be answered is, whether the KKS symplectic structure $\sigma$ of $O_Z\cong M$ complements the hermitian structure $(g,j)$ to a Kähler structure. As $\sigma$, $g$ and $j$ are $G$-invariant it is enough to check compatibility at $Z\in O_Z\subset\mathfrak{g}$, indeed for all $a,b\in \mathfrak{p}\cong T_Z O_Z$
$$
g_Z(j_Z a,b)=-B([Z,a],b)=-B(Z,[a,b])=\sigma_Z(a,b).
$$
By uniqueness of $g$ and $j$ we obtain the following theorem.
\begin{Theorem}
The $G$-invariant triple $(g,j,\sigma)$, defined on $\mathfrak{p}\cong T_Z M$ as
$$
g_Z(\cdot,\cdot):=-B(\cdot,\cdot),\ \ j_Z(\cdot):=[Z,\cdot]\ \ \text{and}\ \ \sigma_Z(\cdot,\cdot):=-B(Z,[\cdot,\cdot])
$$
is compatible and equivariantly extends to the up to scalar multiple unique invariant Kähler structure of $M\cong O_Z$.
\end{Theorem}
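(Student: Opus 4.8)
The plan is to verify that the three tensors assemble into a compatible Kähler triple at the single base point $Z\in O_Z$, then to spread this data over the whole orbit by $G$-equivariance, and finally to identify the result with the invariant Kähler structure by invoking the uniqueness statements already established. Essentially all the hard analytic content (non-degeneracy and closedness of the KKS form, $\mathrm{ad}_Z^2=-\mathrm{id}_\mathfrak{p}$, uniqueness of the invariant metric and complex structure) is already in place, so the proof is an assembly of these facts.

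First I would check the pointwise data on $\mathfrak{p}\cong T_Z M$. For $M$ of compact type the Killing form is negative definite, so $g_Z=-B$ is a positive-definite inner product, the opposite sign being needed for the non-compact dual, which is exactly the scalar ambiguity allowed in the statement. That $j_Z=\mathrm{ad}_Z$ is a genuine linear complex structure, $j_Z^2=-\mathrm{id}_\mathfrak{p}$, was shown above, where one finds $\mathrm{ad}_Z^2=-\lambda^2\mathrm{id}$ with $\lambda=1$; and $\sigma_Z$ is the KKS form restricted to $T_Z O_Z$ directly from $\sigma_x(a^\#,b^\#)=-B(x,[a,b])$. The compatibility relation is precisely the three-term computation carried out just before the statement: by $\mathrm{Ad}$-invariance of $B$ one gets $g_Z(j_Za,b)=-B([Z,a],b)=-B(Z,[a,b])=\sigma_Z(a,b)$. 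Combined with $j_Z^2=-\mathrm{id}$ and the symmetry of $g_Z$ this forces $j_Z$-invariance of the metric, since
$$
g_Z(j_Za,j_Zb)=\sigma_Z(a,j_Zb)=-\sigma_Z(j_Zb,a)=-g_Z(j_Z^2b,a)=g_Z(a,b),
$$
so $(g_Z,j_Z,\sigma_Z)$ is a compatible linear Kähler triple on $\mathfrak{p}$.

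It then remains to globalize and identify. All three tensors are $K$-invariant at $Z$: the form $g_Z=-B$ is $\mathrm{Ad}$-invariant; $\mathrm{Ad}_kZ=Z$ gives $K$-invariance of $\sigma_Z$; and $[k,Z]=0$ for $k\in\mathfrak{k}$ makes $\mathrm{ad}_k$ commute with $\mathrm{ad}_Z$, yielding $K$-invariance of $j_Z$ (this is exactly the computation already used to extend $j$). Hence each extends to a $G$-invariant tensor on $O_Z\cong G/K$, and the compatibility identity, being $G$-invariant and valid at $Z$, holds everywhere. The extended $\sigma$ is the KKS form, hence closed and non-degenerate; the extended $j=\mathrm{ad}_Z$ is the invariant almost complex structure, which by the uniqueness argument above agrees up to sign with the original integrable complex structure of $M$. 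Therefore $(g,j,\sigma)$ is a $G$-invariant Kähler structure, and by uniqueness of the invariant metric up to a scalar and of the invariant complex structure up to sign it must coincide with the invariant Kähler structure of $M$.

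The only step that is not purely formal is the integrability of $j=\mathrm{ad}_Z$: rather than reproving it, I would rely on the earlier identification of $\mathrm{ad}_Z$ with the given complex structure of the Hermitian symmetric space, which is integrable by hypothesis. Everything else reduces to the single pointwise compatibility computation at $Z$ together with $G$-equivariance.
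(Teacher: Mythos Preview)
Your proposal is correct and follows essentially the same approach as the paper: verify the compatibility identity $g_Z(j_Za,b)=-B([Z,a],b)=-B(Z,[a,b])=\sigma_Z(a,b)$ at the base point, extend by $G$-invariance, and invoke the already-established uniqueness of the invariant metric and complex structure. The paper's argument is terser (essentially one line of compatibility plus an appeal to uniqueness), while you spell out the auxiliary verifications ($j$-invariance of $g$, $K$-invariance of the data at $Z$, integrability via identification with the given complex structure) that the paper leaves implicit from the preceding discussion.
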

\noindent
The following Lemma will be useful for future calculations.
\begin{Lemma}\label{invariant Kähler structure}
    At any point $x\in M\cong O_Z\subset\mathfrak{g}$ the Kähler structure is given by
    $$
        g_x(v,w):=-B(v,w),\ \ j_x(v):=[x,v]\ \ \text{and}\ \ \sigma_x(v,w):=-B(x,[v,w]),
    $$
    for all $v,w\in T_xM\cong[x,\mathfrak{g}]\subset\mathfrak{g}$.
\end{Lemma}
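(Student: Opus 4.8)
The plan is to reduce everything to the base point $Z$, where the Kähler structure has already been identified in the preceding Theorem, and then propagate the formulas to an arbitrary point purely by $G$-equivariance. Since $G$ acts transitively on the orbit $O_Z$, I would fix $g\in G$ with $x=\mathrm{Ad}_g Z$ and write any pair of tangent vectors $v,w\in T_xM\cong[x,\mathfrak{g}]$ as $v=\mathrm{Ad}_g v'$, $w=\mathrm{Ad}_g w'$ with $v',w'\in T_ZM\cong[Z,\mathfrak{g}]$. The key structural observation is that the differential of the action $\Psi_g=\mathrm{Ad}_g$ on the tangent space is again $\mathrm{Ad}_g$: for the fundamental vector field one has $(\dd\Psi_g)_x(a^\#_x)=\mathrm{Ad}_g[a,x]=[\mathrm{Ad}_g a,\mathrm{Ad}_g x]$, since $\mathrm{Ad}_g$ is a Lie algebra automorphism. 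This guarantees that $\mathrm{Ad}_g$ indeed maps $T_ZM\cong[Z,\mathfrak{g}]$ isomorphically onto $T_xM\cong[x,\mathfrak{g}]$, so that every $v,w$ arises this way.

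Next I would invoke the two invariance properties that make all three formulas fall out in identical shape. On one side, $(g,j,\sigma)$ is $G$-invariant, so $g_x(v,w)=g_Z(v',w')$, $j_x(v)=\mathrm{Ad}_g(j_Z(v'))$ and $\sigma_x(v,w)=\sigma_Z(v',w')$. On the other side, the Killing form $B$ is $\mathrm{Ad}$-invariant and $\mathrm{Ad}_g$ respects brackets. Combining these, for the metric I get $g_x(v,w)=g_Z(v',w')=-B(v',w')=-B(\mathrm{Ad}_g v',\mathrm{Ad}_g w')=-B(v,w)$; for the complex structure $j_x(v)=\mathrm{Ad}_g([Z,v'])=[\mathrm{Ad}_g Z,\mathrm{Ad}_g v']=[x,v]$; and for the symplectic form $\sigma_x(v,w)=-B(Z,[v',w'])=-B(\mathrm{Ad}_g Z,\mathrm{Ad}_g[v',w'])=-B(x,[v,w])$.

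Since the resulting expressions depend only on $x,v,w$ and not on the chosen lift $g$, no separate well-definedness check is required; the derivation automatically produces formulas independent of $g$. I do not anticipate a genuine obstacle here — the entire content is the interplay between $G$-invariance of the geometric data and the fact that $\mathrm{Ad}_g$ is \emph{simultaneously} a $B$-isometry and a Lie algebra automorphism. The only point that demands care is the bookkeeping of how $\dd\Psi_g$ acts on tangent vectors identified with elements of $[x,\mathfrak{g}]\subset\mathfrak{g}$, which is exactly where the automorphism property is used to convert $\mathrm{Ad}_g[a,Z]$ into $[\mathrm{Ad}_g a,x]$.
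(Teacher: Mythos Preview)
Your proposal is correct and follows essentially the same approach as the paper: pick $g\in G$ with $x=\mathrm{Ad}_g Z$, transport the formulas from $Z$ to $x$ via $G$-invariance of $(g,j,\sigma)$, and use that $\mathrm{Ad}_g$ is simultaneously a $B$-isometry and a Lie algebra automorphism. The paper only treats $g$ and $j$ explicitly and remarks that $\sigma$ follows, while you spell out all three; otherwise the arguments are identical.
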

\begin{proof}
    We prove the formulas for the metric and the complex structure, the formula for the symplectic form follows. Clearly there exists an element $g\in G$ such that $x=\mathrm{Ad}_g(Z)$. Using this we find for the metric
    $$
    g_x(v,w):=g_Z(\mathrm{Ad}_{g^{-1}}v,\mathrm{Ad}_{g^{-1}}w)=-B(\mathrm{Ad}_{g^{-1}}v,\mathrm{Ad}_{g^{-1}}w)=-B(v,w)
    $$
    as the Killing form is $\mathrm{Ad}_G$-invariant. For the complex structure we similarly find
    $$
    j_x(v):= \mathrm{Ad}_{g}\left( j_Z\left (\mathrm{Ad}_{g^{-1}}v\right)\right)=\mathrm{Ad}_{g}[Z,\mathrm{Ad}_{g^{-1}}v]=[\mathrm{Ad}_{g}Z,v]=[x,v].
    $$
\end{proof}
\ \\
\textbf{De Rham cohomolgy.}
We will quickly determine the second de Rham cohomology of Hermitian symmetric spaces of compact type. We did not find a proof in the literature, so we present what we learned from discussions with Maria Beatrice Pozzetti. 
\begin{Proposition}\label{H2 is 1 dimensional}
    Let $(M,g)$ be an irreducible Hermitian symmetric space of compact type and denote $\sigma\in \Omega^2(M)$ the corresponding invariant Kähler form. Then the second de Rham cohomology group is generated by $[\sigma]$, i.e.
    $$
    H^2_{\mathrm{dR}}(M,\R)\cong \R.
    $$
\end{Proposition}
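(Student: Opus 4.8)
The plan is to reduce the computation to a purely Lie-algebraic one via invariant differential forms, and then to split the resulting space of invariant $2$-forms into Hodge types and read off each piece.

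First I would use that $M\cong G/K$ is a compact symmetric space with $G$ compact: every de Rham class then has a $G$-invariant representative, and by Cartan's theorem the $G$-invariant forms on a compact symmetric space are precisely the harmonic ones and carry vanishing exterior differential, so $H^2_{\mathrm{dR}}(M,\R)\cong(\Lambda^2\mathfrak{p}^*)^K$, the space of $K$-invariant alternating $2$-forms on $\mathfrak{p}\cong T_oM$. Here $K$ acts through the isotropy (adjoint) representation, which is irreducible since $M$ is irreducible. Because taking $K$-invariants commutes with complexification, it suffices to compute the complex dimension of $\bigl(\Lambda^2(\mathfrak{p}^\C)^*\bigr)^K$ and then halve it.

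Next I would invoke the decomposition $\mathfrak{p}^\C=\mathfrak{p}_+\oplus\mathfrak{p}_-$ into the $\pm i$-eigenspaces of $j=\mathrm{ad}_Z$ established above, which induces the Hodge-type splitting $\Lambda^2(\mathfrak{p}^\C)^*=\Lambda^{2,0}\oplus\Lambda^{1,1}\oplus\Lambda^{0,2}$ with $\Lambda^{2,0}\cong\Lambda^2\mathfrak{p}_+^*$, $\Lambda^{0,2}\cong\Lambda^2\mathfrak{p}_-^*$ and $\Lambda^{1,1}\cong\mathfrak{p}_+^*\otimes\mathfrak{p}_-^*$. The key observation is that the central generator $Z$ acts as $+i$ on $\mathfrak{p}_+$ and as $-i$ on $\mathfrak{p}_-$, so $\exp(tZ)\in C(K)$ scales $\Lambda^{2,0}$ by $e^{-2it}$ and $\Lambda^{0,2}$ by $e^{2it}$; neither admits a nonzero fixed vector, whence $(\Lambda^{2,0})^K=(\Lambda^{0,2})^K=0$. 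On the $(1,1)$-part the center acts trivially, and here I would apply Schur's lemma: $\mathfrak{p}_+$ is an irreducible complex $K$-representation, the invariant Hermitian metric identifies $\mathfrak{p}_-^*\cong\mathfrak{p}_+$, and therefore $(\mathfrak{p}_+^*\otimes\mathfrak{p}_-^*)^K\cong\mathrm{Hom}_K(\mathfrak{p}_+,\mathfrak{p}_+)\cong\C$ is one-dimensional.

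Combining these gives $\dim_\C\bigl(\Lambda^2(\mathfrak{p}^\C)^*\bigr)^K=1$, hence $\dim_\R(\Lambda^2\mathfrak{p}^*)^K=1$ and $H^2_{\mathrm{dR}}(M,\R)\cong\R$. To see that $[\sigma]$ generates rather than vanishes, note that $\sigma$ is a nonzero $G$-invariant closed real $2$-form and that $\sigma^{\,n}$ (with $n=\dim_\C M$) is a volume form on the compact manifold $M$, so $\int_M\sigma^{\,n}\neq0$ forces $[\sigma]\neq0$; thus $[\sigma]$ spans $H^2_{\mathrm{dR}}(M,\R)$. I expect the main obstacle to be the very first step, namely justifying that invariant forms compute the cohomology (Cartan's theorem for compact symmetric spaces together with the closedness of invariant forms); once that reduction is in place the remainder is representation-theoretic bookkeeping, where the only delicate point is passing correctly between the complex and real dimensions of the invariant spaces.
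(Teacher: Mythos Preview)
Your proof is correct and follows the same overall strategy as the paper: reduce $H^2_{\mathrm{dR}}(M)$ to the space $(\Lambda^2\mathfrak{p}^*)^K$ of $K$-invariant alternating $2$-forms, then show this space is one-dimensional via irreducibility of the isotropy representation. The paper carries out the first reduction by hand (averaging against Haar measure plus the observation that $s_p^*$ acts by $(-1)^k$ on invariant $k$-forms, forcing closedness), which is exactly the content of the Cartan theorem you invoke.

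Where you diverge is in the Schur step. The paper stays over $\R$: given an invariant $2$-form $\nu$, it writes $\nu(A\cdot,\cdot)=\sigma(\cdot,\cdot)$ for a $K$-equivariant symmetric operator $A$ on $\mathfrak{p}$ and applies real irreducibility directly. You instead complexify, use the Hodge splitting $\Lambda^{2,0}\oplus\Lambda^{1,1}\oplus\Lambda^{0,2}$, kill the $(2,0)$ and $(0,2)$ pieces with the nontrivial weight of the center $\exp(tZ)$, and apply complex Schur to $(\Lambda^{1,1})^K\cong\mathrm{Hom}_K(\mathfrak{p}_+,\mathfrak{p}_+)$. Your route is slightly longer but more structured and yields the Hodge numbers $h^{2,0}=h^{0,2}=0$, $h^{1,1}=1$ as a by-product; the paper's route is quicker but gives only the total Betti number.

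One small slip worth fixing: you write that one should compute $\dim_\C(\Lambda^2(\mathfrak{p}^\C)^*)^K$ ``and then halve it'', but in fact $\dim_\R V=\dim_\C(V\otimes_\R\C)$ with no halving --- as you yourself use correctly in the final line. The conclusion is unaffected.
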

\begin{proof}
    Denote by $\Omega_G^2(M)$ the set of $G$-invariant 2-forms on $M$. Every $\nu\in \Omega^2_G(M)$ is closed. This can be seen as follows. For any point $p\in M$ denote by $s_p$ the geodesic symmetry, then on the one hand
    $$
    s_p^*\nu=(-1)^2\nu=\nu
    $$
    on the other hand 
    $$
    s_p^*\dd\nu=(-1)^3\dd\nu=-\dd\nu,
    $$
    using that $G$ acts transitively and that $s_p\circ g\circ s_p^{-1}\in G$ for all $g\in G$ implies that $s_p^*\nu$ is also $G$-invariant. In total this means
    $$
    \dd\nu=-\dd\nu =0.
    $$
    Further every de Rham cohomology class $\alpha\in H_{\mathrm{dR}}^2(M)$ can be represented by an invariant form. Let $\mu$ be a $G$ bi-invariant probability measure on $G$.  We define the $G$-average of a 2-form $\eta \in \Omega^2(M)$  with respect to $\mu$ as 
    $$
    \bar{\nu}_p(v,w):= \int_{G} (g^{*}\nu)_p(v,w)\ \dd\mu(g) \ \text{ for all } v,w\in T_pM.
    $$
    Then for any closed 2-dimensional submanifold $\Sigma\subset M$ we have
    \begin{align*}
\bar\nu(\Sigma)&=\int_\Sigma\bar\nu=\int_\Sigma\left(\int_G(g^*\nu)_p(\partial_s \Sigma,\partial_t \Sigma)\dd\mu(g)\right)\dd s\dd t\\
&=\int_G\left(\int_\Sigma (g^*\nu)_p(\partial_s \Sigma,\partial_t \Sigma)\dd s\dd t\right)\dd\mu(g)=\int_G\nu(g(\Sigma))\dd\mu(g)\\
&=\int_G\nu(\Sigma)\dd\mu(g)=\nu(\Sigma),
    \end{align*}
    where we think of $\Sigma(s,t)$ as a parametrization of $\Sigma\subset M$.
     All that is left to do is to show that there is up to scalar multiple only one invariant 2-form. Take some $\nu\in \Omega_G^2(M)$, then there exists a $K$-invariant symmetric operator $A:\mathfrak{p}\to\mathfrak{p}$ satisfying
    $
    \nu(A\cdot,\cdot)=\sigma(\cdot,\cdot).
    $
    Then, by the same argument as in the proof of uniqueness of the invariant metric $g$, $A$ must be a multiple of the identity, because the representation of $K$ on $\mathfrak{p}$ is irreducible. 
\end{proof}
\noindent
\textbf{Momentum maps.}
We will now study the induced action of $G$ on the tangent bundle of $M=O_Z$. Actually, the action can also be seen as the restriction of the diagonal adjoint action of $G$ on $\mathfrak{g}\times\mathfrak{g}$ to
$$
TM=\left \lbrace (x,v)\in \mathfrak{g}\times\mathfrak{g}\ \vert \ x=\mathrm{Ad}_g(Z),\ v\in \mathrm{ann}(x)^\perp\right\rbrace,
$$
where $\mathrm{ann}(x) = \lbrace\eta\in \mathfrak{g}\ \vert\ [\eta,x]=0\rbrace$. In view of this, we see that evaluated at a point $(x,v)\in TM$ the induced vector field $a^\#$ takes the form
$$
a^\#_{(x,v)}=([a,x],[a,v])\in\mathfrak{g}\times\mathfrak{g}.
$$
This representation of $a^\#$ will be useful for what comes.
By construction the 1-forms $\lambda, \eta $ and $\tau$ (see \ref{dual 1-forms}) are invariant under isometries and consequently $G$-invariant. It therefore makes sense to ask whether there exist momentum maps for $\dd\lambda, \dd\eta$ and $\dd\tau$.

\begin{theorem}\label{momentum maps on TM}
The $G$-action on $TM$ is Hamiltonian with respect to the three symplectic\footnote{Actually, $\dd\tau$ is only symplectic outside the zero-section. Still its momentum map is globally defined.} forms $\dd \lambda, \dd \eta$ and $\dd \tau$. The momentum maps are respectively given by
$$
\mu_\lambda(x,v)=[x,v],\  \ \mu_\eta(x,v)=v,\ \ \mu_\tau(x,v)=-[[x,v],v]\ \ \ \forall (x,v)\in TM\subset\mathfrak{g}\times\mathfrak{g},
$$
using the identification of $\mathfrak{g}$ and $\mathfrak{g}^*$ via the Killing form $B$.
\end{theorem}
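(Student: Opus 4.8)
The plan is to exploit the $G$-invariance of the three one-forms rather than to write out the symplectic forms on all of $TTM$. For any $G$-invariant one-form $\alpha$ and any $a\in\g$, Cartan's magic formula combined with $\mathcal{L}_{a^\#}\alpha=0$ gives
$$
0=\mathcal{L}_{a^\#}\alpha=\dd\big(\iota_{a^\#}\alpha\big)+\iota_{a^\#}\dd\alpha,
$$
so that $\iota_{a^\#}\dd\alpha=-\dd\big(\alpha(a^\#)\big)$. Hence the function $\langle\mu_\alpha,a\rangle:=-\alpha(a^\#)$ automatically satisfies the defining relation $\dd\langle\mu_\alpha,a\rangle=\iota_{a^\#}\dd\alpha$, and the whole theorem reduces to evaluating $\alpha(a^\#)$ pointwise for $\alpha\in\{\lambda,\eta,\tau\}$ and then checking equivariance.

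For the evaluation I would use two inputs. First, the ambient formula $a^\#_{(x,v)}=([a,x],[a,v])$, which immediately gives $\dd\pi(a^\#)=[a,x]$. Second, since $a^\#$ is the complete lift of the Killing field $y\mapsto[a,y]$ on $M$, its vertical component is $\kappa(a^\#)=\nabla_v[a,\cdot\,]$. By Lemma \ref{dual 1-forms} the forms $\lambda$ and $\eta=j^*\lambda$ annihilate $\VV$, so only $[a,x]$ enters them, whereas $\tau$ annihilates $\HH$, so only $\kappa(a^\#)$ enters it. Feeding in $g_x=-B$ and $j_x=[x,\cdot\,]$ from Lemma \ref{invariant Kähler structure} yields
$$
\lambda(a^\#)=g_x(v,[a,x]),\qquad \eta(a^\#)=g_x([x,v],[a,x]),\qquad \tau(a^\#)=g_x([x,v],\kappa(a^\#)).
$$
The first simplifies by ad-invariance of $B$ to $-\lambda(a^\#)=B([x,v],a)$, giving $\mu_\lambda=[x,v]$. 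For $\eta$ the identity $\mathrm{ad}_x^2=-\mathrm{id}$ on $T_xM=[x,\g]$ — valid because $x=\mathrm{Ad}_gZ$ and $\mathrm{ad}_Z^2=-\mathrm{id}$ on $\mathfrak{p}$ — collapses the expression to $\pm B(v,a)$, matching the stated $\mu_\eta=v$.

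The genuinely new ingredient, and the step I expect to be the main obstacle, is the vertical term $\kappa(a^\#)=\nabla_v[a,\cdot\,]$ needed for $\mu_\tau$: a priori this is a covariant derivative of a Killing field. I would sidestep the general computation by working at the base point $x=Z$, where it trivializes. For $a\in\mathfrak{p}$ the field is a transvection with vanishing covariant derivative at $Z$, so $\kappa(a^\#)=0$; for $a\in\mathfrak{k}$ one has $\mathrm{Ad}_{\exp ta}Z=Z$, so the base curve is constant and the covariant derivative is just the ordinary one, $\kappa(a^\#)=\diff\mathrm{Ad}_{\exp ta}v=[a,v]$. Substituting into $\tau(a^\#)=-B([Z,v],[a,v])$ and applying ad-invariance together with the Jacobi identity rewrites $B([Z,v],[a,v])=-B([[Z,v],v],a)$, whence $-\tau(a^\#)=-B([[Z,v],v],a)$, i.e.\ $\mu_\tau=-[[x,v],v]$ at $Z$. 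Finally, equivariance of all three maps is immediate: each $\langle\mu,a\rangle=-\alpha(a^\#)$ is built from an invariant form and the natural $\g$-action, and the closed-form brackets $[x,v]$, $v$, $-[[x,v],v]$ are manifestly $\mathrm{Ad}$-equivariant, so the base-point formula for $\mu_\tau$ propagates to every point of the orbit and all three maps intertwine the $G$-action with the coadjoint action.
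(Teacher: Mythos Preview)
Your approach is the same as the paper's: use Cartan's formula together with the $G$-invariance of $\lambda,\eta,\tau$ to reduce the moment map condition to the pointwise identity $\langle\mu_\alpha,a\rangle=-\alpha(a^\#)$, and then evaluate these pairings via the ambient description $a^\#_{(x,v)}=([a,x],[a,v])$ and ad-invariance of $B$. The only divergence is your treatment of $\tau$. You compute the genuine vertical component $\kappa(a^\#)=\nabla_v(a^\#_M)$ at the base point through the $\mathfrak{k}\oplus\mathfrak{p}$ splitting, whereas the paper simply feeds the ambient second coordinate $[a,v]$ directly into $\tau(a^\#)=g(jv,\,\cdot\,)=-B([x,v],\,\cdot\,)$. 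That shortcut is legitimate: at $x=Z$ the discrepancy $[a,v]-\kappa(a^\#)=[a_{\mathfrak p},v]$ lies in $\mathfrak{k}$ and is therefore $B$-orthogonal to $[Z,v]\in\mathfrak{p}$, so $B([x,v],\kappa(a^\#))=B([x,v],[a,v])$ at $Z$ and hence, by equivariance, everywhere. Your detour through Killing-field theory is correct but avoidable; the paper's two-line manipulation $B([[x,v],v],a)=-B([x,v],[v,a])=-g(jv,[a,v])$ already encodes the same argument.
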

\begin{proof}
The maps are clearly equivariant, as commutators are. Further, for any $a\in\mathfrak{g}$, $(x,v)\in TM$ we have
\begin{align*}
\dd (B(\mu_\lambda(x,v),a))&=\dd (B([x,v],a))=\dd (B(v,[x,a]))\\
&=\dd (g(v,\dd\pi(a^\#))=\dd (\hat g(X,a^\#))=\dd (\lambda(a^\#))=\iota_{a^\#}\dd\lambda
\end{align*}
as $\lambda$ is invariant under the flow of $a^\#$ and as a consequence $\mathcal{L}_{a^\#}\lambda=0$. Analogously we find
$$
\dd (B(\mu_\eta(x,v),a))=\dd (B(v,a))=\dd (B([x,v],[x,a]))=\dd (g(H,a^\#))=\dd (\eta(a^\#))=\iota_{a^\#}\dd\eta
$$
and
\begin{align*}
\dd (B(\mu_\tau(x,v),a))&=\dd (B([[x,v],v],a))=-\dd (B([x,v],[v,a]))\\
&=-\dd (g(V,a^\#))=-\dd (\tau(a^\#))=-\iota_{a^\#}\dd\tau .
\end{align*}
\end{proof}

\noindent
\textbf{Polyspheres resp.\ polydiscs as suborbits.}
We want to give an explicit description of the polyspheres resp.\ polydiscs in Theorem \ref{polysphere theorem} as suborbits. First we fix some notation. We denote by $\Sigma_i$ the i-th factor of $\Sigma^r$. Every factor can be realized as an adjoint orbit in $\mathfrak{h}$. Here $\mathfrak{h}$ denotes either $\mathfrak{su}(2)$ in the compact case or $\mathfrak{sl}(2,\R)$ in the non-compact case. Denote $Z_i$ the up to sign unique element in the center of $\mathfrak{h}$ such that $\mathrm{ad}_{Z_i}^2=-\mathrm{id}$. Then $\Sigma_i\cong O_{Z_i}$ and the standard Kähler structure coincides with the Kähler structure obtain as in Theorem \ref{invariant Kähler structure} up to multiple.\\
\ \\
From the discussion above Theorem \ref{polysphere theorem}, we know that every polysphere resp.\ polydisc
$$
\iota_{p,q}:\Sigma^r\hookrightarrow M
$$
comes from integrating a subalgebra of $\mathfrak{g}$ isomorphic to $\mathfrak{su}(2)^r$ resp.\ $\mathfrak{sl}(2,\R)^r$. 
In particular for every embedding $\iota_{p,q}$ there is an injective Lie algebra homomorphism
$$
k_{p,q}:\mathfrak{h}^r\hookrightarrow\mathfrak{g}
$$
such that $(\dd\iota_{p,q})_{x_0}=k_{p,q}\vert_{T_{x_0}\Sigma^r}$, where $p=(x_0,v_0)\in T\Sigma^r$. By equivariance of the embedding (see Remark \ref{equivariance polysphere}, translation and reparametrization), we may restrict to $p=(Z_0:=\sum_i Z_i,v_0)$ for some $v_0\in T_{Z_0}\Sigma$ and $q=(Z,v)$ for some $v\in T_{Z}M$. We abbreviate $\iota:=\iota_{p,q}$ and $k:=k_{p,q}$.

\begin{Proposition}\label{polysphere as suborbit}
The affine linear map
$$ K: \mathfrak{h}^r\to\mathfrak{g};\ \xi\mapsto k(\xi)+Z-k(Z_0)$$ 
extends $\iota:\Sigma^r\to M$ equivariantly with respect to the adjoint action of $H^r\subset G$. This means the following diagrams commute
\begin{equation}\label{extension}\textbf{(extension)}\ \ \ \ \ \ \ \ \ \ \ \ \ \ \
    \begin{tikzcd}
\Sigma^r \arrow{r}{\iota} \arrow[swap]{d}{} & M \arrow{d}{} \\%
\mathfrak{h}^r \arrow{r}{K}& \mathfrak{g}
\end{tikzcd},
\end{equation} 
where the vertical arrows are the inclusions as coadjoint orbits and
\begin{equation}\label{equivariance}\textbf{(equivariance)}\ \ \ \ \ \ \ \ \ \ \ \
    \begin{tikzcd}
\mathfrak{h}^r \arrow{r}{K} \arrow[swap]{d}{\mathrm{Ad}_h} & \g \arrow{d}{\mathrm{Ad}_{R(h)}} \\%
\mathfrak{h}^r \arrow{r}{K}& \mathfrak{g}
\end{tikzcd}
\end{equation} 
for all $h\in H^r$, where $R: H^r\hookrightarrow G$ denotes as is Remark \ref{equivariance polysphere} the monomorphism of Lie groups that integrates $k$, i.e. $(\dd R)_e=k$. 
\end{Proposition}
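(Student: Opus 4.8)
The plan is to reduce both assertions---commutativity of the extension square \eqref{extension} and of the equivariance square \eqref{equivariance}---to the single algebraic fact that the constant vector
$$
\zeta:=Z-k(Z_0)\in\mathfrak{g}
$$
commutes with the entire image subalgebra $k(\mathfrak{h}^r)$, equivalently that $\mathrm{Ad}_{R(h)}\zeta=\zeta$ for all $h\in H^r$. Granting this, both diagrams fall out of the naturality of the adjoint representation and the normalization $\iota(Z_0)=Z$, so the whole content of the proof is concentrated in establishing this commutation.

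\textbf{The key commutation.} With the chosen normalization $p=(Z_0,v_0)$, $q=(Z,v)$ one has $\iota(Z_0)=Z$ and $(\dd\iota)_{Z_0}=k\vert_{T_{Z_0}\Sigma^r}$. Since $\iota$ is holomorphic (Theorem \ref{polysphere theorem}) and the invariant complex structure of a coadjoint-orbit Hermitian symmetric space is $\mathrm{ad}$ at its base point (Lemma \ref{invariant Kähler structure}), the complex structures at $Z_0$ and $Z$ are $\mathrm{ad}_{Z_0}$ and $\mathrm{ad}_Z$. Holomorphicity at $Z_0$ therefore reads, for every $w\in T_{Z_0}\Sigma^r$,
$$
[k(Z_0),k(w)]=k([Z_0,w])=[Z,k(w)],\qquad\text{i.e.}\qquad [\zeta,k(w)]=0,
$$
using that $k$ is a Lie algebra homomorphism. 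Thus $\zeta$ centralizes $k(T_{Z_0}\Sigma^r)$. To upgrade this to $k(\mathfrak{h}^r)$, note that $T_{Z_0}\Sigma^r$ is the sum of the $\mathfrak{p}$-parts $\mathfrak{p}_i$ of the $r$ rank-one factors of $\mathfrak{h}^r$, and since each factor is $\mathfrak{su}(2)$ resp.\ $\mathfrak{sl}(2,\R)$ with $[\mathfrak{p}_i,\mathfrak{p}_i]=\mathfrak{k}_i=\R Z_i$, these $\mathfrak{p}$-parts generate $\mathfrak{h}^r$ as a Lie algebra. The subspace $C:=\{\xi\in\mathfrak{h}^r : [\zeta,k(\xi)]=0\}$ contains this generating set and is a subalgebra, because $[\zeta,k\xi]=[\zeta,k\eta]=0$ together with the Jacobi identity gives $[\zeta,k([\xi,\eta])]=[[\zeta,k\xi],k\eta]+[k\xi,[\zeta,k\eta]]=0$. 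Hence $C=\mathfrak{h}^r$, so $\zeta$ commutes with $k(\mathfrak{h}^r)$, and integrating, $\mathrm{Ad}_{R(\exp\xi)}\zeta=e^{\mathrm{ad}_{k(\xi)}}\zeta=\zeta$ for all $\xi$, i.e.\ $\mathrm{Ad}_{R(h)}\zeta=\zeta$ for all $h\in H^r$.

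\textbf{Deducing the two diagrams.} For \eqref{equivariance}, naturality of $\mathrm{Ad}$ for $R$ (with $(\dd R)_e=k$) yields $k(\mathrm{Ad}_h\xi)=\mathrm{Ad}_{R(h)}k(\xi)$, whence
$$
K(\mathrm{Ad}_h\xi)=\mathrm{Ad}_{R(h)}k(\xi)+\zeta,\qquad \mathrm{Ad}_{R(h)}K(\xi)=\mathrm{Ad}_{R(h)}k(\xi)+\mathrm{Ad}_{R(h)}\zeta,
$$
and these agree exactly because $\mathrm{Ad}_{R(h)}\zeta=\zeta$. For \eqref{extension}, write a general point of $\Sigma^r=O_{Z_0}$ as $x_0=\mathrm{Ad}_hZ_0$; then $K(x_0)=\mathrm{Ad}_{R(h)}k(Z_0)+\zeta$, while the $H^r$-equivariance of $\iota$ (Remark \ref{equivariance polysphere}), read on the orbit picture as $\iota\circ\mathrm{Ad}_h=\mathrm{Ad}_{R(h)}\circ\iota$, together with $\iota(Z_0)=Z$ gives $\iota(x_0)=\mathrm{Ad}_{R(h)}Z$. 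The two coincide iff $\mathrm{Ad}_{R(h)}(k(Z_0)-Z)=k(Z_0)-Z$, i.e.\ iff $\mathrm{Ad}_{R(h)}\zeta=\zeta$, which has been established. In particular $K$ restricts to $\iota$ on $O_{Z_0}$ and is the asserted equivariant affine extension.

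\textbf{Main obstacle.} The only non-formal step is the commutation $[\zeta,k(\mathfrak{h}^r)]=0$: holomorphicity of $\iota$ directly supplies it only along the tangent directions $T_{Z_0}\Sigma^r$, and the crux is to propagate it to the compact directions $\R Z_i$. This is exactly where I would use that the $\mathfrak{p}$-parts generate each $\mathfrak{sl}(2)$-factor, so that the centralizer $C$ is a subalgebra by Jacobi and hence all of $\mathfrak{h}^r$. Everything past this point is bookkeeping with the naturality of $\mathrm{Ad}$ and the normalization $\iota(Z_0)=Z$.
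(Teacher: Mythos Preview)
Your proof is correct and follows the same overall architecture as the paper's: both reduce the two diagrams to the single fact that $\zeta=Z-k(Z_0)$ commutes with all of $k(\mathfrak{h}^r)$, and then derive equivariance and the extension property from this invariance together with the normalization $\iota(Z_0)=Z$. The only difference lies in how the commutation $[\zeta,k(\mathfrak{h}^r)]=0$ is established. The paper splits $\mathfrak{h}^r=\mathfrak{k}_0\oplus\mathfrak{p}_0$ and treats the two pieces separately: for $\xi\in\mathfrak{k}_0$ it uses directly that $Z$ is central in $\mathfrak{k}$ (so $[k(\xi),Z]=0$) and that $Z_0$ commutes with $\mathfrak{k}_0$ (so $[k(\xi),k(Z_0)]=k[\xi,Z_0]=0$); for $\xi\in\mathfrak{p}_0$ it uses holomorphicity of $\iota$, exactly as you do. You instead handle only the $\mathfrak{p}_0$ case via holomorphicity and then bootstrap to $\mathfrak{k}_0$ by observing that $[\mathfrak{p}_i,\mathfrak{p}_i]=\mathfrak{k}_i$, so $\mathfrak{p}_0$ generates $\mathfrak{h}^r$ as a Lie algebra, and that the preimage of the centralizer of $\zeta$ is a subalgebra by Jacobi. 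Your route is a touch more uniform in that it only appeals to holomorphicity, while the paper's case split is slightly more direct for the compact directions; the difference is cosmetic.
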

\begin{proof}
To prove the Lemma we first need to show that $Z-k(Z_0)$ is invariant under $R(H^r)$, i.e.\ we need to show that for any $\xi\in\mathfrak{h}^r$ the following commutator vanishes
\begin{equation}\label{eq11}
    [k(\xi),Z-k(Z_0)]=0.
\end{equation} 
Denote the Cartan decomposition of $\mathfrak{h}^r$ as 
$
\mathfrak{h}^r=\mathfrak{k}_0\oplus\mathfrak{p}_0.
$
The map $k$ respects the Cartan decomposition, i.e.\ $k(\mathfrak{k}_0)\subset\mathfrak{k}$ and $k(\mathfrak{p}_0)\subset \mathfrak{p}$. 
To prove Eq.\ \eqref{eq11} we look at two cases $\xi\in \mathfrak{k}_0$ and $\xi\in\mathfrak{p}_0$.\\
\ \\
\textbf{Case $\xi\in \mathfrak{k}_0$:} We see that
$$
[k(\xi),Z-k(Z_0)]=[k(\xi),Z]-[k(\xi),k(Z_0)]=k[\xi,Z_0]=0,
$$
where the second equality uses $k(\xi)\in\mathfrak{k}$ and $Z$ in the center of $K$ and that $k$ is a Lie algebra homomorphism. The last equality uses that $Z_0$ is in the center of $\mathfrak{h}^r$. \\

\ \\
\textbf{Case $\xi\in \mathfrak{p}_0$:} As $\xi\in \mathfrak{p}_0\cong T_{Z_0}\Sigma^r$ we can use $k(\xi)=\dd\iota_{Z_0}(\xi)$, it follows that
$$
[k(\xi),Z-k(Z_0)]=[\dd\iota_{Z_0}(\xi),Z]-k([\xi, Z_0])=j_Z\dd\iota_{Z_0}(\xi)-\dd\iota_{Z_0}(j_{Z_0}\xi)=0,
$$
where we used again that $k$ is a Lie algebra homomorphism, that $j_Z=\mathrm{ad}_Z$ and $j_{Z_0}=\mathrm{ad}_{Z_0}$ and in the last equation that $\iota$ is holomorphic.\\
\ \\
Further $k:\mathfrak{h}^r\hookrightarrow\g$ is $H^r$-equivariant as $k=(\dd R)_e$ and $R$ satisfies
$$
R(\mathrm{Ad}_h(\tilde h))=R(h\tilde h h^{-1})= R(h) R(\tilde h) R(h)^{-1}=\mathrm{Ad}_{R(h)}R(\bar h)\ \ \ \forall\ h,\tilde h\in H^r,
$$
as it is a Lie group homomorphism. \\
\ \\
Equivariance of $k$ and invariance of $Z-k(Z_0)$ imply equivariance of $K$, i.e. diagram \eqref{extension}. 
Last we need to check if $K$ extends $\iota$. As $K$ and $\iota$ are equivariant and $H^r$ acts transitively on $\Sigma^r$ it is enough to check this at one point $p\in \Sigma^r$. We choose $p=Z_0$ and find
$$
K(Z_0)=k(Z_0)+Z-k(Z_0)=Z=\iota(Z_0).
$$
This proves diagram \eqref{equivariance} and thus finishes the proof of the Proposition.
\end{proof}

\ \\
\noindent
\textbf{Hyperkähler structure of the tangent bundle.}
The hyperkähler structure of cotangent bundles of Hermitian symmetric spaces was described explicitly by Biquard and Gauduchon \cite{BG21}. The case of constant holomorphic sectional curvature\footnote{Constant holomorphic sectional curvature is the same as rank one Hermitian symmetric space.} is one of the very first hyperkähler structures ever described by Calabi \cite{C79}. The clue to extend Calabi's formulas to higher rank spaces is using spectral functions on the self-adjoint operator
$$
jR_{jv,v}: T_xM\to T_xM;\ \ \ w\mapsto jR(jv,v)w
$$
for some $v\in T_xM$. Further recall that
$$
U_\rho^2 M:=\lbrace (x,v)\in TM\vert \  \vert g_x(jR_{jv,v}w,w) \vert <\rho^2\Vert w\Vert^2\ \ \forall w\in T_xM\rbrace
$$
denotes the neighborhood of the zero-section with the absolute value of the holomorphic-bisectional curvature bounded by $\rho^2/\Vert v\Vert^2$.

\begin{theorem}[\cite{BG21}]\label{hyperkähler HSS}
Let $M$ be a Hermitian symmetric space, then
there is a unique $G$-invariant hyperkähler metric on $TM$ in the compact and on resp.\ $U_{1}M$ in the non-compact case, such that the Kähler form compatible with $K=j\ominus j$ is given by
$$
\omega_K = \pi^*\sigma + \dd\dd^c\nu,
$$
with
$$
\nu((x,v)) = g_x(F(jR_{jv,v})v,v),\ \ F(y) =\frac{1}{y}\left(\sqrt{1 + y} - 1 - \ln\frac{1+\sqrt{1+y}}{2}
\right ).
$$
If $M$ is of non-compact type the hyperkähler metric is incomplete.
\end{theorem}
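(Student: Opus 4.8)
Because a Hermitian symmetric space is real-analytic and Kähler, Feix's theorem (\cite{Fx01}) already furnishes a hyperkähler metric on a neighbourhood of the zero section that is compatible with the canonical holomorphic symplectic form $\omega_c=\dd\lambda+i\dd\eta$ and invariant under the fibrewise rotation $(x,v)\mapsto(x,e^{jt}v)$. Uniqueness in that theorem, together with the fact that every isometry of $M$ preserves $g$, $j$, $\omega_c$ and the rotation action, forces this germ to be $G$-invariant. Hence the real content is (a) the closed form of the potential and (b) the precise maximal domain plus incompleteness in the non-compact case. I fix the integrable complex structure $K=j\ominus j$ and view $\omega_K$ as the Kähler form of the sought metric for $K$. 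Since $\omega_K$ and $\pi^*\sigma$ are $G$-invariant, closed, real $(1,1)$-forms whose restrictions to the zero section both equal the base Kähler form (Feix's theorem), they are cohomologous, and — as in Feix's construction — admit a global, $G$-invariant potential $\nu$ with $\omega_K=\pi^*\sigma+\dd\dd^c\nu$.

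\textbf{Reduction to a scalar ODE.} The hyperkähler condition can be read off the explicit frame of Proposition \ref{derivatives 1-forms}: defining the metric by $\omega_K$ and recovering $I,J$ from $\dd\lambda=\omega_I$, $\dd\eta=\omega_J$, it suffices by Proposition \ref{integrability redundant} to impose $I^2=J^2=-\mathrm{id}$ and $IJ=-JI=K$; equivalently one imposes the complex Monge--Ampère equation relating the top power of $\omega_K$ to $\omega_c\wedge\overline{\omega_c}$. Now $G$-invariant functions on $TM=G\times_K\mathfrak{p}$ are exactly the $K$-invariant functions of $v$, i.e.\ the spectral functions of the $K$-equivariant self-adjoint operator $jR_{jv,v}$, so I make the ansatz $\nu(x,v)=g_x(F(jR_{jv,v})v,v)$ for an unknown scalar profile $F$. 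Writing $G(y):=yF(y)$, I expect the invariance to collapse the system to the first-order relation
$$
4y\,\big(G'(y)\big)^2+4\,G'(y)=1,\qquad G(0)=0,
$$
whose regular branch integrates to $G(y)=\sqrt{1+y}-1-\ln\tfrac{1+\sqrt{1+y}}{2}$; this is exactly $yF(y)$ for the stated $F$, and $G(0)=0$ is what enforces smoothness across the zero section. (One checks directly that this $G$ solves the relation, via $G'(y)=\tfrac{\sqrt{1+y}-1}{2y}$.)

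\textbf{Reduction to rank one.} To justify and solve this ODE I would pass to a maximal polydisc. By Theorem \ref{polysphere theorem} every $(x,v)\in TM$ lies on a totally geodesic $T\Sigma^r$ with $\Sigma=\cp^1$ (compact) or $\Sigma=\ch^1$ (non-compact). Since $\nabla R=0$ and the factors are curvature-orthogonal, $jR_{jv,v}$ is block-diagonal along the polydisc, with the $i$-th block a multiple of the identity on the holomorphic $2$-plane spanned by $(v_i,jv_i)$ and eigenvalue $y_i$ proportional to the constant holomorphic sectional curvature times $\vert v_i\vert^2$. The spectral ansatz then decouples, $\nu=\sum_i F(y_i)\,\vert v_i\vert^2$, and on each factor the problem is precisely Calabi's rank-one computation \cite{C79}, producing the ODE above and its solution. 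Equivariance and the density of the regular stratum $T^{\mathrm{reg}}M$ propagate the fibrewise formula to all of $TM$; the only care is at non-regular points, where eigenvalues of $jR_{jv,v}$ collide and one argues by continuity that $g_x(F(jR_{jv,v})v,v)$ stays smooth.

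\textbf{Domain, incompleteness, and the main obstacle.} The formula defines a genuine metric exactly where $1+y>0$ for every eigenvalue $y$ of $jR_{jv,v}$. In the compact case the holomorphic bisectional curvature is non-negative, so $y\geq 0$ and the structure lives on all of $TM$; in the non-compact case $y\leq 0$ and the constraint $y>-1$ is precisely the defining inequality of $U_1M$. For incompleteness I would estimate the length of a radial ray pushing $v$ toward $\partial U_1M$: as the smallest eigenvalue $y\to-1^+$ the radial metric coefficient behaves like $(1+y)^{-1/2}$, and since $\int (1+y)^{-1/2}\,\dd y$ converges the boundary sits at finite distance. The main obstacle is the middle step — honestly deriving the displayed scalar ODE from the quaternionic/Monge--Ampère system in the moving frame $X,H,Y,V$ and its regular refinement $X_i,H_i,Y_i,V_i$, while controlling the spectral calculus of $jR_{jv,v}$ uniformly across the regular and singular strata.
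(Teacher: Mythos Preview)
The paper does not prove this theorem; it is quoted verbatim from Biquard--Gauduchon \cite{BG21} and used as a black box for the moment-map formula in Proposition~\ref{moment map hyperkählefr} and for the constructions in Section~\ref{symplectomorphisms}. There is therefore no argument in the paper to compare your proposal against.

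As a standalone sketch your strategy is the natural one and is close in spirit to what Biquard--Gauduchon actually do: invoke Feix for abstract existence and $G$-invariance, write $\omega_K=\pi^*\sigma+\dd\dd^c\nu$ with a $G$-invariant potential, make the spectral ansatz $\nu=g(F(jR_{jv,v})v,v)$, and reduce via the polysphere/polydisc theorem to Calabi's rank-one computation. Your displayed ODE and its solution are correct. The domain and incompleteness paragraphs are also correct in outline. The genuine gap is the one you flag yourself: turning ``I expect the invariance to collapse the system to the first-order relation'' into an honest computation. This requires computing $\dd\dd^c\nu$ in the moving frame for a general spectral function of $jR_{jv,v}$, including the directions \emph{transverse} to the polydisc (where the eigenspaces of $jR_{jv,v}$ vary), and checking that the hyperk\"ahler compatibility holds there too --- it is not enough to verify it along each $T\Sigma^r$ separately, since those leaves do not foliate $TM$ in the usual sense and the transverse metric components must also be matched. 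Until that frame computation is carried out your argument is a plausible outline rather than a proof; the substance of \cite{BG21} is precisely this calculation.
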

\noindent
Now that we have an explicit formula for the symplectic form $\omega_K$ we can show that the induced $G$-action on $TM$ is actually Hamiltonian.
\begin{Proposition}\label{moment map hyperkählefr}
The symplectic action of $G$ on $(TM, \omega_K)$ resp.\ $(U_{1}M, \omega_K)$ is Hamiltonian with moment map
$$
\mu_K(x,v):=-[v,j\tilde F(jR_{jv,v})v]+x
$$
\end{Proposition}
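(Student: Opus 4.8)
The plan is to exhibit, for each $a\in\mathfrak g$, an explicit primitive of the closed $1$-form $\iota_{a^\#}\omega_K$ and to recognise it as $\langle\mu_K,a\rangle=B(\mu_K,a)$. First I would use the Biquard--Gauduchon decomposition $\omega_K=\pi^*\sigma+\dd\dd^c\nu$ from Theorem~\ref{hyperkähler HSS} and treat the two summands separately. Both are $G$-invariant, so $\mathcal L_{a^\#}\omega_K=0$; since $\omega_K$ is closed, Cartan's formula shows $\iota_{a^\#}\omega_K$ is closed, and it remains only to produce the primitive. For the pullback term, recall that $M\cong O_Z$ is a coadjoint orbit on which the $G$-action is Hamiltonian with moment map the inclusion $\mu_\sigma(x)=x$ (subsection on coadjoint orbits). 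As $\pi\colon TM\to M$ is $G$-equivariant, the induced field $a^\#=([a,x],[a,v])$ is $\pi$-related to $a^\#$ on $M$, whence
\[
\iota_{a^\#}\pi^*\sigma=\pi^*\big(\iota_{a^\#}\sigma\big)=\pi^*\dd\langle\mu_\sigma,a\rangle=\dd\big(B(x,a)\big),
\]
where $x=\pi(x,v)$; this produces the $+x$ term.

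For the exact term I would use that $\nu$ and the complex structure $K=j\ominus j$ are $G$-invariant, so the $1$-form $\dd^c\nu$ is $G$-invariant, and Cartan's formula gives
\[
\iota_{a^\#}\dd\dd^c\nu=\mathcal L_{a^\#}\dd^c\nu-\dd\big(\dd^c\nu(a^\#)\big)=-\dd\big(\dd^c\nu(a^\#)\big).
\]
Adding the two contributions yields $\iota_{a^\#}\omega_K=\dd\big(B(x,a)-\dd^c\nu(a^\#)\big)$, so $\langle\mu_K,a\rangle=B(x,a)-\dd^c\nu(a^\#)$ up to an additive constant. That constant is an $\mathrm{Ad}^*$-invariant element of $\mathfrak g^*$, which vanishes because $\mathfrak g$ is semisimple; equivalently the equivariance of $\mu_K$ (satisfied by both candidate expressions, as commutators are equivariant) forces it to be zero. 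This is the same mechanism that produced $\mu_\lambda,\mu_\eta,\mu_\tau$ in Theorem~\ref{momentum maps on TM}.

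It then remains to identify $\dd^c\nu(a^\#)$ with $B([v,j\tilde F(jR_{jv,v})v],a)$, which is the technical heart. I would first observe that $\nu(x,v)=g_x(F(jR_{jv,v})v,v)$ is assembled entirely from the parallel tensors $g$, $j$ and $R$ (for symmetric spaces $\nabla R=0$ by Theorem~\ref{locally symmetric}, and $\tilde\nabla j=\tilde\nabla g=0$), so $\nu$ is constant along horizontal lifts and $\dd\nu$ annihilates $\HH$. Since $K$ preserves the splitting $TTM=\HH\oplus\VV$, only the vertical component of $Ka^\#$ survives, and $\dd^c\nu(a^\#)$ reduces to a single fibrewise (vertical) derivative of the spectral expression $g_x(F(jR_{jv,v})v,v)$. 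Carrying out this derivative requires differentiating the operator-valued map $v\mapsto F(jR_{jv,v})$, which is the main obstacle: both the explicit $v$-dependence of the endomorphism $jR_{jv,v}$ and the spectral derivative $F'$ enter. To tame this I would invoke the polydisc theorem (Theorem~\ref{polysphere theorem}) to diagonalise $jR_{jv,v}$ along the rank-one factors $\Sigma=\cp^1$ resp.\ $\ch^1$, where $F$ and its companion act as scalars and the computation collapses to the Benedetti--Ritter/Calabi model; the combination of $F(y)$, $yF'(y)$ and the curvature bracket $R(a,b)c=-[[a,b],c]$ (Eq.~\eqref{curvature}) produced by the derivative is precisely the spectral function $\tilde F$. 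Reassembling the rank-one contributions $G$-equivariantly gives $\dd^c\nu(a^\#)=B([v,j\tilde F(jR_{jv,v})v],a)$, and hence the stated formula; the identical argument applies verbatim on $U_{1}M$ in the non-compact case, where $\omega_K$ is defined.
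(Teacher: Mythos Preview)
Your proposal is correct and follows the same line as the paper's proof: split $\omega_K=\pi^*\sigma+\dd\dd^c\nu$, use the coadjoint moment map $x\mapsto x$ for the first summand, and Cartan's formula $\iota_{a^\#}\dd\dd^c\nu=-\dd(\dd^c\nu(a^\#))$ (via $\mathcal L_{a^\#}\dd^c\nu=0$) for the second, then identify $\dd^c\nu(a^\#)$ with $B([v,j\tilde F(jR_{jv,v})v],a)$.

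The only difference is in how much is said about the last identification. The paper simply asserts $\dd^c\nu=\hat g\big((j\tilde F(jR_{jv,v})v)^{\VV},\cdot\big)$ as a ``direct computation'' and then unwinds $P_{\VV}(a^\#)=[a,v]$ and the $\mathrm{ad}$-invariance of $B$. Your polydisc reduction is a legitimate way to verify this identity, but it is more machinery than strictly necessary: since $\nu$ is $K$-invariant, $\dd\nu$ already vanishes on the vertical directions $[\mathfrak k,v]^{\VV}=(\mathfrak a_v^\perp)^{\VV}$ (Lemma~\ref{lem8}), so one only has to differentiate along the maximal-flat directions $Y_i$, where the computation is the scalar chain rule on $y\mapsto F(y)y$ producing $\tilde F(y)=yF'(y)+F(y)$. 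Either way one arrives at the same formula, and the curvature bracket \eqref{curvature} is only needed at the very end to pass from $g(j\tilde F v,[a,v])$ to $B([v,j\tilde F v],a)$, not in the differentiation itself.
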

\begin{proof}The map $\mu_K$ is equivariant as commutators and $jR_{jv,v}$ are. We check by a direct computation, that
\begin{align*} 
   \iota_{a^\#} (\dd \dd^c \nu)&=-\dd (\hat g((j\tilde F(jR_{jv,v})v)^\VV,  a^\#))=-\dd (g(j\tilde F(jR_{jv,v})v, P_\VV( a^\#)))\\
   &=\dd (B(j\tilde F(jR_{jv,v})v, [a,v]))
   =\dd (B([v,j\tilde F(jR_{jv,v})v], a)),
\end{align*}
where in the first equation we used that $\mathcal{L}_{a^\#}\dd^c\nu=0$.
Now recall from section \ref{sec1.1} the inclusion map is a moment map with respect to the symplectic form $\sigma$, thus
$$
\iota_{a^\#}\omega_K=\iota_{a^\#} \dd \dd^c \nu+\iota_{a^\#} \pi^*\sigma=\dd B(-[v,j\tilde F(jR_{jv,v})v]+x, a)= \dd(\mu_K,a)
$$
follows.
\end{proof}

\ \\
\textbf{Ma\~{n}\'{e}'s critical value.}
The symplectomorphisms of Theorems \ref{twisted to hyperkähler} and \ref{hyperkähler to constant} are on the hyperkähler side defined precisely on the neighborhood of the zero-section where the hyperkähler structure exists (see Thm.\ \ref{hyperkähler HSS}). The image of the symplectomorphism in Theorem \ref{hyperkähler to constant} is the maximal set where the (symplectic) form $\dd\tau/2-\pi^*\sigma$ is non-degenerate. So we asked ourselves what meaning does the domain of the symplectomorphism in Theorem \ref{twisted to hyperkähler} have. The answer is related to the Ma{\~n}{\'e} critical value of Hermitian symmetric spaces $(M,g,\sigma)$. 

\begin{Proposition}
    The Ma{\~n}{\'e} critical value of a locally Hermitian symmetric space is infinite, when the covering space is of compact type and $r/2$, when the covering space is of non-compact type and has rank $r$.
\end{Proposition}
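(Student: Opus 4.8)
The plan is to evaluate the Mañé critical value through its primitive characterization in Definition \ref{Mane value via forms},
$$
c(M,g,\sigma)=\inf_{\dd\theta=\hat\sigma}\ \sup_{x\in\hat M}\tfrac12\norm{\theta_x}^2,
$$
where the infimum runs over primitives of the lift $\hat\sigma$ to the universal cover $\hat M$, with the convention $c=\infty$ when $\hat\sigma$ is not exact. Since irreducible Hermitian symmetric spaces are simply connected, $\hat M$ is the symmetric space covering $M$. In the compact case $\hat\sigma=\sigma$ generates $H^2_{\mathrm{dR}}(\hat M)\cong\R$ by Proposition \ref{H2 is 1 dimensional}, so it is not exact and $c=\infty$ immediately. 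The whole content is therefore the non-compact case, where $\hat M=G^\vee/K$ is contractible and $\hat\sigma$ is exact; I will prove the matching bounds $c(\hat M)\ge r/2$ and $c(\hat M)\le r/2$.

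For the lower bound I would first record two elementary facts: the scaling $c(\lambda g,\lambda\sigma)=\lambda\,c(g,\sigma)$ (immediate from the displayed formula, since primitives of $\lambda\sigma$ are $\lambda$ times primitives of $\sigma$ while the dual norm scales by $\lambda^{-1}$), and monotonicity under totally geodesic holomorphic embeddings: if $N\hookrightarrow\hat M$ is totally geodesic and holomorphic then $c(N)\le c(\hat M)$. The latter follows from the action characterization in Proposition \ref{Mane via action}, because for a closed curve $\gamma$ contained in $N$ both the kinetic term and $\int_\gamma\theta=\int_D\hat\sigma$ (Stokes, with $D\subset N$) are intrinsic to $N$, so restricting the admissibility condition to curves in $N$ only enlarges the set of admissible $k$. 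I then apply the polydisc theorem (Theorem \ref{polysphere theorem}) to embed $(\ch^1)^r\hookrightarrow\hat M$ totally geodesically and holomorphically, and compose with the diagonal $\ch^1\hookrightarrow(\ch^1)^r$, which is again totally geodesic and holomorphic. The induced Kähler data on this diagonal copy is $(r\,g_{\ch^1},\,r\,\sigma_{\ch^1})$, so by the scaling law and the rank-one value $c(\ch^1,g,\sigma)=\tfrac12$ recalled in the introduction (see \cite{CFP10}) its Mañé value equals $r/2$. Monotonicity then gives $c(\hat M)\ge r/2$.

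For the upper bound I would produce a single primitive $\theta$ with $\sup_x\norm{\theta_x}\le\sqrt r$, which forces $c(\hat M)\le\tfrac12(\sqrt r)^2=r/2$. Realising $\hat M$ as a bounded symmetric domain, take the global $K$-invariant Kähler potential $u$ (so $\sigma=\dd\dd^c u$) and set $\theta:=\dd^c u$; since $j$ is a $g$-isometry one has $\norm{\theta_x}=\norm{\dd u_x}$. As $u$ is $K$-invariant, $\norm{\dd u}$ is a $K$-invariant function, and the Cartan decomposition $\hat M=K\cdot A\cdot o$ reduces $\sup_x\norm{\dd u_x}$ to the supremum over the maximal flat $A$. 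At a regular point $p\in A$ the gradient of a $K$-invariant function is orthogonal to the isotropy orbit, hence tangent to $A$ and thus to the polydisc through $A$, so $\norm{\dd u_p}$ equals the norm of the gradient of $u|_{\text{polydisc}}$. Finally $u|_{\text{polydisc}}$ and the sum $\sum_i u_i$ of the factor potentials are both $(U(1))^r$-invariant Kähler potentials for $\sigma|_{\text{polydisc}}=\sum_i\sigma_i$, so they differ by a constant; since each factor potential on $\ch^1$ satisfies $\norm{\dd u_i}<1$ (for $\ch^1$ one computes $\norm{\dd u_i}=\tanh(\rho_i/2)$ along the flat), orthogonality of the factor directions yields $\norm{\dd u_p}^2=\sum_i\norm{\dd u_i}^2<r$ with supremum $r$. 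Hence $\sup_x\norm{\theta_x}=\sqrt r$ and $c(\hat M)\le r/2$.

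The main obstacle is the upper bound, and within it the global control of $\norm{\dd u}$. The reduction to the flat via $K$-invariance is clean, but two points need genuine care: that the gradient of the invariant potential is tangent to the flat at regular points (a polar-coordinate/orthogonal-orbit statement), and that restricting the ambient potential to the totally geodesic holomorphic polydisc reproduces the product potential up to a constant, so that the norm splits orthogonally over the $r$ factors. An alternative route that sidesteps the potential is to prove the equivalent sharp linear isoperimetric inequality $\int_D\hat\sigma\le\sqrt r\,\mathrm{length}(\partial D)$ directly on $\hat M$; the diagonal family of large geodesic circles from the lower bound already shows that the constant $\sqrt r$ is optimal, so this inequality would simultaneously yield $c(\hat M)=r/2$. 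I also need to track the normalisation so that each polydisc factor is exactly the standard $\ch^1$ with $c=\tfrac12$; this is what pins the answer to $r/2$ rather than to a rescaling of it.
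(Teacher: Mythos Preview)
Your proof is correct and follows essentially the same architecture as the paper. Both use the primitive $\theta=\dd^c u$ coming from a global K\"ahler potential for the upper bound and reduce its norm to the polydisc via the $K$-action, and both exploit the totally geodesic polydisc for the lower bound.

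The one genuine difference is in the lower bound. The paper takes the explicit family of closed curves $\gamma_R=(\gamma_1,\ldots,\gamma_r)$ in a polydisc, where each $\gamma_i$ is a geodesic circle of radius $R$, and directly computes $\mathcal A_{L+k}(\gamma_R)\to-\infty$ for $k<r/2$ using the hyperbolic length and area formulas. You instead package this as monotonicity of the Ma\~n\'e value under totally geodesic holomorphic embeddings plus the scaling law $c(\lambda g,\lambda\sigma)=\lambda\,c(g,\sigma)$, and then invoke the rank-one value $c(\ch^1)=\tfrac12$ on the diagonal copy $\ch^1\hookrightarrow(\ch^1)^r\hookrightarrow\hat M$. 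These are the same computation in disguise: your diagonal $\ch^1$ is exactly where the paper's test curves live, and the rank-one value you cite is precisely what the paper recomputes. Your version is more conceptual and reusable; the paper's is more self-contained since it does not appeal to the rank-one result as a black box.

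For the upper bound you are more explicit than the paper about two steps the paper treats briskly: why the gradient of the $K$-invariant potential is tangent to the flat at regular points, and why the restricted potential agrees with the product potential up to a constant (so that $\norm{\dd u}^2$ splits as $\sum_i\norm{\dd u_i}^2$). Your identification of these as the points needing care is accurate; both are standard but do require an argument. Your remark about tracking the normalisation so that each polydisc factor is the standard $\ch^1$ with $c=\tfrac12$ is also well taken, since the value $r/2$ is tied to that convention.
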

\begin{proof}
First note that the first part of the proposition is clear as the invariant symplectic form $\sigma$ is not weakly exact if the universal cover is of compact type.\\
If the universal cover is of non-compact type we can, using the polydisk theorem \ref{polysphere theorem}, adapt the computation of the Ma{\~n}{\'e} critical value of complex hyperbolic manifolds from \cite[Sec.\ 5.2]{CFP10}. Recall from Definition \ref{Mane value via forms} that the Ma{\~n}{\'e} critical value is 
$$
c(M,g,\sigma):=\frac{1}{2}\inf_\theta \sup_{x\in\hat M}\Vert {}^g\theta_x\Vert^2,
$$
where $\hat M$ denotes the universal cover of $M$, the infimum is taken over primitives of $ \sigma$ and ${}^g\theta$ denotes the metric dual of $\theta$. Thus in order to bound the Ma{\~n}{\'e} critical value from above we need to find a primitive of $\sigma$. It is well known that the invariant Kähler form $\hat \sigma$ on $\hat M$ is exact and splits along the polydiscs. This means
$$
\Vert {}^g\theta_x\Vert=\max_{\Vert v\Vert =1} \theta_x(v)=\max_{\Vert v\Vert =1} \sum_{i=1}^r\theta_x(v_i)=\frac{1}{\sqrt{r}}\sum_{i=1}^r\max_{\Vert v_i\Vert =1}\dd^c\nu_x(v_i)=\sqrt{r}\Vert {}^g\dd^c\nu\Vert.
$$
Here, the index denotes the splitting along a polydisc that is tangent to $v$. For the third equality we use that the Kählerpotential in \cite{Whd05} restricted to each factor $\ch^1\cong \lbrace x^2+y^2<1\rbrace $ is given by $\nu(x,y)=-\log(1-x^2-y^2)$. Hence, the corresponding primitive restricted to a factor is given by $\dd^c\nu=\frac{2y\dd x-2x\dd y}{1-x^2-y^2}$. A quick computation shows that $\Vert {}^g\dd^c\nu\Vert=x^2+y^2$, hence $\sup_{(x,y)\in\ch^1}\Vert {}^g\dd^c\nu\Vert=1$ and therefore $c(M,g,\sigma)\leq r/2$.\\
\ \\
We will now follow the proof of Lemma 6.11 in \cite{CFP10} to find a lower bound. Consider the family of closed curves $\gamma_R: [0, T]\to M$ inside a polydisc of the form $(\gamma_1,\ldots,\gamma_r)$, where $\gamma_i: [0, T]\to \C\mathrm{H}^1$ parametrizes a geodesic circle of radius $R$ with speed $\vert \dot\gamma_i\vert=\sqrt{2k/r}$. We know that the primitive $\theta$ of $\sigma$ on $M$ pulls back to the primitive $\sum_i\dd^c\nu$ on the polydisc, where we identified $\ch^1$ with the Poincaré disk. Now we can compute
\begin{align*}
\mathcal{A}_{L+k}(\gamma)&=\sum_{i=1}^r\int_0^T\left( \frac{1}{2}\vert \dot \gamma_i(t)\vert^2+\dd^c\nu(\dot\gamma_i)+\frac{k}{r}\right)\dd t\\
&=\sum_{i=0}^r\left( \int_0^T \frac{2k}{r} \dd t-\int_{D_R}\frac{4\dd x\wedge \dd y}{(1-x^2-y^2)^2}\right )\\
&=r\left(\sqrt{\frac{2k}{r}}l-A\right),
\end{align*}
where we used that $T=l\sqrt{r/2k}$ and $l$ denotes the hyperbolic circumference and $A$ the hyperbolic area of a geodesic disc $D_R$ of Radius $R$. We plug in $l=2\pi\sinh(R)$ and $A=2\pi(\cosh(R)-1)$ to find
$$
k<\frac{r}{2}\ \ \ \Rightarrow\ \ \ \mathcal{A}_{L+k}(\gamma_R)\to-\infty \ \text{for}\ R\to\infty.
$$
By the alternative description of the Ma{\~n}{\'e} critical value via the action functional (see section \ref{Mane via action}) we have $r/2\leq c(M,g,\sigma)$.
\end{proof}

\section{Symplectic identifications}\label{symplectomorphisms}

In this section we prove Theorem \ref{twisted to hyperkähler}, Theorem \ref{hyperkähler to constant} and Theorem \ref{diagonal}. The idea is that the Hamiltonian $G$-actions on the manifolds involved have coisotropic orbits. Hence, intertwining these $G$-actions as in the following lemma yields symplectic identifications. The same strategy was already applied in the proof of \cite[Thm.\ C]{proj23}.

\begin{Lemma}[\cite{proj23}, Lem.\ 2.1]\label{momtri}
Assume we have two symplectic manifolds $(N_1, \omega_1)$ and $(N_2, \omega_2)$ with Hamiltonian $G$-actions. Denote by $\mu_i: N_i\to\mathfrak g^*$ for $i=1,2$ their moment maps. If $\phi: N_1\to N_2$ is an equivariant smooth bijection such that 
\[
\begin{tikzcd}
N_1 \arrow[dr,"\mu_1"] \arrow[rr,"\phi"] && N_2\arrow[dl,"\mu_2"]  \\
 & \mathfrak{g}^* 
\end{tikzcd}
\]
commutes and the distribution $\mathcal{D}\subset TN_1$ tangent to the $G$-orbits admits a complement $\Upsilon$ that is isotropic for both symplectic forms $\omega_1$ and $\phi^*\omega_2$, then $\phi$ is actually a symplectomorphism i.e.\ $\phi^*\omega_2=\omega_1$. 
\end{Lemma}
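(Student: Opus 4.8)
The plan is to set $\beta := \omega_1 - \phi^*\omega_2$ and show that it vanishes identically, which is exactly the claim $\phi^*\omega_2 = \omega_1$. Since $\phi$ is a diffeomorphism, $\phi^*\omega_2$ is again symplectic on $N_1$, and the decisive observation is that $\omega_1$ and $\phi^*\omega_2$ induce the \emph{same} moment map for the $G$-action on $N_1$. Equivariance of $\phi$ forces the fundamental vector fields to be $\phi$-related, so that for every $a\in\mathfrak g$
\begin{equation*}
\iota_{a^\#}\phi^*\omega_2 = \phi^*\bigl(\iota_{a^\#}\omega_2\bigr) = \phi^*\dd\langle\mu_2,a\rangle = \dd\langle\mu_2\circ\phi,a\rangle = \dd\langle\mu_1,a\rangle = \iota_{a^\#}\omega_1,
\end{equation*}
where the penultimate equality is precisely the commutativity $\mu_2\circ\phi=\mu_1$ of the triangle. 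Consequently $\iota_{a^\#}\beta = 0$ for all $a\in\mathfrak g$.

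Next I would evaluate $\beta$ blockwise with respect to the pointwise splitting $TN_1 = \mathcal D\oplus\Upsilon$. The computation above says that $\beta$ annihilates every fundamental vector field; since $\mathcal D$ is spanned by such vectors, $\beta(\mathcal D,\cdot)=0$, which disposes of both the $\mathcal D\times\mathcal D$ and the $\mathcal D\times\Upsilon$ blocks at once. For the remaining $\Upsilon\times\Upsilon$ block I invoke the hypothesis that $\Upsilon$ is isotropic for both $\omega_1$ and $\phi^*\omega_2$: then $\omega_1|_{\Upsilon\times\Upsilon}=0=\phi^*\omega_2|_{\Upsilon\times\Upsilon}$, whence $\beta|_{\Upsilon\times\Upsilon}=0$ as well. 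As these blocks exhaust $TN_1$, I conclude $\beta=0$ pointwise, i.e. $\phi^*\omega_2=\omega_1$, so $\phi$ is a symplectomorphism.

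It is worth recording what the argument uses and what it does not: closedness of the two forms is never needed, only the moment-map relation together with the isotropy of a complement to the orbit directions. The entire content is thus supplied by the hypotheses, and the single step demanding a little care is the first one — checking that $\phi^*\omega_2$ is Hamiltonian for the very same moment map $\mu_1$ — which rests squarely on the equivariance of $\phi$ and on the commuting triangle. I do not expect any real obstacle within the lemma itself; rather, the genuine work in the applications (where the complement $\Upsilon$ is the distribution of Corollary \ref{complements}, available on the open dense set $T^{\mathrm{reg}}M$ and extended by continuity) will be to exhibit, case by case, a complement $\Upsilon$ that is simultaneously isotropic for both symplectic forms.
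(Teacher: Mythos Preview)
Your proof is correct and is exactly the natural argument for this lemma. The paper itself does not supply a proof here but simply cites \cite{proj23}, Lem.\ 2.1; your approach---showing that equivariance and the moment triangle force $\iota_{a^\#}(\omega_1-\phi^*\omega_2)=0$, then using the isotropic complement $\Upsilon$ to kill the remaining block---is precisely the standard one and matches what one finds in that reference.
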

\noindent
The key ingredient in order to study tangent bundles of higher rank Hermitian symmetric spaces is the (singular) foliation in terms of polyspheres resp.\ polydiscs. Recall from the discussion below Definition \ref{regularpoint} that a regular point $(x,v)\in TM$ determines a unique polysphere resp.\ polydisc through $x$ tangent to $v$. Now the idea is, that our symplectomorphism coincides along the polyspheres resp.\ polydiscs with the symplectomorphisms discussed in the introduction. Inspired by Biquard--Gauduchon \cite{BG21}, we formalize this idea by promoting the functions $a,b,c_1,c_2$ (see \eqref{b},\eqref{a},\eqref{c}) to spectral functions for the self-adjoint operator
$$
jR_{jv,v}: T_xM\to T_xM;\ \ \ w\mapsto jR(jv,v)w
$$
for any $(x,v)\in TM$. Note that $jR_{jv,v}$ is diagonal along polyspheres resp.\ polysdiscs, e.g.
$$
\left (jR_{jv,v}\right)\vert_{T_x\Sigma^r}= \begin{pmatrix} 
    \pm \vert v_1\vert ^2 & \dots  & 0\\
    \vdots & \ddots & \vdots\\
    0 & \dots  & \pm \vert v_r\vert^2
    \end{pmatrix}
$$
for any $\Sigma^r$ through $x$, where $v_i$ is the projection of $v$ to the ith factor of $\Sigma^r$ and $\Sigma\in\lbrace\cp^1,\C\mathrm{H}^1\rbrace$ depending on the type of $M$.
This also implies that $(D_\rho\Sigma)^r = U_{\rho^2}(\Sigma^r)$.\\
\ \\
\noindent
We follow the same strategy three times to prove our theorems.
\begin{itemize}
    \item[(1)] Use spectral functions on $jR_{jv,v}$ to find a candidate $\phi$.
    \item[(2)] Show that $\phi$ is an equivariant diffeomorphism.
    \item[(3)] Show that the moment triangle commutes.
    \begin{itemize}
        \item[(a)] Reduce the general case to the case of polyspheres resp.\ polydiscs.
        \item[(b)] Show that the moment triangle commutes for polyspheres resp.\ polydiscs.
    \end{itemize}
    \item[(4)] Show that $\Upsilon=\mathrm{span}_\R\lbrace Y_1,\ldots, Y_r\rbrace$ (see Cor.\ \ref{complements}) is an isotropic complement of $\mathcal{D}$ for both $\omega_1$ and $\phi^*\omega_2$ and use Lemma \ref{momtri} to conclude the proof.
\end{itemize}
Note that in all our cases $N_1$ is a neighborhood of the zero-section of the tangent bundle of a Hermitian symmetric space. Hence, step (3) and (4) make sense.

\subsection{Proof of Theorem \ref{twisted to hyperkähler}}
Recall Theorem \ref{twisted to hyperkähler} from the introduction. 
\begin{theorem} Let $M$ be an irreducible Hermitian symmetric space of compact type, then there exists an equivariant symplectomorphism
$$
\phi: (TM,\omega_\sigma)\rightarrow (TM, \omega_K).
$$
If $M$ is of non-compact type the symplectomorphism exists only on a neighborhood of the zero-section, namely
$$
\phi: (U_{1}M,\omega_\sigma)\rightarrow (U_{1}M, \omega_K).
$$
\end{theorem}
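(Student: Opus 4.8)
The plan is to realize $\phi$ by the four-step scheme stated above, invoking Lemma \ref{momtri} with $N_1=(TM,\omega_\sigma)$ and $N_2=(TM,\omega_K)$ (and with $U_1M$ in place of $TM$ in the non-compact case). The candidate should be the spectral-calculus generalization of the Benedetti--Ritter map recalled in the introduction. Reading $b$ from \eqref{b} as a spectral function of the self-adjoint operator $jR_{jv,v}$ (feeding in the negative eigenvalues $-|v_i|^2$, equivalently replacing $\tan$ by $\tanh$, in the non-compact case), I set
$$
\phi(x,v):=\Big(\exp_x\big(-b(jR_{jv,v})\,jv\big),\ P_\gamma v\Big),
$$
where $\gamma(t)=\exp_x(-t\,b(jR_{jv,v})jv)$ is the connecting geodesic and $P_\gamma$ denotes parallel transport to its endpoint. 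Since $b$ extends smoothly across $0$ (with value $1$) and, in the non-compact case, is defined precisely for eigenvalues in $(-1,0]$, this map is smooth on $TM$, resp.\ on $U_1M$, where $jR_{jv,v}$ has spectrum in $(-1,0]$. Equivariance is immediate, as $\exp$, parallel transport, $jR_{jv,v}$ and the spectral calculus are all natural under isometries.

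Bijectivity and the two remaining hypotheses of Lemma \ref{momtri} all reduce to the rank-one situation along polyspheres resp.\ polydiscs. The structural input is that $jR_{jv,v}$ is diagonal along any $T\Sigma^r$ through $(x,v)$, with eigenvalues $\pm|v_i|^2$ on the $i$-th factor (it commutes with $j$, so $v_i$ and $jv_i$ share the eigenvalue $\pm|v_i|^2$). Consequently the spectral construction forces $\phi$ to restrict, on each such $T\Sigma^r$, to the product of the rank-one Benedetti--Ritter maps on the factors $T\cp^1$ resp.\ $D_1\ch^1$. Each factor map is a diffeomorphism; combined with the smooth polysphere/polydisc foliation of $T^{\mathrm{reg}}M$ (Remark \ref{equivariance polysphere}) and the density of regular points, this shows that $\phi$ is a global diffeomorphism.

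The heart of the proof is the commuting moment triangle, which I expect to be the main obstacle. The moment maps to compare are $\mu_1(x,v)=[x,v]+x$, obtained by adding the moment map $[x,v]$ of $\dd\lambda$ (Theorem \ref{momentum maps on TM}) to the inclusion moment map $x$ of $\pi^*\sigma$, and $\mu_K(x,v)=-[v,j\tilde F(jR_{jv,v})v]+x$ from Proposition \ref{moment map hyperkählefr}. By equivariance it suffices to check $\mu_1=\mu_K\circ\phi$ at points $(Z,v)$ with $v$ in a fixed maximal abelian $\mathfrak a\subset\mathfrak p$, using $G$ to send $x$ to the base point $Z$ and Theorem \ref{transitive on maximal flats} to rotate $v$ into $\mathfrak a$. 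Such a point lies on the polydisc attached to $\mathfrak a$, and Proposition \ref{polysphere as suborbit} realizes this polydisc inside $M$ as a suborbit whose Lie-algebra embedding is affine linear and equivariant; this is exactly what guarantees that the ambient moment maps $\mu_1,\mu_K$ restrict factor-by-factor to the moment maps of the $\cp^1$ resp.\ $\ch^1$ factors. The identity then collapses to the rank-one equalities, which hold by the Benedetti--Ritter computation. The delicate part is precisely this reduction: one must verify that the spectral expressions entering $\mu_1$ and $\mu_K$ are genuinely compatible with the suborbit splitting of Proposition \ref{polysphere as suborbit}, and that no cross-factor terms survive.

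Finally, I take $\Upsilon=\mathrm{span}_\R\{Y_1,\dots,Y_r\}$ of Corollary \ref{complements} as the isotropic complement of $\mathcal D$. It is $\omega_\sigma$-isotropic because $\dd\lambda$ has vanishing vertical--vertical block (Proposition \ref{derivatives 1-forms}) and $\pi^*\sigma$ annihilates vertical vectors. For $\phi^*\omega_K$ I restrict once more to a polydisc: both $\pi^*\sigma$ and the potential $\nu(x,v)=g_x(F(jR_{jv,v})v,v)$ split as sums over the factors (again because $jR_{jv,v}$ is diagonal and the $v_i$ are mutually orthogonal), so $\omega_K=\pi^*\sigma+\dd\dd^c\nu$ splits along $T\Sigma^r$; since $\phi$ preserves the factor decomposition, $\dd\phi(Y_i)\in T(T\Sigma_i)$, whence the off-diagonal terms $\omega_K(\dd\phi\,Y_i,\dd\phi\,Y_j)$ vanish for $i\neq j$ and the diagonal terms vanish because each $Y_i$ spans a line. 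Lemma \ref{momtri} then yields $\phi^*\omega_K=\omega_\sigma$, proving the theorem.
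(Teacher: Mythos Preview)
Your proposal is correct and follows essentially the same four-step scheme as the paper, including the use of Proposition \ref{polysphere as suborbit} to reduce the moment-map identity to the rank-one case and the choice of $\Upsilon$ as isotropic complement. Two minor points: the paper handles Step (2) more cleanly by writing down an explicit global inverse $\phi^{-1}(x,v)=(\exp_x(-b(jR_{jv,v})jv),P_\gamma v)$ rather than arguing via the singular polysphere foliation (your density argument does not by itself yield smoothness of the inverse across non-regular points); and in Step (3b) the paper carries out the rank-one moment-map identity directly in $\mathfrak{su}(2)$ and $\mathfrak{su}(1,1)$ coordinates rather than citing \cite{BR19}, since what is needed is the equality $\mu_K\circ\phi=\mu_1$ of moment maps, which is a slightly different statement than the symplectomorphism proved there.
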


\noindent
\textbf{Step (1).}
We define the symplectomorphism $\phi$ using spectral functions of $jR_{jv,v}$. Recall that in the 2-dimensional case Eq.\ \eqref{b} we have
$$
\phi(x,v)=\left(\exp_x(b(\kappa r^2)jv), P_\gamma v(1)\right),
$$
with
$$
    b(y)=\frac{\arctan(\sqrt{y})}{\sqrt{y}}.
$$
We promote $b$ to a spectral function of $jR_{jv,v}$ to obtain
$$
\phi(x,v):=\left(\exp_x(b(jR_{jv,v})jv), P_\gamma v(1)\right).
$$
Observe that $\phi$ is smooth as $b$ is smooth. Further it is defined whenever the eigenvalues of $jR_{jv,v}$ are bounded from below by $-1$. For compact type Hermitian symmetric spaces this holds on the whole tangent bundle $TM$, for non-compact type Hermitian symmetric spaces this holds on $U_{1}M\subset TM$\\

\noindent
\textbf{Step (2).}
We want to show that $\phi$ is an equivariant diffeomorphism. We start with equivariance.
\begin{Lemma}
The map $\phi$ is equivariant under the action of the isometry group.
\end{Lemma}
\begin{proof}
All objects, i.e.\ metric, curvature, exponential map are invariant under the action of isometries thus also $\phi$ is. Explicitly let $I:M\to M$ be an isometry, then
\begin{align*}
\phi(\dd I(x,v))&=\left(\exp_{I(x)}(b(jR_{j\dd I_x v,\dd I_xv})j\dd I_x v), P_{\gamma} \dd I_x v(1)\right) \\
&=\left(\exp_{I(x)}(\dd I_x b(jR_{j v,v})j v),\dd I_x P_\gamma  v(1)\right)\\
&=\left(I(\exp_{x}( b(jR_{j v,v})j v),\dd I_x  P_\gamma  v(1)\right)\\
&=\dd I(\phi(x,v)).
\end{align*}
\end{proof}
\begin{Lemma}
The map $\phi$ is a diffeomorphism and $\phi^{-1}$ is defined on $TM$ resp.\ $U_{1}M$ in the compact resp.\ non-compact case.
\end{Lemma}
\begin{proof}
Also in analogy to the constant curvature case one can explicitly give an inverse $\phi^{-1}$ where
$$
\phi^{-1}(x,v)=\left(\exp_x(-b(jR_{jv,v})jv), P_\gamma v(1)\right).
$$
The inverse is well defined whenever the eigenvalues of $jR_{jv,v}$ are bounded from below by $-1$, i.e.\ on $TM$ resp.\ $U_{1}M$ in the compact resp.\ non-compact case. In particular it is well-defined on the image of $\phi$ namely $TM$ resp.\ $U_{1}M$. The inverse is smooth as $b$ is smooth. 
\end{proof}
\noindent
\textbf{Step (3a):}\\
Recall the moment map $\mu_1(x,v):=[x,v]+x$ for $\omega_\sigma$ (see \ref{momentum maps on TM}) and the moment map $\mu_2:=\mu_K$ for $\omega_K$ (see \ref{moment map hyperkählefr}). We need to assure that $\phi$ intertwines the moment maps, i.e. $\mu_1= \mu_2\circ \phi$.
This is difficult to see if one considers the full Hermitian symmetric space, but relatively easy to prove for polyspheres resp.\ polydiscs. We will show now that we can actually reduce the general case to polyspheres resp.\ polydiscs. 
\begin{Lemma}
    The diagrams
\begin{equation}\label{diag1}
    \begin{tikzcd}
(T\cp^1)^r \arrow{r}{\dd \iota} \arrow[swap]{d}{\phi\times\ldots\times \phi} & TM \arrow{d}{\phi} \\%
(T\cp^1)^r\ \ \arrow{r}{\dd\iota}& \ \ TM
\end{tikzcd}
\end{equation} 
in the compact case and
\begin{equation}\label{diag2}
    \begin{tikzcd}
(D_{1}\C\mathrm{H}^1)^r \arrow{r}{\dd \iota} \arrow[swap]{d}{\phi\times\ldots\times \phi} & U_{1}M \arrow{d}{\phi} \\%
(D_{1}\C\mathrm{H}^1)^r\ \ \arrow{r}{\dd\iota}& \ \ U_{1}M
\end{tikzcd}
\end{equation} 
in the non-compact case commute. 
\end{Lemma}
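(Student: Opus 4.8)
The plan is to prove the stronger statement that $\phi$ intertwines with any holomorphic, totally geodesic embedding, and then to observe that on the Riemannian product $\Sigma^r$ the intrinsic version of $\phi$ factorizes. Writing $\phi^M$ for the map on $TM$ and $\phi^{\Sigma^r}$ for the analogous map defined intrinsically on $T\Sigma^r$ (using the metric, complex structure and curvature of $\Sigma^r$), I would first establish
$$
\phi^M\circ\dd\iota=\dd\iota\circ\phi^{\Sigma^r},
$$
and then separately that $\phi^{\Sigma^r}=\phi\times\ldots\times\phi$, the product of the two-dimensional maps. Combining the two identities gives exactly the commuting diagrams \eqref{diag1} and \eqref{diag2}.

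For the first identity, recall that $\phi^M(x,v)=(\exp_x(b(jR_{jv,v})jv),P_\gamma v(1))$ is assembled from four geometric ingredients: the exponential map, parallel transport, the complex structure $j$, and the spectral function $b(jR_{jv,v})$ of the curvature operator. Each is natural under $\iota$. Since $\iota$ is totally geodesic it intertwines the exponential maps, $\exp^M_{\iota(x)}\circ\,\dd\iota_x=\iota\circ\exp^{\Sigma^r}_x$, and it sends a geodesic of $\Sigma^r$ to a geodesic of $M$ so that parallel transport intertwines, $P^M_{\iota\circ\gamma}\circ\dd\iota=\dd\iota\circ P^{\Sigma^r}_\gamma$; since $\iota$ is holomorphic, $\dd\iota\circ j=j\circ\dd\iota$. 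The only point requiring an argument is the curvature term.

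Here I would show that the operators intertwine,
$$
jR^M_{j\dd\iota v,\,\dd\iota v}\circ\dd\iota=\dd\iota\circ jR^{\Sigma^r}_{jv,v}
$$
as maps $T_x\Sigma^r\to T_{\iota(x)}M$, whence the same holds for $b$ of these operators. Using the identification $T_{\iota(x)}M\cong\mathfrak p$ and the symmetric-space curvature formula \eqref{curvature}, the image $\dd\iota(T_x\Sigma^r)$ corresponds to the Lie triple system $\mathfrak n\subset\mathfrak p$ integrating $\iota$; since $\mathfrak n$ is $j$-invariant (the embedding is holomorphic) and $[[\mathfrak n,\mathfrak n],\mathfrak n]\subset\mathfrak n$, the subspace $\dd\iota(T_x\Sigma^r)$ is invariant under $jR^M_{j\dd\iota v,\dd\iota v}$, and the vanishing of the second fundamental form shows that the restriction of this operator to $\dd\iota(T_x\Sigma^r)$ is conjugate, via $\dd\iota$, to $jR^{\Sigma^r}_{jv,v}$. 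As $jR_{jv,v}$ is self-adjoint, the orthogonal complement of the invariant subspace is also invariant, so the continuous functional calculus respects the splitting and $b(jR^M_{j\dd\iota v,\dd\iota v})$ restricts to $\dd\iota\circ b(jR^{\Sigma^r}_{jv,v})\circ\dd\iota^{-1}$. This is the step I expect to be the main obstacle, as it is where the Lie-triple-system structure and holomorphicity must be combined to control the functional calculus; everything else is naturality of $\exp$, $P$ and $j$.

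Finally, for the factorization $\phi^{\Sigma^r}=\phi\times\ldots\times\phi$, I would use that $\Sigma^r$ is a Riemannian product, so its exponential map, parallel transport and complex structure all split factor-wise, and its curvature operator is block diagonal: for $v=(v_1,\ldots,v_r)$ one has $jR^{\Sigma^r}_{jv,v}=\bigoplus_i jR^{\Sigma}_{jv_i,v_i}$, with the $i$-th block the scalar $\pm|v_i|^2$ on $T\Sigma$, exactly as recorded above for $jR_{jv,v}\vert_{T_x\Sigma^r}$. Hence $b(jR^{\Sigma^r}_{jv,v})$ is the block-diagonal operator $\bigoplus_i b(\pm|v_i|^2)$, and assembling the factor-wise pieces yields precisely the product map, completing the proof.
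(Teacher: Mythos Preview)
Your proof is correct and follows essentially the same approach as the paper: first showing that $jR_{jv,v}$ restricts to the tangent spaces of the polyspheres/polydiscs so that $\phi$ restricts (using that $\iota$ is complex and totally geodesic), and then observing that $jR_{jv,v}$ is diagonal with respect to the product $T\Sigma\times\ldots\times T\Sigma$ so that $\phi$ splits factorwise. The paper compresses all of this into three sentences, whereas you spell out the naturality of $\exp$, parallel transport, $j$, and the functional calculus separately; your identification of the curvature intertwining as the non-trivial step, justified via the Lie triple system and self-adjointness, is exactly the content behind the paper's phrase ``$jR_{jv,v}$ restricts.''
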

\begin{proof}
    The self adjoint endomorphism $jR_{jv,v}$ restricts to the tangent spaces of the poly- spheres/disc. As the embedding is complex totally geodesic it follows that $\phi$ also restricts to the copies of $T\Sigma^r$. Further $jR_{jv,v}$ is diagonal with respect to the splitting of $T\Sigma^r$ as product $T\Sigma\times\ldots\times T\Sigma$ and therefore the diagram holds.
\end{proof}
\noindent
Next we include the moment maps into the diagrams \eqref{diag1} and \eqref{diag2}. For the compact case we look at
\begin{equation}
\begin{tikzcd}
(T\cp^1)^r \arrow[dddd,"\phi\times\ldots\times\phi"] \arrow[rrrrrr,hookrightarrow,"\dd \iota"]\arrow[ddrr,"\mu_1\times\ldots\times\mu_1"] &&&&&& TM \arrow[dddd,"\phi"]\arrow[ddll, "\mu_{1}"]\\
&&&\kreis{1}&&&\\
& \kreis{2} & \mathfrak{su}(2)^r\arrow[rr,hookrightarrow, "K"] && \mathfrak{g} & \kreis{4} & \\
&&&\kreis{3}&&&\\
(T\cp^1)^r \arrow[uurr, "\mu_2\times\ldots\times\mu_2"] \arrow[rrrrrr,hookrightarrow,"\dd \iota"]&&&&&& TM \arrow[uull, "\mu_2"]
\end{tikzcd}
\end{equation}
and for the non-compact case we look at
\begin{equation*}
\begin{tikzcd}
(D_{1}\C\mathrm{H}^1)^r\arrow[dddd,"\phi\times\ldots\times\phi"] \arrow[rrrrrr,hookrightarrow,"\dd \iota"]\arrow[ddrr,"\mu_1\times\ldots\times\mu_1"] &&&&&& U_{1}M \arrow[dddd,"\phi"]\arrow[ddll, "\mu_1"]\\
&&&\kreis{1}&&&\\
& \kreis{2} & \mathfrak{sl}(2,\R)^r\arrow[rr,hookrightarrow, "K"] && \mathfrak{g} & \kreis{4} & \\
&&&\kreis{3}&&&\\
(D_{1}\C\mathrm{H}^1)^r \arrow[uurr, "\mu_2\times\ldots\times\mu_2"] \arrow[rrrrrr,hookrightarrow,"\dd \iota"]&&&&&& U_{1}M \arrow[uull, "\mu_2"]
\end{tikzcd},
\end{equation*}
where $K: \mathfrak{h}^r\hookrightarrow \mathfrak{g}$ is an affine embedding we will specify below.
The idea is that commutativity of $\ \kreis{4}$ follows from commutativity of $\ \kreis{1}-\ \kreis{3}$. As all maps are equivariant and the embeddings are well behaved under $G$ as explained in Remark \ref{equivariance polysphere}, we may assume $\iota=\iota_{p,q}$ for $p=(Z,v)$ and $q=(Z_0=\sum_{i=1}^rZ_i,v_0)$ for some elements $v\in T_ZM$ resp.\ $v_0\in T_{Z_i}\Sigma_i$. Here, $Z_i$ denotes the up to sign unique element in the center of $\mathfrak{h}_i$ such that $O_{Z_i}\cong \Sigma$ and the Kähler structure from Theorem \ref{invariant Kähler structure} coincides with the standard Kähler structure. The index indicates the factor in the product.\\
\ \\
We are now exactly in the setup of Proposition \ref{polysphere as suborbit}. We define $K$ to be
$$
K: \mathfrak{h}^r\hookrightarrow\mathfrak{g};\ h\mapsto k(h)+(Z-k(Z_0)).
$$
The affine embedding $K$ is the same as in Proposition \ref{polysphere as suborbit}.
\begin{Lemma}\label{diagram affine hyperkähler}
The sub diagrams $\ \kreis{1}$ and $\ \kreis{3}$ commute with this choice of affine embedding.
\end{Lemma}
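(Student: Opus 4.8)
The plan is to verify each of the two triangles by a direct point-chase, exploiting that $K$ is affine with constant differential $k$ and that $k=\dd\iota$ is a holomorphic Lie algebra monomorphism onto a totally geodesic suborbit. Everything in sight is given by the explicit formulas of Theorem \ref{momentum maps on TM} (here $\mu_1(X,V)=[X,V]+X$) and Proposition \ref{moment map hyperkählefr} (here $\mu_2=\mu_K(X,V)=X-[V,j\tilde F(jR_{jV,V})V]$), so I will simply push a point $(x,v)\in T\Sigma^r\subset\mathfrak{h}^r\times\mathfrak{h}^r$ through the two legs of each triangle and compare. Throughout I will use three inputs from Proposition \ref{polysphere as suborbit}: the extension identity $\iota(x)=K(x)=k(x)+Z-k(Z_0)$, the invariance $[k(\xi),Z-k(Z_0)]=0$ for all $\xi$ (Eq.\ \eqref{eq11}), and $\dd\iota_x=k|_{T_x\Sigma^r}$.

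For $\kreis{1}$ I would compute both legs. Along $K\circ(\mu_1\times\cdots\times\mu_1)$ one gets $k([x,v]+x)+Z-k(Z_0)$, with the brackets taken componentwise in $\mathfrak{h}^r$. Along $\mu_1\circ\dd\iota$ one gets $[\iota(x),k(v)]+\iota(x)$; substituting $\iota(x)=k(x)+Z-k(Z_0)$ the bracket collapses to $[k(x),k(v)]=k([x,v])$, since $k$ preserves brackets and $[Z-k(Z_0),k(v)]=0$ by Eq.\ \eqref{eq11}. Adding back $\iota(x)$ reproduces the first leg, so $\kreis{1}$ commutes. The triangle $\kreis{3}$ has the same shape but now carries the operator $jR_{jV,V}$, so the one genuinely new ingredient is that the spectral function must intertwine with $k$.

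To establish that intertwining I would argue as follows. Since $\iota$ is complex and totally geodesic, $j$ commutes with $\dd\iota=k$ and the ambient curvature restricts to that of $\Sigma^r$; hence on the $k$-invariant subspace $k(T_x\Sigma^r)$ the self-adjoint operator $jR_{jk(v),k(v)}$ equals $k\circ(jR_{jv,v})\circ k^{-1}$. Functional calculus on an invariant subspace then gives $\tilde F(jR_{jk(v),k(v)})k(v)=k\bigl(\tilde F(jR_{jv,v})v\bigr)$, and using $jk=kj$ together with $k[\,\cdot\,,\,\cdot\,]=[k\,\cdot\,,k\,\cdot\,]$ yields $[k(v),j\tilde F(jR_{jk(v),k(v)})k(v)]=k\bigl([v,j\tilde F(jR_{jv,v})v]\bigr)$. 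Feeding this into $\mu_2\circ\dd\iota(x,v)=\iota(x)-k([v,j\tilde F(jR_{jv,v})v])$ and substituting $\iota(x)=k(x)+Z-k(Z_0)$ reproduces $K\circ(\mu_2\times\cdots\times\mu_2)(x,v)$, where I also use that $jR_{jv,v}$ is block-diagonal along the factors of $\Sigma^r$ so that the per-factor maps $\mu_2$ assemble into the stated global formula.

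The main obstacle is precisely this spectral-function step in $\kreis{3}$: one must be certain that $k(T_x\Sigma^r)$ is invariant under $jR_{jk(v),k(v)}$ and that $k$ conjugates the two operators, so that $\tilde F$ can be pushed through $k$. This is exactly where the hypotheses that $\iota$ is holomorphic and totally geodesic enter — equivalently, that the holomorphic bisectional curvature is diagonal along polyspheres resp.\ polydiscs, as recorded above Lemma \ref{diagram affine hyperkähler}. Once this is granted, both triangles reduce to the elementary bracket manipulations sketched above, and $\kreis{1}$, $\kreis{3}$ commute.
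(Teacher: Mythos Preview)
Your proof is correct and follows essentially the same approach as the paper: a direct point-chase using that $k$ is a Lie algebra homomorphism, that $K$ extends $\iota$, that $[Z-k(Z_0),k(\cdot)]=0$, and (for $\kreis{3}$) that $\iota$ is a holomorphic totally geodesic isometry so the spectral function in $jR_{jv,v}$ intertwines with $k=\dd\iota$. The paper's version is terser and collapses your functional-calculus paragraph into the single phrase ``$\iota$ is a holomorphic isometry'', but the substance is identical.
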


\begin{proof}
 We start with $\ \kreis{1}$. Take $(y,w)\in (T\cp^1)^r$ resp.\ $(y,w)\in (D_{1}\C\mathrm{H}^1)^r$, then 
\begin{align*}
    K(\mu_1(y,w))&=k(\mu_\lambda(y,w)+\mu_\sigma(y,w))+(Z-k(Z_0))\\
    &=k([y,w])+K(y) \overset{\ast}{=}[k(y),k(w)]+K(y) \\
    &\overset{\ast\ast}{=}[K(y), k(w)]+K(y) \overset{\ast\ast\ast}{=}[\iota(y), \dd\iota_{y}(w)]+\iota(y)\\
    &=\mu_1(\dd\iota(y,w)).
\end{align*}
The first two and the last equations are just plugging in definitions. Equation $\ast$ uses that $k$ is a Lie algebra homomorphism, $\ast\ast$ uses that $[Z-k(Z_0),k(w)]=0$ by \eqref{eq11} and $\ast\ast\ast$ uses that $K$ extends $\iota$, $K\vert_{\Sigma^r}=\iota$ (see Prop.\ \ref{polysphere as suborbit}).\\
\ \\
Similarly we can compute $\ \kreis{3}$. Again take $(y,w)\in (T\cp^1)^r$ resp.\ $(y,w)\in (D_{1}\C\mathrm{H}^1)^r$, then 
\begin{align*}
    K((\mu_2 (y,w))&=k([w,j\tilde F(-jR_{jw,w})w]+y)+(Z-k(Z_0))\\
    &=k([w,j\tilde F(-jR_{jw,w})w])+K(y)\\
    &\overset{\ast}{=}[k (w),k(j\tilde F(-jR_{jw,w})w)]+K(y)\\
    &\overset{\ast\ast}{=}[\dd\iota_y w,\dd\iota_y(j\tilde F(-jR_{jw,w})w])]+\iota(y)\\
    &\overset{\ast\ast\ast}{=}[\dd\iota_y w,j\tilde F(-jR_{j\dd\iota_y w,\dd\iota_y w})\dd\iota_y w])]+\iota(y)\\
    &=\mu_2(\dd\iota(y,w)).
\end{align*}
The first two and the last equations are just plugging in definitions. Equation $\ast$ uses that $k$ is a Lie algebra homomorphism, $\ast\ast$ uses that $K$ extends $\iota$, $K\vert_{\Sigma^r}=\iota$ (see Prop.\ \ref{polysphere as suborbit}) and $\ast\ast\ast$ uses that $\iota$ is a holomorphic isometry.
\end{proof}
\noindent
Observe, that if we can now show that the moment map triangle commutes in the two dimensional case, commutativity of $\ \kreis{2}$ and thus commutativity of $\ \kreis{4}$ follows. \\
\ \\
\noindent
\textbf{Step (3b):}\\
We reduced the problem to the 2-dimensional case, thus $\Sigma\in\lbrace \cp^1,\C\mathrm{H}^1\rbrace$. In this case the isometries act transitively on the unit-sphere subbundle of $T\Sigma$, by equivariance it is therefore enough to show that
\begin{equation}
   \mu_2(\phi(Z_0, rv_0))=\mu_1(Z_0,rv_0) 
\end{equation}
for some fixed $(Z_0,v_0)\in T\Sigma$ and arbitrary $r\geq 0$\footnote{Attention! This $r$ has nothing to do with the rank, it is the norm of $v_0$.}.\\
The geodesic starting at $x\in M\subset \mathfrak{g}$ in direction $v\in T_xM\subset \mathfrak{g}$ is given by
\begin{equation}\label{eq7}
\gamma(t)=e^{tjv}xe^{-tjv}
\end{equation}
as then 
$$
\dot\gamma(0)=[jv,x]=-j^2v=v.
$$
Observe that
in the case of constant curvature surfaces all eigenvalues of $jR_{jv,v}$ are identically $y=\kappa r^2$. We compute
\begin{align*}
\tilde F(y)&:=F'(y)y+F(y)\\
&=\left ( \frac{1}{y^2}\ln\left (\frac{1+\sqrt{1+y}}{2}\right)-\frac{1}{2y(\sqrt{1+y}+1)}\right)y-F(y)\\
&=\frac{1}{\sqrt{1+y}+1}.
\end{align*}
Thus $\mu_2=\mu_K$ is in the case of surfaces given by 
$$
\mu_2(x,v)=\frac{\kappa r^2}{\sqrt{1+\kappa r^2}+1}x+x=\left(\frac{\kappa r^2+1+\sqrt{1+\kappa r^2}}{\sqrt{1+\kappa r^2}+1}\right)x=\sqrt{1+\kappa r^2}x.
$$
Now we can explicitly realize $\cp^1$ and $\ch^1$ as (co-)adjoint orbits and check the moment map condition.\\
\ \\
\noindent
\textbf{\textit{The case }$M=\cp^1$ :}
Here $G=\mathrm{SU}(2)$ and the Lie-algebra is
$$
\mathfrak{su}(2)=\left \langle a_1:=\frac{1}{2}\begin{pmatrix}
        i & 0\\
        0  & -i \\
    \end{pmatrix},\ a_2:=\frac{1}{2}\begin{pmatrix}
        0 & i\\
        i  & 0 \\
    \end{pmatrix},\ a_3:=\frac{1}{2}\begin{pmatrix}
        0 & 1\\
        -1  & 0 \\
    \end{pmatrix}\right \rangle
$$
The generators satisfy 
$$
[a_1,a_2]=-a_3,\ [a_3,a_1]=-a_2,\ [a_3,a_2]=a_1.
$$
We can identify $\cp^1$ as coadjoint orbit of $a_3$, i.e.\ $\cp^1\cong\mathcal{O}_{a_3}$. We need to show that $\phi$ intertwines the moment maps. The geodesic starting at $a_3$ in direction $-a_2=ja_1$ is 
$$
    \gamma(t)=e^{-ta_1}a_3e^{ta_1}=\frac{1}{2}\begin{pmatrix}
        0 & e^{-it}\\
        -e^{it}  & 0\\
    \end{pmatrix}.
$$
Here $\kappa=1$ and we can use Euler's formula $e^{ix}=\cos(x)+i\sin(x)$ and the identities
$$
\cos(\tan^{-1}(x))=\frac{1}{\sqrt{1+x^2}},\ \ \sin(\tan^{-1}(x))=\frac{x}{\sqrt{1+x^2}}
$$
in order to find the following expression
$$
\begin{pmatrix}
        0 & e^{-ibr}\\
        -e^{ibr}  & 0\\
    \end{pmatrix}=\cos\left(\tan^{-1}\left( r\right)\right)a_3-\sin\left(\tan^{-1}\left(r\right)\right)a_2=\frac{1}{\sqrt{1+r^2}}(a_3-ra_2).
$$
So in particular we have that:
\begin{align*}
    \mu_2(\phi(a_3, ra_1))&=\mu_2(\frac{1}{\sqrt{r^2+1}}(a_3-ra_2),ra_2)\\
    &=a_3-ra_2=[a_3,ra_1]+a_3\\
    &=\mu_1(a_3,ra_1).
\end{align*}
Which finishes the compact case.\\
\ \\
\noindent
\textbf{\textit{The case }$M=\C\mathrm{H}^1$ :}
Here $G=\mathrm{SU}(1,1)$ and the Lie-algebra is
$$
\mathfrak{su}(1,1)=\left \langle a_1:=\frac{1}{2}\begin{pmatrix}
        1 & 0\\
        0  & -1 \\
    \end{pmatrix},\ a_2:=\frac{1}{2}\begin{pmatrix}
        0 & 1\\
        1  & 0 \\
    \end{pmatrix},\ a_3:=\frac{1}{2}\begin{pmatrix}
        0 & 1\\
        -1  & 0 \\
    \end{pmatrix}\right\rangle
$$
The generators satisfy 
$$
[a_1,a_2]=a_3,\ [a_3,a_1]=-a_2,\ [a_3,a_2]=a_1.
$$
We can identify $\C\mathrm{H}^1$ as coadjoint orbit of $a_3$, i.e.\ $\C\mathrm{H}^1\cong\mathcal{O}_{a_3}$. We need to show that $\phi$ intertwines the moment maps. The geodesic starting at $a_3$ in direction $-a_2=ja_1$ is
\begin{align*}
    \gamma(t)=e^{-ta_1}a_3e^{ta_1}=\frac{1}{2}\begin{pmatrix}
        0 & e^{-t}\\
        -e^{t}  & 0\\
    \end{pmatrix}.
\end{align*}
Here $\kappa=-1$ and we can use the identities $e^{x}=\cosh(x)+\sinh(x)$ and
$$
-\cosh(\tanh^{-1}(x))=\frac{1}{\sqrt{1-x^2}},\ \ \sinh(\tanh^{-1}(x))=\frac{x}{\sqrt{1-x^2}}
$$
in order to find the following expression
$$
\begin{pmatrix}
        0 & e^{-br}\\
        -e^{br}  & 0\\
    \end{pmatrix}=\cosh\left(\tanh^{-1}\left( r\right)\right)a_3-\sinh\left(\tanh^{-1}\left( r\right)\right)a_2=\frac{1}{\sqrt{1-r^2}}(a_3-ra_2).
$$
So in particular we have that:
\begin{align*}
    \mu_2(\phi(a_3, ra_1))&=\mu_2(\frac{1}{\sqrt{1-r^2}}(a_3-ra_2),ra_1)\\
    &=a_3-ra_2=a_3-ra_2=r[a_3,a_1]+a_3\\
    &=\mu_1(a_3,ra_1),
\end{align*}
which finishes the proof of the non-compact case.\\
\ \\
\noindent
\textbf{Step (4):}\\
The last condition we need to check in order to apply \ref{momtri} (and thus finish the proof of Thm. \ref{twisted to hyperkähler}) is the existence of a complement of $\mathcal{D}:=\lbrace a^\#\ \vert \ a\in\mathfrak{g}\rbrace\subset TTM$ that is isotropic with respect to both symplectic forms $\omega_1:=\omega_\sigma$ and $\phi^*\omega_2:=\phi^*\left (\omega_I\right)$. Recall that by Corollary \ref{complements} on the open dense set of regular points $\Upsilon=\mathrm{span}\lbrace Y_1,\ldots, Y_r\rbrace$ is a complement of $\mathcal{D}$ and observe that $\Upsilon$ is clearly isotropic for $\omega_1$ as it is contained in the vertical distribution. 
\begin{Lemma}\label{Lemma isotropic}
$\Upsilon$ is isotropic for $\phi^* \omega_2$.
\end{Lemma}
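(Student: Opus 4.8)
The plan is to reduce the statement to the two-dimensional factors of the polysphere resp.\ polydisc, exactly as in the reduction already used for the moment map in Step (3a). I would work at a regular point $(x,v)\in T^{\mathrm{reg}}M$, where $\Upsilon$ is defined and a unique embedding $\iota:\Sigma^r\hookrightarrow M$ through $(x,v)$ exists. The first observation is that each generator $Y_i=(v_i)^{\mathcal V}$ is the vertical lift of $v_i=\pi_i(v)$, which is tangent to the $i$-th factor; hence $\Upsilon$ is tangent to the submanifold $\dd\iota(T\Sigma^r)\subset TM$, and $Y_i$ is the image under $\dd\iota$ of a vector $\tilde Y_i\in T(T\Sigma^r)$ supported on the $i$-th factor, so that $\dd\mathrm{pr}_k(\tilde Y_i)=0$ for $k\neq i$. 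Here $\mathrm{pr}_k:T\Sigma^r\to T\Sigma$ denotes the projection onto the $k$-th factor.

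The heart of the argument will be to show that $\omega_2=\omega_K$ splits along the polysphere resp.\ polydisc, i.e.
$$
(\dd\iota)^*\omega_K=\sum_{i=1}^r\mathrm{pr}_i^*\,\omega_K^\Sigma,
$$
where $\omega_K^\Sigma$ denotes the corresponding form on the single factor $T\Sigma$. Writing $\omega_K=\pi^*\sigma+\dd\dd^c\nu$ (Theorem \ref{hyperkähler HSS}), the term $\pi^*\sigma$ splits because $\iota$ is a holomorphic, totally geodesic isometric embedding into the Kähler product $\Sigma^r$, so that $\iota^*\sigma=\sum_i\mathrm{pr}_i^*\sigma^\Sigma$. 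For the potential term I would use that along $T\Sigma^r$ the operator $jR_{jv,v}$ is diagonal, acting on the $i$-th factor as the scalar $\pm\lvert v_i\rvert^2$; hence $F(jR_{jv,v})$ preserves the splitting and $\nu\circ\dd\iota=\sum_i\nu_\Sigma\circ\mathrm{pr}_i$. Since $\dd$ commutes with pullback and $\dd^c$ commutes with the holomorphic embedding $\dd\iota$ (which intertwines $I=j\ominus j$ with its product analogue), this gives $(\dd\iota)^*\dd\dd^c\nu=\sum_i\mathrm{pr}_i^*\dd\dd^c\nu_\Sigma$ and hence the displayed splitting.

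Finally I would combine this with the commutativity of \eqref{diag1} resp.\ \eqref{diag2}, namely $\phi\circ\dd\iota=\dd\iota\circ(\phi\times\dots\times\phi)$, to obtain
$$
(\dd\iota)^*\phi^*\omega_K=(\phi\times\dots\times\phi)^*(\dd\iota)^*\omega_K=\sum_{i=1}^r\mathrm{pr}_i^*\,(\phi^\Sigma)^*\omega_K^\Sigma,
$$
using $\mathrm{pr}_i\circ(\phi\times\dots\times\phi)=\phi^\Sigma\circ\mathrm{pr}_i$. Evaluating on $\tilde Y_i$ and $\tilde Y_j$, only the $k=i=j$ summand can survive, since for every other $k$ the pullback annihilates one of the two arguments; that term in turn vanishes by antisymmetry. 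As $\dd\iota$ is an immersion this yields $\phi^*\omega_K(Y_i,Y_j)=0$ for all $i,j$, so $\Upsilon$ is isotropic for $\phi^*\omega_2$. I expect the main obstacle to be the splitting of the Kähler potential $\nu$ along the polysphere resp.\ polydisc, which hinges on the scalar (diagonal) action of $jR_{jv,v}$ on the individual factors; once this is in place, the isotropy is a formal consequence of the product structure.
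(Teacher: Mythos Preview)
Your proposal is correct and follows essentially the same strategy as the paper: reduce to the polysphere/polydisc via the commutative diagrams \eqref{diag1}--\eqref{diag2}, use that $\phi$ and $\omega_K$ both split along the product $T\Sigma_1\times\dots\times T\Sigma_r$, and conclude that $\phi^*\omega_K(Y_i,Y_j)$ vanishes because for $i\neq j$ the two vectors land in $\omega_K$-orthogonal factors, while $i=j$ is antisymmetry. The only difference is level of detail: the paper simply asserts that $\omega_2$ splits along the product, whereas you verify this explicitly from the Biquard--Gauduchon formula $\omega_K=\pi^*\sigma+\dd\dd^c\nu$ by checking that $\nu$ decomposes as a sum over the factors (since $jR_{jv,v}$ is diagonal) and that $\dd^c$ commutes with the holomorphic totally geodesic embedding $\dd\iota$.
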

\begin{proof}
In the view of the foliation of $TM$ by tangent spaces of polyspheres resp.\ polydiscs $T\Sigma^r$, near a regular point it makes sense to look at the vector fields $Y_i$. We need to compute $\dd \phi (Y_i)$. As $\phi$ splits with respect to the product we conclude that $\dd \phi (Y_i)\in T \Sigma_i$. Further also $\omega_2$ splits with respect to the product $T\Sigma_1\times\ldots\times T \Sigma_r$ and therefore $T\Sigma_i$ and $T\Sigma_j$ are $\omega_2$-orthogonal for $i\neq j$. It follows that
$$
\phi^*\omega_2(Y_i,Y_j)=\omega_2(\dd \phi Y_i,\dd \phi Y_j)=0\ \ \ \forall i,j\in\lbrace 1,\ldots, r\rbrace.
$$
\end{proof}
\noindent
Using Lemma \ref{momtri} we find that $\dd\phi$ is symplectic on the open, dense set of regular points, but as $\phi$ is smooth this already implies that $\dd\phi$ is symplectic everywhere.
Thus this Lemma finishes the proof of Theorem \ref{twisted to hyperkähler}.

\subsection{Proof of Theorem \ref{hyperkähler to constant}}
Recall Theorem \ref{hyperkähler to constant} from the introduction.
\begin{theorem} Let $M$ be an irreducible Hermitian symmetric space of compact type, then there exists an equivariant symplectomorphism
$$
\phi: (TM,\omega_K)\rightarrow (TM, \dd\tau/2+\pi^*\sigma).
$$
If $M$ is of non-compact type the symplectomorphism exists only on a neighborhood of the zero-section, namely
$$
\phi: (U_{1}M,\omega_K)\rightarrow (U_{2}M, \dd\tau/2+\pi^*\sigma).
$$
\end{theorem}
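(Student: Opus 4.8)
The plan is to run the same four-step scheme as in the proof of Theorem \ref{twisted to hyperkähler}, now with $N_1=(TM,\omega_K)$ (resp.\ $(U_1M,\omega_K)$) and $N_2=(TM,\dd\tau/2+\pi^*\sigma)$ (resp.\ $(U_2M,\dd\tau/2+\pi^*\sigma)$), and to conclude with Lemma \ref{momtri}. For step (1) the rank-one model is the fibrewise scaling $(x,v)\mapsto(x,e^{2a(|v|^2)}v)$ of the introduction, with $a$ as in \eqref{a}; I promote it to a spectral function of the self-adjoint operator $jR_{jv,v}$ and take as candidate
\[
\phi(x,v):=\bigl(x,\,e^{2a(jR_{jv,v})}v\bigr).
\]
This is fibrewise (it projects to the identity on $M$), and because $\iota$ is complex and totally geodesic the operator $jR_{jv,v}$ preserves each factor of a polysphere/polydisc, where it acts as the scalar $\kappa|v_i|^2$; hence $\phi$ splits as a product and restricts on each $T\Sigma_i$ to the rank-one map.

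For step (2), equivariance is immediate since $g$, $j$, $R$, and therefore every spectral function of $jR_{jv,v}$, are invariant under isometries (verbatim the argument in the proof of Theorem \ref{twisted to hyperkähler}), and smoothness follows from smoothness of $a$ together with the spectral calculus of the smooth family $jR_{jv,v}$. The inverse is built from the inverse radial function, and tracking the eigenvalue $\kappa|v_i|^2$ through the scaling shows that $\phi$ is a diffeomorphism onto $TM$ in the compact case and onto the stated neighborhood in the non-compact case, on which $\dd\tau/2+\pi^*\sigma$ is symplectic by Corollary \ref{symplectic fibers}.

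For step (3), the relevant moment maps are $\mu_1=\mu_K$ from Proposition \ref{moment map hyperkählefr} and, since moment maps add over sums of forms,
\[
\mu_2(x,v)=\tfrac12\mu_\tau(x,v)+\mu_{\pi^*\sigma}(x,v)=-\tfrac12[[x,v],v]+x,
\]
using Theorem \ref{momentum maps on TM} and the fact that the inclusion $x$ is the moment map of $(M,\sigma)$ pulled back by $\pi$. As in the preceding proof I first reduce to polyspheres/polydiscs: $\phi$ commutes with $\dd\iota$ because both respect the product splitting, and the affine embedding $K$ of Proposition \ref{polysphere as suborbit} intertwines $\mu_1$ and $\mu_2$ on each side by the computation of Lemma \ref{diagram affine hyperkähler} (that lemma already treats $\mu_K$; for $\tfrac12\mu_\tau+\mu_{\pi^*\sigma}$ the identical manipulation works, using that $k$ is a Lie-algebra homomorphism, that $K$ extends $\iota$, and \eqref{eq11}). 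It then remains to check $\mu_2\circ\phi=\mu_1$ in the rank-one case $\Sigma\in\{\cp^1,\ch^1\}$, where by equivariance one may test at a single $(Z_0,rv_0)$. There both $\mu_K$ and $\mu_\tau$ are scalar multiples of $x$, so the identity collapses to a single scalar equation relating the scaled radius $e^{2a(\kappa r^2)}r$ to $\sqrt{1+\kappa r^2}$, which is the defining relation of the spectral function.

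Finally, for step (4), Corollary \ref{complements} gives that $\Upsilon=\mathrm{span}_\R\{Y_1,\dots,Y_r\}$ complements $\mathcal D$ at regular points, and the argument of Lemma \ref{Lemma isotropic} shows it is isotropic for both $\omega_K$ and $\phi^*(\dd\tau/2+\pi^*\sigma)$: each $Y_i$ lies in the $i$-th factor, distinct factors are mutually orthogonal for both (split) forms, a single $Y_i$ is isotropic by antisymmetry, and $\phi$ preserves the splitting so the same persists under pullback. Lemma \ref{momtri} then yields $\phi^*\omega_2=\omega_1$ on the dense regular locus, hence everywhere by continuity. I expect the real work to sit in step (3b): producing the rank-one moment identity and, with it, keeping the normalisation of $\sigma$ (via the Killing form) and the exact radius of the target neighborhood mutually consistent, so that the spectral function matching the moment maps is the same one carrying $U_1M$ onto $U_2M$; once that scalar computation is settled, the remaining steps are structurally identical to the proof of the previous theorem.
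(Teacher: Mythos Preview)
Your proposal is correct and follows essentially the same route as the paper: the same four-step scheme, the same fibrewise spectral scaling as candidate, the same reduction to polyspheres/polydiscs via the affine embedding $K$ of Proposition \ref{polysphere as suborbit}, the same moment maps, and the same isotropic complement $\Upsilon$. One small discrepancy: the paper defines $\phi(x,v)=(x,e^{a(jR_{jv,v})}v)$, not $e^{2a(jR_{jv,v})}v$; with the single exponent the rank-one check in (3b) reads $1+\tfrac{\kappa r^2}{2}e^{2a(\kappa r^2)}=\sqrt{1+\kappa r^2}$, which is exactly the defining relation of $a$, whereas your extra factor of $2$ (inherited from the introduction's display) would spoil this identity.
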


\noindent
\textbf{Step (1):}\\
In analogy to the 2-dimensional case \eqref{a}, define
$$
\phi(x,v)=\left(x, e^{a(jR_{jv,v})}v\right),
$$
with
$$
    a(y)=\frac{1}{2}\ln \left( \frac{2}{y}(\sqrt{1+y}-1)\right).
$$
We promoted $a$ to a spectral function of $jR_{jv,v}$. Observe that $\phi$ is smooth as $a$ is smooth. Further it is defined whenever the eigenvalues of $jR_{jv,v}$ are bounded from below by $-1$. For compact type Hermitian symmetric spaces this holds on the whole tangent bundle $TM$, for non-compact type Hermitian symmetric spaces this holds on $U_{1}M\subset TM$.\\

\noindent
\textbf{Step (2):}\\
We want to show that $\phi$ is an equivariant diffeomorphism. We start with equivariance.
\begin{Lemma}
The map $\phi$ is equivariant under the action of the isometry group.
\end{Lemma}
\begin{proof}
All objects, i.e.\ metric, curvature, exponential map are invariant under the action of isometries thus also $\phi$ is. Explicitly let $I:M\to M$ be an isometry, then
\begin{align*}
\phi(\dd I(x,v))&=\left(I(x), e^{a(jR_{j\dd I_xv,\dd I_xv})} \dd I_x v\right) \\
&=\left(I(x), \dd I_x e^{a( jR_{jv,v})}  v\right)\\
&=\dd I(\phi(x,v)).
\end{align*}
\end{proof}
\begin{Lemma}
The map $\phi$ is a diffeomorphism and $\phi^{-1}$ is defined on $TM$ resp.\ $U_{1}M$ in the compact resp.\ non-compact case.
\end{Lemma}
\begin{proof}
One can explicitly give an inverse $\phi^{-1}$, where
$$
\phi^{-1}(x,v)=\left(x, e^{\bar a(jR_{jv,v})}v\right),
$$
where 
$$
\bar a(y)=\frac{1}{2}\ln \left( \frac{1}{y}\left (\left(\frac{y}{2}+1\right)^2-1\right)\right).
$$
The inverse is well defined whenever the eigenvalues of $jR_{jv,v}$ are bounded from below by $-4$, i.e.\ on $TM$ resp.\ $U_{2}M$ in the compact resp.\ non-compact case. In particular it is well-defined on the image of $\phi$ namely $TM$ resp.\ $U_{2}M$. The inverse is smooth as $a$ is smooth. 
\end{proof}
\noindent
\textbf{Step (3a):}\\
Recall the moment map $\mu_1:=\mu_K$ for $\omega_1:=\omega_K$ (see \ref{moment map hyperkählefr}) and the moment map $\mu_2(x,v):=-[[x,v],v]/2+x$ for $\omega_2:=\dd\tau/2+\pi^*\sigma$ (see \ref{momentum maps on TM}). We need to assure that $\phi$ intertwines the moment maps, i.e. $\mu_1= \mu_2\circ \phi$.
This is difficult to see if one considers the full Hermitian symmetric space, but relatively easy to prove for polyspheres resp.\ polydiscs. We will show now that we can actually reduce the general case to polyspheres resp.\ polydiscs. 
\begin{Lemma}
    The diagrams
\begin{equation}\label{diag4}
    \begin{tikzcd}
(T\cp^1)^r \arrow{r}{\dd \iota} \arrow[swap]{d}{\phi\times\ldots\times \phi} & TM \arrow{d}{\phi} \\%
(T\cp^1)^r\ \ \arrow{r}{\dd\iota}& \ \ TM
\end{tikzcd}
\end{equation} 
in the compact case and
\begin{equation}\label{diag5}
    \begin{tikzcd}
(D_{1}\C\mathrm{H}^1)^r \arrow{r}{\dd \iota} \arrow[swap]{d}{\phi\times\ldots\times \phi} & U_{1}M \arrow{d}{\phi} \\%
(D_{2}\C\mathrm{H}^1)^r\ \ \arrow{r}{\dd\iota}& \ \ U_{2}M
\end{tikzcd}
\end{equation} 
in the non-compact case commute.
\end{Lemma}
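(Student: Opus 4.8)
The plan is to reduce the commutativity of both diagrams to two structural properties of the holomorphic totally geodesic embedding $\iota:\Sigma^r\hookrightarrow M$ of Theorem~\ref{polysphere theorem}, exactly as in Step~(3a) for Theorem~\ref{twisted to hyperkähler}. Since the present $\phi$ only rescales the fiber, $\phi(x,v)=(x,e^{a(jR_{jv,v})}v)$, and $\dd\iota$ covers $\iota$ on the base, the two composites $\phi\circ\dd\iota$ and $\dd\iota\circ(\phi\times\cdots\times\phi)$ automatically agree in the base direction; the entire content is the fiber identity, namely that the spectral operator $e^{a(jR_{jv,v})}$ commutes with $\dd\iota$.

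First I would record that $\dd\iota$ intertwines the operators that enter the functional calculus. Because $\iota$ is holomorphic we have $\dd\iota\circ j=j\circ\dd\iota$, and because $\iota$ is totally geodesic its second fundamental form vanishes, so the Gauss equation identifies the curvature tensor of $M$, restricted to vectors tangent to $\Sigma^r$, with the intrinsic curvature of $\Sigma^r$. (Here $\tilde\nabla=\nabla$ since $M$ is Kähler, so $R$ is the Riemannian curvature.) Writing $v=\dd\iota_{x_0}(v_0)$, these two facts combine into the operator identity
$$
jR_{j\dd\iota v_0,\,\dd\iota v_0}\circ\dd\iota_{x_0}=\dd\iota_{x_0}\circ jR_{jv_0,v_0}
$$
on $T_{x_0}\Sigma^r$. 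As $a$ is applied via spectral calculus, this intertwining passes through to give $e^{a(jR_{j\dd\iota v_0,\dd\iota v_0})}\dd\iota_{x_0}=\dd\iota_{x_0}\,e^{a(jR_{jv_0,v_0})}$, which is precisely the fiber identity for the right-hand square.

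For the left vertical map I would use that $\Sigma^r$ is a Riemannian product, so $jR_{jv,v}$ is block-diagonal with respect to $T\Sigma^r=T\Sigma\times\cdots\times T\Sigma$; concretely it is the diagonal operator with entries $\pm|v_i|^2$ displayed above Step~(1). The spectral function $e^{a(jR_{jv,v})}$ therefore splits as a product over the factors, so applying it coincides with applying $\phi$ on each $T\Sigma$ separately, i.e.\ with $\phi\times\cdots\times\phi$. This identifies the left map with the block-diagonal spectral operator, and combined with the previous paragraph closes the diagram.

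The one point I expect to require genuine care --- the main obstacle --- is the bookkeeping of domains in the non-compact case, where the diagram passes from $U_1$ on top to $U_2$ on the bottom. Invoking the earlier observation $(D_\rho\Sigma)^r=U_{\rho^2}(\Sigma^r)$ together with the curvature intertwining above (which shows $\dd\iota$ matches the eigenvalues of $jR_{jv,v}$ and hence preserves the holomorphic bisectional curvature bounds defining the $U$-neighborhoods), I would check that $\dd\iota$ sends $(D_{1}\ch^1)^r=U_1((\ch^1)^r)$ into $U_1M$ and $(D_{2}\ch^1)^r=U_2((\ch^1)^r)$ into $U_2M$, while the single-factor $\phi$ sends $D_1\ch^1$ to $D_2\ch^1$ as established in the introduction. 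This confirms all four corners and both copies of $\phi$ are defined on the stated sets; the remaining verification is then verbatim the argument for the analogous lemma in the proof of Theorem~\ref{twisted to hyperkähler}.
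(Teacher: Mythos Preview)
Your proof is correct and follows the same approach as the paper: the paper's proof is a single sentence observing that $jR_{jv,v}$ restricts and is diagonal with respect to the product splitting $T\Sigma\times\cdots\times T\Sigma$, which is precisely what your curvature-intertwining and block-diagonal arguments establish in more detail. Your added domain bookkeeping for the non-compact case is not present in the paper's proof but is a reasonable elaboration.
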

\begin{proof}
    The self adjoint endomorphism $jR_{jv,v}$ restricts to and is diagonal with respect to the splitting of $T\Sigma^r$ as product $T\Sigma\times\ldots\times T\Sigma$ and therefore the diagram holds.
\end{proof}
\noindent
Next we include the moment maps into the diagrams \eqref{diag4} and \eqref{diag5}. For the compact case we look at
\begin{equation}
\begin{tikzcd}
(T\cp^1)^r \arrow[dddd,"\phi\times\ldots\times\phi"] \arrow[rrrrrr,hookrightarrow,"\dd \iota"]\arrow[ddrr,"\mu_1\times\ldots\times\mu_1"] &&&&&& TM \arrow[dddd,"\phi"]\arrow[ddll, "\mu_{1}"]\\
&&&\kreis{1}&&&\\
& \kreis{2} & \mathfrak{su}(2)^r\arrow[rr,hookrightarrow, "K"] && \mathfrak{g} & \kreis{4} & \\
&&&\kreis{3}&&&\\
(T\cp^1)^r \arrow[uurr, "\mu_2\times\ldots\times\mu_2"] \arrow[rrrrrr,hookrightarrow,"\dd \iota"]&&&&&& TM \arrow[uull, "\mu_2"]
\end{tikzcd}
\end{equation}
and for the non-compact case we look at
\begin{equation*}
\begin{tikzcd}
(D_{1}\C\mathrm{H}^1)^r\arrow[dddd,"\phi\times\ldots\times\phi"] \arrow[rrrrrr,hookrightarrow,"\dd \iota"]\arrow[ddrr,"\mu_1\times\ldots\times\mu_1"] &&&&&& U_{1}M \arrow[dddd,"\phi"]\arrow[ddll, "\mu_1"]\\
&&&\kreis{1}&&&\\
& \kreis{2} & \mathfrak{sl}(2,\R)^r\arrow[rr,hookrightarrow, "K"] && \mathfrak{g} & \kreis{4} & \\
&&&\kreis{3}&&&\\
(D_{2}\C\mathrm{H}^1)^r \arrow[uurr, "\mu_2\times\ldots\times\mu_2"] \arrow[rrrrrr,hookrightarrow,"\dd \iota"]&&&&&& U_{2}M \arrow[uull, "\mu_2"]
\end{tikzcd},
\end{equation*}
where $K: \mathfrak{h}^r\hookrightarrow \mathfrak{g}$ is an affine embedding we will specify below. 
The idea is that commutativity of $\ \kreis{4}$ follows from commutativity of $\ \kreis{1}-\ \kreis{3}$. As all maps are equivariant and the embeddings are well behaved under $G$ as explained in Remark \ref{equivariance polysphere}, we may assume $\iota=\iota_{p,q}$ for $p=(Z,v)$ and $q=(Z_0=\sum_{i=1}^rZ_i,v_0)$ for some elements $v\in T_ZM$ resp.\ $v_0\in T_{Z_i}\Sigma_i$. Here $Z_i$ denotes the up to sign unique element in the center of $\mathfrak{h}_i$ such that $O_{Z_i}\cong \Sigma$ and the Kähler structure from Theorem \ref{invariant Kähler structure} coincides with the standard Kähler structure. The index indicates the factor in the product.\\
\ \\
We are now exactly in the setup of Proposition \ref{polysphere as suborbit}. We define $K$ to be
$$
K: \mathfrak{h}^r\hookrightarrow\mathfrak{g};\ h\mapsto k(h)+(Z-k(Z_0)).
$$
The affine embedding $K$ is the same as in Proposition \ref{polysphere as suborbit}.
\begin{Lemma}
The sub diagrams $\ \kreis{1}$ and $\ \kreis{3}$ commute with this choice of affine embedding.
\end{Lemma}

\begin{proof}
We already proved $\ \kreis{1}$ in Lemma \ref{diagram affine hyperkähler}. So we only need to compute $\ \kreis{3}$. Take $(y,w)\in (T\cp^1)^r$ resp.\ $(y,w)\in (D_{1}\C\mathrm{H}^1)^r$, then 
\begin{align*}
    K((\mu_2 (y,w))&=k([w,[y,w]]/2+y)+(Z-k(Z_0))\\
    &=k([w,[y,w]]/2)+K(y)\\
    &\overset{\ast}{=}[k (w),[k(y),k(w)]]/2-sK(y)\\
    &\overset{\ast\ast}{=}[k(w),[K(y),k(w)]]/2-sK(y)\\
    &\overset{\ast\ast\ast}{=}[\dd\iota_y(w),[\iota(y),\dd\iota_{y} w]]/2-s\iota(y)\\
    &=\mu_2(\dd\iota(y,w)).
\end{align*}
The first two and the last equations are just plugging in definitions. Equation $\ast$ uses that $k$ is a Lie algebra homomorphism, $\ast\ast$ uses that $[Z-k(Z_0),k(w)]=0$ by \eqref{eq11} and $\ast\ast\ast$ uses that $K$ extends $\iota$, $K\vert_{\Sigma^r}=\iota$ (see Prop.\ \ref{polysphere as suborbit}).
\end{proof}
\noindent
Observe that if we can now show that the moment map triangle commutes in the two dimensional case, commutativity of $\ \kreis{2}$ and thus commutativity of $\ \kreis{4}$ follows. \\
\ \\
\noindent
\textbf{Step (3b):}\\
We reduced the problem to the 2-dimensional case, thus $\Sigma\in\lbrace \cp^1,\C\mathrm{H}^1\rbrace$. In this case the isometries act transitively on the unit-sphere subbundle of $T\Sigma$, by equivariance it is therefore enough to show that
\begin{equation}
   \mu_2(\phi(Z_0, rv_0))=\mu_1(Z_0,rv_0) 
\end{equation}
for some fixed $(Z_0,v_0)\in T\Sigma$ and arbitrary $r\geq 0$\footnote{Attention! This $r$ has nothing to do with the rank, it is the norm of $v_0$.}. Recall that 
$$
\mu_1(x,v)=\sqrt{1+\kappa r^2}x\ \ \text{and}\ \ \mu_2(x,v)=[v,[x,v]]/2+x=(1+\kappa r^2/2)x 
$$
We check that indeed:
\begin{align*}
    \mu_2(\phi(x, v))&=\mu_2(x,e^{a(\kappa r^2)}v)=(1+\kappa r^2 e^{2a(\kappa r^2)}/2)x= \sqrt{1+\kappa r^2}x=\mu_1(x,v).
\end{align*}
\ \\
\noindent
\textbf{Step (4):}\\
The last condition we need to check in order to apply Lemma \ref{momtri} (and thus finish the proof of Thm. \ref{hyperkähler to constant}) is the existence of a complement of $\mathcal{D}:=\lbrace a^\#\ \vert \ a\in\mathfrak{g}\rbrace\subset TTM$ that is isotropic with respect to both symplectic forms $\omega_1:=\omega_I$ and $\phi^*\omega_2:=\phi^*\left (\dd\tau/2+\pi^*\sigma\right)$. Recall that by Corollary \ref{complements} on the open dense set of regular points $\Upsilon=\mathrm{span}\lbrace Y_1,\ldots, Y_r\rbrace$ is a complement of $\mathcal{D}$. We already showed in Lemma \ref{Lemma isotropic} that $\Upsilon$ is isotropic for $\omega_1$. Completely analogously it also follows that $\Upsilon$ is isotropic for $\phi^*\omega_2$.
Using Lemma \ref{momtri} we find that $\dd\phi$ is symplectic on the open, dense set of regular points, but as $\phi$ is smooth this already implies that $\dd\phi$ is symplectic everywhere and we finish the proof of Theorem \ref{hyperkähler to constant}.

\subsection{Proof of Theorem \ref{diagonal}}
Recall Theorem \ref{diagonal} from the introduction.
\begin{theorem} Let $M\cong G/K$ be an irreducible Hermitian symmetric space of compact type, then there exists a $G$-equivariant symplectomorphism
$$
\phi: (U_{2\sqrt{R}}M,\omega_{(1-R)\sigma})\rightarrow (M\times M\setminus \bar\Delta, \sigma\ominus R\sigma),
$$
where $\bar \Delta$ is fiberwise the cut locus of the base point and thus a finite union of complex submanifolds of complex codimension one.
\end{theorem}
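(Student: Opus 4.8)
The plan is to run, a fourth time, the four–step scheme set up at the beginning of Section \ref{symplectomorphisms}; the moment–map and isotropy parts will be essentially identical to the proofs of Theorems \ref{twisted to hyperkähler} and \ref{hyperkähler to constant}, and the only genuinely new work lies in the image analysis of Step (2). \textbf{Step (1): the candidate.} I would promote the map of \cite[Thm.\ C]{proj23} recalled in the introduction to a spectral map of the self-adjoint operator $jR_{jv,v}$ by setting
$$
\phi(x,v):=\Big(\exp_x\!\big(-c_-(jR_{jv,v})\,jv\big),\ \exp_x\!\big(c_+(jR_{jv,v})\,jv\big)\Big),
$$
where $c_\pm$ are the smooth even functions obtained from \eqref{c} upon replacing the pair $(R_-,R_+)$ by $(1,R)$, i.e.\ the solutions of
\begin{align*}
\sin\!\big(2c_-(y)y\big)+R\,\sin\!\big(2c_+(y)y\big)&=y,\\
\cos\!\big(2c_-(y)y\big)-R\,\cos\!\big(2c_+(y)y\big)&=1-R.
\end{align*}
Since along a polysphere the eigenvalues of $jR_{jv,v}$ are the $\vert v_i\vert^2$, promoting $c_\pm$ to spectral functions produces a map defined and smooth precisely on the neighbourhood $U_{2\sqrt R}M$ on which this system is uniquely solvable.

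\textbf{Step (2): equivariant diffeomorphism onto $M\times M\setminus\bar\Delta$.} Equivariance is immediate, as $\phi$ is assembled from the $G$–invariant data $g$, $j$, $R$ and $\exp$, exactly as in the equivariance lemmas of the two previous proofs. The substantial part, and the one I expect to be the main obstacle, is identifying the image with $M\times M\setminus\bar\Delta$ together with injectivity. By construction $p_-=\exp_x(-c_-jv)$, the base point $x$, and $p_+=\exp_x(c_+jv)$ all lie on a single geodesic, and the choice of $U_{2\sqrt R}M$ should be exactly what keeps the segment from $p_-$ to $p_+$ minimising. Thus for a pair $(p_-,p_+)$ with $p_+\notin\mathrm{Cut}(p_-)$ the connecting geodesic is unique, and from it one recovers $x$ (the interior point dividing the segment in the ratio fixed by $c_\pm$) and $v$ (the $j$–rotation of the velocity, rescaled by inverting $c_\pm$); this furnishes a two–sided inverse on the complement of the fibrewise cut relation, while pairs with $p_+\in\mathrm{Cut}(p_-)$ are missed because the connecting geodesic degenerates. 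These excluded pairs form $\bar\Delta$, and its description as a finite union of complex submanifolds of complex codimension one is the classical structure of the cut locus of a point in a compact Hermitian symmetric space, which I would cite as known.

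\textbf{Step (3): the moment triangle.} For (3a) I would reduce to polyspheres exactly as before: $jR_{jv,v}$ is diagonal with respect to the splitting $T\Sigma^r=T\Sigma\times\cdots\times T\Sigma$ and $\exp$ restricts to the totally geodesic polyspheres, so $\phi$ splits as $\phi\times\cdots\times\phi$, and the affine embedding $K$ of Proposition \ref{polysphere as suborbit} reduces commutativity of the full diagram to the two–dimensional case through the sub-diagrams $\kreis{1}$ and $\kreis{3}$. Here the relevant moment maps are $\mu_1(x,v)=[x,v]+(1-R)x$ for $\omega_{(1-R)\sigma}$ (Theorem \ref{momentum maps on TM}) and $\mu_2(p_-,p_+)=p_--R\,p_+$ for the diagonal action on $(M\times M,\sigma\ominus R\sigma)$. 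For (3b), on $\Sigma=\cp^1\cong O_{a_3}$ one has $\exp_{a_3}(\pm c_\pm\,jv)=\cos(c_\pm r)\,a_3\mp\sin(c_\pm r)\,a_2$ as in the proof of Theorem \ref{twisted to hyperkähler}, and the identity $\mu_2(\phi(a_3,ra_1))=\mu_1(a_3,ra_1)$ is, after matching the $a_2$– and $a_3$–components, exactly the defining system for $c_\pm$ from Step (1).

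\textbf{Step (4): the isotropic complement.} Finally $\Upsilon=\mathrm{span}_\R\{Y_1,\dots,Y_r\}$ from Corollary \ref{complements} is isotropic for $\omega_{(1-R)\sigma}$ because its vertical–vertical block vanishes, and isotropic for $\phi^*(\sigma\ominus R\sigma)$ because $\phi$ respects the product splitting while distinct factors $T\Sigma_i,T\Sigma_j$ are $(\sigma\ominus R\sigma)$–orthogonal, so all the terms $\phi^*(\sigma\ominus R\sigma)(Y_i,Y_j)$ vanish. Lemma \ref{momtri} then makes $\phi$ a symplectomorphism on the dense set of regular points, hence everywhere by continuity, which completes the proof.
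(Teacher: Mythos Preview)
Your Steps (1), (3) and (4) track the paper's argument closely and are fine. The one substantive problem is in Step (2), where your proposed inverse rests on the claim that $p_-=\exp_x\!\big(-c_-(jR_{jv,v})jv\big)$, $x$, and $p_+=\exp_x\!\big(c_+(jR_{jv,v})jv\big)$ lie on a single geodesic. This is false as soon as $\mathrm{rk}\,M>1$. Once $c_\pm$ are spectral functions of $jR_{jv,v}$, the vectors $c_-(jR_{jv,v})jv$ and $c_+(jR_{jv,v})jv$ are, along a polysphere, $\sum_i c_-(|v_i|^2)\,jv_i$ and $\sum_i c_+(|v_i|^2)\,jv_i$; since $c_-$ and $c_+$ are different functions (for $R\neq1$), these are not scalar multiples of one another for generic $v$, so the two exponentials sit on different geodesics through $x$. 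Consequently your recipe ``recover $x$ as the point dividing the minimising segment in a fixed ratio'' does not make sense in higher rank.

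The paper's fix is to argue fibrewise via the polysphere theorem instead of via a single geodesic: given $(a,b)\in M\times M\setminus\bar\Delta$, choose a polysphere $(\cp^1)^r$ containing both, write $a=(a_i)$, $b=(b_i)$, note that $(a,b)\notin\bar\Delta$ is exactly the condition that no pair $(a_i,b_i)$ is antipodal, and then invoke the two--dimensional inverse from \cite[Thm.\ C]{proj23} factor by factor to produce $(x_i,v_i)\in D_{2\sqrt R}\cp^1$; the point $(x,v)=\dd\iota(x_1,\dots,x_r,v_1,\dots,v_r)\in U_{2\sqrt R}M$ is then the preimage. Uniqueness at non-regular points (where several polyspheres pass through $a$ and $b$) has to be checked separately: there some $a_i=b_i$, hence $v_i=0$, and $\phi$ is constant in those directions, so the preimage does not depend on the choice of polysphere. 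A smaller point: in Step (3a) the affine embedding must be the scaled version $K_s(a)=k(a)+s\big(Z-k(Z_0)\big)$ with $s=1-R$, not the plain $K$ of Proposition~\ref{polysphere as suborbit}, in order for both sub-diagrams $\kreis{1}$ and $\kreis{3}$ to commute with the moment maps $\mu_1(x,v)=[x,v]+(1-R)x$ and $\mu_2(a,b)=a-Rb$.
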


\noindent
\textbf{Step (1):}\\
Similar to the 2-dimensional case \eqref{c}, we define the symplectomorphism $\phi$ promoting the analytic functions $c_1,c_2: (-2\sqrt{R},2\sqrt{R})\to (0,\infty)$ implicitly defined via the equations 
$$
   \begin{aligned}
    \sin(2c_1\sqrt{y})+R\sin(2c_2\sqrt{y})&=\sqrt{y}, \\
    \cos(2c_1\sqrt{y})-R\cos(2c_2\sqrt{y})&=1-R
   \end{aligned}
$$
to spectral functions of $jR_{jv,v}$. For a detailed discussion of the functions $c_1,c_2$ and their definition look at \cite{proj23}.
We define
$$
\phi(x,v):=\left(\exp_x(c_1(jR_{jv,v})jv), \exp_x(-c_2(jR_{jv,v})jv)\right).
$$
Observe that $\phi$ is smooth as $c_1,c_2$ are smooth.\\

\noindent
\textbf{Step (2):}\\
We want to show that $\phi$ is an equivariant smooth bijection. We start with equivariance.
\begin{Lemma}
The map $\phi$ is equivariant with respect to the induced action of the isometry group on $TM$ and the diagonal action on $M\times M$.
\end{Lemma}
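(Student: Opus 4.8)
The plan is to mirror the equivariance lemmas already proved in Theorems \ref{twisted to hyperkähler} and \ref{hyperkähler to constant}: every ingredient entering $\phi$ --- the complex structure $j$, the curvature $R$, the self-adjoint operator $jR_{jv,v}$, and the exponential map --- is natural under holomorphic isometries, hence so is $\phi$. Since the theorem is about $G$-equivariance and $G=\mathrm{Is}^0(M)$ acts on the Hermitian symmetric space $M$ by holomorphic isometries, I will assume throughout that the isometry $I\colon M\to M$ satisfies $\dd I_x\circ j=j\circ\dd I_x$.

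First I would record the naturality of the operator governing the spectral functions. An isometry preserves the Levi-Civita connection and therefore the curvature tensor, so $R(\dd I_x a,\dd I_x b)\dd I_x c=\dd I_x R(a,b)c$; together with $j\,\dd I_x=\dd I_x\,j$ this gives the conjugation identity
$$
jR_{j\,\dd I_x v,\,\dd I_x v}\circ\dd I_x=\dd I_x\circ jR_{jv,v}.
$$
Because $\dd I_x$ is a linear isometry carrying the self-adjoint operator $jR_{jv,v}$ to $jR_{j\dd I_x v,\dd I_x v}$, it maps eigenspaces to eigenspaces and hence commutes with any function defined through the spectral theorem, so for $k\in\{1,2\}$,
$$
c_k\!\left(jR_{j\,\dd I_x v,\,\dd I_x v}\right)\dd I_x=\dd I_x\,c_k\!\left(jR_{jv,v}\right).
$$

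Then I would combine this with $j\,\dd I_x v=\dd I_x jv$ and the naturality of the exponential map, $\exp_{I(x)}\circ\,\dd I_x=I\circ\exp_x$. Applying these to each coordinate of $\phi$ turns $c_1(jR_{j\dd I_x v,\dd I_x v})\,j\dd I_x v$ into $\dd I_x\big(c_1(jR_{jv,v})jv\big)$ and then pulls $I$ out of the exponential, with the identical manipulation for the $-c_2$ coordinate. The upshot is
$$
\phi(\dd I(x,v))=\big(I(\exp_x(c_1(jR_{jv,v})jv)),\,I(\exp_x(-c_2(jR_{jv,v})jv))\big)=(I\times I)(\phi(x,v)),
$$
which is precisely the diagonal action of $I$ on $M\times M$.

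I expect no real obstacle: the argument is routine given the earlier lemmas. The only point that deserves a word is the naturality of the spectral functions $c_1,c_2$, which is legitimate exactly because $\dd I_x$ is a linear isometry conjugating the two self-adjoint operators and therefore commutes with the functional calculus; everything else is the standard naturality of $R$ and $\exp$ under isometries.
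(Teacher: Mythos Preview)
Your proposal is correct and follows essentially the same route as the paper: both use naturality of $j$, $R$, the spectral functions $c_1,c_2$ of $jR_{jv,v}$, and the exponential map under (holomorphic) isometries to verify $\phi(\dd I(x,v))=(I\times I)(\phi(x,v))$. You are slightly more explicit than the paper in justifying why the spectral calculus commutes with $\dd I_x$, but the argument is the same.
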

\begin{proof}
All objects, i.e.\ metric, curvature, exponential map are invariant under the action of isometries thus also $\phi$ is. Explicitly let $I:M\to M$ be an isometry, then
\begin{align*}
\phi(\dd I(x,v))&=\left(\exp_{I(x)}(c_1(jR_{j\dd I_x v,\dd I_xv})j\dd I_x v), \exp_{I(x)}(-c_2(jR_{j\dd I_x v,\dd I_xv})j\dd I_x v)\right) \\
&=\left(\exp_{I(x)}(\dd I_x c_1(jR_{j v,v})j v),\exp_{I(x)}(-\dd I_x c_2(jR_{j v,v})j v)\right)\\
&=\left(I(\exp_{x}( c_1(jR_{j v,v})j v),I(\exp_{x}(-c_2(jR_{j v,v})j v)\right)\\
&=(I\times I)(\phi(x,v)).
\end{align*}
\end{proof}
\begin{Lemma}
The map $\phi$ is a smooth bijection.
\end{Lemma}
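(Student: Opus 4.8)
Smoothness is immediate: $c_1,c_2$ are smooth on $(-2\sqrt R,2\sqrt R)$ and $jR_{jv,v}$ depends smoothly on $(x,v)$, so by the smooth functional calculus $c_1(jR_{jv,v})$, $c_2(jR_{jv,v})$ and hence $\phi$ are smooth on $U_{2\sqrt R}M$. To prove $\phi$ is a bijection onto $M\times M\setminus\bar\Delta$ the plan is to reduce everything to the two-dimensional statement of \cite[Thm.\ C]{proj23} along polyspheres. First I would fix a polysphere $\iota\colon\Sigma^r\hookrightarrow M$ with $\Sigma=\cp^1$ and record that, since $\iota$ is holomorphic and totally geodesic, $\exp$ and $j$ split as products and $jR_{jv,v}$ is diagonal with eigenvalue $|v_i|^2$ on the $i$-th factor; consequently $\phi\circ\dd\iota=\dd\iota\circ(\phi_0\times\cdots\times\phi_0)$, where $\phi_0(x_i,v_i)=\bigl(\exp_{x_i}(c_1(|v_i|^2)jv_i),\exp_{x_i}(-c_2(|v_i|^2)jv_i)\bigr)$ is the two-dimensional map, known to be a bijection from $D_{2\sqrt R}\cp^1$ onto $\cp^1\times\cp^1$ minus its anti-diagonal, i.e.\ minus the pairs $(q,q')$ with $q'$ in the cut locus of $q$.

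For surjectivity I would take $(p_1,p_2)\in M\times M$ with $p_2\notin\mathrm{Cut}(p_1)$ and use $G$-equivariance of $\phi$ for the diagonal action together with transitivity of $G$ to assume $p_1=o$. Writing $p_2=\exp_o(w)$ with $w\in\mathfrak p\cong T_oM$ and using that every $K$-orbit in $\mathfrak p$ meets a fixed maximal abelian subalgebra (Theorem \ref{transitive on maximal flats}), I can arrange $w$ tangent to the polysphere $\iota(\Sigma^r)$ through $o$ determined by that subalgebra. Then $p_1=o$ and $p_2$ both lie in $\iota(\Sigma^r)$, the condition $p_2\notin\mathrm{Cut}(o)$ reads factorwise as ``the $i$-th component of $p_2$ is not antipodal to $o_i$'', and inverting $\phi_0$ in each factor produces a unique $(x_i,v_i)\in D_{2\sqrt R}\cp^1$, hence $(x,v)\in U_{2\sqrt R}M$ with $\phi(x,v)=(p_1,p_2)$. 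The same factorwise description shows $\phi$ avoids every pair whose second entry lies in the cut locus of the first, so the image is exactly $M\times M\setminus\bar\Delta$ with $\bar\Delta$ the fiberwise cut locus; its description as a finite union of complex hypersurfaces is the standard structure of the cut locus of an irreducible compact Hermitian symmetric space.

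For injectivity I would argue first on the open dense regular set: if $\phi(x,v)=(p_1,p_2)$ with $(x,v)$ regular, the polysphere through $(x,v)$ is unique and contains both output points $p_1=\exp_x(c_1(jR_{jv,v})jv)$ and $p_2=\exp_x(-c_2(jR_{jv,v})jv)$; since a pair $(p_1,p_2)$ with $p_2\notin\mathrm{Cut}(p_1)$ lies on a unique minimizing geodesic and hence in a unique maximal flat and polysphere, any other preimage $(x',v')$ lies in that same polysphere, where factorwise injectivity of $\phi_0$ gives $(x,v)=(x',v')$. The hard part will be the non-regular locus, where several polyspheres pass through $(x,v)$ and this reduction is not canonical; I expect to resolve it by assembling the factorwise inverses into an explicit smooth inverse $\psi\colon M\times M\setminus\bar\Delta\to U_{2\sqrt R}M$, built from the unique minimizing geodesic joining $p_1$ to $p_2$ and the even functions $c_1,c_2$, and checking $\phi\circ\psi=\mathrm{id}$ and $\psi\circ\phi=\mathrm{id}$ on each polysphere (where both reduce to the two-dimensional identities); continuity of $\psi$ then forces these identities at the non-regular points as well.
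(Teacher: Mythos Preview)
Your overall strategy---reduce everything to polyspheres and invoke the two-dimensional result from \cite{proj23}---is exactly the paper's approach, and your treatment of smoothness and surjectivity matches the paper's argument essentially verbatim.

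The difference is in how injectivity is handled, and here your argument has a gap. You claim that for regular $(x,v)$ the image pair $(p_1,p_2)$ ``lies on a unique minimizing geodesic and hence in a unique maximal flat and polysphere''. In rank $r>1$ this is not the right picture: the two output points $p_1=\exp_x(c_1(jR_{jv,v})jv)$ and $p_2=\exp_x(-c_2(jR_{jv,v})jv)$ are obtained by applying \emph{operators} to $jv$, so in general $p_1,p_2,x$ do not lie on a common geodesic at all; and even when two points are joined by a unique minimizing geodesic, that geodesic need not determine a unique maximal flat unless its initial direction is itself regular, which you have not established. Your proposed inverse $\psi$ ``built from the unique minimizing geodesic joining $p_1$ to $p_2$'' inherits the same problem.

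The paper avoids this entirely by not splitting into regular and non-regular cases. It constructs the inverse directly: given $(a,b)\in M\times M\setminus\bar\Delta$, choose \emph{any} polysphere through $a$ and $b$ (Theorem~\ref{polysphere theorem} guarantees one), write $a=(a_1,\dots,a_r)$, $b=(b_1,\dots,b_r)$, and invert factorwise using the two-dimensional bijection. The only thing to check is that the result does not depend on the choice of polysphere. But ambiguity in the polysphere can only occur in factors where $a_i=b_i$; on such a factor the two-dimensional inverse forces $(x_i,v_i)=(a_i,0)$, which is the same answer regardless of how that factor is chosen. This single observation simultaneously gives well-definedness of the inverse and hence injectivity of $\phi$, without any appeal to regularity or to geodesics joining $p_1$ and $p_2$.
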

\begin{proof}
We can explicitly give an inverse. Recall from Theorem \ref{polysphere theorem} that any two points $a,b\in M$ lie on a totally geodesic polysphere $(\cp^1)^r$. We may represent $a=(a_1,\ldots,a_n), b=(b_1,\ldots,b_r)$ according to the product. The condition $(a,b)\in M\times M\setminus \bar \Delta$, implies that the points $(a_i,b_i)\in \cp^1\times \cp^1$ are never antipodal. As shown in \cite[Thm.\ C]{proj23} we can find a unique point $(x_i,v_i)\in D_{2\sqrt{R}}\cp^1$ such that $\phi(x_i,v_i)=(a_i,b_i)$. Since $\phi$ restricts and splits with respect to the polyspheres this shows that $(x,v)=\dd\iota(x_1,\ldots,x_r,v_1,\ldots,v_r)\in U_{2\sqrt{R}}M$ satisfies $\phi(x,v)=(a,b)$. Moreover, $(x,v)$ is clearly unique if there is only one polysphere through $a$ and $b$. If there are more polyspheres through $a$ and $b$ this means $(x,v)$ is not regular. Hence, some of the $v_i=0$ or equivalently some of the $a_i=b_i$. This means the map $\phi$ is constant in those directions and therefore the inverse does not depend on the choice of them.
\end{proof}
\noindent
\textbf{Step (3a):}\\
Recall the moment map $\mu_1(x,v):=[x,v]+x$ for $\omega_\sigma$ (see \ref{momentum maps on TM}) and the moment map $\mu_2(a,b):=a-Rb$ for $\omega_2:=\sigma\ominus R\sigma$. We need to assure that $\phi$ intertwines the moment maps, i.e. $\mu_1= \mu_2\circ \phi$.
This is difficult to see if one considers the full Hermitian symmetric space, but relatively easy to prove for polyspheres. We will show now that we can actually reduce the general case to polyspheres. 
\begin{Lemma}
    The diagrams
\begin{equation}\label{diag11}
    \begin{tikzcd}
(D_{2\sqrt{R}}\cp^1)^r \arrow{r}{\dd \iota} \arrow[swap]{d}{\phi\times\ldots\times \phi} & U_{2\sqrt{R}}M \arrow{d}{\phi} \\%
(\cp^1\times\cp^1\setminus \bar \Delta)^r\ \ \arrow{r}{\iota\times \iota}& \ \ (M\times M\setminus\bar \Delta)
\end{tikzcd}
\end{equation} 
commute. 
\end{Lemma}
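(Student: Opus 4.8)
The plan is to verify commutativity of \eqref{diag11} by evaluating the two composites $\phi\circ\dd\iota$ and $(\iota\times\iota)\circ(\phi\times\cdots\times\phi)$ on an arbitrary point $(x_0,v_0)=((x_1,v_1),\dots,(x_r,v_r))\in(D_{2\sqrt R}\cp^1)^r$, after the harmless regrouping $(\cp^1\times\cp^1)^r\cong(\cp^1)^r\times(\cp^1)^r$ that lets $\iota\times\iota$ act factorwise on the base points. Writing $x=\iota(x_0)$ and $v=\dd\iota_{x_0}(v_0)$, the whole statement reduces to three structural properties of the polysphere embedding $\iota$, each of which is available earlier: that $\iota$ is holomorphic, that it is totally geodesic, and that the operator $jR_{jv,v}$ is block diagonal along the product splitting of $T\Sigma^r$ with eigenvalue $\lvert v_i\rvert^2$ on the $i$-th factor (the displayed diagonal matrix preceding the theorem, with sign $+$ in the compact case).

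First I would use holomorphicity, $\dd\iota\circ j=j\circ\dd\iota$, to write $jv=\dd\iota_{x_0}(jv_0)$ with $jv_0=(jv_1,\dots,jv_r)$. Since $jR_{jv,v}$ preserves $\dd\iota(T_{x_0}\Sigma^r)$ and is diagonal there, the spectral calculus shows that $c_1(jR_{jv,v})$ acts on the $i$-th factor as the scalar $c_1(\lvert v_i\rvert^2)$, and likewise for $c_2$; hence $c_1(jR_{jv,v})jv=\dd\iota_{x_0}\big(c_1(\lvert v_1\rvert^2)jv_1,\dots,c_1(\lvert v_r\rvert^2)jv_r\big)$. Because $\iota$ is totally geodesic, geodesics of the product map to geodesics of $M$, so $\exp_x\circ\,\dd\iota_{x_0}=\iota\circ\exp_{x_0}$, and on the Riemannian product $\Sigma^r$ the exponential splits factorwise. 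Combining these identities turns the first component of $\phi(x,v)$ into $\iota\big(\exp_{x_1}(c_1(\lvert v_1\rvert^2)jv_1),\dots,\exp_{x_r}(c_1(\lvert v_r\rvert^2)jv_r)\big)$, which is exactly the first component of $(\iota\times\iota)\circ(\phi\times\cdots\times\phi)(x_0,v_0)$; the second component is identical with $c_1$ replaced by $-c_2$. This gives the desired equality.

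The only genuine obstacle is justifying that $jR_{jv,v}$ restricts to $\dd\iota(T_{x_0}\Sigma^r)$ and is block diagonal there with eigenvalues $\lvert v_i\rvert^2$. This combines the Gauss equation for a totally geodesic embedding (so the ambient curvature tensor, evaluated on directions tangent to $\iota(\Sigma^r)$, agrees with the intrinsic curvature of $\Sigma^r$) with the fact that on a Riemannian product of Kähler factors the holomorphic bisectional curvature between distinct factors vanishes, leaving the rank-one blocks $jR_{jv_i,v_i}=\lvert v_i\rvert^2\,\mathrm{id}$ on each $T\Sigma_i$. As this diagonal form was already recorded in the displayed formula preceding the theorem, I would simply invoke it; once it is granted, the factorwise splitting of $\phi$ along the polysphere is immediate and the diagram commutes.
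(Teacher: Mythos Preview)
Your argument is correct and follows essentially the same route as the paper: both rely on $\iota$ being holomorphic and totally geodesic so that $jR_{jv,v}$ restricts and is diagonal along the polysphere, whence the spectral functions $c_1,c_2$ and the exponential map split factorwise. The paper's proof is a one-line summary of exactly this reasoning; you have simply unpacked the details.
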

\begin{proof}
    As the embedding $\iota$ is complex totally geodesic it follows that $jR_{jv,v}$ and $\phi$ restrict and are diagonal along the polyspheres, hence the diagram holds.
\end{proof}
\noindent
Next we include the moment maps into diagram \eqref{diag11},
\begin{equation}
\begin{tikzcd}
(D_{2\sqrt{R}}\cp^1)^r \arrow[dddd,"\phi\times\ldots\times\phi"] \arrow[rrrrrr,hookrightarrow,"\dd \iota"]\arrow[ddrr,"\mu_1\times\ldots\times\mu_1"] &&&&&& U_{2\sqrt{R}}M \arrow[dddd,"\phi"]\arrow[ddll, "\mu_{1}"]\\
&&&\kreis{1}&&&\\
& \kreis{2} & \mathfrak{su}(2)^r\arrow[rr,hookrightarrow, "K"] && \mathfrak{g} & \kreis{4} & \\
&&&\kreis{3}&&&\\
(\cp^1\times\cp^1\setminus\bar\Delta)^r \arrow[uurr, "\mu_2\times\ldots\times\mu_2"] \arrow[rrrrrr,hookrightarrow,"\dd \iota"]&&&&&& M\times M\setminus\bar\Delta \arrow[uull, "\mu_2"]
\end{tikzcd}.
\end{equation}
where $K: \mathfrak{h}^r\hookrightarrow \mathfrak{g}$ is an affine embedding we will specify below. 
The idea is that commutativity of $\ \kreis{4}$ follows from commutativity of $\ \kreis{1}-\ \kreis{3}$. As all maps are equivariant and the embeddings are well behaved under $G$ as explained in Remark \ref{equivariance polysphere}, we may assume $\iota=\iota_{p,q}$ for $p=(Z,v)$ and $q=(Z_0=\sum_{i=1}^rZ_i,v_0)$ for some elements $v\in T_ZM$ resp.\ $v_0\in T_{Z_i}\cp^1_i$. Here $Z_i$ denotes the up to sign unique element in the center of $\mathfrak{h}_i$ such that $O_{Z_i}\cong \cp^1$ and the Kähler structure from Theorem \ref{invariant Kähler structure} coincides with the standard Kähler structure. The index indicates the factor in the product.\\
\ \\
We are now exactly in the setup of Proposition \ref{polysphere as suborbit}. We define $K_s$ to be
$$
K_s(a)=k(a)+s(Z-k(Z_0)).
$$
The affine embedding $K_s$ is a variation of the affine embedding $K$ defined in Proposition \ref{polysphere as suborbit}.
\begin{Lemma}
The sub diagrams $\ \kreis{1}$ and $\ \kreis{3}$ commute with this choice of affine embedding.
\end{Lemma}

\begin{proof}
The proof of $\ \kreis{1}$ is analogous to Lemma \ref{diagram affine hyperkähler}. So we only compute $\ \kreis{3}$.
\begin{align*}
    K_s(\mu_2(a,b))&=K_s(a-sb)=k(a-sb)+(1-R)(Z-k(Z_0))\\
    &=k(a)+(Z-k(Z_0))-R(k(b)+(Z-k(Z_0)))\\
    &=K(a)-RK(b)=\iota(a)-R\iota(b)\\
    &=\mu_2(\iota\times\iota(a,b))
\end{align*}
\end{proof}
\noindent
Observe that if we can now show that the moment map triangle commutes in the two dimensional case, commutativity of $\ \kreis{2}$ and thus commutativity of $\ \kreis{4}$ follows. \\
\ \\
\noindent
\textbf{Step (3b):}\\
We reduced the problem to the 2-dimensional case, thus $M=\cp^1$. This was proved in \cite[Thm.\ 5.1]{proj23}.\\
\ \\
\noindent
\textbf{Step (4):}\\
The last condition we need to check in order to apply Lemma \ref{momtri} (and thus finish the proof of Thm. \ref{diagonal}) is the existence of a complement of $\mathcal{D}:=\lbrace a^\#\ \vert \ a\in\mathfrak{g}\rbrace\subset TTM$ that is isotropic with respect to both symplectic forms $\omega_1:=\omega_\sigma$ and $\phi^*\omega_2:=\phi^*\left (\sigma\ominus R\sigma)\right)$. Recall that by Corollary \ref{complements} on the open dense set of regular points $\Upsilon=\mathrm{span}\lbrace Y_1,\ldots, Y_r\rbrace$ is a complement of $\mathcal{D}$ and observe that $\Upsilon$ is clearly isotropic for $\omega_1$ as it is contained in the vertical distribution. 
Completely analogous to Lemma \ref{Lemma isotropic} it follows that $\Upsilon$ is isotropic for $\phi^*\omega_2$. Using Lemma \ref{momtri} we find that $\dd\phi$ is symplectic on the open, dense set of regular points, but as $\phi$ is smooth this already implies that $\dd\phi$ is symplectic everywhere.
Thus this Lemma finishes the proof of Theorem \ref{diagonal}.

\section{Computing some capacities}\label{capacities}
In this section we compute some capacities, namely prove Theorem \ref{capacompact} and Theorem \ref{capanoncompact}. One key ingredient are the symplectomorphisms constructed in the previous section and the other essential input comes from Hamiltonian circle actions. In the next subsection we quickly explain why such a circle action can be useful.

\subsection{Hamiltonian circle actions and capacities} 
A Hamiltonian $S^1$-manifold is a symplectic manifold $(M,\omega)$ that admits a Hamiltonian $H:M\to \R$ such that the associated Hamiltonian flow is 1-periodic. On these manifolds we have fairly good tools to understand the Gromov-width and the Hofer--Zehnder capacity. For example a lower bound for the Hofer--Zehnder capacity can often immediately be given in terms of the Hamiltonian $H$.

\begin{Lemma}\label{lem9}
Let $(M,\omega)$ be a compact symplectic manifold that admits a non-trivial semi-free Hamiltonian circle action with moment map 
$H:M\to \R$. Further assume that, if $M$ has a boundary, $H$ attains its minimum on the interior and its maximum constantly on the boundary of $M$. Then 
$$
c_{HZ}(M,\omega)\geq \mathrm{osc}(H)=\max H-\min H.
$$
\end{Lemma}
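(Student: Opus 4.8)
The plan is to use the classical reparametrization trick of McDuff--Polterovich: I will produce admissible Hamiltonians of the form $F = f \circ H$ with $\max F$ arbitrarily close to $\mathrm{osc}(H)$, and then let these exhaust the supremum defining $c_{HZ}$. The starting observation is that for any smooth $f \colon \R \to \R$ one has $X_{f \circ H} = (f' \circ H)\, X_H$, so on each level set $\{H = c\}$ the vector field $X_F$ is the constant multiple $f'(c)\, X_H$. Consequently every orbit of $X_F$ is a time-reparametrization of an $X_H$-orbit lying in a single level set of $H$, and controlling the periods of $X_F$ reduces to controlling the single number $f'(c)$ on each level.

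First I would record how the hypotheses pin down the dynamics of $X_H$. Since $H$ is the moment map of a $1$-periodic Hamiltonian $S^1$-action, its flow is the action itself, and the fixed points of the action are exactly the critical points of $H$, where $X_H = 0$. At every non-fixed point the orbit is an embedded circle, and because the action is \emph{semi-free} its stabilizer is trivial, so this orbit has minimal period exactly $1$. This is precisely where semi-freeness enters: without it, a stabilizer $\Z/k$ would produce orbits of minimal period $1/k$ and force a stronger slope bound. Hence on a level $\{H = c\}$ with $f'(c) \neq 0$ every nonconstant $X_F$-orbit has period $1/|f'(c)|$, while on a level with $f'(c) = 0$ every point is a fixed point of $X_F$; so to meet the period condition it suffices to impose $|f'| < 1$ everywhere.

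Next I would construct $f$. Writing $m = \min H$ and $\bar H = \max H$, for a given $\varepsilon > 0$ and a small $\delta > 0$ I choose a smooth nondecreasing $f \colon [m, \bar H] \to [0,\infty)$ with $f \equiv 0$ on $[m, m+\delta]$, $f \equiv \max f$ on $[\bar H - \delta, \bar H]$, and $0 \le f' < 1$ throughout, with $f'$ close to $1$ on the middle interval. Then $\max f = \int_{m+\delta}^{\bar H-\delta} f'$ can be made as close as desired to $\bar H - m - 2\delta$ by letting $f' \nearrow 1$, so $\max f \ge \mathrm{osc}(H) - \varepsilon$ after also letting $\delta \searrow 0$. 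For $F = f \circ H$ the three admissibility conditions are then checked directly: $F \ge 0$ and $F \equiv 0$ on the open set $U = \{H < m + \delta\}$, which lies in the interior because the minimum is attained there; $F \equiv \max F$ on $\{H > \bar H - \delta\}$, whose complement $K = \{H \le \bar H - \delta\}$ is compact and disjoint from $\partial M = \{H = \bar H\}$; and the period condition holds by the previous paragraph since $|f'| < 1$. Thus $c_{HZ}(M,\omega) \ge \max F \ge \mathrm{osc}(H) - \varepsilon$ for every $\varepsilon > 0$, which gives the claim.

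The one genuinely delicate point I would treat most carefully is the simultaneous satisfaction of the two flatness requirements ($F$ must vanish on an interior open set \emph{and} be constant near the boundary) together with the slope bound $|f'| < 1$. It is exactly the hypothesis that $H$ attains its minimum in the interior and its maximum on $\partial M$ that makes the flat regions of $f$ land in the correct places, and it is the semi-free hypothesis that makes $1$ the right slope threshold, so that no loss is incurred in the limit $\max f \to \mathrm{osc}(H)$.
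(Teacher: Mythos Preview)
Your proposal is correct and follows essentially the same route as the paper's proof: reparametrize the moment map by a smooth $f$ with $0\le f'<1$, flat near the endpoints, so that $f\circ H$ is admissible and has oscillation $\mathrm{osc}(H)-\varepsilon$. Your write-up is in fact more careful than the paper's, spelling out explicitly where semi-freeness is used (to guarantee minimal period exactly $1$) and why the boundary hypothesis ensures the flat regions of $f$ land in the right places.
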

\begin{proof}
    We need to modify the Hamiltonian $H$ generating the circle action slightly so that it becomes admissible. This can be done with the help of a function
    $f:[a,b]\to [0,\infty)$ satisfying
    $$
    \begin{aligned}
    &0\leq f'(x)< 1, \\
    &f(x)=0\ \  \text{near}\ \ a,\\
    &f(x)=b-a-\varepsilon\ \  \text{near}\ \ b
    \end{aligned}
    $$
    with $a=\min H$ and $b=\max H$. Then all solutions to the Hamiltonian system with Hamiltonian $\tilde H=f\circ H$ have period 
    $$
    T=\frac{1}{f'(E)}> 1.
    $$
    Thus $\tilde H$ is admissible and we find the estimate 
    $$
    c_{HZ}(M,\omega)\geq \mathrm{osc}(\tilde H)=\mathrm{osc}(H)-\varepsilon,\ \ \ \ \forall \varepsilon>0
    $$
    and the claim follows.
    
\end{proof}
\noindent
In some cases also a lower bound of the Gromov-width can be obtained.
\begin{Proposition}[\cite{KT05}, Prop.\ 2.8]\label{prop7}
In addition to the assumptions of Lemma \ref{lem9} assume that the minimum is isolated, then
$$
c_G(M,\omega)\geq \mathrm{smin}(H)-\min H,
$$
where $\mathrm{smin}(H)$ denotes the second lowest critical value.
\end{Proposition}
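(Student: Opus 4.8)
The plan is to realise the lower bound by explicitly embedding a standard ball whose capacity equals the length of the moment interval $[\min H,\mathrm{smin}(H))$, using the circle action to organise the embedding; this is the McDuff--Polterovich trick alluded to in the acknowledgment. Write $m:=\min H$ and $c:=\mathrm{smin}(H)$, and let $p_0$ be the isolated minimum. First I would fix an $S^1$-invariant compatible almost complex structure and the associated metric, so that the gradient flow of $H$ is $S^1$-equivariant and emanates from $p_0$. Because the action is semi-free and $p_0$ is an isolated minimum, all isotropy weights at $p_0$ equal $+1$; the equivariant Darboux theorem then gives an $S^1$-equivariant symplectomorphism from a small ball $(B^{2n}(\delta),\omega_0)$ around $0\in\C^n$ onto a neighbourhood of $p_0$, under which the standard rotation corresponds to the given action and $H$ pulls back to $m+\pi|z|^2$ (with the sign convention $\dd H=-\iota_{X_H}\omega$, the period-one standard action on $\C^n$ has moment map $\pi|z|^2$).

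The heart of the argument is to enlarge this germ to a symplectic identification of the full sublevel set $W:=\{H<c\}$ with a ball of capacity $c-m$. By definition of $\mathrm{smin}(H)$ there are no critical values in the open interval $(m,c)$, so $p_0$ is the unique critical point in $W$ and $W\setminus\{p_0\}$ is a free Hamiltonian $S^1$-space fibering over the family of symplectic reductions $M_t:=H^{-1}(t)/S^1$ for $t\in(m,c)$. Near $p_0$ the local model identifies $M_t$ with $\cp^{n-1}$ carrying the Fubini--Study form scaled by $t-m$, together with the Hopf bundle $H^{-1}(t)\to M_t$. Since no critical value is crossed, the diffeomorphism type of $M_t$, the Euler class of this bundle, and hence (by Duistermaat--Heckman) the linear variation of the reduced symplectic form are all constant in $t$ throughout $(m,c)$. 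The standard ball $B^{2n}(r)$ with $\pi r^2=c-m$ has exactly the same reduced-space data over the same moment interval. Matching the two families of reductions together with the connection data, and gluing back the fixed point $p_0\leftrightarrow 0$, produces an $S^1$-equivariant symplectomorphism $W\cong B^{2n}(r)$ intertwining moment maps; concretely one builds it by transporting the Darboux germ along the normalised gradient flow and correcting it with a Moser argument on each reduced space.

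Letting $c\nearrow\mathrm{smin}(H)$ gives symplectic embeddings of balls of capacity approaching $\mathrm{smin}(H)-m$, whence $c_G(M,\omega)\ge\mathrm{smin}(H)-\min H$, as claimed.

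I expect the main obstacle to be the symplectic, rather than merely smooth, nature of the enlargement: Morse theory immediately shows that $W$ is a ball, but promoting the gradient-flow diffeomorphism $W\cong B^{2n}(r)$ to a symplectomorphism is where the hypotheses must be used in full. Precisely, one must verify that the reduced symplectic forms vary with the same Duistermaat--Heckman rate as those of the model ball, which holds because the local model at $p_0$ pins down both the initial reduced form and the Euler class, and the absence of intermediate critical values keeps these constant, and then run a parametrised Moser argument that extends continuously across the fixed point. The semi-free hypothesis is essential here, since a weight larger than one at $p_0$ would turn the local model into an orbifold ball of different effective capacity and destroy the clean matching of reduced spaces.
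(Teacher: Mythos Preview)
The paper does not give its own proof of this proposition: it is quoted verbatim from \cite{KT05}, Proposition~2.8, and used as a black box. So there is nothing in the paper to compare your argument against beyond the citation itself.

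That said, your outline is precisely the Karshon--Tolman strategy that the citation points to. The key inputs you identify are the right ones: semi-freeness forces all isotropy weights at the isolated minimum $p_0$ to be $+1$, the equivariant Darboux chart pins down the germ of the moment map as $m+\pi|z|^2$, the absence of critical values in $(m,c)$ keeps the reduced spaces diffeomorphic to $\cp^{n-1}$, and Duistermaat--Heckman plus $H^2(\cp^{n-1};\R)\cong\R$ forces the reduced symplectic form at level $t$ to be exactly $(t-m)\omega_{FS}$, matching the ball model. The reconstruction of the total space from this data is the standard ``local normal form plus parametrised Moser'' package, and you correctly flag that this is where the analytic work sits. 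One small slip: you write ``letting $c\nearrow\mathrm{smin}(H)$'' at the end, but you have already set $c=\mathrm{smin}(H)$; what you mean is that the open set $W=\{H<c\}$ is symplectomorphic to the open ball of capacity $c-m$, which directly gives $c_G\ge c-m$ without any limit.
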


\noindent 
On the other hand we are also in a good position to expect that some 1-point and 2-point Gromov-Witten invariants in suitable homology classes do not vanish, as we have pseudoholomorphic curves going through every point.
Indeed we may assume our compatible almost complex structure $J$ to be $S^1$-invariant\footnote{If $J$ is not invariant, we can always average the corresponding metric to be $S^1$-invariant and then redefine $J$.}. Now as described in \cite[Ex. 5.1.5]{DS17} the $S^1$-orbit of a gradient flow line of $H$ is a J-holomorphic sphere $u$ connecting critical points $c_\pm$ of $H$ and $\omega(u)=H(c_+)-H(c_-)$. Any non-critical point lies on a gradient flow line and at every critical point we have either incoming or outgoing flow lines. Thus we see that these gradient spheres go through every point. It is still highly non-trivial to show that some Gromov--Witten invariant does not vanish as there might be many (nodal or broken) pseudoholomorphic spheres through every point, so that they cancel in the count.
The first important result towards explicit computations of Gromov--Witten invariants in the context of Hamiltonian $S^1$-manifolds is the localization principle proved by McDuff and Tolman in \cite[sec.\ 4.2]{DT06}.\\
\ \\
\noindent
Our aim is to use Lu's Theorem \cite[Thm.\ 1.10]{Lu06} in the context of Hamiltonian circle actions. This set up was also studied by Hwang and Suh \cite{HS13} for closed Fano\footnote{They call $(M,\omega)$ Fano if there exists a compatible $S^1$-invariant almost complex structure such that all non-constant pseudo holomorphic spheres have positive Chern number. In particular monotone implies Fano.} symplectic manifolds with semi-free Hamiltonian circle action.

\begin{theorem}[Thm.\ 1.1.\ \cite{HS13}]\label{hs13}
    Let $(M,\omega)$ be a closed Fano symplectic manifold with a semifree Hamiltonian circle action. The Gromov width and the Hofer--Zehnder capacity are estimated as
    \begin{itemize}
        \item[(a)] $c_G(M,\omega) \leq \max(H) - \min (H) \leq c_{HZ}(M,\omega).$
        \item[(b)] Further if $H_{min}$ is a point, then
        $$
            c_G(M,\omega) = \mathrm{smin}(H)-\min(H),\ \ c_{HZ}(M,\omega)=\max(H)-\min(H).
        $$
    \end{itemize}
\end{theorem}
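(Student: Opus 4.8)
The plan is to sandwich each capacity between a lower bound coming from the Hamiltonian circle action and an upper bound coming from enumerative geometry, and then check that the two bounds coincide. The lower bounds are already available in the excerpt. Applying Lemma \ref{lem9} to the semifree moment map $H$ gives
$$c_{HZ}(M,\omega)\ge \mathrm{osc}(H)=\max(H)-\min(H),$$
which is the right inequality of (a) and the lower half of the $c_{HZ}$-equality in (b). Under the extra hypothesis of (b) that $H_{\min}$ is an isolated point, Proposition \ref{prop7} yields $c_G(M,\omega)\ge \mathrm{smin}(H)-\min(H)$. It therefore remains to establish the matching upper bounds $c_G\le \max(H)-\min(H)$ for (a), and $c_{HZ}\le \max(H)-\min(H)$ together with $c_G\le \mathrm{smin}(H)-\min(H)$ for (b).

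For the upper bounds I would use the pseudoholomorphic spheres produced by the circle action. As recalled before the statement, for an $S^1$-invariant compatible $J$ the closure of the orbit of a gradient flow line of $H$ is a $J$-holomorphic sphere joining two fixed points $c_-,c_+$ and of area $H(c_+)-H(c_-)$, the fixed points being the critical manifolds of $H$. I single out the class $A_{\max}$ of a gradient sphere running from $\min$ to $\max$, with $\omega(A_{\max})=\max(H)-\min(H)$, and, when $H_{\min}$ is a point, the class $A_0$ of a shortest gradient sphere issuing from the minimum, with $\omega(A_0)=\mathrm{smin}(H)-\min(H)$. Granting that the one-point Gromov--Witten invariant with a point constraint is non-zero in class $A_{\max}$ and in class $A_0$, and that the two-point invariant with two point constraints is non-zero in class $A_{\max}$, Lu's theorem \cite[Thm.\ 1.10]{Lu06} converts a non-vanishing one-point invariant in a class $A$ into the bound $c_G\le\omega(A)$ and a non-vanishing two-point invariant in class $A$ into $c_{HZ}\le\omega(A)$. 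Feeding in $A_{\max}$ and $A_0$ then yields exactly the three upper bounds above and closes both parts of the theorem.

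The heart of the matter, and the step I expect to be the main obstacle, is the non-vanishing of these Gromov--Witten invariants: a priori many nodal or multiply-covered configurations could contribute and cancel. I would control this through the localization principle of McDuff--Tolman \cite[sec.\ 4.2]{DT06}, which expresses the relevant invariants as sums of fixed-point contributions supported on the gradient spheres. The semifree hypothesis forces all isotropy weights to be $\pm 1$, so near the isolated minimum the action is modelled on the standard circle action on $\C^n$; the minimal gradient spheres then sweep out a standard configuration whose contribution to the class-$A_0$ invariant can be evaluated to be $\pm 1$, and likewise for $A_{\max}$. The Fano condition $c_1>0$ is precisely what prevents any bubble of equal or smaller energy, or any multiple cover, from appearing in the Gromov compactification and spoiling or cancelling this leading term. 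I expect the careful bookkeeping of weights, Chern numbers and virtual dimensions in this localization---guaranteeing simultaneously that the relevant moduli spaces are cut out with the expected dimension and that the surviving fixed-point contribution is genuinely non-zero---to be the technical core, carried out as in \cite{HS13}.
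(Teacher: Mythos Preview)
The paper does not prove this statement: Theorem~\ref{hs13} is quoted verbatim from Hwang--Suh \cite{HS13} and used as a black box, so there is no ``paper's own proof'' to compare against. What the paper does provide, in the paragraph preceding the theorem, is exactly the heuristic you reproduce---gradient spheres of invariant $J$, their area $H(c_+)-H(c_-)$, the warning about cancellations, and the reference to the McDuff--Tolman localization \cite[sec.\ 4.2]{DT06} and to Lu \cite[Thm.\ 1.10]{Lu06}---so your outline is faithful to how the paper frames the result.

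On the substance of your sketch: the lower bounds via Lemma~\ref{lem9} and Proposition~\ref{prop7} are correct, and reducing the upper bounds to non-vanishing Gromov--Witten invariants in the classes $A_0$ and $A_{\max}$ is indeed the strategy of \cite{HS13}. One small inaccuracy: Lu's theorem already gives $c_{HZ}(M,\omega)\le\omega(A)$ from a non-vanishing invariant $\mathrm{GW}_{A}(\mathrm{pt},\beta_1,\ldots,\beta_m)$ with a \emph{single} point insertion, so you do not need a separate two-point invariant for the $c_{HZ}$ bound---the same invariant that bounds $c_G$ in class $A_{\max}$ already bounds $c_{HZ}$. Beyond that, you correctly identify the genuine difficulty as the non-vanishing of these invariants, and correctly locate the role of the Fano and semifree hypotheses in ruling out unwanted contributions; the detailed verification is the content of \cite{HS13} and is not reproduced in the present paper.
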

\noindent
One nice observation from this theorem is that it is compatible with taking products. If Fano symplectic manifolds $(M_1,\omega_1)$, $(M_2,\omega_2)$ with Hamiltonian circle actions generated by $H_1, H_2$ satisfy the prerequisites of Theorem \ref{hs13} (b), then so does $(M_1\times M_2, a\omega_1\oplus b\omega_2)$ with Hamiltonian $aH_1\circ\pi_1+bH_2\circ \pi_2$, where $\pi_1,\pi_2$ are the projections on the first resp.\ second factor. In particular 
$$
c_G(M_1\times M_2, a\omega_1\oplus b\omega_2) = \min\lbrace \vert a\vert c_G(M_1,\omega_1),\vert b\vert c_G(M_2,\omega_2)\rbrace
$$
and
$$
c_{HZ}(M_1\times M_2, a\omega_1\oplus b\omega_2)= \vert a\vert c_{HZ}(M_1, \omega_1)+\vert b\vert c_{HZ}(M_2,\omega_2)
$$
while for arbitrary symplectic manifolds only 
$$
c_{G}(M_1\times M_2, a\omega_1\oplus b\omega_2)\geq \min\lbrace \vert a\vert c_G(M_1,\omega_1),\vert b\vert c_G(M_2,\omega_2)\rbrace
$$
and
$$
c_{HZ}(M_1\times M_2, a\omega_1\oplus b\omega_2)\geq \vert a\vert c_{HZ}(M_1, \omega_1)+\vert b\vert c_{HZ}(M_2,\omega_2)
$$
holds.
\begin{Corollary}
    Let $(M,\omega)$ be a closed Fano symplectic manifold with a semifree Hamiltonian circle action and $H_{min}$ a point, then the Hofer--Zehnder capacity of any compact neighborhood of the zero-section in $(T^*M,\dd\lambda)$ is bounded.
\end{Corollary}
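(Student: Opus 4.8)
The plan is to bound $c_{HZ}$ of the given neighbourhood by embedding a rescaled copy of it into a \emph{closed} symplectic manifold whose Hofer--Zehnder capacity is finite, and then invoking monotonicity. The natural ambient manifold is the product $(M\times\bar M,\omega\ominus\omega)$, where $\bar M:=(M,-\omega)$, inside which the diagonal $\Delta\cong M$ is Lagrangian: for $(v,v),(w,w)\in T\Delta$ one has $(\omega\ominus\omega)\big((v,v),(w,w)\big)=\omega(v,w)-\omega(v,w)=0$. By Weinstein's Lagrangian neighbourhood theorem a neighbourhood of $\Delta$ is symplectomorphic to a neighbourhood of the zero section in $(T^*M,\dd\lambda)$, so it remains to push a given compact neighbourhood into this model and to control $c_{HZ}(M\times\bar M,\omega\ominus\omega)$.

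First I would reduce an arbitrary compact neighbourhood $U$ of the zero section to the Weinstein model by fibrewise scaling. The diffeomorphism $\psi_s:(x,p)\mapsto(x,sp)$ satisfies $\psi_s^*\dd\lambda=s\,\dd\lambda$, so $\psi_{1/s}$ is a symplectomorphism from $(U,\tfrac1s\dd\lambda)$ onto the shrunk set $\psi_{1/s}(U)$, which for $s$ large enough lies inside the Weinstein neighbourhood of $\Delta$. Combining the scaling behaviour $c_{HZ}(N,\mu\omega)=\mu\,c_{HZ}(N,\omega)$ with monotonicity of $c_{HZ}$ under symplectic embeddings gives
$$
c_{HZ}(U,\dd\lambda)=s\,c_{HZ}\big(\psi_{1/s}(U),\dd\lambda\big)\le s\,c_{HZ}(M\times\bar M,\omega\ominus\omega),
$$
so that the statement follows once the right-hand side is shown to be finite.

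The finiteness of $c_{HZ}(M\times\bar M,\omega\ominus\omega)$ is where the hypotheses enter. Since $(M,\omega)$ is Fano and carries a semifree Hamiltonian circle action whose minimum $H_{min}$ is an isolated point, $M$ is (symplectically) uniruled: for an $S^1$-invariant compatible $J$ the $S^1$-orbits of the gradient flow lines emanating from $H_{min}$ are $J$-holomorphic spheres sweeping out $M$, and the McDuff--Tolman localization principle \cite[Sec.\ 4.2]{DT06} produces a non-zero genus-zero Gromov--Witten invariant with a point constraint in a class of positive symplectic area. This is exactly the mechanism behind Theorem \ref{hs13}. By \cite[Lem.\ 15]{LMZ13} the product $M\times\bar M$ is then uniruled as well, and Lu's theorem \cite[Thm.\ 1.10]{Lu06} bounds its Hofer--Zehnder capacity by the area of the sweeping class, yielding $c_{HZ}(M\times\bar M,\omega\ominus\omega)<\infty$.

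I expect the genuine difficulty to lie in this last finiteness statement rather than in the formal embedding-and-scaling step. The second factor $\bar M=(M,-\omega)$ is anti-Fano, so the product is neither Fano nor monotone; one must therefore verify that the relevant point-constrained Gromov--Witten invariant of $M\times\bar M$ survives (the role of \cite[Lem.\ 15]{LMZ13} and the product formula) and that Lu's finiteness theorem still applies in this non-monotone setting, either via the general virtual machinery or by checking that the low-energy curves carrying the invariant cannot bubble. Once this non-vanishing is secured, the first two paragraphs are essentially formal and close the argument.
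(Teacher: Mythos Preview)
Your argument is correct and follows essentially the same route as the paper: the diagonal in $(M\times M,\omega\ominus\omega)$ is Lagrangian, Weinstein's neighbourhood theorem identifies a tube around it with a tube around the zero-section in $T^*M$, fibrewise rescaling pushes any compact neighbourhood into this tube, and monotonicity reduces everything to finiteness of $c_{HZ}(M\times M,\omega\ominus\omega)$. The only cosmetic difference is in that last step: the paper invokes the product observation made right after Theorem~\ref{hs13} (namely that the hypotheses and conclusion of \cite[Thm.~1.1]{HS13}(b) persist on $(M\times M,a\omega\oplus b\omega)$), whereas you phrase it via uniruledness of the product and Lu's theorem; your caveat that $M\times\bar M$ is not Fano is well taken, but in either formulation what is really being used is the product formula for the relevant point-constrained Gromov--Witten invariant, so the two justifications amount to the same thing.
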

\begin{proof}
    The zero-section is a Lagrangian diffeomorphic to $M$. Also the diagonal in
    $(M\times M, \omega\ominus \omega)$ is such a Lagrangian. By the previous considerations 
    $c_{HZ}(M\times M, \omega\ominus \omega)$ is finite. This implies by Lagrangian neighborhood theorem that the Hofer--Zehnder capacity of some neighborhood of the zero-section must be finite. Scaling the fibers of the disc-bundle only scales the symplectic form and thus the capacity. We can therefore shrink any compact subset of $T^*M$ to fit in the neighborhood of the zero-section.
\end{proof}
\noindent
This corollary is a special case of the main theorem in \cite{AFO17} by Albers, Frauenfelder and Oancea. Indeed for all such Fano symplectic manifolds the Hurewicz map 
$$
\pi_2(M)\to H_2(M;\Z) 
$$
is nonzero, because all gradient spheres represent non-zero elements in $H_2(M;\Z)$.\\
\ \\
\noindent
\subsection{Hermitian symmetric spaces of compact type}
In this section we will compute the Gromov width and the Hofer--Zehnder capacity of any Hermitian symmetric space of compact type. The Gromov width for this class of symplectic manifolds was already computed by Loi, Mossa and Zuddas \cite{LMZ13}, but the proof we present is different and strongly relies on the existence of a semi-free Hamiltonian circle action. The Hofer--Zehnder capacity of Hermitian symmetric spaces is contained in the class of examples considered in \cite{ACC20}, we include the proof as we find it instructive to see why in the case of Hermitian symmetric spaces the lower bound and the upper bound given in \cite{ACC20} match. The Hamiltonian circle action and the capacity of $M\times M$ will be essential to prove Theorem \ref{capacompact}.\\
\ \\
\noindent
We want to use Theorem \ref{hs13}. Indeed Hermitian symmetric spaces are monotone and thus Fano as shown in \cite[Ch.\ 5, \S 16]{BH58}. Further as stated in the following lemma the representation of $M$ as adjoint orbit $O_Z\subset\mathfrak{g}$ almost immediately yields a Hamiltonian circle action. 
\begin{Lemma}\label{circle action}
Let $M$ be an irreducible Hermitian symmetric space. The Hamiltonian function
$$
\nu : M\cong O_Z\to \R,\ x\mapsto 2\pi B(Z,x)
$$
generates a semi-free circle action. Here $B(\cdot,\cdot):\mathfrak{g}\times\mathfrak{g}\to\R$ denotes the Killing form.
\end{Lemma}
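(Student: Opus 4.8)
The plan is to identify the Hamiltonian flow of $\nu$ with the adjoint action of the one-parameter group $\exp(tZ)$ and then to integrate it explicitly, using the two facts $\mathrm{ad}_Z|_{\mathfrak{k}}=0$ and $\mathrm{ad}_Z^2=-\mathrm{id}_{\mathfrak{p}}$ established in the construction of the invariant complex structure.

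First I would use Corollary \ref{HSS as coadjoint orbit} to view $M\cong O_Z\subset\mathfrak{g}\cong\mathfrak{g}^*$, so that by Section \ref{sec1.1} the inclusion is the moment map $\mu$ for the adjoint $G$-action. Then $\nu(x)=2\pi B(Z,x)=2\pi\langle\mu(x),Z\rangle$ is, up to the factor $2\pi$, the component of the moment map along $Z$, and the defining relation of the moment map identifies its Hamiltonian vector field with $X_\nu=\pm 2\pi Z^\#$ (the sign depending on the Hamiltonian convention, which is irrelevant below). Since $Z^\#_x=[Z,x]=\mathrm{ad}_Z(x)$, the flow is $\Phi_t(x)=\mathrm{Ad}_{\exp(\pm 2\pi t Z)}(x)=e^{\pm 2\pi t\,\mathrm{ad}_Z}(x)$; it stays on $O_Z$ and acts by symplectomorphisms because it is the restriction of the adjoint action and the KKS form is $G$-invariant.

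Next I would check that $\Phi_1=\mathrm{id}$, which makes $\nu$ the moment map of a genuine circle action. Splitting $\mathfrak{g}=\mathfrak{k}\oplus\mathfrak{p}$, the element $Z$ lies in the center of $\mathfrak{k}$, so $\mathrm{ad}_Z$ vanishes on $\mathfrak{k}$; on $\mathfrak{p}$ the operator $\mathrm{ad}_Z=j$ satisfies $j^2=-\mathrm{id}$, so $e^{2\pi t\,\mathrm{ad}_Z}|_{\mathfrak{p}}=\cos(2\pi t)\,\mathrm{id}+\sin(2\pi t)\,j$. At $t=1$ both summands are the identity, hence $\Phi_1=\mathrm{id}$.

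Finally I would read off semi-freeness from the same formula. Writing $x=x_{\mathfrak{k}}+x_{\mathfrak{p}}$, we get $\Phi_t(x)=x_{\mathfrak{k}}+\cos(2\pi t)\,x_{\mathfrak{p}}+\sin(2\pi t)\,j x_{\mathfrak{p}}$. If $x_{\mathfrak{p}}=0$ then $x$ is fixed by the whole circle; if $x_{\mathfrak{p}}\neq 0$, then $x_{\mathfrak{p}}$ and $j x_{\mathfrak{p}}$ are linearly independent (as $j$ has no real eigenvalue), so $\Phi_t(x)=x$ forces $\cos(2\pi t)=1$, i.e.\ $t\in\Z$, and the stabilizer is trivial. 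Every stabilizer is thus either trivial or all of $S^1$, which is precisely semi-freeness, the fixed locus being $O_Z\cap\mathfrak{k}$. No step is a serious obstacle; the only care needed is the normalization bookkeeping, namely confirming that the factor $2\pi$ yields period exactly one rather than $1/k$ — which is exactly what the stabilizer computation at points with $x_{\mathfrak{p}}\neq 0$ verifies.
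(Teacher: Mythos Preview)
Your argument is correct and begins exactly as the paper does, by identifying $X_\nu=2\pi Z^\#$; the paper's proof then simply invokes that the one-parameter subgroup generated by $Z$ is a circle (Proposition~\ref{center is circle}) and defers the period normalization to a figure, without writing out the semi-freeness claim at all. Your proof is in this sense more complete: instead of appealing to the group-level fact, you compute the linear flow $e^{2\pi t\,\mathrm{ad}_Z}$ on $\mathfrak g=\mathfrak k\oplus\mathfrak p$ directly from $\mathrm{ad}_Z|_{\mathfrak k}=0$ and $\mathrm{ad}_Z^2|_{\mathfrak p}=-\mathrm{id}$, which simultaneously yields period one and the dichotomy on stabilizers. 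The trade-off is that the paper's one-line argument is slicker for periodicity, while yours actually establishes the semi-freeness asserted in the statement and makes the period normalization transparent without a picture.
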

\begin{proof}
Let us compute the Hamiltonian vector field,
$$
\dd\nu_x(\cdot)=2\pi B(Z,[x,\cdot])=-2\pi B([Z,\cdot],x)=2\pi\iota_{Z^\#}\sigma.
$$
We conclude $X_\nu=2\pi Z^\#$, which clearly generates a circle action, as the group generated by $Z$ is isomorphic to $S^1$. We shall see later that the prefactor is there to ensure that the period of the circle action is one (see figure \ref{fig6}).
\end{proof}
\begin{figure}[h]
	\centering
 \includegraphics[width=0.3\textwidth]{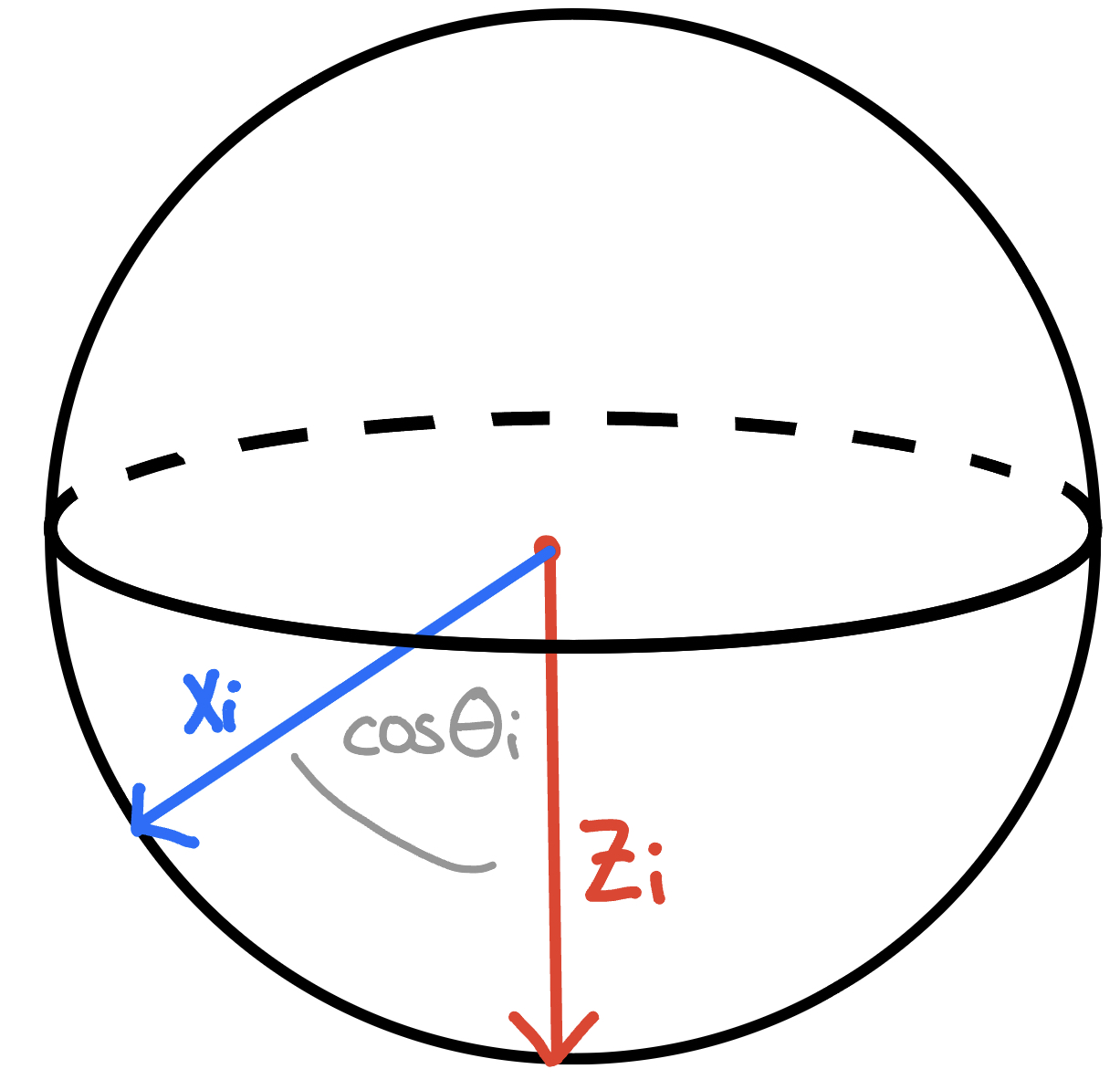}
	\caption{\textit{In order to satisfy $\mathrm{ad}_{Z_i}^2=-\mathrm{id}$, the norm of $Z_i$ needs to be one. We see that $(Z_i,x_i)$ is equal to the height function, which generates a circle action of period $2\pi$. In particular $\nu$ generates a circle action of period one.}}
    \label{fig6}
\end{figure}
\noindent
In order for $\nu$ to fulfill the prerequisites of Theorem \ref{hs13} we need to show that $\nu$ has an isolated minimum. Clearly $Z$ is a critical point of $\nu$. We claim that the point $Z$ is the isolated minimum of $\nu$.
\begin{Lemma}\label{lem12}
The Hessian of $\nu$ at $p=Z$ is positive definite, thus $p=Z$ is isolated and the global minimum.
\end{Lemma}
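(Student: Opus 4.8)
The plan is to compute the Hessian of $\nu$ directly along curves lying inside the orbit $O_Z\cong M$, exploiting that at the critical point $Z$ the second derivative of $\nu$ along \emph{any} curve through $Z$ already equals the Hessian: the connection term $\dd\nu(\nabla_{\dot\gamma}\dot\gamma)$ drops out since $\dd\nu_Z=0$, because $Z$ is a fixed point of the circle action generated by $Z^\#$ (Lemma \ref{circle action}, as $Z^\#_Z=[Z,Z]=0$). First I would identify the tangent space: since $a^\#_Z=[a,Z]$ and $[\mathfrak{k},Z]=0$, the map $\mathfrak{p}\to T_ZM$, $p\mapsto [p,Z]=p^\#_Z$, is a linear isomorphism (it is $-j_Z=-\mathrm{ad}_Z$, which is invertible on $\mathfrak{p}$). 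Thus every tangent vector at $Z$ arises as the velocity of the orbit curve $\gamma(t):=\mathrm{Ad}_{\exp(tp)}Z=e^{t\,\mathrm{ad}_p}Z$, with $\dot\gamma(0)=[p,Z]=:v\in\mathfrak{p}$.

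The core computation is then a second-order Taylor expansion. Writing
$$
\nu(\gamma(t))=2\pi B\!\left(Z,\,e^{t\,\mathrm{ad}_p}Z\right)=2\pi B(Z,Z)+2\pi t\,B(Z,[p,Z])+\pi t^2\,B(Z,[p,[p,Z]])+O(t^3),
$$
I would use the $\mathrm{ad}$-invariance $B([p,X],Y)=-B(X,[p,Y])$ twice. The linear term gives $B(Z,[p,Z])=0$, reconfirming that $Z$ is critical; the quadratic term gives $B(Z,[p,[p,Z]])=-B([p,Z],[p,Z])$. Hence
$$
\mathrm{Hess}_Z\nu(v,v)=\tfrac{\dd^2}{\dd t^2}\Big|_{t=0}\nu(\gamma(t))=-2\pi B([p,Z],[p,Z])=-2\pi B(v,v)=2\pi\,g_Z(v,v),
$$
where the last equality is the definition $g_Z=-B$ from Lemma \ref{invariant Kähler structure}. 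Since $M$ is of compact type, $B$ is negative definite, so $g_Z$ is positive definite and therefore $\mathrm{Hess}_Z\nu=2\pi\,g_Z$ is positive definite. This matches the conceptual expectation: the linearisation of the circle action at the fixed point $Z$ is the rotation $j_Z$, so the Hessian of its moment map is a positive multiple of the compatible metric.

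Positive-definiteness immediately yields that $Z$ is a nondegenerate, hence isolated, local minimum, which is precisely the input needed to apply Theorem \ref{hs13}. To upgrade to the \emph{global} minimum I would add one short remark using $\mathrm{Ad}_G$-invariance of $B$: every point of $O_Z$ has the same norm $B(x,x)=B(Z,Z)$ with respect to the inner product $-B$, so minimising $\nu=2\pi B(Z,\cdot)=-2\pi\langle Z,\cdot\rangle$ over this sphere, Cauchy--Schwarz forces the unique minimiser to be $x=Z$. The only real care needed is bookkeeping of signs and of the identification $T_ZM\cong\mathfrak{p}$; there is no serious obstacle, since taking the second derivative along the explicit orbit curve bypasses any choice of connection or Hamiltonian sign convention.
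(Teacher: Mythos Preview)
Your Hessian computation is correct and essentially the same as the paper's: both differentiate $\nu$ along the adjoint-orbit curves $t\mapsto \mathrm{Ad}_{\exp(ta)}Z$ and use $\mathrm{ad}$-invariance of $B$ to reach $\mathrm{Hess}_Z\nu=-2\pi B\vert_{\mathfrak p\times\mathfrak p}$, which is positive definite in the compact case. Your remark that the connection term drops out because $\dd\nu_Z=0$ is the clean justification the paper leaves implicit when it writes $\mathrm{Hess}_Z(\nu)(a^\#_Z,b^\#_Z)=a^\#(b^\#(\nu))\vert_Z$.

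Where you genuinely diverge is in the passage from local to global minimum. The paper invokes general structural results about moment maps of Hamiltonian circle actions: such a moment map is Morse--Bott with even indices and coindices \cite[Lem.~5.5.7]{DS17}, and its level sets are connected \cite[Lem.~5.5.5]{DS17}, which together force a nondegenerate local minimum to be the unique global one. Your argument is more elementary and more direct: $\mathrm{Ad}$-invariance of $B$ confines $O_Z$ to the sphere $\{-B(x,x)=-B(Z,Z)\}$ in $(\mathfrak g,-B)$, and Cauchy--Schwarz immediately identifies $Z$ as the unique minimiser of $\nu=-2\pi\langle Z,\cdot\rangle$ on that sphere. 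This is a nice shortcut that avoids the Morse--Bott machinery entirely; the paper's route, on the other hand, generalises to situations where the orbit is not contained in a metric sphere.
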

\begin{proof} Take $a,b\in \mathfrak{p}\cong T_ZM$, then
\begin{align*}
\mathrm{Hess}_Z(\nu)(a^\#_Z,b^\#_Z)&=a^\#\left (b^\#(\nu)\right)\big\vert_Z=a^\#\left(\dd\nu(b^\#)\right)\big\vert_Z=a^\#\left(2\pi B(Z,[b,p])\right)\big\vert_{p=Z}\\
&\left( \diff 2\pi B(Z,[b,\mathrm{Ad}_{e^{ta}}(p)])\right)\big\vert_{p=Z}=2\pi B(Z,[b,[a,Z]])=-2\pi B(a,b).
\end{align*}
We conclude that
$$
\mathrm{Hess}_Z=-2\pi B(\cdot,\cdot)\vert_{\mathfrak{p}\times\mathfrak{p}}.
$$
In particular the Hessian is positive definite as the Killing-form restricted to $\mathfrak{p}$ is negative definite in the compact case. This shows that $\nu$ is a local minimum. In general any Hamiltonian generating a circle action is a Morse-Bott function, its critical submanifolds are symplectic and their indices and coindices are even \cite[Lem. 5.5.7]{DS17}. By \cite[Lem. 5.5.5]{DS17} all level sets of such functions are connected, thus $p=Z$ is the global minimum.
\end{proof}
\noindent
This shows that our moment map $\nu$ satisfies the prerequisites of Theorem \ref{hs13}.
\begin{Lemma}\label{lem18}
    The Hamiltonian $\nu$ satisfies
    $$
    \max(\nu)-\min(\nu)=4\pi r,\ \ \mathrm{smin}-\min(\nu)=4\pi,
    $$
where $\mathrm{smin}(\nu)$ denotes the second lowest value of $\nu$ at a critical point.
\end{Lemma}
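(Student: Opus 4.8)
The plan is to push the whole computation down from $M\cong O_Z$ to a single polysphere through the minimum $Z$, on which $\nu$ splits as a sum of $r$ height functions, and to read off the oscillation of each factor from the normalization of $\sigma$. First I would record two invariance facts. Since $Z$ lies in the center of $\mathfrak{k}$ we have $\mathrm{Ad}_k Z=Z$ for all $k\in K$, whence $\nu(\mathrm{Ad}_k x)=2\pi B(Z,\mathrm{Ad}_k x)=2\pi B(Z,x)=\nu(x)$, so $\nu$ is $K$-invariant. Moreover the circle action $x\mapsto \mathrm{Ad}_{\exp(2\pi t Z)}x$ generated by $\nu$ (Lemma \ref{circle action}) preserves every polysphere through $Z$: writing $Z=k(Z_0)+c$ with $c:=Z-k(Z_0)$ central for $k(\mathfrak{h}^r)$ (see \eqref{eq11}), the factor $\exp(2\pi t\,k(Z_0))\in R(H^r)$ preserves the polysphere $\{K(\xi)\}$ while $\mathrm{Ad}_{\exp(2\pi t c)}$ fixes it pointwise. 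In particular the critical points of $\nu$ are the fixed points of this action, i.e.\ the $x\in O_Z$ with $[Z,x]=0$, and any such point lying on a polysphere $P$ through $Z$ is automatically a critical point of the restriction $\nu|_P$.

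Next I would show that \emph{every} point of $M$ lies on a polysphere through $Z$: given $x\in M$, join it to $Z$ by a geodesic with initial direction $v\in\mathfrak{p}\cong T_ZM$, extend $v$ to a maximal abelian $\mathfrak{a}\subset\mathfrak{p}$, and take the totally geodesic polysphere through $Z$ integrating the associated $\mathfrak{su}(2)^r$; it contains the flat $\exp_Z(\mathfrak{a})$ and hence $x$. By Theorem \ref{transitive on polyspheres} all polyspheres through $Z$ are $K$-conjugate, and since $\nu$ is $K$-invariant they all carry the same values of $\nu$. Consequently $\max_M\nu$, $\min_M\nu$ and the entire set of critical values of $\nu$ on $M$ agree with those of $\nu|_P$ for one fixed polysphere $P$ through $Z$; indeed every critical point $x$ of $\nu$ on $M$ lies on some such $P$ and satisfies $d(\nu|_P)_x=0$, so its $\nu$-value is one of the critical values on $P$.

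On $P\cong(\cp^1)^r$ I would compute $\nu|_P$ explicitly. Using $\mathrm{Ad}_{R(h)}c=c$ one gets $B(c,k(\xi))=B(c,k(Z_0))$, and strong orthogonality of the factors kills the cross terms, so
\[
\nu|_P(K(\xi))=2\pi\sum_{i=1}^{r} B\big(k(Z_i),k(\xi_i)\big)+\mathrm{const},
\]
i.e.\ $\nu|_P$ is, up to an additive constant, the sum of the period-one height functions $\nu_i$ on the $r$ factors; the product splitting is symplectic because $P$ is totally geodesic and holomorphic. Since the generator of $H_2(M,\Z)$ is the class of a single $\cp^1$ factor (Proposition \ref{H2 is 1 dimensional}), the normalization $\int_{\cp^1}\sigma=4\pi$ together with Duistermaat--Heckman (in the period-one normalization of Lemma \ref{circle action}) gives $\mathrm{osc}(\nu_i)=\int_{\cp^1_i}\sigma=4\pi$. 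The critical points of each height function on an $S^2$ factor are its two poles, so the critical values of $\nu|_P$ are exactly $\min\nu+4\pi|S|$ for $S\subseteq\{1,\dots,r\}$, and each is realized by a fixed point of the action on $M$ because flipping poles keeps $[Z_0,\pm Z_i]=0$, hence $[Z,K(\xi)]=0$. Taking $S=\{1,\dots,r\}$ yields $\max\nu-\min\nu=4\pi r$, and the smallest nonzero jump $|S|=1$ yields $\mathrm{smin}-\min\nu=4\pi$.

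The main obstacle is precisely the reduction step: one must rule out any critical value of $\nu$ on $M$ hidden strictly between $\min\nu$ and $\min\nu+4\pi$, which is exactly why it is essential that every critical point lie on a polysphere through $Z$ (forcing its value into $\min\nu+4\pi\,\Z_{\ge 0}$) and that the circle action preserve these polyspheres (so that ``critical on $M$'' implies ``critical on $P$''). The secondary point requiring care is the normalization bookkeeping that converts the $H_2$-normalization of $\sigma$ into the oscillation $4\pi$ of the period-one height function on each $S^2$ factor.
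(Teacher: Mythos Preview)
Your proposal is correct and follows essentially the same route as the paper: reduce to a single polysphere through $Z$, split $\nu|_P$ as a sum of $r$ height functions on the $\cp^1$-factors, and read off the oscillation and the critical-value gaps. The paper does this more tersely by writing $(Z,x)=\sum_i\cos\theta_i+|c|^2$ from the affine embedding (Prop.~\ref{polysphere as suborbit}) and the normalization $|Z_i|=1$ coming from $\mathrm{ad}_{Z_i}^2=-\mathrm{id}$ (so each height factor ranges over $[-1,1]$ and, after the factor $2\pi$, contributes $4\pi$). Your use of Duistermaat--Heckman to convert $\int_{\cp^1_i}\sigma=4\pi$ into $\mathrm{osc}(\nu_i)=4\pi$ is an equivalent bookkeeping, and your extra care in checking that the circle action preserves $P$ (so that critical on $M$ implies critical on $P$) fills in a step the paper leaves implicit.
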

\begin{proof}
 Observe that for any $x\in M$ there exists a polysphere through $x$ and $Z$. From Proposition \ref{polysphere as suborbit} we know that the polyspheres are sub orbits and stay in an affine copy of $(\mathfrak{su}(2))^r$. Thus we can decompose $Z=\sum_iZ_i+c$ and $x=\sum_i x_i+c$, where $c$ the vector orthogonal to the affine subspace (see Figure \ref{fig5}). Recall that we picked $Z\in\mathfrak{g}$ the unique generator of the center of $K$ such that $[Z,[Z,v]]=-v$ for all $v\in \mathfrak{p}$. In particular it follows that $\vert Z_i\vert=1$. Indeed, this is equivalent to the above condition as $[Z_i,[Z_i,v]]=-\vert Z_i\vert^2 v$ for the case $\mathfrak{su}(2)$. See figure \ref{fig6} for a visualization.
 We compute
$$
(Z,x)=\sum_{i=1}^r(Z_i,x_i)+\vert c\vert^2=\sum_{i=1}^r\cos(\theta_i)+\vert c\vert^2,
$$
where we used that $\vert Z_i\vert=\vert x_i\vert =1$. Therefore we conclude
$$
\max(\nu)-\min(\nu)=4\pi r \ \ \ \text{and}\ \ \
\mathrm{smin}(\nu)-\min(\nu)=4\pi.
$$
\end{proof}
\begin{figure}
	\centering
 \includegraphics[width=0.5\textwidth]{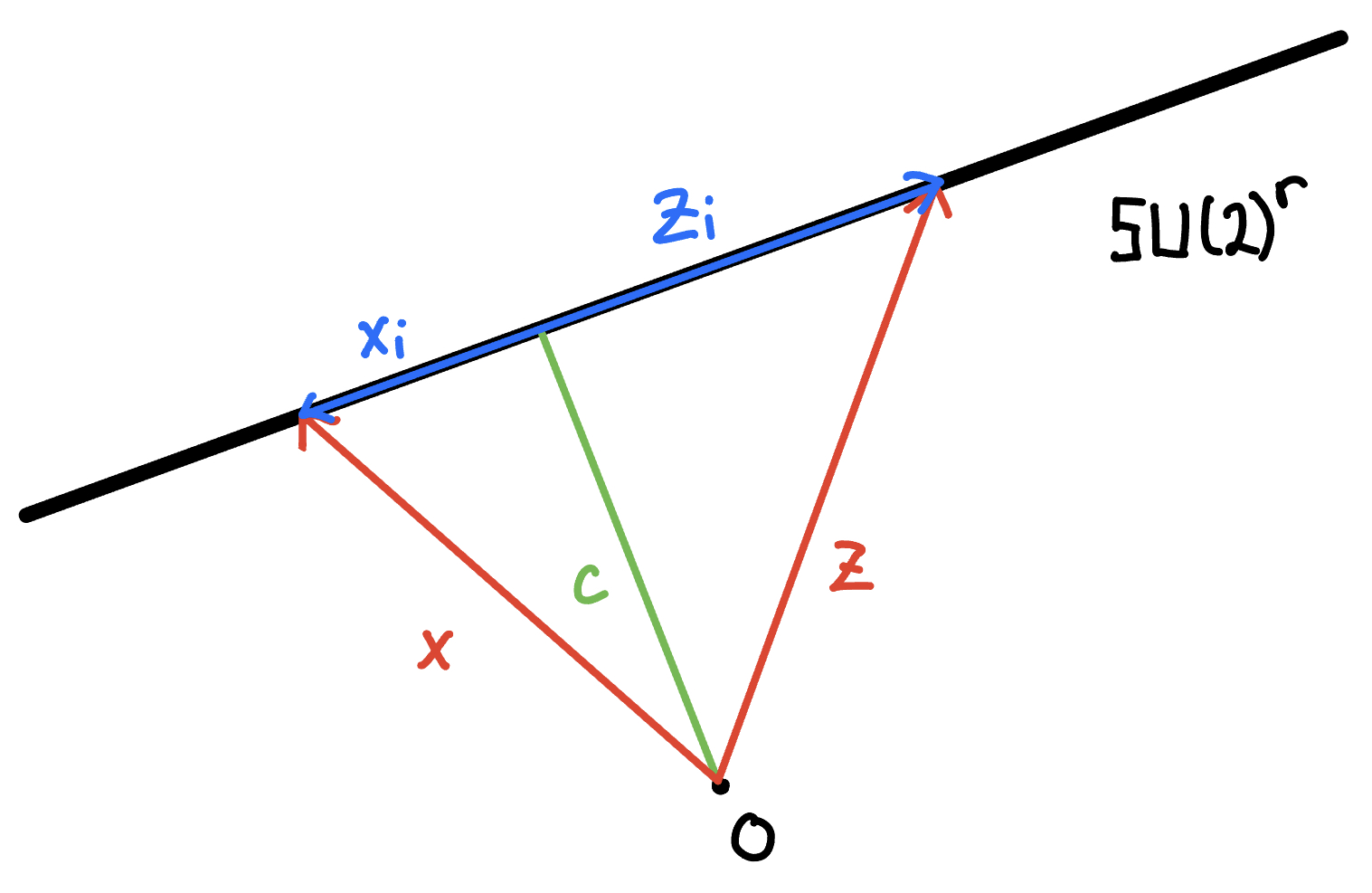}
	\caption{\textit{The figure schematically shows how the poly-spheres sits in an affine copy of $\mathfrak{su}(2)^r$.}}
    \label{fig5}
\end{figure}
\noindent
These three Lemmas together with Theorem \ref{hs13} prove the following theorems.

\begin{theorem}\label{thm13}
    Let $(M,g)$ be an irreducible Hermitian symmetric space of compact type. Denote by $r$ the rank of $M$ and normalize $\sigma$ such that $\sigma(A)=4\pi$ for $A$ the homology class of any factor in a polysphere. Then the Hofer--Zehnder capacity is given by
    $$
    c_{HZ}(M,\sigma)=4\pi r.
    $$
\end{theorem}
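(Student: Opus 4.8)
The plan is to verify that $(M,\sigma)$, realized as the coadjoint orbit $O_Z$ via Corollary \ref{HSS as coadjoint orbit}, satisfies all the hypotheses of the Hwang--Suh theorem (Theorem \ref{hs13}), and then simply read off the value of $\max(\nu)-\min(\nu)$ from Lemma \ref{lem18}. Concretely, I would first recall that a compact irreducible Hermitian symmetric space is Kähler--Einstein with positive Einstein constant, hence monotone, and therefore Fano in the sense required by Theorem \ref{hs13}; this is the content of \cite[Ch.\ 5, \S 16]{BH58}. This is the input that licenses the non-vanishing Gromov--Witten machinery behind Theorem \ref{hs13}.

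Next I would assemble the Hamiltonian data already prepared in the three lemmas. Lemma \ref{circle action} provides the Hamiltonian $\nu(x)=2\pi B(Z,x)$ whose flow is the semi-free circle action $x\mapsto \mathrm{Ad}_{e^{2\pi t Z}}x$, with the prefactor $2\pi$ chosen so that the period is exactly one. Lemma \ref{lem12} identifies the minimum set of $\nu$ as the single point $p=Z$, where the Hessian equals $-2\pi B|_{\mathfrak{p}\times\mathfrak{p}}$ and is positive definite, so the minimum is isolated. These two facts are precisely the hypotheses of Theorem \ref{hs13}(b). Applying that theorem gives immediately
$$
c_{HZ}(M,\sigma)=\max(\nu)-\min(\nu),
$$
and Lemma \ref{lem18} evaluates the right-hand side to $4\pi r$.

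The only point requiring genuine care is the bookkeeping of normalizations: I must check that the symplectic form for which $\nu$ is a moment map is precisely the $\sigma$ normalized by $\sigma(A)=4\pi$ on a polysphere factor $A$. This is built into the choice of $Z$ satisfying $\mathrm{ad}_Z^2=-\mathrm{id}$, which forces $\vert Z_i\vert=1$ on each $\mathfrak{su}(2)$-factor; the oscillation $4\pi$ of $\nu$ along a single factor — the height function on a unit two-sphere rescaled by $2\pi$ — is then exactly the area $\sigma(A)$ of that factor, consistent with the decomposition $(Z,x)=\sum_i\cos(\theta_i)+\vert c\vert^2$ used in Lemma \ref{lem18}. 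I expect the real obstacle to lie not in this final assembly, which is a formality once the lemmas are in hand, but in the Fano/monotonicity input and in Theorem \ref{hs13} itself, whose proof relies on Lu's estimates and the McDuff--Tolman localization principle to secure the pseudoholomorphic spheres realizing the capacity bounds. Since we are permitted to invoke Theorem \ref{hs13} as a black box, the argument reduces to the routine verification above.
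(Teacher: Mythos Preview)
Your proposal is correct and mirrors the paper's own argument exactly: the paper also cites \cite[Ch.\ 5, \S 16]{BH58} for the Fano/monotone property, then assembles Lemmas \ref{circle action}, \ref{lem12}, and \ref{lem18} to feed into Theorem \ref{hs13}(b). The normalization check you flag is indeed the only subtle point, and it is handled precisely as you describe.
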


\begin{theorem}[\cite{LMZ13} Thm.\ 1]\label{cghss}
    Let $(M,g)$ be an irreducible Hermitian symmetric space of compact type. Normalize $\sigma$ such that $\sigma(A)=4\pi$ for $A$ the homology class of any factor in a polysphere. Then the Gromov width is given by
    $$
    c_{G}(M,\sigma)=4\pi.
    $$
\end{theorem}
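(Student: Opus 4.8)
The plan is to derive the result as an essentially immediate consequence of Theorem \ref{hs13}(b), once its hypotheses have been verified for the circle action supplied by Lemma \ref{circle action}. The first ingredient is that an irreducible Hermitian symmetric space of compact type is monotone, and therefore Fano, as established in \cite[Ch.\ 5, \S 16]{BH58}; this is precisely the input needed to apply the Hwang--Suh estimates. With Fano-ness in hand, the remaining task is to produce a semi-free Hamiltonian circle action whose moment map has a \emph{pointwise} (isolated) minimum, and to read off its second-lowest critical value.

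For the circle action I would use exactly the one from Lemma \ref{circle action}: realizing $M \cong O_Z \subset \mathfrak{g}$, the function $\nu(x) = 2\pi B(Z,x)$ has Hamiltonian vector field $2\pi Z^\#$ with respect to the invariant (KKS) form $\sigma$, and this integrates to a semi-free $S^1$-action of period one. By Lemma \ref{lem12} the Hessian of $\nu$ at $p=Z$ equals $-2\pi B(\cdot,\cdot)\vert_{\mathfrak{p}\times\mathfrak{p}}$, which is positive definite in the compact case, so the minimum is attained at the single isolated point $Z$. Thus $\nu$ satisfies all prerequisites of Theorem \ref{hs13}(b), which yields
$$
c_{G}(M,\sigma) = \mathrm{smin}(\nu) - \min(\nu).
$$
Lemma \ref{lem18} then computes, via the polysphere decomposition $Z = \sum_i Z_i + c$ and $x = \sum_i x_i + c$ together with $\vert Z_i\vert = \vert x_i\vert = 1$, that $\mathrm{smin}(\nu) - \min(\nu) = 4\pi$, giving $c_{G}(M,\sigma) = 4\pi$.

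The one genuine point to check is that the normalization implicit in these lemmas (the KKS form coming from $g = -B$) coincides with the normalization $\sigma(A) = 4\pi$ appearing in the statement; I expect this to be the main, though mild, obstacle. I would verify it by restricting the action to a single polysphere factor $\Sigma = \cp^1 \cong O_{Z_i}$, where $\nu$ restricts (up to an additive constant) to $2\pi$ times a height function on the unit $2$-sphere in $\mathfrak{su}(2)\cong\R^3$, whose oscillation is $2\pi\cdot 2 = 4\pi$. By the Duistermaat--Heckman relation this oscillation equals the symplectic area $\sigma(A)$ of the factor, so $\sigma(A) = 4\pi$ holds automatically for the KKS normalization. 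Since $\mathrm{smin}(\nu)-\min(\nu)$ is realized precisely by flipping a single factor, it equals this per-factor oscillation, confirming $c_{G}(M,\sigma) = \sigma(A) = 4\pi$ and closing the argument.
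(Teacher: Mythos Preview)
Your proposal is correct and follows essentially the same route as the paper: verify the Fano condition via \cite{BH58}, invoke Lemmas \ref{circle action}, \ref{lem12}, and \ref{lem18} to check the hypotheses and compute $\mathrm{smin}(\nu)-\min(\nu)=4\pi$, and then apply Theorem \ref{hs13}(b). Your additional normalization check via the per-factor oscillation is a nice clarification that the paper leaves to Figure \ref{fig6} and the surrounding discussion.
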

\noindent

\subsection{Proof of Theorem \ref{capacompact}}
As an application of the diagonal embedding we compute the Gromov width and the Hofer--Zehnder capacity of $U$-neighborhoods of the zero-section in the tangent bundle of a Hermitian symmetric space of compact type. 
\begin{theorem} Normalize $\sigma$ such that evaluated on the generator of $H_2(M,Z)$ it takes value $4\pi$, then
$$
    c_{\mathrm G}(U_{2\sqrt{R}}M,\omega_{(1-R)\sigma})=c_{\mathrm{HZ}}(U_{2\sqrt{R}}M,\omega_{(1-R)\sigma})=c_{\mathrm{HZ}}^0(U_{2\sqrt{R}}M,\omega_{(1-R)\sigma})=\min \lbrace 1,R\rbrace 4\pi.
$$
\end{theorem}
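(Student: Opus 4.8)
The plan is to push the whole problem through the diagonal embedding of Theorem \ref{diagonal}, which identifies $(U_{2\sqrt R}M,\omega_{(1-R)\sigma})$ with the open subset $(M\times M\setminus\bar\Delta,\sigma\ominus R\sigma)$, and then to play off two different Hamiltonian circle actions against one another. Since the three invariants obey the general monotonicity $c_{\mathrm G}\le c_{\mathrm{HZ}}^0\le c_{\mathrm{HZ}}$, it suffices to establish the single lower bound $c_{\mathrm G}\ge\min\{1,R\}4\pi$ together with the single upper bound $c_{\mathrm{HZ}}\le\min\{1,R\}4\pi$; the chain then pinches all three to the common value.

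First I would prove the lower bound on the closed manifold $(M\times M,\sigma\ominus R\sigma)$, which is monotone, hence Fano, for the complex structure $j\oplus(-j)$. Consider the semifree circle action generated by $H(a,b)=\nu(a)+R\nu(b)$, with $\nu$ the moment map of Lemma \ref{circle action}; this is the anti-diagonal action, and by the reasoning behind Lemma \ref{lem18} its minimum is the isolated diagonal point $(p_0,p_0)$, where $p_0$ is the minimum of $\nu$, with $\mathrm{smin}(H)-\min(H)=\min\{1,R\}\bigl(\mathrm{smin}(\nu)-\min(\nu)\bigr)=\min\{1,R\}4\pi$, the cheaper factor being the one allowed to move. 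The ball construction of Proposition \ref{prop7} then embeds a ball of capacity approaching $\min\{1,R\}4\pi$ near this minimum, contained in the open sublevel set $\{H<\mathrm{smin}(H)\}$. The decisive point is that this ball avoids $\bar\Delta$: restricting $H$ to a polysphere through the two base points via Theorem \ref{polysphere theorem} and computing as in Lemma \ref{lem18} gives $\min_{\bar\Delta}H\ge\mathrm{smin}(H)$, so that $\bar\Delta\subset\{H\ge\mathrm{smin}(H)\}$ is disjoint from the sublevel set carrying the ball. As the ball therefore lies in $M\times M\setminus\bar\Delta$, it pulls back into $U_{2\sqrt R}M$ and yields $c_{\mathrm G}\ge\min\{1,R\}4\pi$.

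For the upper bound $c_{\mathrm{HZ}}\le\min\{1,R\}4\pi$ I would turn to pseudoholomorphic curve methods. Through every point of $M\times M\setminus\bar\Delta$ there pass holomorphic spheres coming from the two $\cp^1$-factors of a polysphere (Theorem \ref{polysphere theorem}), of symplectic areas $4\pi$ and $4\pi R$, so the minimal area of such a sphere through a generic point is $\min\{1,R\}4\pi$. Feeding the nonvanishing of the associated Gromov--Witten invariant into Lu's capacity bound \cite[Thm.\ 1.10]{Lu06} should give $c_{\mathrm{HZ}}\le\min\{1,R\}4\pi$, provided the estimate can be localized to the open subset.

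I expect this upper bound to be the crux. The Hofer--Zehnder capacity of the open subset is strictly smaller than that of the ambient $M\times M$, where the product formula gives $(1+R)4\pi r$, so it can be obtained neither by monotonicity nor by a Gromov--Witten argument on the closed manifold, where the rigid curves through two generic points lie in the class $\sum_i\bigl(A_1^{(i)}+A_2^{(i)}\bigr)$ of large area $(1+R)4\pi r$. The small polysphere-factor spheres that realize $\min\{1,R\}4\pi$ leave the subset through $\bar\Delta$, so the difficulty is precisely to run the curve count inside $U_{2\sqrt R}M$; here I would exploit the foliation by pseudoholomorphic planes furnished by Theorems \ref{twisted to hyperkähler} and \ref{hyperkähler to constant}, in particular the fibrewise-constant model $\dd\tau/2+\pi^*\sigma$, which is adapted to the open geometry. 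Once the upper bound is secured, the monotonicity chain together with the ball of the previous step closes the argument.
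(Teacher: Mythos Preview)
Your inequality chain is backwards: the $\pi_1$-sensitive Hofer--Zehnder capacity satisfies $c_{\mathrm{HZ}}\le c_{\mathrm{HZ}}^0$, not the other way around, because requiring only contractible fast orbits enlarges the class of admissible Hamiltonians. So the upper bound you need is on $c_{\mathrm{HZ}}^0$, and a bound on $c_{\mathrm{HZ}}$ alone would not close the chain.

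Your lower bound differs from the paper's. Rather than arguing that the Karshon--Tolman ball already misses $\bar\Delta$, the paper uses the McDuff--Polterovich trick: embed the ball near the isolated minimum $(Z,Z)$, then push $\bar\Delta$ off it by the \emph{holomorphic} gradient flow of $\nu$ (Lemma~\ref{hol}) and run a relative Moser argument, using that $\bar\Delta$ is a union of complex (hence symplectic) hypersurfaces for every form in the isotopy. Your direct approach would need the claim $\bar\Delta\subset\{H\ge\mathrm{smin}(H)\}$, and your justification via polyspheres is shaky: a polysphere through an arbitrary pair $(a,b)\in\bar\Delta$ need not pass through $Z$, and the restriction of the linear functional $\nu=2\pi B(Z,\cdot)$ to such a polysphere factor need not have oscillation $4\pi$. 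The rank-one computation you have in mind does not obviously transport.

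The real gap is the upper bound, which you correctly flag as the crux but do not resolve. The paper does not ``localize'' Lu's estimate by hand or use the fiberwise-constant model; it invokes a \emph{relative} version of Lu's theorem (Corollary~\ref{upperbound}): for a closed $(N,\omega)$ and a codimension-two complex $D_\infty$, a nonvanishing Gromov--Witten invariant with $[D_\infty]$ among the constraints bounds $c_{\mathrm{HZ}}^0(N\setminus D_\infty)$ by $\omega(A)$. With $N=M\times M$, $D_\infty=\bar\Delta$ and $A$ the class of a single polysphere factor in the $\min\{1,R\}$-side, the divisor axiom gives
\[
\mathrm{GW}_{A,4}\bigl([pt],[\bar\Delta],\alpha,\beta\bigr)=\mathrm{GW}_{A,3}\bigl([pt],\alpha,\beta\bigr)\cdot\bigl([\bar\Delta]\cdot A\bigr).
\]
The first factor is nonzero by \cite[Lem.~15]{LMZ13}; the second is nonzero because, by Theorem~\ref{diagonal}, $(\sigma\ominus\sigma)$ is exact on $M\times M\setminus\bar\Delta$, so a representative of $A$ disjoint from $\bar\Delta$ would have zero area (Lemma~\ref{intersection}). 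This is the missing idea: you need a Gromov--Witten invariant that \emph{sees} $\bar\Delta$, and the exactness coming from your own diagonal embedding is what forces the intersection number to be nonzero.
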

\noindent
We prove the theorem by finding a lower bound for the Gromov width and an upper bound for the $\pi_1$-sensitive Hofer--Zehnder capacity. The theorem then follows as the inequalities 
$$
c_G\leq c_\mathrm{HZ}\leq c_\mathrm{HZ}^0
$$
are automatically satisfied. In order to find a lower bound we need to find symplectic embeddings of the standard ball into $(U_{2\sqrt{R}}M,\omega_{(1-R)\sigma})$. Following an idea of McDuff--Polterovich \cite{DP94} we will instead embedd a ball into $M\times M$ and use a holomorphic gradient flow to push the ball into the complement of $\bar\Delta$, hence into $U_{2\sqrt{R}}M$. Applying a Moser trick relative to $\bar \Delta$ we obtain the desired embedding.\\
\ \\
\noindent
Recall from Lemma \ref{circle action} that the Hamiltonian function
$$
\nu : M\cong O_Z\to \R,\ x\mapsto 2\pi B(Z,x)
$$
generates a semi-free circle action by holomorphic isometries. Here, $B(\cdot,\cdot):\mathfrak{g}\times\mathfrak{g}\to\R$ denotes the Killing form. 

\begin{Lemma}\label{hol}
    The gradient flow of $\nu$ is holomorphic.
\end{Lemma}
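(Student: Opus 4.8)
The plan is to relate the gradient flow of $\nu$ directly to the holomorphic circle action it already generates. Recall from Lemma \ref{circle action} that the Hamiltonian vector field is $X_\nu = 2\pi Z^\#$, and that on the coadjoint orbit $M \cong O_Z$ the Kähler structure satisfies the compatibility $\sigma_x(v,w) = g_x(j_x v, w)$ (this is exactly the identity $g_Z(j_Z a, b) = \sigma_Z(a,b)$ verified when we checked that $(g,j,\sigma)$ is Kähler, cf.\ Lemma \ref{invariant Kähler structure}). First I would use this to compute the gradient: since $g(\nabla\nu, \cdot) = \dd\nu(\cdot) = \iota_{X_\nu}\sigma = \sigma(X_\nu, \cdot) = g(jX_\nu, \cdot)$, non-degeneracy of $g$ forces $\nabla\nu = j X_\nu = 2\pi\, j Z^\#$. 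Thus the gradient field is obtained from the Killing field $Z^\#$ simply by applying the complex structure.

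Next I would show that $\nabla\nu$ is a real holomorphic vector field, i.e.\ $\mathcal{L}_{\nabla\nu} j = 0$; this is equivalent to its flow preserving $j$, which is precisely the meaning of the gradient flow being holomorphic. The key point is that $j$ is integrable (Section \ref{HSS}), so the space of real holomorphic vector fields is invariant under $V \mapsto jV$: if $V$ is real holomorphic then its $(1,0)$-part $V^{1,0} = \tfrac12(V - ijV)$ is a holomorphic section of $T^{1,0}M$, and a short computation gives $(jV)^{1,0} = i\,V^{1,0}$, which is again holomorphic, so $jV$ is real holomorphic as well. Since the circle action generated by $\nu$ is by holomorphic isometries, $Z^\#$ is a real holomorphic (indeed Killing) vector field, and applying the above with $V = Z^\#$ shows that $\nabla\nu = 2\pi\, j Z^\#$ is real holomorphic. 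Hence its flow consists of biholomorphisms, which proves the claim.

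The main obstacle --- really the only nontrivial point --- is the implication ``$V$ holomorphic $\Rightarrow jV$ holomorphic'', which genuinely uses integrability of $j$ and fails for a general almost complex structure, where the Nijenhuis tensor obstructs it. Everything else is the standard Kähler dictionary between Hamiltonian and gradient vector fields together with the explicit coadjoint-orbit formulas from Lemma \ref{invariant Kähler structure}. As an alternative one could verify holomorphicity infinitesimally from the explicit expression $\nabla\nu|_x = 2\pi[x,[Z,x]]$, but the abstract integrability argument is cleaner and avoids computing $\mathcal{L}_{\nabla\nu} j$ by hand.
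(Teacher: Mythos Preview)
Your proof is correct and follows essentially the same route as the paper: both identify $\nabla\nu = jX_\nu$, use that $X_\nu = 2\pi Z^\#$ is real holomorphic, and then invoke integrability of $j$ to conclude that $jX_\nu$ is again real holomorphic. The only cosmetic difference is packaging: you pass through the $(1,0)$-part identity $(jV)^{1,0}=iV^{1,0}$, whereas the paper writes out the Nijenhuis identity $[jX_\nu,jY]=[X_\nu,Y]+j[jX_\nu,Y]+j[X_\nu,jY]$ directly (assuming $Y$ holomorphic as well) to obtain $[\nabla\nu,jY]=j[\nabla\nu,Y]$.
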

\begin{proof}
    The gradient flow of $\nu$ is holomorphic if and only if the Lie derivative $\mathcal{L}_{\nabla \nu}(j)=0$. Using the Leibniz rule for Lie derivatives we find 
    $$
    [\nabla\nu, Y]=\mathcal{L}_{\nabla \nu}(jY)=(\mathcal{L}_{\nabla \nu}j)(Y)+j(\mathcal{L}_{\nabla\nu}Y)=(\mathcal{L}_{\nabla\nu}j)(Y)+j[\nabla\nu,Y]).
    $$
    Hence, we need to prove that $[\nabla \nu,jY]=j[\nabla\nu,Y]$ for all vector fields $Y$ on $M$. As the tangent bundle is locally spanned by holomorphic vector fields we may assume $Y$ to be holomorphic, further $j\nabla\nu=-X_\nu$ is holomorphic as it generates an element of $G$. Now integrability of $j$ assures vanishing of Nijenhuis tensor and therefor
    $$
    [\nabla\nu, jY]=[jX_\nu, jY]=[X_\nu, Y]+j[jX_\nu, Y]+j[X_\nu, jY]=-[X_\nu, Y]=j^2[X_\nu, Y]=j[\nabla\nu, Y].
    $$
\end{proof}
\noindent
We can now use the holomorphic gradient flow of $\nu$ to push the symplectic balls found in Theorem \ref{cghss} into the complement of $\bar\Delta$.
\begin{Proposition} Set $A:=\min\lbrace 1,R\rbrace 4\pi$, then there is a symplectic embedding 
    $$
    (B(A-\varepsilon),\omega_0)\hookrightarrow (M\times M\setminus \bar\Delta,\sigma\ominus R\sigma)\ \forall \varepsilon>0,
    $$
    where $B(A)$ denotes the ball of capacity $A$.
\end{Proposition}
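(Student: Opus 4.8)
The plan is to prove the lower bound by the McDuff--Polterovich trick \cite{DP94} sketched above: first embed a full-dimensional ball of capacity close to $A$ into the \emph{closed} product $(M\times M,\sigma\ominus R\sigma)$, and then use a holomorphic gradient flow together with a Moser argument to move it off $\bar\Delta$.

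\textbf{Step 1 (a ball in the closed product).} First I would equip $M\times M$ with the product complex structure $j\oplus(-j)$, with respect to which $\Omega:=\sigma\ominus R\sigma$ is Kähler: on the second factor $-R\sigma$ is the Kähler form of the conjugate Hermitian symmetric space $\bar M$, which is again an irreducible compact Hermitian symmetric space and hence Fano. The anti-diagonal circle action $(a,b)\mapsto(e^{\theta X_\nu}a,e^{-\theta X_\nu}b)$ is Hamiltonian for $\Omega$ with moment map $H=\nu\circ\pi_1+R\,\nu\circ\pi_2$, where $\nu$ is the moment map of Lemma~\ref{circle action}; it is semi-free because each factor is, and by Lemma~\ref{lem12} it has an \emph{isolated} minimum at $(Z,Z)$. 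Since $(M,\sigma)$ and $(\bar M,R\sigma)$ both satisfy the hypotheses of Theorem~\ref{hs13}(b), the product compatibility recorded after that theorem gives
$$
c_G(M\times M,\Omega)=\mathrm{smin}(H)-\min(H)=\min\{4\pi,\,R\cdot 4\pi\}=A,
$$
using Lemma~\ref{lem18} on each factor. Thus for every $\varepsilon>0$ there is a symplectic ball embedding $\iota_0\colon (B(A-\varepsilon),\omega_0)\hookrightarrow(M\times M,\Omega)$, and by the construction underlying Proposition~\ref{prop7} its image lies in the open sublevel set $\{H<\mathrm{smin}(H)\}$, which retracts along the downward gradient flow of $H$ onto the isolated minimum $(Z,Z)\in\Delta$.

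\textbf{Step 2 (clearing $\bar\Delta$, and Moser).} The key geometric observation I would establish is that $\{H<\mathrm{smin}(H)\}$ is disjoint from $\bar\Delta$. Indeed $(Z,Z)\in\Delta$ and $\Delta\cap\bar\Delta=\emptyset$ (a point is never in its own cut locus), while the lowest value of $H$ attained on $\bar\Delta$ is exactly $\mathrm{smin}(H)$, realised at a fixed point of the action on $\bar\Delta$; hence no point of $\bar\Delta$ satisfies $H<\mathrm{smin}(H)$. One checks this on a single polysphere/polydisc factor, where it reduces to an elementary inequality between the height of the anti-diagonal and the second critical level, and then propagates by the splitting of $H$ and of $\bar\Delta$ along polyspheres (Theorem~\ref{diagonal}). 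Where the image of $\iota_0$ is only guaranteed to lie in the sublevel set after flowing, I would push it into $\{H<\mathrm{smin}(H)\}$ by the holomorphic gradient flow $\psi_s$ of $H$, which is holomorphic with respect to $j\oplus(-j)$ by Lemma~\ref{hol} applied factorwise and whose time-$s$ maps are biholomorphisms isotopic to the identity. Since $\psi_s$ is not symplectic, $\iota_s:=\psi_s\circ\iota_0$ pulls $\Omega$ back to the Kähler form $\iota_0^*\psi_s^*\Omega$ on $B(A-\varepsilon)$, cohomologous to $\omega_0$; a Moser interpolation carried out on $M\times M\setminus\bar\Delta$ and supported away from $\bar\Delta$ corrects $\iota_s$ to a genuine symplectic embedding of $B(A-\varepsilon')$ into $(M\times M\setminus\bar\Delta,\Omega)$, with $\varepsilon'\to0$ as $\varepsilon\to0$. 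Letting $\varepsilon\to0$ yields the proposition, and composing with $\phi^{-1}$ of Theorem~\ref{diagonal} transplants the ball into $U_{2\sqrt R}M$.

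\textbf{Main obstacle.} The delicate point is Step 2: controlling the full ball — not merely its centre — relative to $\bar\Delta$, i.e.\ verifying that a symplectic ball of capacity as large as $A$ genuinely fits inside the sublevel region that avoids $\bar\Delta$, and that the holomorphic transport together with the Moser correction loses only $O(\varepsilon)$ of capacity. I expect the bookkeeping of the orientation-reversing second factor — treating $(M,-R\sigma)$ as the Fano manifold $(\bar M,R\sigma)$ so that Theorem~\ref{hs13} and its product formula legitimately apply — to also require some care.
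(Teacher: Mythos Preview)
Your strategy—Karshon--Tolman ball at the isolated minimum of the product Hamiltonian, holomorphic gradient flow, Moser—is the paper's. Two differences in execution are worth noting.

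First, your shortcut claim that $\{H<\mathrm{smin}(H)\}\cap\bar\Delta=\emptyset$ would, if established, bypass the flow and Moser steps entirely. It is true in rank one (for $\cp^n$ it reduces to $|a_0|^2+|b_0|^2\le 1$ for unit vectors with $\langle a,b\rangle=0$), but your polysphere justification has a gap: the moment map $\nu$, hence $H$, splits as a sum only along polyspheres through the base point $Z$ (this is exactly how Lemma~\ref{lem18} is proved), whereas the polysphere determined by a generic pair $(a,b)\in\bar\Delta$ need not contain $Z$. The paper does not attempt this shortcut.

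Second—and this dissolves your ``main obstacle''—the paper runs Moser \emph{globally} on the closed manifold $M\times M$ rather than locally on the ball. Instead of moving the ball, one pushes $\bar\Delta$ off it by the forward gradient flow $\varphi_T$: since the ball lies in $\{H<\mathrm{smin}(H)\}$, it retracts under the reverse flow to the unique minimum $(Z,Z)\notin\bar\Delta$, so for some $T$ one has $\varphi_T(\bar\Delta)\cap B(A-\varepsilon)=\emptyset$. The ball then embeds symplectically into $(M\times M\setminus\varphi_T(\bar\Delta),\Omega)\cong(M\times M\setminus\bar\Delta,\varphi_T^*\Omega)$. Because $\varphi_t$ is holomorphic, $\bar\Delta$ remains a symplectic submanifold for every $\varphi_t^*\Omega$, and a Moser isotopy of $M\times M$ relative to $\bar\Delta$ identifies $(M\times M\setminus\bar\Delta,\varphi_T^*\Omega)$ with $(M\times M\setminus\bar\Delta,\Omega)$ on the nose—no $\varepsilon'$-loss. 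Your local Moser on the ball is workable in principle, but it creates precisely the capacity bookkeeping you flag; the global relative Moser removes it.
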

\begin{proof}
    By Theorem \ref{cghss} and and its proof we find a symplectic embedding 
    $$
     (B(A-\varepsilon),\omega_0)\hookrightarrow (M\times M, \sigma\ominus R\sigma)
    $$
    so that zero is mapped to the unique minimum of $\nu$, e.g.\ $(Z,Z)$, and $B(A)\subset \lbrace \nu < \mathrm{smin}(\nu)\rbrace$. As $(Z,Z)\not\in \bar\Delta$, the gradient flow $\varphi_t$ of the Hamiltonian $\nu$ can be used to push $\bar \Delta$ into the complement of the ball, i.e.
    $$
    \exists T>0\ \mathrm{s.t.}\ \varphi_T(\bar\Delta)\subset M\times M\setminus B(A-\varepsilon).
    $$
    Vice versa we obtain a symplectic embedding 
    $$
    (B(A-\varepsilon),\omega_0)\hookrightarrow (M\times M\setminus \varphi_T(\bar\Delta), \sigma\ominus R\sigma)\cong (M\times M\setminus\bar\Delta, \varphi_T^*(\sigma\ominus R\sigma)).
    $$
    By Lemma \ref{hol} the gradient flow of $\nu$ is holomorphic which implies that $j$ is compatible with $\varphi_t^*(\sigma\ominus R\sigma)$ for all $t$. Therefor $\bar\Delta$ is a finite union of closed symplectic submanifolds for all symplectic structures in the family $\varphi_T^*(\sigma\ominus R\sigma)$ and we can apply Moser's trick relative to $\bar \Delta$ to identify 
    $$
    (M\times M\setminus\bar\Delta,\ \varphi_T^*(\sigma\ominus R\sigma))\cong (M\times M\setminus\bar\Delta,\ \sigma\ominus R\sigma).
    $$
\end{proof}
\noindent
Letting $\varepsilon\to 0$ we conclude the following lower bound for the Gromov width.
\begin{Corollary} The Gromov width satisfies
    $$\min\lbrace 1,R\rbrace 4\pi\leq c_G(U_{2\sqrt{R}}M,\omega_{(1-R)\sigma}).$$
\end{Corollary}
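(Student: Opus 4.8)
The plan is to transport the ball embedding of the preceding Proposition through the symplectomorphism of Theorem \ref{diagonal}. The genuine content—the McDuff--Polterovich displacement of $\bar\Delta$ by the holomorphic gradient flow of $\nu$, together with the subsequent Moser argument relative to $\bar\Delta$—has already been carried out in that Proposition, so what remains for the Corollary is a formal unwinding of definitions.

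First I would recall that Theorem \ref{diagonal} furnishes a $G$-equivariant symplectomorphism
$$
\phi: (U_{2\sqrt{R}}M,\omega_{(1-R)\sigma})\rightarrow(M\times M\setminus\bar\Delta,\sigma\ominus R\sigma).
$$
Setting $A:=\min\lbrace 1,R\rbrace 4\pi$, the Proposition supplies, for every $\varepsilon>0$, a symplectic embedding $(B(A-\varepsilon),\omega_0)\hookrightarrow(M\times M\setminus\bar\Delta,\sigma\ominus R\sigma)$. Precomposing this embedding with $\phi^{-1}$ then yields a symplectic embedding
$$
(B(A-\varepsilon),\omega_0)\hookrightarrow(U_{2\sqrt{R}}M,\omega_{(1-R)\sigma}).
$$

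Since $B(A)$ denotes the ball of capacity $A$, each such embedding witnesses the inequality $c_G(U_{2\sqrt{R}}M,\omega_{(1-R)\sigma})\geq A-\varepsilon$ directly from the definition of the Gromov width as a supremum over symplectically embedded balls. Letting $\varepsilon\to 0$ gives the claimed bound $c_G(U_{2\sqrt{R}}M,\omega_{(1-R)\sigma})\geq A=\min\lbrace 1,R\rbrace 4\pi$. I do not expect a real obstacle at this stage: the only point worth double-checking is that $\phi^{-1}$ is an honest symplectomorphism onto $U_{2\sqrt{R}}M$, so that the composed map lands in the correct domain carrying the correct symplectic form $\omega_{(1-R)\sigma}$—but this is exactly the content of Theorem \ref{diagonal}. (The matching upper bound, giving the full value in Theorem \ref{capacompact}, is obtained separately via the $\pi_1$-sensitive Hofer--Zehnder capacity and is not needed here.)
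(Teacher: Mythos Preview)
Your proposal is correct and matches the paper's own argument: the paper simply says ``Letting $\varepsilon\to 0$ we conclude the following lower bound for the Gromov width,'' leaving the transfer through the symplectomorphism $\phi$ of Theorem~\ref{diagonal} implicit, while you spell it out. (Minor wording quibble: you want to \emph{post}compose the ball embedding with $\phi^{-1}$, not precompose, but the intent is clear.)
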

\ \\
For the upper bound we use Lu's Theorem \cite[Thm.\ 1.10]{Lu06} or to be precise the following Corollary.

\begin{Corollary}[\cite{Bim23}, Cor.\ A.1]\label{upperbound}
 Let $(N,\omega)$ be a closed symplectic manifold of dimension $\dim N\geq 4$ and fix a closed\footnote{compact with no boundary!} connected submanifolds $D_\infty\subset N$ of codimension at least two. Denote by $[D_\infty]\in H_2(N,\Q)$ the induced homology classes. Suppose there exists a spherical homology class $A\in H_2(N;\Z)$ for which the Gromov-Witten invariant 
$$
\mathrm{GW}_{A,m+2}([pt.],[D_\infty], \beta_1, \ldots , \beta_m) \neq 0 
$$
for some homology classes $\beta_1,\ldots,\beta_m \in H_*(N;\Q)$ and an integer $m \geq 1$, then 
$$
c_{HZ}^\circ(N\setminus D_\infty,\omega)\leq \omega(A).
$$   
\end{Corollary}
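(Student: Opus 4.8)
The plan is to deduce the statement from Lu's Theorem \cite[Thm.\ 1.10]{Lu06} through the intermediary of his \emph{pseudo-symplectic capacities}. Recall that to a closed symplectic manifold $(N,\omega)$ and a pair of nonzero homology classes $\alpha_0,\alpha_\infty\in H_*(N;\Q)$ Lu associates a number $C^{(2)}_{HZ}(N,\omega;\alpha_0,\alpha_\infty)$, defined by a Hofer--Zehnder-type variational problem in which the admissible Hamiltonians are required to attain their minimum on an open set meeting a cycle representing $\alpha_0$ and their maximum on an open set meeting a cycle representing $\alpha_\infty$, subject to the usual condition that all fast contractible periodic orbits be constant. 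When $\alpha_0=[\mathrm{pt}]$ and $\alpha_\infty=[N]$ this recovers the ordinary $\pi_1$-sensitive Hofer--Zehnder capacity, and Lu's main theorem asserts that the nonvanishing of a Gromov--Witten invariant $\mathrm{GW}_{A,m+2}(\alpha_0,\alpha_\infty,\beta_1,\ldots,\beta_m)$ forces $C^{(2)}_{HZ}(N,\omega;\alpha_0,\alpha_\infty)\leq\omega(A)$.

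First I would establish the domination inequality
$$
c_{HZ}^\circ(N\setminus D_\infty,\omega)\leq C^{(2)}_{HZ}(N,\omega;[\mathrm{pt}],[D_\infty]).
$$
Let $H$ be any admissible Hamiltonian for $N\setminus D_\infty$: it vanishes on an open set $U$, equals $\max H$ outside a compact subset $K\subset N\setminus D_\infty$, and has no fast contractible periodic orbits. Since $K$ is compact and disjoint from $D_\infty$, the function $H$ is constant (equal to $\max H$) on an entire neighborhood of $D_\infty$; extending it by this constant yields a smooth function on all of $N$ whose maximum region contains a neighborhood of a representative of $[D_\infty]$ and whose minimum region $U$ contains a point, hence a representative of $[\mathrm{pt}]$. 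Thus the extension is admissible for the pseudo-capacity with $\alpha_0=[\mathrm{pt}]$ and $\alpha_\infty=[D_\infty]$, and its dynamics on $N$ coincide with those on $N\setminus D_\infty$ because the flow is stationary near $D_\infty$. Taking the supremum of $\mathrm{osc}(H)=\max H-\min H$ over all such $H$ gives the inequality.

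Second I would invoke Lu's theorem directly: the hypothesized nonvanishing of
$$
\mathrm{GW}_{A,m+2}([\mathrm{pt}],[D_\infty],\beta_1,\ldots,\beta_m)\neq 0,
$$
with the point class among the insertions, yields $C^{(2)}_{HZ}(N,\omega;[\mathrm{pt}],[D_\infty])\leq\omega(A)$, and composing with the domination inequality completes the argument. I expect the genuine content to lie in the first step, namely verifying that the variational problem on the open manifold $N\setminus D_\infty$ is faithfully controlled by the pseudo-capacity on the compactification, so that no admissibility is lost when $D_\infty$ is filled back in. This rests on $D_\infty$ having codimension at least two (keeping the complement connected and preserving the $\pi_1$-sensitive count) and on admissible Hamiltonians being forced to be locally constant near $D_\infty$, so that reinserting $D_\infty$ introduces no new short orbits. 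The deeper analytic ingredients—the construction of $C^{(2)}_{HZ}$ and the definition and invariance of the Gromov--Witten invariants entering Lu's estimate—are imported wholesale from \cite{Lu06}.
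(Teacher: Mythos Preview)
Your proposal is correct and follows exactly the route the paper signals: the corollary is quoted from \cite{Bim23} without proof, and the paper explicitly frames it as a consequence of Lu's Theorem \cite[Thm.\ 1.10]{Lu06}, which is precisely what you carry out via the pseudo-symplectic capacity $C^{(2)}_{HZ}(N,\omega;[\mathrm{pt}],[D_\infty])$. The extension step (pushing an admissible Hamiltonian on $N\setminus D_\infty$ to one admissible for Lu's pseudo-capacity on $N$) and the observation that codimension $\geq 2$ ensures $\pi_1(N\setminus D_\infty)\cong\pi_1(N)$, so that contractibility of orbits is unaffected, are exactly the points one needs; nothing is missing.
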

\noindent
We want to apply this corollary to $N=M\times M$, $A$ the homology class of a factor in any polysphere and $D_\infty=\bar\Delta$. Observe that $\bar\Delta$ is not a closed connected submanifold, but connected and a finite union of smooth closed submanifolds. One can check, that the proof of Corollary \ref{upperbound} actually also goes through for our choice of $D_\infty=\bar\Delta$.\\
\ \\
We need one last ingredient to show that a Gromov--Witten invariant of the form 
$$\mathrm{GW}_{A,m+2}([pt.],[\bar\Delta], \beta_1, \ldots , \beta_m)$$ does not vanish.
\begin{Lemma}\label{intersection}
    The intersection product $[\bar\Delta]\cdot A$ does not vanish.
\end{Lemma}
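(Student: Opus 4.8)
The plan is to compute the intersection number $[\bar\Delta]\cdot A$ directly, by choosing a convenient geometric representative of $A$ and reducing the whole computation to a single polysphere, where $\bar\Delta$ has a transparent product structure. First I would fix the factor class explicitly: let $A=[\Sigma_1]$ be the class of the first $\cp^1$-factor $\Sigma_1=\cp^1\times\{x_2^0\}\times\cdots\times\{x_r^0\}$ of a polysphere $\Sigma^r\subset M$, viewed inside the first copy of $M\times M$ via $\dd\iota$ (see Thm.\ \ref{polysphere theorem}). A representative of $A$ is then the cycle $S\times\{y\}$, where $S:=\iota(\Sigma_1)$ and $y\in M$ is a point that I will take \emph{generic inside the same polysphere} $\Sigma^r$ (second copy). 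Since this cycle lies entirely in $\Sigma^r\times\Sigma^r$, only $\bar\Delta\cap(\Sigma^r\times\Sigma^r)$ is relevant for the count.

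Next I would recall the product structure of $\bar\Delta$ along polyspheres that was already established in the proof of Theorem \ref{diagonal}: a pair $(a,b)=((a_1,\dots,a_r),(b_1,\dots,b_r))\in\Sigma^r\times\Sigma^r$ lies in $\bar\Delta$ precisely when at least one pair $(a_i,b_i)\in\cp^1\times\cp^1$ is antipodal, so that $\bar\Delta\cap(\Sigma^r\times\Sigma^r)=\bigcup_{i=1}^r\{(a,b):b_i\text{ is the antipode of }a_i\}$. With this in hand the count is immediate: a point of $S\times\{y\}$ has the form $x=(x_1,x_2^0,\dots,x_r^0)$ with $y=(y_1,\dots,y_r)$ fixed, and membership in $\bar\Delta$ requires $y_i$ to be antipodal to $x_i$ for some $i$. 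For $i\ge 2$ the point $x_i^0$ is fixed while $y_i$ is generic, so the antipodal condition fails; for $i=1$ it forces $x_1$ to be the unique antipode of $y_1$. Hence for generic $y$ there is exactly one intersection point, and it is transverse (the condition cuts $x_1$ down to a single value inside the one-dimensional $S$).

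Having a single transverse intersection point already rules out any cancellation, so I would conclude $[\bar\Delta]\cdot A=\pm1\neq 0$. If one wishes to pin down the sign, I would observe that with respect to the product complex structure $j\ominus j$ compatible with $\sigma\ominus R\sigma$ both $S$ and $\bar\Delta$ are complex — the fiberwise cut-locus (antipodal) graph is $(j\ominus j)$-holomorphic, which is exactly why $\bar\Delta$ is a union of complex codimension-one submanifolds — so positivity of complex intersections gives $[\bar\Delta]\cdot A=+1$. By the symmetry of $\bar\Delta$ under swapping the two copies of $M$, the same argument applies verbatim to a factor class in the second copy, which is the class relevant when $R\le 1$.

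I expect the only delicate point to be the product description of $\bar\Delta$ along a polysphere, namely that $\mathrm{Cut}_M(y)\cap\Sigma^r$ coincides with the cut locus computed inside the totally geodesic $\Sigma^r$, so that $\bar\Delta$ restricts to the union of factorwise antipodal loci. This is precisely the structural input supplied by the bijectivity argument in the proof of Theorem \ref{diagonal} together with the rank-one analysis of \cite[Thm.\ C]{proj23}; once it is invoked, the genericity of $y$ and the transversality of the single intersection are routine.
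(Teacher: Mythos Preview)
Your argument is correct but takes a genuinely different route from the paper. The paper gives a two-line cohomological proof: by Theorem~\ref{diagonal} with $R=1$, the form $\sigma\ominus\sigma$ is exact on $M\times M\setminus\bar\Delta$; hence any representative $S$ of $A$ avoiding $\bar\Delta$ would satisfy $\int_S(\sigma\ominus\sigma)=0$ by Stokes, contradicting $\int_A(\sigma\ominus\sigma)=4\pi$. So no representative of $A$ can be pushed off $\bar\Delta$, and (tacitly using positivity of complex intersections) the intersection number is nonzero.

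Your approach instead computes $[\bar\Delta]\cdot A=+1$ directly by localizing to a single polysphere and exploiting the product description of $\bar\Delta\cap(\Sigma^r\times\Sigma^r)$. This is more geometric and yields the precise value, at the cost of unpacking the structural input about $\bar\Delta$ along polyspheres from the bijectivity argument in the proof of Theorem~\ref{diagonal}; the paper's proof uses that theorem only as a black box (exactness on the complement). Both arguments ultimately lean on the fact that $S\times\{y\}$ and $\bar\Delta$ are $(j\ominus j)$-complex to exclude cancellations --- in your case to certify the sign of the single intersection point, in the paper's case to pass from ``every representative meets $\bar\Delta$'' to ``the algebraic count is nonzero''.
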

\begin{proof}
    Observe that by Theorem \ref{diagonal} $(\sigma\ominus\sigma)\vert_{M\times M\setminus\bar\Delta}$ is exact. Now if there was a representative $S$ of $A$ that does not intersect $\bar\Delta$, then
    $$
    \int_S\sigma\ominus\sigma=0
    $$
    by Stoke's theorem. This is not true as $A$ is the homology class of a factor in a polysphere, hence has area $4\pi$.
\end{proof}
\noindent
We now proceed with the upper bound. By \cite[Lem.\ 15]{LMZ13} a Gromov--Witten invariant of the form $\mathrm{GW}_A([pt.],\alpha,\beta)$ does not vanish. Together with Lemma \ref{intersection} this implies
$$
\mathrm{GW}_{A,4}([pt.],[\bar\Delta], \alpha, \beta)=\mathrm{GW}_{A,3}([pt.], \alpha, \beta)\left ([\bar\Delta]\cdot A\right)\neq 0.
$$
Now an application of Theorem \ref{diagonal} and Corollary \ref{upperbound} yields the desired upper bound:
$$
c_{HZ}^0(U_{2\sqrt{R}}M,\omega_{(R-1)\sigma})=c_{HZ}^0(M\times M\setminus \bar\Delta,\sigma\ominus R\sigma)\leq (\sigma\ominus R\sigma)(A)= \min\lbrace 1,R\rbrace 4\pi.
$$
The minimum is obtained by choosing $A$ to be the homology class represented by a factor of a polysphere in the factor of $M\times M$ corresponding to $\min\lbrace 1,R\rbrace$.

\subsection{Proof of Theorem \ref{capanoncompact}}

The idea of proof of Theorem \ref{capanoncompact} is to find a lower bound by constructing a Hamiltonian circle action from the magnetic geodesic flow. The upper bound is obtained by first applying the symplectomorphisms of Theorem \ref{twisted to hyperkähler} and \ref{hyperkähler to constant} and then observing that the symplectic form $\dd\tau/2+\pi^*\sigma$ is precisely the symplectic structure for which Lu \cite[Thm. 1.3]{Lu06f} computed the Hofer--Zehnder capacity. In total we will compute the precise value of the Hofer--Zehnder capacity of sub level sets of the Hamiltonian generating the circle action, but only bounds for disk tangent bundles.

\begin{Proposition}
Let $(M,g,\sigma,j)$ be a Hermitian symmetric space of non-compact type and $s>1$, then the Hamiltonian 
$$
H:U_{1}M\to \R;\ (x,v)\mapsto g_x(h(jR_{jv,v})v,v),\ \  \text{where}\ \  h(y):=\frac{2\pi}{y}\left(\sqrt{s^2+y}-s\right)
$$
is well-defined, differentiable and generates a circle action.
\end{Proposition}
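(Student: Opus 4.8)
The plan is to handle well-definedness and smoothness first, and then to prove that the Hamiltonian flow of $H$ (with respect to $\omega_{s\sigma}$) is $1$-periodic by reducing to a single polydisc via isometry-invariance and the polydisc theorem, where the flow becomes a reparametrized magnetic geodesic flow.

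\textbf{Well-definedness and smoothness.} First I would rationalize the defining function, writing
\begin{equation*}
h(y)=\frac{2\pi}{y}\left(\sqrt{s^2+y}-s\right)=\frac{2\pi}{\sqrt{s^2+y}+s},
\end{equation*}
which exhibits $h$ as a real-analytic, positive function on $(-s^2,\infty)$ with no singularity at $y=0$. Next I would locate the spectrum of the self-adjoint operator $jR_{jv,v}$: for a space of non-compact type the holomorphic bisectional curvature is non-positive (Definition \ref{type}), so its eigenvalues are non-positive, and on $U_{1}M$ they are bounded below by $-1$ by the very definition of the $U$-neighborhood. Hence the spectrum lies in $[-1,0]\subset(-s^2,\infty)$ because $s>1$, so $h(jR_{jv,v})$ is a well-defined, smooth, positive self-adjoint bundle endomorphism through the spectral calculus, and $H(x,v)=g_x(h(jR_{jv,v})v,v)$ is smooth on $U_1M$ and positive off the zero section. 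As usual $H$ is invariant under the isometry group, since $g$, $j$ and $R$ are.

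\textbf{Reduction to a polydisc.} By isometry-invariance and the polydisc theorem (Theorem \ref{polysphere theorem}) it suffices to understand the flow on a single totally geodesic, holomorphically embedded polydisc $(\ch^1)^r$ tangent to a given $v$. Along such a polydisc $jR_{jv,v}$ is diagonal with factorwise eigenvalues $-|v_i|^2$, so $h(jR_{jv,v})$, the Hamiltonian $H$, and its Hamiltonian vector field $X_H$ all restrict to and split along the product $T\ch^1\times\cdots\times T\ch^1$, exactly as in the arguments of Section \ref{symplectomorphisms}. A direct computation then gives on the polydisc
\begin{equation*}
H=\sum_{i=1}^{r} 2\pi\left(s-\sqrt{s^2-|v_i|^2}\right)=\sum_{i=1}^{r} f(E_i),\qquad f(E):=2\pi\left(s-\sqrt{s^2-2E}\right),
\end{equation*}
where $E_i=\tfrac12|v_i|^2$ is the kinetic energy of the $i$-th factor.

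\textbf{The flow is a circle action.} Since the magnetic geodesic flow conserves each $E_i$, on the $i$-th factor $X_{H_i}=f'(E_i)\,X_{E_i}$ is a fiberwise-constant reparametrization of the magnetic geodesic flow generated by $E_i$ (Lemma \ref{twisted hamiltonian vector field}); the $X_{H_i}$ act on distinct factors and therefore commute, so the flow of $X_H$ is the product of the factor flows, and is $1$-periodic as soon as each factor flow is. It thus remains to verify the rank-one statement: for $s>1$ one has $|v|<s$ on $U_1\ch^1$, the magnetic geodesics of $(\ch^1,s\sigma)$ are closed, and their period is $T(|v|)=\tfrac{2\pi}{\sqrt{s^2-|v|^2}}=f'(E)$, so that every orbit of $X_{H_i}$ closes up in time exactly $1$. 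Assembling the factors and using equivariance to carry the conclusion from one polydisc to all of $U_1M$ then shows that $H$ generates a circle action. The main obstacle is precisely this rank-one period computation — matching $f'$ to the magnetic geodesic period on $\ch^1$, which I would either extract from \cite{Bim23} or recompute in the coadjoint-orbit model $\ch^1\cong O_{a_3}$ of Section \ref{symplectomorphisms} — together with the verification that $X_H$ is genuinely tangent to the totally geodesic polydiscs so that the splitting used above is legitimate.
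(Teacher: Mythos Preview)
Your proposal is correct and follows essentially the same approach as the paper: establish smoothness via the spectral calculus, reduce to a polydisc where $H$ becomes the sum $\sum_i 2\pi\bigl(s-\sqrt{s^2-|v_i|^2}\bigr)$, and then invoke the rank-one magnetic-geodesic period computation (which the paper also defers to \cite{Bim23}). The one point where the paper is more explicit is exactly the obstacle you flag at the end: rather than appealing abstractly to isometry-invariance, the paper computes $X_H=(\tilde h(jR_{jv,v})v)^{\mathcal H}+s(\tilde h(jR_{jv,v})jv)^{\mathcal V}$ directly and uses that the polydisc is totally geodesic and holomorphic to see that $X_H$ is tangent to $T\Sigma^r$.
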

\begin{proof}
It is well defined, because the condition $s>1$ makes sure that the eigenvalues of $jR_{jv,v}$ are bounded from below by $-1$. Further $h$ is smooth, thus the same holds for $H$. Next we compute $\dd H$ using the chain rule. The horizontal part must vanish, as $g, j$ and $R$ are parallel, thus
\begin{align*}
    \dd H&=(0, g(\tilde h(jR_{jv,v})v,\cdot))
    =(\tilde h(jR_{jv,v})v,s\tilde  h(jR_{jv,v})jv)\begin{pmatrix} 
    -s\sigma & g\\
    g & 0
    \end{pmatrix},
\end{align*}
where $\tilde h(y)= h'(y)y+h(y)$ and we represent with respect to the splitting $TTM\cong \mathcal{H}\oplus\mathcal{V}$. It follows that
$$
X_H=(\tilde h(jR_{jv,v})v)^\mathcal{H}+s(\tilde h (jR_{jv,v})jv)^\mathcal{V}.
$$
The operator $jR_{jv,v}: T_xM\to T_xM$ restricts to the tangent space of the polydisc $\Sigma^r\cong(\ch^1)^r$ through $x$ tangent to $v$. As the polydiscs are totally geodesic $(\tilde h(jR_{jv,v})v)^\mathcal{H}\in TT\Sigma^r$. The embedding is also complex thus $(\tilde h(jR_{jv,v})jv)^\mathcal{V}\in TT\Sigma^r$.
In total we see that $X_H$ is tangent to $T\Sigma^r$. Observe that this means that the diagrams of the form
\[
\begin{tikzcd}
(D_{1}\Sigma)^r \arrow[dr,"\sum H\circ\pi_i"] \arrow[rr,"\dd\iota"] && U_{1}M\arrow[dl,"H"]  \\
 & \R 
\end{tikzcd}
\]
commute. On the polydisc $jR_{jv,v}$ is diagonal with respect to the product structure, i.e.\
$$
jR_{jv,v}= \begin{pmatrix} 
    - \vert v_1\vert ^2 & \dots  & 0\\
    \vdots & \ddots & \vdots\\
    0 & \dots  & - \vert v_r\vert^2
    \end{pmatrix}
$$
where $v_i$ is the projection of $v$ to the ith factor. Now it is easy to see that on the polydisc the Hamiltonian is given by
$$
H(x,v)=\sum_{i=1}^r\left (s-\sqrt{s^2-\vert v_i\vert^2}\right ).
$$
As shown in \cite[Lem.\ 2.1]{Bim23} this Hamiltonian generates a diagonal semi-free circle action on the product.  
\end{proof}

\begin{Lemma}\label{intertwine}
    Denote $\Psi: U_1M\to U_2M$ the composition of the symplectomorphism of Theorem \ref{twisted to hyperkähler} and Theorem \ref{hyperkähler to constant}, then
    $$
    2\pi E\circ\Psi=H: U_1M\to \R,
    $$
    where $E:TM\to \R; E(x,v)=\frac{1}{2}\vert v\vert^2$ is the kinetic Hamiltonian.
\end{Lemma}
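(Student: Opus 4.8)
The plan is to unwind the composition $\Psi=\phi_2\circ\phi_1$ of the two symplectomorphisms and to show that, after composing, its fibre component is nothing but a parallel transport followed by a self-adjoint fibrewise rescaling by a spectral function of $jR_{jv,v}$. Recall that $\phi_1(x,v)=(\exp_x(b(jR_{jv,v})jv),P_\gamma v(1))$, where $\gamma$ is the geodesic of $\phi_1$ used to displace the base point, and that $\phi_2(x,v)=(x,e^{a(jR_{jv,v})}v)$ for the relevant spectral function $a$ (the analogue of the function in Theorem \ref{hyperkähler to constant} adapted to the twist). Writing $(x',v'):=\phi_1(x,v)$, the heart of the argument is to understand how the operator $jR_{jv',v'}$ at $x'$ is related to $jR_{jv,v}$ at $x$; once this is clear, $E\circ\Psi$ will be expressible purely through the original data at $(x,v)$, and the claimed identity will collapse to a scalar identity between spectral functions.

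First I would record two elementary facts about $\phi_1$. Since $v'=P_\gamma v(1)$ is obtained from $v$ by parallel transport and parallel transport is a $g$-isometry, we have $\norm{v'}=\norm{v}$. More importantly, because $M$ is symmetric and Kähler we have $\nabla R=0$ and $\nabla j=0$, so parallel transport along $\gamma$ commutes with $j$ and intertwines the curvature tensor. Writing any $w'\in T_{x'}M$ as $w'=P_\gamma w$, this gives $jR(jv',v')w'=P_\gamma\big(jR(jv,v)w\big)$, i.e.
\[
jR_{jv',v'}=P_\gamma\circ jR_{jv,v}\circ P_\gamma^{-1}.
\]
Thus $jR_{jv',v'}$ is conjugate to $jR_{jv,v}$ by the isometry $P_\gamma$, and the same conjugation relation holds for any function of these operators given by the functional calculus. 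In particular $e^{a(jR_{jv',v'})}v'=P_\gamma\big(e^{a(jR_{jv,v})}v\big)$, so that the fibre component of $\Psi(x,v)=\phi_2(x',v')$ equals $P_\gamma\big(e^{a(jR_{jv,v})}v\big)$.

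Consequently, using once more that $P_\gamma$ is an isometry and that $a(jR_{jv,v})$ is self-adjoint (as $jR_{jv,v}$ is self-adjoint and $a$ is real-valued),
\[
E(\Psi(x,v))=\tfrac12\,\norm{e^{a(jR_{jv,v})}v}^2=\tfrac12\,g_x\big(e^{2a(jR_{jv,v})}v,\,v\big).
\]
Comparing with $H(x,v)=g_x\big(h(jR_{jv,v})v,v\big)$, both sides are obtained by applying a function of the single self-adjoint operator $jR_{jv,v}$ to $v$ and pairing against $v$; hence $2\pi E\circ\Psi=H$ is equivalent to the scalar identity $\pi\,e^{2a(y)}=h(y)$ on the spectrum of $jR_{jv,v}$. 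This last identity is immediate from the explicit formulas defining $\Psi$ and $H$; equivalently, restricting to a polydisc $(\ch^1)^r$, where $jR_{jv,v}$ is diagonal with eigenvalues $-\norm{v_i}^2$, it reduces to the fibrewise expression for $H$ read off in the preceding Proposition.

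The step I expect to be the main obstacle is the conjugation relation $jR_{jv',v'}=P_\gamma\, jR_{jv,v}\,P_\gamma^{-1}$: it is exactly here that the symmetric–Kähler hypotheses ($\nabla R=0$, $\nabla j=0$) enter, and it is what makes the base-point displacement of $\phi_1$ invisible to $E\circ\Psi$, reducing the whole statement to functional calculus for a single operator. Everything else — the isometry and self-adjointness bookkeeping, and the final scalar check — is routine.
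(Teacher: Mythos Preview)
Your proposal is correct and follows essentially the same route as the paper's proof. The paper compresses the whole argument into the single line $2\pi E(\Psi(x,v))=\pi\,g_x(e^{a(jR_{jv,v})}v,e^{a(jR_{jv,v})}v)=\pi\,g_x(e^{2a(jR_{jv,v})}v,v)=g_x(h(jR_{jv,v})v,v)=H(x,v)$, thereby silently using exactly the conjugation relation $jR_{jv',v'}=P_\gamma\,jR_{jv,v}\,P_\gamma^{-1}$ (from $\nabla R=0$, $\nabla j=0$) and the isometry of parallel transport that you take care to spell out; your version simply makes explicit why the base-point shift in $\phi_1$ is invisible to $E\circ\Psi$ and why one may compute in $T_xM$ throughout.
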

\begin{proof}
    We compute
    \begin{align*}
        2\pi E(\Psi((x,v))&=\pi g_x(e^{a(jR_{jv,v})}v,e^{a(jR_{jv,v})}v)=\pi g_x(e^{2a(R_{jv,v})}v,v)\\
        &=g_x(h(R_{jv,v})v,v)=H(x,v).
    \end{align*} 
\end{proof}
\noindent
At the moment our base manifold $M\cong G/K$ is a Hermitian symmetric space of non-compact type, hence non-compact as a manifold. We take the quotient $\Gamma\setminus M$ with respect to a cocompact subgroup $\Gamma\subset K$. As the symplectomorphism $\Psi$ is equivariant with respect to the induced $G$-action, it survives a the quotient. Moreover, $H$ and $E$ still generate semi-free circle actions. From now on let $M=\Gamma\setminus G / K$ be a locally Hermitian with universal cover of non-compact type.
\begin{Lemma}\label{capasub}
    Let $\rho<2$ and $s>1$, then
    $$
    c_{HZ}(\lbrace H< \pi \rho^2\rbrace,\omega_{s\sigma})=c^0_{HZ}(\lbrace H<\pi \rho^2\rbrace,\omega_{s\sigma})=\pi \rho^2.
    $$
\end{Lemma}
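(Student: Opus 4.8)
The plan is to push the entire problem through the symplectomorphism $\Psi$ of Lemma \ref{intertwine}, and then to pin down the two capacities by sandwiching them using the trivial inequality $c_{HZ}\leq c_{HZ}^0$: a lower bound coming from the circle action and an upper bound coming from Lu's theorem. Concretely, by Theorems \ref{twisted to hyperkähler} and \ref{hyperkähler to constant} (in the $s$-adapted form packaged by Lemma \ref{intertwine}) the map $\Psi$ is a symplectomorphism $(U_1M,\omega_{s\sigma})\to(U_2M,\dd\tau/2+\pi^*\sigma)$ with $2\pi E\circ\Psi=H$, so it carries the sublevel set onto
$$
\Psi\big(\{H<\pi\rho^2\}\big)=\{2\pi E<\pi\rho^2\}=\{\,|v|<\rho\,\}=D_\rho M .
$$
Since $c_{HZ}$ and $c_{HZ}^0$ are symplectic invariants, it suffices to compute them for $(D_\rho M,\dd\tau/2+\pi^*\sigma)$. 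The hypothesis $\rho<2$ guarantees $D_\rho M\subset U_2M$, so by Corollary \ref{symplectic fibers} the form $\dd\tau/2+\pi^*\sigma$ is genuinely symplectic there, and because $M=\Gamma\backslash G/K$ is compact, $D_\rho M$ is a compact manifold with boundary.

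For the lower bound I would work directly on the source side, where the semi-free circle action is already available. By the Proposition constructing $H$, the Hamiltonian $H$ generates a semi-free circle action on $U_1M$ (it is, along each polydisc factor, the diagonal semi-free action of \cite[Lem.\ 2.1]{Bim23}). On the compact manifold with boundary $\{H\leq\pi\rho^2\}$ the function $H$ attains its minimum $0$ on the zero section, lying in the interior, and its maximum $\pi\rho^2$ constantly on the boundary $\{H=\pi\rho^2\}$. Hence Lemma \ref{lem9} applies and gives, after exhausting the open sublevel set by the sets $\{H\leq\pi\rho^2-\eps\}$,
$$
c_{HZ}(\{H<\pi\rho^2\},\omega_{s\sigma})\geq \mathrm{osc}(H)=\pi\rho^2 .
$$
Combined with $c_{HZ}\leq c_{HZ}^0$ this already yields the lower bound $\pi\rho^2$ for both capacities.

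For the upper bound I would pass to the model $(D_\rho M,\dd\tau/2+\pi^*\sigma)$. As recalled around Corollary \ref{symplectic fibers} and in the introduction, this form makes $TM\to M$ a symplectic vector bundle foliated by pseudoholomorphic planes, and on each fiber $\dd\tau/2$ restricts to $\sigma_x$, so that the fiber $\{\,|v|<\rho\,\}\subset T_xM\cong\C^n$ is the standard ball of capacity $\pi\rho^2$. This is exactly the fibered situation treated by Lu \cite[Thm.\ 1.3]{Lu06f}, whose computation bounds the $\pi_1$-sensitive Hofer--Zehnder capacity of such a domain by the symplectic area of the fiber, giving $c_{HZ}^0(D_\rho M,\dd\tau/2+\pi^*\sigma)\leq\pi\rho^2$. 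Transporting back through $\Psi$ and combining with the lower bound and $c_{HZ}\leq c_{HZ}^0$ forces
$$
\pi\rho^2\leq c_{HZ}\leq c_{HZ}^0\leq\pi\rho^2 ,
$$
so all three quantities equal $\pi\rho^2$.

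The main obstacle is the upper bound: one must verify that the hypotheses of \cite[Thm.\ 1.3]{Lu06f} are genuinely satisfied for $(D_\rho M,\dd\tau/2+\pi^*\sigma)$, where the fibers are non-compact pseudoholomorphic planes rather than closed spheres and $D_\rho M$ carries a boundary, so that the invariant Lu extracts from the fiber foliation indeed forces the capacity to be the fiber area $\pi\rho^2$. A secondary point to check carefully is the bookkeeping of the parameter $s$: one must confirm that the $s$-adapted maps of Theorems \ref{twisted to hyperkähler} and \ref{hyperkähler to constant} compose to the $\Psi$ of Lemma \ref{intertwine} realizing the symplectomorphism $(U_1M,\omega_{s\sigma})\cong(U_2M,\dd\tau/2+\pi^*\sigma)$ onto the $s$-independent target form, so that the identification $\{H<\pi\rho^2\}\cong D_\rho M$ really is symplectic for the twisted form $\omega_{s\sigma}$ and the resulting value $\pi\rho^2$ is independent of $s$.
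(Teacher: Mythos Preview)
Your overall strategy---transport via $\Psi$, lower bound from the semi-free circle action and Lemma \ref{lem9}, upper bound from a fibration argument---matches the paper's. The lower bound is essentially identical.

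The upper bound, however, is handled differently, and the difference is exactly the obstacle you flagged. The paper does \emph{not} try to apply Lu's theorem to the open disc bundle with its non-compact fibers. Instead it first performs a Lerman cut of $(D_\rho M,\dd\tau/2+s\pi^*\sigma)$ along the boundary sphere bundle, using the Hamiltonian circle action generated by $E$. This produces a \emph{closed} symplectic manifold $\overline{D_\rho M}$ that fibers as $\cp^n\hookrightarrow\overline{D_\rho M}\twoheadrightarrow M$. Because the universal cover of $M$ is of non-compact type, $M$ is aspherical, so the fiber class $[\cp^1]$ is the minimal sphere class; the paper then invokes Hofer--Viterbo \cite[Thm.\ 1.16]{HV92} on this closed fibration to get $c_{HZ}^0\leq\omega([\cp^1])=\pi\rho^2$. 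The compactification step is precisely what resolves your concern about open fibers and boundaries: after the cut, one is in the standard closed setting where the pseudoholomorphic sphere machinery applies without further justification. Your proposed direct appeal to \cite[Thm.\ 1.3]{Lu06f} on the open bundle would require checking that his hypotheses accommodate the boundary, which you correctly identified as the weak point; the paper sidesteps this entirely.
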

\begin{proof}
Lemma \ref{intertwine} shows that 
$$
\Psi\left(H^{-1}(\pi \rho^2)\right)=E^{-1}\left(\rho^2/2\right)=D_\rho M.
$$
     We can compactify the disc bundle $(D_\rho M,\dd\tau/2+s\pi^*\sigma)$ using a Lerman cut \cite{ler} with respect to the Hamiltonian circle action induced by $E$. The resulting symplectic manifold $(\overline{D_\rho M},\omega)$ is a symplectic fibration 
    $$
    \cp^n\hookrightarrow \overline{D_\rho M}\twoheadrightarrow M.
    $$
    As the universal cover of $M$ is of non-compact type, M is aspherical. In particular the fiber class $[\cp^1]\in H_2(\overline{D_\rho M},\Z)$ is minimal. We can apply a theorem by Hofer--Viterbo \cite[Thm.\ 1.16]{HV92} to see that $\omega([\cp^1])=\pi\rho^2$ is an upper bound. The lower bound is automatic as $E$ generates a semi-free crircle action (see Lemma \ref{lem9}).
\end{proof}
\noindent
We can now prove Theorem \ref{capanoncompact}.

\begin{Theorem}[Thm.\ \ref{capanoncompact}]
Let $(M,g,\sigma)$ be isometrically covered by an irreducible Hermitian symmetric space of non-compact type with rank $r$, then
$$
2\pi\left(s-\sqrt{s^2-1/r}\right)\leq c_{HZ}(D_1 M,\omega_{s\sigma})\leq c_{HZ}^0(D_1 M,\omega_{s\sigma})\leq 2\pi r\left (s-\sqrt{s^2- 1}\right )
$$
for any constant $s>1$.
\end{Theorem}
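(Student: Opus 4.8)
The plan is to sandwich the disk bundle $D_1M$ between two sublevel sets of the circle-action Hamiltonian $H$ of the preceding Proposition, for which Lemma~\ref{capasub} computes the Hofer--Zehnder capacity exactly, and then to invoke monotonicity of $c_{HZ}$ and $c_{HZ}^0$ under inclusions together with the automatic inequality $c_{HZ}\le c_{HZ}^0$. Throughout I would work on the compact quotient $M=\Gamma\backslash G/K$, on which $H$ (and the auxiliary $E$ of Lemma~\ref{intertwine}) still generate semi-free circle actions, so that Lemma~\ref{capasub} applies to the sets $\{H<\pi\rho^2\}$ for every $\rho<2$. That this descends to the quotient is guaranteed by the $\Gamma$-equivariance of the symplectomorphisms of Theorems~\ref{twisted to hyperkähler} and \ref{hyperkähler to constant}.

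First I would record the explicit shape of $H$ on a polydisc. By the polydisc theorem~\ref{polysphere theorem} and the fact that $jR_{jv,v}$ is diagonal along $(\ch^1)^r$ with eigenvalues $-|v_i|^2$, the Proposition gives $H(x,v)=2\pi\sum_{i=1}^r\big(s-\sqrt{s^2-|v_i|^2}\big)$, where $v=\sum_i v_i$ is the polydisc decomposition and $|v|^2=\sum_i|v_i|^2$. The key elementary feature is that $t\mapsto s-\sqrt{s^2-t}$ is nonnegative, increasing and convex on $[0,1]$ for $s>1$; this convexity is exactly what produces the asymmetric thresholds $1/r$ and $r$ in the statement.

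For the lower bound I would exhibit a large inscribed sublevel set. Since every summand of $H$ is nonnegative, $H(x,v)\ge 2\pi(s-\sqrt{s^2-|v_i|^2})$ for each $i$; hence $H(x,v)<2\pi(s-\sqrt{s^2-1/r})$ forces $|v_i|^2<1/r$ for every $i$ and therefore $|v|^2=\sum_i|v_i|^2<1$. Thus $\{H<2\pi(s-\sqrt{s^2-1/r})\}\subseteq D_1M$, and since this threshold corresponds to $\rho^2=2(s-\sqrt{s^2-1/r})<2<4$, Lemma~\ref{capasub} gives the capacity of the inscribed set as $2\pi(s-\sqrt{s^2-1/r})$. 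Monotonicity then yields $c_{HZ}(D_1M,\omega_{s\sigma})\ge 2\pi(s-\sqrt{s^2-1/r})$.

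For the upper bound I would circumscribe $D_1M$ by a sublevel set: on $D_1M$ one has $|v_i|^2\le|v|^2\le 1$, so each summand is at most $s-\sqrt{s^2-1}$ and $H\le 2\pi r(s-\sqrt{s^2-1})$ everywhere on $D_1M$, which is the stated bound after applying Lemma~\ref{capasub} and monotonicity of $c_{HZ}^0$. The hard part here is the admissibility hypothesis $\rho<2$ of Lemma~\ref{capasub}: the crude circumscribing threshold $2\pi r(s-\sqrt{s^2-1})$ can correspond to $\rho\ge 2$ (e.g.\ $r\ge 2$ with $s$ near $1$), where the sublevel set leaves the domain $U_1M$ on which $H$ is defined, so Lemma~\ref{capasub} cannot be applied to it directly. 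I would circumvent this by circumscribing instead with the sharp value $\max_{D_1M}H$, which by convexity of $t\mapsto s-\sqrt{s^2-t}$ over the simplex $\{|v_i|^2\ge 0,\ \sum_i|v_i|^2\le 1\}$ is attained at a vertex and equals $2\pi(s-\sqrt{s^2-1})$; this always satisfies $\rho^2=2(s-\sqrt{s^2-1})\le 2<4$, gives $c_{HZ}^0(D_1M,\omega_{s\sigma})\le 2\pi(s-\sqrt{s^2-1})$, and is $\le 2\pi r(s-\sqrt{s^2-1})$ because $r\ge 1$. Hence the stated (weaker) upper bound holds for every $s>1$, and combined with the lower bound and $c_{HZ}\le c_{HZ}^0$ the proof is complete; in the rank-one case $r=1$ both thresholds collapse to $2\pi(s-\sqrt{s^2-1})$, recovering the exact value of \cite{Bim23}.
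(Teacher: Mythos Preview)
Your lower-bound argument coincides with the paper's: both show $\{H<2\pi(s-\sqrt{s^2-1/r})\}\subset D_1M$ using nonnegativity of each summand, then invoke Lemma~\ref{capasub} and monotonicity.

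For the upper bound the two arguments diverge, and yours is sharper. The paper bounds each summand on $D_1M$ by $s-\sqrt{s^2-1}$ and sums, obtaining $D_1M\subset\{H\le 2\pi r(s-\sqrt{s^2-1})\}$, then appeals to Lemma~\ref{capasub}. You correctly observe that this last step is problematic: the threshold $\pi\rho^2=2\pi r(s-\sqrt{s^2-1})$ gives $\rho^2=2r(s-\sqrt{s^2-1})$, which for $r\ge 3$ and $s$ close to $1$ exceeds $4$, so the hypothesis $\rho<2$ of Lemma~\ref{capasub} fails (your claim ``$r\ge 2$'' is a slight overreach, since for $r=2$ one still has $\rho^2=4(s-\sqrt{s^2-1})<4$ for all $s>1$, but the point stands for $r\ge 3$). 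Your fix via convexity of $t\mapsto s-\sqrt{s^2-t}$ on the simplex $\{t_i\ge 0,\ \sum t_i\le 1\}$ yields the exact value $\max_{D_1M}H=2\pi(s-\sqrt{s^2-1})$, which always lands in the admissible range $\rho^2=2(s-\sqrt{s^2-1})<2$. This not only repairs the applicability of Lemma~\ref{capasub} but actually produces the stronger bound
\[
c_{HZ}^0(D_1M,\omega_{s\sigma})\;\le\;2\pi\bigl(s-\sqrt{s^2-1}\bigr),
\]
improving the paper's stated inequality by a factor of $r$.
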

\begin{proof}
The bounds for $c_{HZ}(D_\rho M,\omega_s)$ are now obtained from Lemma \ref{capasub} by asking what is the largest sub level set of $H$ that lies in $D_1 M$ and what is the smallest sub level set of $H$ that contains $D_1 M$.\\
\ \\
\noindent
Assume $(x,v)\in \left \lbrace H\leq 2\pi\left(s-\sqrt{s^2-1/r}-s\right)\right \rbrace $, then we can use that $v$ splits into $\sum_i v_i$ along a polydisc and we obtain the following chain of inequalities
\begin{align*}
     &H(x,v)=g_x(h(jR_{jv,v})v,v)\leq 2\pi \left(s-\sqrt{s^2-1/r}\right)\\
     \Rightarrow\ \ &\sum_i2\pi \left(s-\sqrt{s^2- \vert v_i\vert ^2}\right)\leq 2\pi \left(\sqrt{s^2-1/r}-s\right)\\
    \Rightarrow\ \ &\left(s-\sqrt{s^2-\vert v_i\vert ^2}\right)\leq \left(s-\sqrt{s^2-1/r}\right)\ \ \ \ \ \forall\ i\\
    \Rightarrow\ \ & \vert v_i\vert^2\leq 1/r\ \ \ \ \ \forall\ i\\
    \Rightarrow\ \ & \vert v\vert^2\leq 1.
\end{align*}
This implies that the sub level set 
$\left\lbrace H\leq 2\pi\left(s-\sqrt{s^2-1/r}\right)\right\rbrace$ is in $ D_1 M$ and thus the lower bound holds.
For the upper bound of the capacity we have to find a sub level set of $H$ that contains $D_1 M$. For this let $(x,v)\in D_1 M$, then again considering the splitting of $v$ into $\sum_i v_i$ along a polydisc we find
$$
\vert v\vert ^2\leq 1
\Rightarrow\ \ \vert v_i\vert \leq 1 \ \ \ \forall i.
$$
Then by monotonicity of the square root it follows that
\begin{align*}
 &\left(s-\sqrt{s^2-\vert v_i\vert ^2}\right)\leq \left(s-\sqrt{s^2-1}\right)\ \ \ \forall i\\
\Rightarrow\ \ &H(x,v)=g_x(h(jR_{jv,v})v,v)=\sum_i 2\pi \left(s-\sqrt{s^2- \vert v_i\vert ^2}\right)\leq 2\pi r\left(s-\sqrt{s^2-1}\right).
\end{align*}
This implies that $D_1 M\subset  H^{-1}\left(2\pi r\left(s- \sqrt{s^2-1}\right)\right)$ and the upper bound follows, which finishes the proof.
\end{proof}

\bibliographystyle{abbrv}
\bibliography{ref}
\end{document}